\newtheorem{thm}{Theorem}[section]
\newtheorem{prop}[thm]{Proposition}
\newtheorem{lem}[thm]{Lemma}
\newtheorem{cor}[thm]{Corollary}
\numberwithin{equation}{section}
\newcommand{\N}{\mathbb{N}}
\newcommand{\Z}{\mathbb{Z}}
\newcommand{\C}{\mathbb{C}}
\newcommand{\R}{\mathbb{R}}
\newcommand{\D}{\mathbb{D}}
\DeclareMathOperator{\im}{Im}
\DeclareMathOperator{\re}{Re}
\DeclareMathOperator{\dist}{dist}
\DeclareMathOperator{\diam}{diam}
\DeclareMathOperator{\HD}{HD}
\DeclareMathOperator{\erior}{Int}
\newcommand{\sms}{\setminus}
\newcommand{\leeq}{\leqslant}
\newcommand{\greq}{\geqslant}
\newcommand{\ve}{\varepsilon}
\newcommand{\la}{\lambda}
\newcommand{\mc}{\mathcal}
\newcommand{\vp}{\varphi}
\begin{document}

\title[On the directional derivative of the Hausdorff dimension]{On the directional derivative of the Hausdorff dimension of quadratic polynomial Julia sets at 1/4}

\author{Ludwik Jaksztas}

\address{Faculty of Mathematics and Information Sciences, Warsaw University of Technology, ul. Koszykowa 75, 00-662 Warsaw, Poland, ORCID: 0000-0002-9283-8841}
\email{jaksztas@impan.pl}
\subjclass[2010]{Primary 37F45, Secondary 37F35}
\keywords{Hausdorff dimension, Julia set, quadratic family}

\begin{abstract}
Let $d(\ve)$ and $\mc D(\delta)$ denote the Hausdorff dimension of the Julia sets of the polynomials $p_\ve(z)=z^2+1/4+\ve$ and $f_\delta(z)=(1+\delta)z+z^2$ respectively.

In this paper we will study the directional derivative of the functions $d(\ve)$ and $\mc D(\delta)$ along directions landing at the parameter $0$, which corresponds to $1/4$ in the case of family $z^2+c$. We will consider all directions, except the one $\ve\in\R^+$ (or two imaginary directions in the $\delta$ parametrization) which is outside the Mandelbrot set and is related to the parabolic implosion phenomenon.

We prove that for directions in the closed left half-plane the derivative of $d$ is negative. Computer calculations show that it is negative except a cone (with opening angle approximately $150^\circ$) around $\R^+$.

\end{abstract}

\maketitle


\section{Introduction}\label{sec:introd}

Let $f$ be a polynomial in one complex variable of degree at least $2$. The \emph{filled-in Julia set} $K(f)$ we define as the set of all points that do not escape to infinity under iteration of $f$, i.e.
   $$K(f)=\{z\in\C:f^n(z)\nrightarrow\infty\}.$$
It is a compact set whose boundary is called the \emph{Julia set}. So, let us write
$$J(f):=\partial K(f).$$

In this paper we will consider two families of quadratic polynomials. For technical reasons, we slightly modify the classical families $z^2+c$ and $\lambda z +z^2$. We will deal with
\begin{equation*}
 \begin{array}{ll}
   p_\ve(z)=z^2+1/4+\ve,\\
   f_\delta(z)=(1+\delta)z+z^2,
 \end{array}
\end{equation*}
where $\ve, \delta\in\C$, however we will consider parameters close to $0$. Obviously $\ve=0$ and $\delta=0$ correspond to $c=1/4$ and $\lambda=1$ respectively, and all results can be easily transferred to the parametrizations $z^2+c$ and $\lambda z +z^2$.

If $\tau_\delta(z)=z+(1+\delta)/2$, then we have
\begin{equation}\label{eq:conj}
\tau_\delta\circ f_\delta\circ\tau_\delta^{-1}(z)=z^2+\frac14-\frac{\delta^2}{4}=p_{-\delta^2/4}(z).
\end{equation}
Thus the polynomials $f_\delta$ and $p_\ve$ are conjugated by a similarity if and only if
\begin{equation}\label{eq:ed}
   \ve=-\delta^2/4.
\end{equation}
In particular we see that $f_0$ is conjugated to $p_{0}$, and $f_\delta$ is conjugated to $f_{-\delta}$.

\begin{figure}[!h]
\begin{center}
\includegraphics[width=6.2cm]{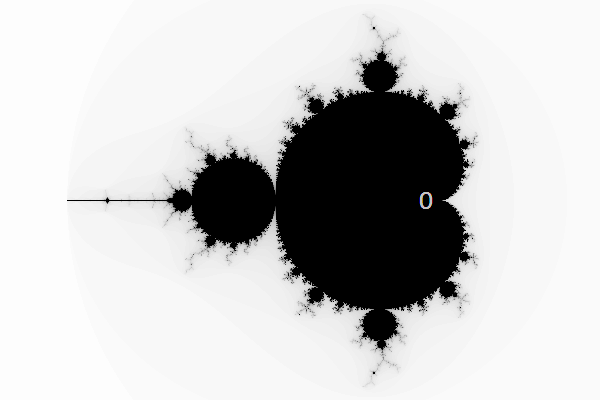} \includegraphics[width=6.2cm]{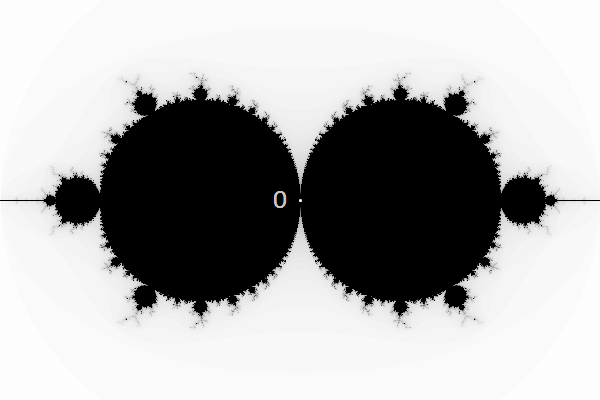}
\caption{The Mandelbrot sets $M$ and $\mc M$ }
\end{center}
\end{figure}

We define the \emph{Mandelbrot sets} as follows:
\begin{equation*}
 \begin{array}{ll}
   M:=\{\ve\in\C:p_\ve^n(0)\nrightarrow\infty\},\\
   \mc M:=\{\delta\in\C:f_\delta^n(-1/2-\delta/2)\nrightarrow\infty\}.
 \end{array}
\end{equation*}
Note that $0$ and $-1/2-\delta/2$ are the only critical points of $p_\ve$ and $f_\delta$ respectively.
Equivalently $M$ and $\mc M$ can be defined as the sets of all parameters for which respective Julia sets are connected.

We will use the following abbreviations:
\begin{equation*}
 \begin{array}{ll}
   J_\ve:=J(p_\ve),\;\; K_\ve:=K(p_\ve),\\
   \mc J_\delta:=J(f_\delta),\;\; \mc K_\delta:=K(f_\delta).
 \end{array}
\end{equation*}

Let $d(\ve):=\HD(J_\ve)$ and $\mc D(\delta):=\HD(\mc J_\delta)$ denote the Hausdorff dimension of the Julia sets.
We will study properties of the functions
$$\ve\mapsto d(\ve)\;\;\; \textrm{ and }\;\;\;\delta\mapsto \mc D(\delta).$$

Recall that a polynomial $f:\overline\C\rightarrow\overline\C$ is called
\emph{hyperbolic (expanding)} if there exists $n\greq1$ such that $|(f^n)'(z)|>1$ for every $z\in J(f)$.

The function $\mc D(\delta)$ is real-analytic on each hyperbolic component of $\erior\mc M$ (consisting of parameters
related to hyperbolic maps) as well as on the exterior of $\mc M$ (see \cite{R}). In particular $\mc D(\delta)$ is
real-analytic on $\mc M_0^+=B(1,1)$ and $\mc M_0^-=B(-1,1)$, the components of $\erior\mc M$ that consist of parameters $\delta$ for which the polynomial $f_\delta$ has an attracting fixed point.

Note that $0$ and $-\delta$ are the fixed points of $f_\delta$. Thus we have $f'_\delta(0)=1+\delta$ and $|1+\delta|<1$ for $\delta\in B(-1,1)$, whereas $f'_\delta(-\delta)=1-\delta$ and $|1-\delta|<1$ for $\delta\in B(1,1)$.

Analogously the function $d(\ve)$ is real-analytic on each hyperbolic component of $M$, in particular on the largest component $M_0$ bounded by the so called main cardioid. The component $M_0$ is related to the components $\mc M_0^+$, $\mc M_0^-$,
i.e. $\ve\in M_0$ if and only if $\ve=-\delta^2/4$ where $\delta\in\mc M_0^+$ (or $\delta\in\mc M_0^-$).

We have $0\in\partial M$ and $0\in\partial\mc M_0^+\cap\partial\mc M_0^-$ (thus $0\in M$ and $0\in\mc M$). Moreover the polynomials $p_0(z)=z^2+1/4$ and $f_0(z)=z+z^2$ have parabolic fixed points with one petal, i.e. $p'_0(1/2)=1$ and $f'_0(0)=1$.

Let us assume that $\alpha\in[-\pi/2,3\pi/2)$. Write
$$\mc R(\alpha):=\{z\in\C^*:\alpha=\arg z\}.$$

We will study properties of the functions $d(\ve)$, $\mc D(\delta)$ when the parameters $\ve\in M$, $\delta\in \mc M$ tend to $0$ along the rays $\mc R(\alpha)$. So, we will consider all directions except $\ve\in\mc R(0)$, and $\delta\in\mc R(\pm\pi/2)$ (cf. Figure 1.). Note that these exceptional directions are related to the parabolic implosion phenomenon, and the Hausdorff dimension of Julia sets is not continuous at $0$ along them (see \cite{DSZ}).

Let us first consider the case $\ve\in\mc R(\pi)$ (i.e. $\ve\in\R^-$). O. Bodart and M. Zinsmeister proved in \cite{BZ} the following theorem:\vspace{2.5mm}
\newline
\textbf{Theorem BZ.} \emph{The function $d$ restricted to the real axis is left-sided continuous at $0$.}
\vspace{2mm}

Analogously, we see that $\mc D|_\R$ is continuous at $0$ from both sides (cf. \ref{eq:ed}).

In \cite{HZi}, G. Havard and M. Zinsmeister studied more precisely behavior of $d$ on the left side of $0$. They
proved that: \vspace{2.5mm}
\newline
\textbf{Theorem HZ.} \emph{There exist $c_0<0$ and $K>1$ such that for every $\ve\in(c_0, 0)$}
   $$\frac{1}{K}(-\ve)^{d(0)-\frac{3}{2}}\leeq d'(\ve)\leeq K(-\ve)^{d(0)-\frac{3}{2}}.$$

We have $1<d(0)<1.295<1.5$ (see \cite{Zd} and \cite{HZ}). Therefore $d'(\ve)\rightarrow+\infty$ when $\ve\rightarrow0^-$.
Related statement for the function $\mc D(\delta)$ follows from Theorem \ref{thm:direction2} (see below), but let us note that in this case we obtain $\mc D'(\delta)\rightarrow0$.

Next, it was also proven in \cite{Jii} that:\vspace{2.5mm}
\newline
\textbf{Theorem J.} \emph{There exists $c_1<0$ such that}
   $$d''(\ve)>0,$$
\emph{where $c\in(c_1,0)$ (i.e. $d$ is a convex function on the interval $(c_1,0)$). Moreover $d''(\ve)\rightarrow\infty$
when $\ve\rightarrow{0}^-$.}
\vspace{2mm}

Let us now consider non-real directions. First we adapt to our situation the definition from \cite{Mii}. We say that $f_{\delta_n}\rightarrow f_0$ horocyclically if $\delta_n\rightarrow0$ and
\begin{equation}\label{eq:horo}
   {\im^2 \delta_n}/{\re \delta_n}\rightarrow0,
\end{equation}
when $n\rightarrow\infty$. Moreover we say that $p_{\ve_n}\rightarrow p_0$ horocyclically if $\ve_n\rightarrow0$ and there exist $\delta_n$ such that $\ve_n=-\delta^2_n/4$ and (\ref{eq:horo}) holds.

The following fact follows from theorem proved by C. McMullen in \cite[Theorem 11.2]{Mii} (see also \cite{BT}):\vspace{2.5mm}
\newline
\textbf{Theorem M.} (1)\emph{ If $p_{\ve_n}\rightarrow p_0$ horocyclically, then}
   $$\lim_{n\rightarrow\infty}d(\ve_n)=d(0).$$
\emph{In particular the limit exists if $\ve_n\rightarrow0$ along any ray $\mc R(\alpha)$, where $\alpha\neq0$.}

(2)\emph{ If $f_{\delta_n}\rightarrow f_0$ horocyclically, then}
   $$\lim_{n\rightarrow\infty}\mc D(\delta_n)=\mc D(0).$$
\emph{In particular the limit exists if $\delta_n\rightarrow0$ along any ray $\mc R(\alpha)$, where $\alpha\neq\pm\pi/2$.}\vspace{2mm}

In order to state our main results we need two definitions.

Let $F$ be a real function defined on a domain $U\in\C$. If $z\in U$ and $v\in\C^*$, then
   $$F'_v(z):=\lim_{h\rightarrow0}\frac{F(z+hv)-F(z)}{h},$$
if the limit exists.

For $\vartheta\in\R$ we write
\begin{equation}\label{eq:I}
\Omega(\vartheta):=\sqrt{\vartheta^2+1}\int_{0}^{\infty} \Big(2-\frac{x\sinh x+\vartheta x\sin\vartheta x}{\cosh x-\cos\vartheta x}\Big)
\Big(\frac{1}{\cosh x-\cos\vartheta x}\Big)^{d(0)}dx.
\end{equation}
Note that this integral converges for every $\vartheta\in\R$, because $d(0)<3/2$.

\begin{figure}[!h]
   \centering
        \includegraphics[scale=0.72]{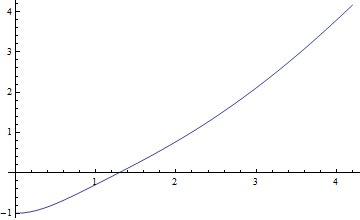}
        \caption{Graph of the function $\Omega(\vartheta)$ under assumption that $d(0)=1.08$.}
\end{figure}

The main Theorem in this paper is:
\begin{thm}\label{thm:direction}
There exists $A>0$ such that for every $\beta\in(0,2\pi)$ we have
   $$\lim_{t\rightarrow0^+}\frac{d'_u(tu)}{t^{d(0)-3/2}}=A \cdot\Omega(\vartheta),$$
where $u=e^{i\beta}$ and $\vartheta=\cot(\beta/2)$. Moreover $\Omega(\vartheta)<0$ for $\vartheta\in[-1,1]$ (i.e. $\beta\in[\pi/2,3\pi/2]$).
\end{thm}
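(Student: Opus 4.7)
The strategy extends the thermodynamic-formalism argument of Havard--Zinsmeister (giving Theorem HZ, which is the case $\vartheta=0$) to an arbitrary direction $u=e^{i\beta}$. Bowen's formula characterises $d(\ve)$ as the unique $s$ solving $P_\ve(s):=P(-s\log|p_\ve'|)=0$, applicable for $\ve=tu$ with $t>0$ sufficiently small because $p_\ve$ is then hyperbolic on $J_\ve$. Implicit differentiation in the direction $u$ gives
$$d'_u(tu)=-\frac{\partial_u P_\ve(s)\big|_{\ve=tu,\,s=d(tu)}}{\partial_s P_\ve(s)\big|_{\ve=tu,\,s=d(tu)}},$$
and the denominator tends as $t\to 0^+$ to $-\chi_0$, where $\chi_0=\int\log|p_0'|\,d\mu_0>0$ is a Lyapunov-type integral against the $d(0)$-conformal measure $\mu_0$, finite and positive because $d(0)>1$ prevents $\mu_0$ from concentrating at the parabolic point $z_0=1/2$. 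The asymptotics of $d'_u(tu)$ are therefore controlled by the numerator, which by Ruelle's formula equals a constant multiple of an integral over $J_{tu}$ of the real part of $(\partial_u h_\ve)/h_\ve$ against the $d(tu)$-conformal measure $\mu_{tu}$, where $h_\ve:J_0\to J_\ve$ denotes the holomorphic motion.

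The heart of the proof is a localisation and rescaling step. Away from $z_0$, $p_\ve\to p_0$ in $C^1$, $\mu_\ve$ converges weakly to $\mu_0$, and the integrand is uniformly bounded, so this region contributes only $O(1)$, which is dwarfed by the expected rate $t^{d(0)-3/2}\to\infty$. The dominant contribution therefore comes from a shrinking neighbourhood of $z_0$, which we resolve via the Fatou-style coordinate $\zeta=-1/(z-1/2)$. In this coordinate $p_0$ becomes the near-translation $\zeta\mapsto\zeta+1+O(\zeta^{-1})$, while for $\ve\ne 0$ the two fixed points of $p_\ve$ appear at $\zeta_\pm=\mp 1/\sqrt{-\ve}$, of modulus $t^{-1/2}$. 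Rescaling by $\sqrt{-\ve}=\sqrt{t}\,e^{i\theta}$ (with $\theta$ the appropriate branch of $\arg(-u)/2$) brings the two fixed points to $\pm 1$ and turns the dynamics into an $O(1)$ near-translation with complex step $\sqrt{-\ve}$. Summing $|p_\ve'|^{-d(\ve)}$ along iterates of this translation replaces the purely hyperbolic sum $\sum e^{-kx}$ of the HZ case by an expression of the form $\cosh x-\cos\vartheta x$, with the parameter $\vartheta=\cot(\beta/2)$ arising geometrically as the ratio of the real and imaginary components of the rescaled step. Tracking the scaling factors---$|\ve|^{d(\ve)/2}$ from the conformal measure, $|\ve|^{-d(\ve)/2}$ from the transfer-operator weight, $|\ve|^{-1/2}$ from the vector field $\partial_u h_\ve$, and the fibre Jacobian $\sqrt{\vartheta^2+1}=1/|\sin(\beta/2)|$---yields the rate $t^{d(0)-3/2}$ and the pointwise limit integrand displayed in (\ref{eq:I}). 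The positive constant $A$ assembles $\chi_0$ with the limiting density of $\mu_0$ at infinity in Fatou coordinates; it is independent of $\vartheta$ and matches the constant implicit in Theorem HZ at $\vartheta=0$.

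For the sign statement, it suffices to show $g(x,\vartheta):=2-(x\sinh x+\vartheta x\sin\vartheta x)/(\cosh x-\cos\vartheta x)\leeq 0$ on $(0,\infty)$ whenever $|\vartheta|\leeq 1$. Multiplying by the positive denominator and comparing power-series coefficients reduces this to
$$\sum_{k\greq 2}\bigl(1-(-1)^k\vartheta^{2k}\bigr)\,\frac{2k-2}{(2k)!}\,x^{2k}\greq 0,$$
which holds termwise: for odd $k\greq 3$ the coefficient equals $1+\vartheta^{2k}>0$, while for even $k\greq 2$ the hypothesis $|\vartheta|\leeq 1$ gives $1-\vartheta^{2k}\greq 0$. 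The $k=1$ term vanishes, and the odd $k\greq 3$ terms contribute strictly positively for $x>0$, so $\Omega(\vartheta)<0$ strictly on $[-1,1]$.

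The main technical obstacle is the localisation/rescaling step. Uniformly in $t\to 0^+$ one must prove that the rescaled transfer operator for $p_{tu}$ converges to a $\vartheta$-parametrised limit operator on the half-plane at infinity in Fatou coordinates, and that its eigenvalue perturbation picks out precisely the integrand of (\ref{eq:I}). Unlike in \cite{HZi}, the non-real phase of $\sqrt{-\ve}$ introduces oscillating factors $\cos\vartheta x$ whose integral against the limit measure must be justified without spurious cancellations; the uniformity near $\vartheta=\pm 1$, close to the heuristic sign-change regime suggested by Figure~2, is the most delicate point and requires sharp decay-of-correlations estimates for $p_0$ together with precise asymptotics of $\mu_0$ in Fatou coordinates near infinity.
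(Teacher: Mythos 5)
Your roadmap matches the paper's strategy: Bowen's formula and implicit differentiation of the pressure (Proposition~\ref{prop:d}), localisation of the dominant contribution near the parabolic point, a Fatou-coordinate rescaling turning the dynamics into a near-translation with complex step proportional to $\sqrt{-\ve}$, and the resulting appearance of $\cosh x-\cos\vartheta x$ in the limit integrand. Your power-series proof of $\Omega(\vartheta)<0$ for $|\vartheta|\leeq 1$ is complete and identical to the paper's (Step~5 of the proof of Lemma~\ref{lem:limQ}: the coefficient $(1-1/n)\,\frac{1-(-1)^n\vartheta^{2n}}{(2n-1)!}$ is nonnegative termwise for $|\vartheta|\leeq 1$ and strictly positive for odd $n\greq 3$).

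What you label ``the main technical obstacle'' is, however, precisely where the proof lives, and you leave it unbuilt. Three concrete gaps. (1) The paper does not work in the $p_\ve$ family at all: it passes to $f_\delta$ via $\ve=-\delta^2/4$, which unfolds the two-valued $\sqrt{-\ve}$ you introduce --- the fixed points $0$, $-\delta$ and the translation step $\delta$ in $\Psi_\delta(w)=\delta/(e^{-w\delta}-1)$ are then single-valued, and this is what makes the Fatou-coordinate estimates tractable and is why the paper proves Theorem~\ref{thm:direction2} first. (2) The non-cancellation you flag as delicate is settled by Lemma~\ref{lem:1/2}, $|\sum_k(e^{-k\delta}-1)|>\tfrac12\sum_k|e^{-k\delta}-1|$; the approximation of the holomorphic-motion derivative $\dot\psi_\delta$ by the explicit model $\Gamma(z)=(e^z-ze^z-1)/(e^z-1)^2$ then requires two distinct estimates (Propositions~\ref{prop:psi} and~\ref{prop:psi2}): a multiplicative one for $\dot\psi_\delta/\Gamma(n\delta)$, and --- because $1+2\Gamma$ vanishes --- a finer one for $(1+2\dot\psi_\delta)/(1+2\Gamma(n\delta))$ valid only on $n|\delta|\leeq 2$; the Riemann-sum passage to the integral defining $\Omega$ then rests on the measure asymptotics of Lemmas~\ref{lem:est} and~\ref{lem:mm}. (3) Your claim that $A$ is independent of the direction and matches Theorem~HZ at $\vartheta=0$ hinges on showing that the normalising constant $H_\mu$ of Lemma~\ref{lem:est} does not depend on $\alpha$, which the paper proves by a separate limiting argument after that lemma. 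So the plan and the sign computation are right, but the exponential-sum lemmas, the measure asymptotics, the $\dot\psi_\delta$-versus-$\Gamma$ comparisons, and the $B_N$/$M_N$ splitting --- the content of Sections~\ref{sec:t} through~\ref{sec:mn} --- are all still to be supplied.
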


Thus we see that $\lim_{t\rightarrow0^+}d'_v(tv)=\pm\infty$, provided $\Omega(\vartheta)\neq0$.

Obviously $\Omega(\vartheta)$ is an even function, so we conclude that there exists a maximal interval $(-\vartheta_0,\vartheta_0)\supset[-1,1]$ on which $\Omega(\vartheta)<0$ (analogously we have $(\pi-\beta_0,\pi+\beta_0)\supset[\pi/2,3\pi/2]$).
Numerically made picture of the graph of $\Omega(\vartheta)$ (see Figure 2.) suggests that it is an increasing function for $\vartheta>0$, whereas $\vartheta_0$ is finite and close to $1.3$ (so $\beta_0$ is close to $7\pi/12$), but we do not know how to prove that.

\

Let $\beta\in(0,2\pi)$ and let $\alpha\in(-\pi/2,\pi/2)\cup(\pi/2,3\pi/2)$. We will assume that $\beta=2\alpha+\pi$ or $\beta=2\alpha-\pi$. Thus, if $v=e^{i\alpha}$, $u=e^{i\beta}$ and $\delta=tv$, $t>0$, then $-\delta^2=-t^2v^2=t^2u$. So (\ref{eq:ed}) leads to $\mc D(tv)=d(t^2u/4)$, hence
   $$\mc D_v'\big(tv\big)=\frac t2\, d_u'\Big(\frac{t^2}{4}u\Big)\;\;\textrm{ and }\;\;
   \frac{\mc D_v'(tv)}{(t/2)^{2\mc D(0)-2}}=\frac{ d_u'(t^2u/4)}{(t^2/4)^{^{d(0)-3/2}}},$$
where $tv\in\mc M_0^-\cup\mc M_0^+$. Note that $\tan(\alpha)=-\cot(\beta/2)$ and $\Omega(\vartheta)=\Omega(-\vartheta)$. Thus, the following Theorem is equivalent to Theorem \ref{thm:direction}.

\begin{thm}\label{thm:direction2}
There exists $\mc A>0$ such that for every $\alpha\in(-\pi/2,\pi/2)\cup(\pi/2,3\pi/2)$ we have
   $$\lim_{t\rightarrow0^+}\frac{\mc D'_v(tv)}{t^{2\mc D(0)-2}}=\mc A \cdot \Omega(\vartheta),$$
where $v=e^{i\alpha}$ and $\vartheta=\tan(\alpha)$. Moreover $\Omega(\vartheta)<0$ for $\vartheta\in[-1,1]$ (i.e. $\alpha\in[-\pi/4,\pi/4]\cup[3\pi/4,5\pi/4]$).
\end{thm}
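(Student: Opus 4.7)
The theorem should follow essentially as a formal corollary of Theorem \ref{thm:direction} via the conjugacy relation (\ref{eq:conj}); the strategy is simply to transfer the limit from the $p_\ve$-parametrization to the $f_\delta$-parametrization and to verify that the scaling exponents and angle arguments line up.

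First, since the map $\tau_\delta$ in (\ref{eq:conj}) is a translation, hence a Euclidean isometry, it preserves Hausdorff dimension, giving the identity $\mc D(\delta)=d(-\delta^2/4)$ on a neighborhood of $0$, and in particular $\mc D(0)=d(0)$. Writing $\delta=tv$ with $v=e^{i\alpha}$ and setting $u=-v^2=e^{i\beta}$, the indicated range of $\alpha$ is covered by the choice $\beta=2\alpha\pm\pi\in(0,2\pi)$, and then $\mc D(tv)=d\bigl((t^2/4)u\bigr)$.

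Next, I differentiate in $t$. Since the real curve $t\mapsto(t^2/4)u$ has velocity $(t/2)u$, the chain rule for directional derivatives yields the identity already displayed in the excerpt,
$$\mc D'_v(tv)=\frac{t}{2}\,d'_u\!\bigl((t^2/4)u\bigr).$$
Introducing $s=t^2/4$ and collecting powers of $t$, so that $t^{2\mc D(0)-2}=2^{2d(0)-2}s^{d(0)-1}$, one checks that
$$\frac{\mc D'_v(tv)}{t^{2\mc D(0)-2}}=2^{2-2d(0)}\cdot\frac{d'_u(su)}{s^{d(0)-3/2}}.$$
As $t\to 0^+$ one has $s\to 0^+$, and Theorem \ref{thm:direction} makes the right-hand side converge to $2^{2-2d(0)}\,A\,\Omega(\cot(\beta/2))$. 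From $\beta/2=\alpha\pm\pi/2$ one reads off $\cot(\beta/2)=-\tan\alpha=-\vartheta$, and the evenness of $\Omega$ noted in the excerpt turns this into $2^{2-2d(0)}\,A\,\Omega(\vartheta)$. Setting $\mc A:=2^{2-2d(0)}A>0$ then yields the asserted limit.

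Finally, the negativity claim is immediate: $\vartheta\in[-1,1]$ corresponds under $\beta=2\alpha\pm\pi$ exactly to $\beta\in[\pi/2,3\pi/2]$, on which Theorem \ref{thm:direction} already provides $\Omega(\vartheta)<0$. I do not expect any obstacle beyond routine bookkeeping of the two branches $\beta=2\alpha\pm\pi$ and the exponent arithmetic; all the analytic difficulty is concentrated in Theorem \ref{thm:direction} itself.
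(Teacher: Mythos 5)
Your chain of calculations reproduces exactly the equivalence the paper itself spells out in the paragraph immediately before the statement of Theorem~\ref{thm:direction2}: the identity $\mc D(tv)=d(t^2u/4)$, the chain-rule relation $\mc D'_v(tv)=\tfrac t2\,d'_u(t^2u/4)$, the exponent bookkeeping showing $\mc D'_v(tv)/t^{2\mc D(0)-2}=2^{2-2d(0)}\,d'_u(su)/s^{d(0)-3/2}$ with $s=t^2/4$, the identity $\cot(\beta/2)=-\tan\alpha$, the evenness of $\Omega$, and the constant relation $A=2^{2\mc D(0)-2}\mc A$. All of that is correct, and it is precisely the content of the sentence ``Thus, the following Theorem is equivalent to Theorem~\ref{thm:direction}.''

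The difficulty is that this is a reduction, not a proof, and in this paper the dependency runs in the opposite direction from the one you assume. The paper states explicitly, right after Theorem~\ref{thm:direction2}, that ``the rest of the paper is devoted to proving Theorem~\ref{thm:direction2}, and then Theorem~\ref{thm:direction} will follow.'' That is, Theorem~\ref{thm:direction} has no independent proof here: it is obtained from Theorem~\ref{thm:direction2} by the very same change of variables you use. Invoking Theorem~\ref{thm:direction} to prove Theorem~\ref{thm:direction2} is therefore circular. All of the analytic content -- the pressure formula of Proposition~\ref{prop:d}, the Fatou-coordinate estimates, the cylinder and measure estimates of Sections~\ref{sec:cyl}--\ref{sec:miary}, the asymptotics of $\dot\vp_\delta$ and $\dot\psi_\delta$ via Propositions~\ref{prop:psi} and~\ref{prop:psi2}, the integral estimates over $B_N$ (Proposition~\ref{prop:BN}) and $M_N$ (Proposition~\ref{prop:MN}), and the final assembly in Section~\ref{sec:mn} -- is carried out in the $f_\delta$-parametrization to prove Theorem~\ref{thm:direction2} directly, and none of it is sidestepped by the conjugacy argument. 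Your argument would become correct only if you supplied an independent proof of Theorem~\ref{thm:direction}, which is where all the real work lives.
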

Since $\mc D(0)<3/2$, we conclude that $\lim_{t\rightarrow0^+}\mc D'_v(tv)=0$ for every $|v|=1$, except $v=\pm i$.
Note that $A=2^{2\mc D(0)-2}\mc A$. 

The rest of the paper is devoted to proving Theorem \ref{thm:direction2}, and then Theorem \ref{thm:direction} will follow. So, we will deal with the family $f_\delta$, however we will use the fact that $f_\delta$ is conjugated to $p_{-\delta^2/4}$.

The proof is based upon ideas introduced in \cite{HZ}, and developed in \cite{J}, \cite{Ji}, \cite{JZ}. In the latter three papers, we estimated the derivative of the Hausdorff dimension of $J(z^2+c)$, when the real parameter $c$ tends to a parabolic parameter with two petals. However, some properties of the holomorphic motion are easier to study in the one petal case (we do not need the symmetry of the Julia set). So, this gives us a possibility to estimate the directional derivative along non-real directions.

Since the polynomials $f_\delta$ and $f_{-\delta}$ are conjugated, we will only consider the case $\delta\in\mc M_0^+$ (i.e. $\alpha\in(-\pi/2,\pi/2)$).

Notation.
$X\asymp Y$ means that $K^{-1}\leeq X/Y\leeq K$, where constant $K>1$ does not depend on $X$ and $Y$ under consideration.

\section{Thermodynamic formalism}\label{sec:formalism}

The main goal of this section is to establish formula for directional derivative of the Hausdorff dimension (see Proposition \ref{prop:d}).

We shall repeat after \cite[Section 2]{Jii} the basic notions.

If $\delta\in\mathcal M$ then there exists the function $\vp_\delta:\C\sms\overline\D\rightarrow\C\sms \mc K_\delta$ (called the
B\"{o}ttcher coordinate) which is holomorphic, bijective, asymptotic to identity map at infinity, and conjugating
$T(s)=s^2$ to $f_\delta$ (i.e. $\vp_\delta\circ T=f_\delta\circ\vp_\delta$). Since the Julia set $\mc J_\delta$ is
a Jordan curve for $\delta\in\mathcal M^+_0\cup\{0\}$, the function $\vp_\delta:\C\sms\overline\D\rightarrow\C\sms\mc K_\delta$ has homeomorphic extension to
$\partial\D$ (Carath\'{e}odory's Theorem) and $\vp_\delta$ conjugates $T|_{\partial\D}$ to $f_\delta|_{\mc J_\delta}$.

The map $(\delta,s)\mapsto\vp_\delta(s)$ gives a holomorphic motion for $\delta\in\mathcal M_0^+$ (see \cite{Hub}). Thus, the
functions $\vp_\delta$ are quasiconformal, and then also H\"{o}lder continuous, whereas $\delta\mapsto\vp_\delta(s)$ are
holomorphic for every $s\in\C\sms\D$ (in particular for $s\in\partial\D$).

Now we use the thermodynamic formalism, which holds for hyperbolic rational maps.
Let $X=\partial\D$, $T(s)=s^2$, and let $\phi:X\rightarrow\R$ be a potential function of the form
$\phi=-\tau\log|f_\delta'(\vp_\delta)|$, for $\delta\in \mc M_0^+$ and $\tau\in\R$.

{\em The topological pressure} can be defined as follows:
   $$P(T,\phi):=\lim_{n\rightarrow\infty}\frac{1}{n}\log\sum_{\overline x\in{T^{-n}(x)}}e^{S_n(\phi(\overline x))},$$
where $S_n(\phi)=\sum_{k=0}^{n-1}\phi\circ T^k$. The limit exists and does not depend on $x\in\partial\D$.
If $\phi=-\tau\log|f_\delta'(\vp_\delta)|$ and $\vp_\delta(\overline x)=\overline z$, then $e^{S_n(\phi(\overline
x))}=|(f_\delta^n)'(\overline z)|^{-\tau}$, hence
   $$P(T,-\tau\log|f_\delta'(\vp_\delta)|)=\lim_{n\rightarrow\infty}\frac{1}{n}\log\sum_{\overline z\in{f_\delta^{-n}(z)}}|(f_\delta^n)'(\overline z)|^{-\tau}.$$
The function $\tau\mapsto P(T,-\tau\log|f_\delta'(\vp_\delta)|)$ is strictly decreasing from $+\infty$ to $-\infty$. So, there exists a
unique $\tau_0$ such that $P(T,-\tau_0\log|f_\delta'(\vp_\delta)|)=0$. By Bowen's Theorem (see \cite[Corollary 9.1.7]{PU} or
\cite[Theorem 5.12]{Z}) we obtain
  \begin{equation*}\label{eq:acm}
    \tau_0=\mc D(\delta).
  \end{equation*}
Thus, we have $P(T,-\mc D(\delta)\log|f_\delta'(\vp_\delta)|)=0$. Write $\phi_\delta:=-\mc D(\delta)\log|f_\delta'(\vp_\delta)|$.

{\em The Ruelle operator} or {\em the transfer operator} $\mathcal{L}_{\phi}:C^0(X)\rightarrow C^0(X)$, is defined as
  \begin{equation*}\label{eq:L}
    \mathcal{L}_{\phi}(u)(x):=\sum_{\overline x\in{T^{-1}(x)}}u(\overline x)e^{\phi(\overline x)}.
  \end{equation*}
The Perron-Frobenius-Ruelle theorem \cite[Theorem 4.1]{Z} asserts that $\beta=e^{P(T,\phi)}$ is a single
eigenvalue of $\mathcal{L}_{\phi}$ associated to an eigenfunction $\tilde h_{\phi}>0$. Moreover there exists a
unique probability measure $\tilde \omega_{\phi}$ such that $\mathcal{L}_{\phi}^*(\tilde
\omega_{\phi})=\beta\tilde\omega_{\phi}$, where $\mathcal{L}_{\phi}^*$ is dual to $\mathcal{L}_{\phi}$.

For $\phi=\phi_\delta$ we have $\beta=1$, and then $\tilde \mu_{\phi_\delta}:=\tilde
h_{\phi_\delta}\tilde\omega_{\phi_\delta}$ is a $T$-invariant measure called an equilibrium state (we assume that this
measure is normalized).
We denote by $\tilde{\omega_\delta}$ and $\tilde{\mu_\delta}$ the measures $\tilde\omega_{\phi_\delta}$ and
$\tilde\mu_{\phi_\delta}$ respectively (measures supported on the unit circle). Next, we take
$\mu_{\delta}:=(\vp_{\delta})_*\tilde{\mu_{\delta}}$, and $\omega_{\delta}:=(\vp_{\delta})_*\tilde{\omega_{\delta}}$ (measures supported on the Julia set $\mc J_\delta$).

So, the measure $\mu_\delta$ is $f_\delta$-invariant, whereas $\omega_\delta$ is called {\em $f_\delta$-conformal measure with exponent
$\mc D(\delta)$}, i.e. $\omega_\delta$ is a Borel probability measure such that for every Borel subset $A\subset\mc J_\delta$,
  \begin{equation*}\label{eq:SP}
    \omega_\delta(f_\delta(A))=\int_A|f_\delta'|^{\mc D(\delta)}d\omega_\delta,
  \end{equation*}
provided $f_\delta$ is injective on $A$.

It follows from \cite[Proposition 6.11]{Z} or \cite[Theorem 4.6.5]{PU} that for every H\"{o}lder $\psi$ and $\hat\psi$ at every $t\in\R$, we have
  \begin{equation*}\label{eq:SP1}
    \frac{\partial}{\partial t}P(T,\psi+t\hat\psi)=\int_X \hat\psi \:d\tilde{\mu}_{\psi+t\hat\psi}.
  \end{equation*}

Let us consider parameters of the form $\delta=te^{i\alpha}=tv$, where $t>0$. Since $\tau=\mc D(tv)$ is the unique zero of the pressure function, for the potential $\phi=-\tau\log|f'_{tv}(\varphi_{tv})|$, the implicit function theorem combined with the above formula leads to (see \cite[Proposition 2.1]{HZi} or \cite[Proposition 2.1]{J}):

\begin{prop}\label{prop:d}
If $\alpha\in(-\pi/2,\pi/2)$ and $v=e^{i\alpha}$, then for every $t>0$ such that $tv\in\mc M_0^+$ we have
\begin{equation}\label{eq:d}
\mc D_v'(tv)=
\frac{-\mc D(tv)}{\int_{\partial\D}\log|f_{tv}'(\vp_{tv})|d\tilde\mu_{tv}}\int_{\partial\D}\frac{\partial}{\partial
t}\log|f_{tv}'(\vp_{tv})|d\tilde\mu_{tv}.
\end{equation}
\end{prop}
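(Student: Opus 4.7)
The plan is to obtain the formula as a direct application of the implicit function theorem to the defining equation $P(T,-\tau\log|f_{tv}'(\vp_{tv})|)=0$, together with the first-variation formula for the topological pressure recalled just before the proposition.

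First I would set, for $v=e^{i\alpha}$ fixed and $(t,\tau)$ in a small neighbourhood of the point of interest,
\begin{equation*}
F(t,\tau):=P\bigl(T,-\tau\log|f_{tv}'(\vp_{tv})|\bigr),
\end{equation*}
and recall that by the Bowen formula stated above one has $F(t,\mc D(tv))=0$ for all admissible $t$. The goal is to apply the implicit function theorem to $F$ at $(t,\mc D(tv))$, which will give
\begin{equation*}
\mc D'_v(tv)=-\frac{\partial_t F(t,\mc D(tv))}{\partial_\tau F(t,\mc D(tv))}.
\end{equation*}
To justify this I would check joint smoothness of $F$ in both variables: smoothness in $\tau$ is standard (the pressure is real-analytic along analytic families of H\"older potentials on a hyperbolic expanding repeller), while smoothness in $t$ uses that $(t,s)\mapsto\vp_{tv}(s)$ is a holomorphic motion on $\partial\D$ (so Hölder continuous in $s$ and holomorphic in $t$), and that $tv$ remains in the hyperbolic component $\mc M_0^+$, so $|f_{tv}'(\vp_{tv}(s))|$ stays bounded away from $0$ and $\infty$ uniformly in $s$.

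Next I would compute the two partial derivatives. For $\partial_\tau F$ the potential depends linearly on $\tau$, so the pressure variation formula directly gives
\begin{equation*}
\partial_\tau F(t,\tau)=-\int_{\partial\D}\log|f_{tv}'(\vp_{tv})|\,d\tilde\mu_{\phi_{t,\tau}},
\end{equation*}
and at $\tau=\mc D(tv)$ the equilibrium state is exactly $\tilde\mu_{tv}$. For $\partial_t F$ the parameter enters nonlinearly through $f_{tv}$ and $\vp_{tv}$, but since the potential $\phi_{t,\tau}$ is real-analytic in $t$ (as a Hölder potential), I would reduce to the linear case by writing $\phi_{t+h,\tau}=\phi_{t,\tau}+h\,\partial_t\phi_{t,\tau}+o(h)$ in the Hölder norm, apply the quoted first-variation formula with $\psi=\phi_{t,\tau}$ and $\hat\psi=\partial_t\phi_{t,\tau}=-\tau\,\partial_t\log|f_{tv}'(\vp_{tv})|$, and conclude
\begin{equation*}
\partial_t F(t,\mc D(tv))=-\mc D(tv)\int_{\partial\D}\partial_t\log|f_{tv}'(\vp_{tv})|\,d\tilde\mu_{tv}.
\end{equation*}

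Finally, substituting these into the implicit function expression for $\mc D'_v(tv)$, the two minus signs cancel and we obtain precisely \eqref{eq:d}. The main subtle point, and the step I expect to require the most care, is the rigorous justification of $t\mapsto\phi_{t,\tau}$ being $C^1$ in a norm (e.g., a suitable Hölder norm) on which the pressure functional is $C^1$; this is where one must invoke that $\vp_{tv}$ is a holomorphic motion and that the parameter space stays in a hyperbolic component so all the relevant quantities depend analytically on $t$, avoiding the degeneracy at $t=0$ where the parabolic phenomenon occurs. Everything else is a direct application of the formulas already displayed in the excerpt.
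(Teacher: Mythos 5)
Your proposal is correct and follows exactly the route the paper indicates: the paper itself only sketches the argument by saying ``the implicit function theorem combined with the above formula leads to [\ldots]'' (deferring details to \cite{HZi} and \cite{J}), and your computation of $\partial_\tau F$ and $\partial_t F$ via the quoted pressure-variation formula, followed by $\mc D'_v(tv)=-\partial_t F/\partial_\tau F$, is precisely that argument carried out. Your identification of the one nontrivial regularity point (that $t\mapsto\phi_{t,\tau}$ is $C^1$ in a H\"older norm, relying on the holomorphic motion and on $tv$ staying inside the hyperbolic component $\mc M_0^+$) also matches the framework of the cited references.
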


\section{Fatou coordinates}\label{sec:fatou}

Now we are going to introduce coordinates in which the family $f_\delta$ is close to the translation by $1$, on the Julia set, near to the fixed points $0$ and $-\delta$.
We will call these coordinates the Fatou-coordinates (repelling and attracting), even if they do not conjugate to an exact translation.
We shall use results of Xavier Buff and Tan Lei (see \cite{BT}).

We have
   $$f_\delta(z)=z+z(z+\delta).$$
The time-one map of the flow
   $$\dot z=f_\delta(z)-z=z(z+\delta),$$
gives a good approximate of $f_\delta$ (close to the fixed points).

The function
\begin{equation}\label{eq:Psi}
   \Psi_\delta(w):=\frac{\delta}{e^{-w\delta}-1}
\end{equation}
is a solution of the above equation, whereas the formal inverse is given by
\begin{equation*}
   \Phi_\delta(z):=\frac{1}{\delta}\log\Big(1-\frac{\delta}{z+\delta}\Big)= -\frac{1}{\delta}\log\Big(1+\frac{\delta}{z}\Big).
\end{equation*}
Let us also write
   $$\Psi_0(w)=-\frac1w\;\;\textrm{ and }\;\; \Phi_0(z)=-\frac1z.$$

If $z\in \C^*$, then we shall assume that $\arg z\in[-\frac12\pi,\frac32\pi)$.
Let us define:
\begin{equation*}
 \begin{array}{ll}
   S^+(\theta,r):=\{z\in\C^*:|\arg z|<\theta,\:|z|<r\},\\
   S^-(\theta,r):=\{z\in\C^*:|\arg z-\pi|<\theta,\:|z|<r\}.
 \end{array}
\end{equation*}
Set $S^\pm(\theta):=S^\pm(\theta,\infty)$, $\hat S_\la^\pm(\theta):=\hat S_\la^\pm(\theta,\infty)$, and (cf. \cite{BT})
   $$S^\pm(\theta)_R:=S^\pm(\theta)\cap\{z\in\C:|\re z|>R\}.$$

It follows from \cite[Proposition 2.6]{BT} that:
\begin{lem}\label{lem:Psiunif}
  If $\alpha\in(-\pi/2,\pi/2)$ then for every $\theta\in(0,\pi/2-|\alpha|)$ we have
     $$\Psi_\delta\rightarrow \Psi_0$$
  uniformly (not just locally uniformly) on $S^\pm(\theta)$, where $\delta\rightarrow0$ and $\alpha=\arg\delta$.
\end{lem}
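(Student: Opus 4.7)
The plan is to pass to the variable $u:=w\delta$ and reduce the statement to a uniform bound on the meromorphic function
$$g(u):=\frac{1}{e^{-u}-1}+\frac{1}{u}$$
on a pair of sectors in the $u$-plane that avoid the poles of $(e^{-u}-1)^{-1}$. A direct computation gives
$$\Psi_\delta(w)-\Psi_0(w)=\delta\Big(\frac{1}{e^{-w\delta}-1}+\frac{1}{w\delta}\Big)=\delta\cdot g(w\delta),$$
so it suffices to bound $g$ uniformly on the image $u$-region corresponding to $S^\pm(\theta)$.

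For $w\in S^+(\theta)$, since $\arg u=\arg w+\alpha$, we have $|\arg u|<\theta+|\alpha|$, and by the hypothesis $\theta+|\alpha|<\pi/2$. Hence $u$ lies in the closed sub-sector $\Sigma^+:=\{u\in\C^*:|\arg u|\leeq\theta+|\alpha|\}$ of the right half-plane. Analogously, for $w\in S^-(\theta)$ one has $|\arg u-\pi|<\theta+|\alpha|$, so $u$ lies in the reflected sector $\Sigma^-:=-\Sigma^+$ in the left half-plane. Both sectors are disjoint from the poles $u=2\pi ik$, $k\in\Z\sms\{0\}$, which lie on the imaginary axis, precisely because of the strict inequality $\theta+|\alpha|<\pi/2$.

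Next I would verify that $g$ is bounded on $\Sigma^+\cup\Sigma^-$. Near $u=0$ the singularity is removable: the expansion $e^{-u}-1=-u+u^2/2-u^3/6+\cdots$ gives $g(u)=-\tfrac{1}{2}+O(u)$, so $g$ extends holomorphically across $0$. For $|u|\to\infty$ in $\Sigma^+$ one has $\re u\greq|u|\cos(\theta+|\alpha|)\to+\infty$, whence $e^{-u}\to 0$ and $g(u)=-1+O(1/|u|)$. For $|u|\to\infty$ in $\Sigma^-$ one has $\re u\leeq -|u|\cos(\theta+|\alpha|)\to-\infty$, so $(e^{-u}-1)^{-1}\to 0$ exponentially fast and $g(u)=O(1/|u|)+O(e^{-|\re u|})\to 0$. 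Together with continuity on compacta of the closed sectors, this yields a uniform bound $|g(u)|\leeq C$ on $\Sigma^+\cup\Sigma^-$ with $C$ depending only on $\theta+|\alpha|$.

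Combining the two steps, $|\Psi_\delta(w)-\Psi_0(w)|\leeq C|\delta|$ for every $w\in S^\pm(\theta)$, so $\Psi_\delta\to\Psi_0$ uniformly on $S^\pm(\theta)$ as $\delta\to 0$ along the ray $\arg\delta=\alpha$. The main obstacle is purely bookkeeping: one has to treat simultaneously the removability at $u=0$ and the two different asymptotic regimes as $|u|\to\infty$ inside $\Sigma^+$ and $\Sigma^-$, and confirm that these sectors stay uniformly away from the imaginary axis where the poles of $g$ sit. Once that is settled the lemma is immediate.
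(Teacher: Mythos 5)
Your proof is correct, and it takes a genuinely different route: the paper does not prove the lemma at all but simply cites \cite[Proposition 2.6]{BT}, a result of Buff--Tan Lei formulated in the general language of ``$\theta$-stable'' families and time-one flow approximations, whereas you give a short self-contained argument. Your key observation is that
\begin{equation*}
\Psi_\delta(w)-\Psi_0(w)=\delta\,g(w\delta),\qquad g(u)=\frac{1}{e^{-u}-1}+\frac{1}{u},
\end{equation*}
which decouples the problem cleanly: the $w$-dependence collapses into the scaled variable $u=w\delta$, and the constraint $\theta+|\alpha|<\pi/2$ is precisely what guarantees that $u$ ranges over a pair of closed proper subsectors of the right and left half-planes, away from the imaginary axis where the nonzero poles $2\pi ik$ of $g$ sit. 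The three checks you perform (removable singularity at $0$ with $g(u)\to-1/2$; $g(u)\to-1$ as $|u|\to\infty$ in the right sector; $g(u)\to 0$ as $|u|\to\infty$ in the left sector) are each correct, and together with continuity they give a uniform bound $|g|\leeq C$ depending only on $\theta+|\alpha|$, hence $|\Psi_\delta-\Psi_0|\leeq C|\delta|$ uniformly on $S^\pm(\theta)$, which is even a quantitative strengthening of the stated qualitative convergence. What your approach buys is transparency and a rate of convergence; what the paper's citation buys is brevity and, in \cite{BT}, a framework that also delivers the harder Lemma \ref{lem:trans} (the approximation of $f_\delta^{\pm n}$ by translations in Fatou coordinates), which does not reduce to an explicit formula the way this lemma does.
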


Let $f^{-1}_{\delta}$ be the inverse branch of $f_\delta$ that keeps fixed points $0$, $-\delta$ (we can assume that $f^{-1}_{\delta}$ is defined on the set $\{z\in\C:\re(z)>\re(f_\delta(c_\delta))\}$, where $c_\delta$ is the only critical point of $f_\delta$). Now we prove the main result of this section:

\begin{lem}\label{lem:trans}
  If $\alpha\in(-\pi/2,\pi/2)$ then for every $\theta\in(0,\pi/2-|\alpha|)$ and $\varepsilon>0$ there exist $R>0$ and $\eta>0$ such that $\Psi_\delta$ has a local inverse map $\Phi_\delta$ and
     $$F^{-1}_\delta=\Phi_\delta\circ f^{-1}_\delta\circ \Psi_\delta$$
  is well defined and univalent on $S^-(\theta)_R$, mapping $S^-(\theta)_R$ into $S^-(\theta)_R$. Moreover
    \begin{equation*}
       \sup_{n\in\N,\:w\in S^-(\theta)_R} |F_\delta^{-n}(w)-(w-n)|<\varepsilon n,
    \end{equation*}
   where $0<|\delta|<\eta$ and $\alpha=\arg\delta$ or $\delta=0$.

   There ia a similar statement for $S^+(\theta)_R$ replacing triple $(F^{-1}_\delta,f^{-1}_\delta,w-n)$ by $(F_\delta,f_\delta,w+n)$.
\end{lem}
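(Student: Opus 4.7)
The plan is to first prove, for the forward map $F_\delta := \Phi_\delta \circ f_\delta \circ \Psi_\delta$ on $S^+(\theta)_R$, that $F_\delta(w)$ is close to $w+1$ as $|w| \to \infty$ and $|\delta| \to 0$; the statement for $S^-(\theta)_R$ and $F_\delta^{-1}$ then follows by inverting (or by the symmetric direct argument). Well-definedness and univalence rest on Lemma~\ref{lem:Psiunif}: since $\Psi_\delta \to \Psi_0$ uniformly on $S^-(\theta)$, the image $\Psi_\delta(S^-(\theta)_R)$ lies in an arbitrarily small punctured neighbourhood of $0$ once $R$ is large and $|\delta|<\eta$ is small, so in particular it avoids the critical value of $f_\delta$, and the inverse branch $f_\delta^{-1}$ fixing $0$ is well-defined and univalent there; composing with the univalent maps $\Psi_\delta$ and $\Phi_\delta$ yields a univalent $F_\delta^{-1}$.

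For the key quantitative estimate I exploit the fact that $\Phi_\delta$ conjugates the time-one flow map $\sigma_\delta$ of the vector field $\dot z = z(z+\delta)$ exactly to translation by $1$, so writing $z_0 = \Psi_\delta(w)$,
\[
F_\delta(w)-(w+1)
= \Phi_\delta(f_\delta(z_0))-\Phi_\delta(\sigma_\delta(z_0))
= \int_{\sigma_\delta(z_0)}^{f_\delta(z_0)} \frac{dz}{z(z+\delta)}.
\]
A Taylor expansion of the flow gives $f_\delta(z_0)-\sigma_\delta(z_0) = -\tfrac12(2z_0+\delta)z_0(z_0+\delta) + O\bigl((|z_0|+|\delta|)^4\bigr)$, while on the short integration path between $\sigma_\delta(z_0)$ and $f_\delta(z_0)$ the integrand is comparable to $1/|z_0(z_0+\delta)|$. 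This yields a bound of the shape $|F_\delta(w)-(w+1)| \leq C(|z_0|+|\delta|)$ which, by Lemma~\ref{lem:Psiunif}, is $O(1/R+\eta)$ and hence can be made smaller than any prescribed $\varepsilon_0$. Inverting (using that $F'_\delta$ is close to $1$) gives the analogous bound $|F_\delta^{-1}(w)-(w-1)| < \varepsilon_0$ uniformly on $S^-(\theta)_R$.

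The iteration step is then combinatorial. Writing $w_k := F_\delta^{-k}(w)$, a telescoping argument gives
\[
\bigl|F_\delta^{-n}(w)-(w-n)\bigr| \leq \sum_{k=0}^{n-1}\bigl|F_\delta^{-1}(w_k)-(w_k-1)\bigr| \leq n\varepsilon_0,
\]
provided each $w_k$ remains in $S^-(\theta)_R$. Sector invariance is the inductive step: parametrising $w_k = -a_k+ib_k$ with $a_k>R$ and $|b_k|<a_k\tan\theta$, the translate $w_k-1 = -(a_k+1)+ib_k$ lies strictly inside $S^-(\theta)$ with slack at least $\tan\theta$ from $\partial S^-(\theta)$ in the imaginary direction, so once $\varepsilon_0 < \tan\theta/2$ (achievable by enlarging $R$ and shrinking $\eta$) the point $w_{k+1}$ stays in $S^-(\theta)_R$. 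Taking $\varepsilon_0 = \varepsilon$ finishes the proof; for $S^+(\theta)_R$ the identical argument applies with $F_\delta$, $f_\delta$, and $w+n$ in place of $F_\delta^{-1}$, $f_\delta^{-1}$, $w-n$. The hardest part, I expect, is the uniform control of the integral as the two scales $|z_0|$ and $|\delta|$ vary independently: one must ensure the integration path stays away from both singularities $z=0$ and $z=-\delta$ of the integrand, and that the bounds are genuinely uniform in $\delta$ with $|\delta|<\eta$ rather than merely pointwise for each fixed $\delta$.
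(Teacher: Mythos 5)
Your proposal is a direct, self-contained argument, whereas the paper handles this lemma entirely by citation: it invokes Buff and Tan Lei \cite[Lemma 5.1]{BT}, checking the hypotheses via the $\theta$-stability of $f_\delta$ established in \cite[Section 2, Example 1 (continued)]{BT} and the discussion around \cite[Lemma 5.3]{BT}. So you are, in effect, reproving (a special case of) the Buff--Tan comparison between $f_\delta$ and the time-one flow of $\dot z = z(z+\delta)$ rather than quoting it. What the citation buys is brevity and the assurance that the delicate uniformity issues (which you yourself flag at the end) have been taken care of once and for all in \cite{BT}; what your approach buys is transparency about the mechanism: the exact conjugation of the time-one flow by $\Phi_\delta$ to translation, the explicit one-step error $|F_\delta(w)-(w+1)| \lesssim |z_0|+|\delta|$, and the cone-invariance bookkeeping that feeds the telescoping sum.

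Two points in your sketch deserve a bit more care, though neither is fatal. First, the passage from the one-step estimate for $F_\delta$ to the one-step estimate for $F_\delta^{-1}$ on $S^-(\theta)_R$ is slightly circular as written: to read the bound $|F_\delta(w')-(w'+1)|<\varepsilon_0$ at $w'=F_\delta^{-1}(w)$ you already need $w'$ to lie in the sector where the estimate is valid, which is part of what invariance is supposed to deliver. The fix (run the forward estimate on a slightly larger sector $S^-(\theta')_{R'}$ with $\theta'>\theta$, $R'<R$, and check that $F_\delta$ maps $S^-(\theta)_R$ into $S^-(\theta')_{R'}$ and conversely dominates it) is routine but should be stated. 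Second, your constant $\varepsilon_0$ plays two incompatible roles: it must be $<\tan\theta/2$ (and $<1$) for cone invariance, and it must be $\leq\varepsilon$ for the final claim; so the correct choice is $\varepsilon_0=\min(\varepsilon,\tan\theta/2,1/2)$, say, achieved by taking $R$ large and $\eta$ small. With these adjustments the argument is sound and gives an honest alternative to the paper's appeal to \cite{BT}.
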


\begin{proof}
The statement follows from \cite[Lemma 5.1]{BT} since $\Psi_\delta$ is solution of differential equation $\dot z=f_\delta(z)-z=z(z+\delta)$.

The assumptions of \cite[Lemma 5.1]{BT} are satisfied because $f_\delta$ is $\theta$-stable for every $\theta\in(0,\pi/2-|\alpha|)$ (see \cite[Section 2, Example 1 (continued)]{BT}), and then the assumptions follows from \cite[proof of Lemma 5.3]{BT}.
\end{proof}

So, if $z=\Psi_\delta(w)$, then $w\in S^-(\theta)_R$ ($w\in S^+(\theta)_R$) can be considered as repelling (attracting) Fatou coordinate.

\section{Technical facts concerning exponential function}\label{sec:t}

In this section we state some elementary facts related to exponential function. The proofs will be given in Appendix \ref{app:A}.

\begin{lem}\label{lem:<}
For every $\ve,\tilde\ve\in(0,1)$, $\ve_1,\tilde\ve_1\greq0$ and $z, X, Y\in\C$
\begin{enumerate}
    \item\label{lit:<1}
       we have
          $$e^{|z|}(1+\ve)-1< e^{2|z|}-1+2\ve,$$
    \item\label{lit:<2}
       if $|X-1|< e^{\ve_1 |z|}-1+\ve$ and $|Y-1|< e^{\tilde\ve_1 |z|}-1+\tilde\ve$, then
          $$|XY-1|<e^{(2\ve_1+2\tilde\ve_1) |z|}-1+(2\ve+2\tilde\ve),$$
    \item\label{lit:<3}
       if $|X-1|< \ve$, then
          $$|Xe^{z}-1|< e^{2|z|}-1+2\ve.$$
 \end{enumerate}
\end{lem}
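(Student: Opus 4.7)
The three items are elementary algebraic inequalities; my plan is (1) by a direct case analysis, (3) as a one-line consequence of (1), and (2), the main step, by reducing to a quadratic positivity problem solved with the help of (1). For (1), setting $a=e^{|z|}\ge 1$ reduces the claim to $a(a-1)>\ve(a-2)$. The case $a\le 2$ is immediate since $a(a-1)\ge 0$ and $\ve(a-2)\le 0$ with at least one strict inequality (at $a=1$ the left side is $0$ but the right is $-\ve<0$; for $a\in(1,2]$ the left side is positive). In the case $a>2$, using $\ve<1$ one gets $\ve(a-2)<a-2<a(a-1)$, the last inequality being equivalent to $(a-1)^2+1>0$. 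For (3), the triangle inequality together with the elementary bound $|e^z-1|\le e^{|z|}-1$ yields $|Xe^z-1|<\ve e^{|z|}+(e^{|z|}-1)=(1+\ve)e^{|z|}-1$, which is $<e^{2|z|}-1+2\ve$ by (1).

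The substance of the lemma is (2). Expanding $XY-1=X(Y-1)+(X-1)$ and using the consequence $|X|<e^{\ve_1|z|}+\ve$ of the hypothesis, one obtains
\[|XY-1|<e^{(\ve_1+\tilde\ve_1)|z|}+\tilde\ve\, e^{\ve_1|z|}+\ve\, e^{\tilde\ve_1|z|}+\ve\tilde\ve-1.\]
The crude estimates $e^{\ve_1|z|},e^{\tilde\ve_1|z|}\le e^{(\ve_1+\tilde\ve_1)|z|}$ then reduce the claim to the two-real-variable inequality
\[(1+\sigma)e^u+\ve\tilde\ve<e^{2u}+2\sigma\qquad(u\ge 0,\ \sigma=\ve+\tilde\ve\in(0,2)).\]
With the substitution $t=e^u\ge 1$ this becomes the positivity of the quadratic $g(t)=t^2-(1+\sigma)t+(2\sigma-\ve\tilde\ve)$ on $[1,\infty)$. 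I would split on $\sigma$: when $\sigma\le 1$ the vertex $(1+\sigma)/2$ of $g$ lies in $(\tfrac12,1]$, so $g$ is non-decreasing on $[1,\infty)$ and the estimate $g(1)=\sigma-\ve\tilde\ve=1-(1-\ve)(1-\tilde\ve)>0$ closes the argument; when $\sigma\in(1,2)$ the minimum value equals $(6\sigma-\sigma^2-1-4\ve\tilde\ve)/4$, which is positive by the elementary bounds $6\sigma-\sigma^2-1\ge 4$ on $[1,2]$ and $\ve\tilde\ve<1$.

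The main obstacle is the regime $\sigma>1$ in (2), where one cannot simply apply (1) with $\sigma$ in place of $\ve$ since (1) requires $\ve<1$. The strict slack $\ve\tilde\ve<\ve+\tilde\ve$ (equivalently $(1-\ve)(1-\tilde\ve)>0$), together with the full doubling to $2(\ve+\tilde\ve)$ on the right-hand side, is exactly the room needed to make $g$ positive on $[1,\infty)$ in this regime.
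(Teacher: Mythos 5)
Your proof is correct. Parts (1) and (3) match the paper's argument almost exactly (the paper also treats (1) as positivity of the quadratic $x^2-(1+\ve)x+2\ve$ at $x=e^{|z|}\ge 1$, and derives (3) from (1) by the same triangle-inequality chain). In part (2), though, you take a genuinely different route. The paper writes $XY-1=(X-1)(Y-1)+(X-1)+(Y-1)$ and bounds the sum term and the product term separately, each by a copy of $e^{(\ve_1+\tilde\ve_1)|z|}-1+(\ve+\tilde\ve)$; adding the two copies and invoking $2(a-1+b)<a^2-1+2b$ (i.e.\ part (1)) yields the doubled exponents. You instead use the asymmetric split $XY-1=X(Y-1)+(X-1)$, absorb everything into $(1+\sigma)e^{u}+\ve\tilde\ve-1$ with $\sigma=\ve+\tilde\ve$, $u=(\ve_1+\tilde\ve_1)|z|$, and reduce to positivity of a single quadratic $g(t)=t^2-(1+\sigma)t+2\sigma-\ve\tilde\ve$ on $[1,\infty)$, handled by a vertex/minimum-value case split on $\sigma\lessgtr 1$. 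Your route makes very explicit that the factor $2$ on both $\ve_1+\tilde\ve_1$ and $\ve+\tilde\ve$ is exactly what makes the quadratic positive (one sees the slack $\sigma-\ve\tilde\ve=1-(1-\ve)(1-\tilde\ve)>0$ and the discriminant margin directly), at the cost of the extra case analysis; the paper's symmetric decomposition avoids the case split by cleverly bounding the product of the two hypotheses, but the algebra there is a bit more opaque. Both proofs are elementary and equally rigorous.
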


\begin{lem}\label{lem:1/2}
For every $\alpha\in(-\pi/2,\pi/2)$ and $\tilde m\greq 1$ there exist $\eta>0$ such that
   $$\Big|\sum_{k=\tilde m}^{m}e^{-k\delta}-1\Big|>\frac{1}{2}\sum_{k=\tilde m}^{m}\big|e^{-k\delta}-1\big|,$$
where $0<|\delta|<\eta$, $\alpha=\arg\delta$ and $m>\tilde m$.
\end{lem}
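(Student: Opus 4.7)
I interpret the display with the $-1$ inside each summand, i.e.\ as $\bigl|\sum_{k=\tilde m}^{m}(e^{-k\delta}-1)\bigr|>\tfrac12\sum_{k=\tilde m}^{m}|e^{-k\delta}-1|$, since the literal reading with $-1$ outside the sum fails as soon as $m\gg 1/|\delta|$ (one checks directly in the case $\alpha=0$). Set $z_k:=e^{-k\delta}-1$ and $N:=m-\tilde m+1$. The hypothesis $\alpha\in(-\pi/2,\pi/2)$ gives $\re\delta>0$, hence $|e^{-k\delta}|\leeq 1$, and every $z_k$ lies in the closed disk of radius $1$ centered at $-1$, with $\re z_k<0$ for $k\geq 1$. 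The plan is a case analysis on the scale $\Lambda:=m|\delta|$, coupled to a Riemann-sum passage in the intermediate regime.

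For $\Lambda\leeq c_\alpha$ with $c_\alpha$ depending only on $\alpha$, Taylor expansion gives $z_k=-k\delta\bigl(1+O(k|\delta|)\bigr)$, so $\arg z_k$ is confined to an $O(c_\alpha)$-neighborhood of $\pi+\alpha$ for every $k$ in the range. Choosing $c_\alpha$ small enough forces the half-opening of this sector strictly below $\pi/3$, and the usual rotation bound then yields $|\sum z_k|\geq\cos(\text{half-opening})\cdot\sum|z_k|>\tfrac12\sum|z_k|$. For $N|\delta|\cos\alpha\geq 6$, apply the identity $\sum z_k=\sum e^{-k\delta}-N$ together with the termwise bound $|\sum e^{-k\delta}|\leeq\sum e^{-k|\delta|\cos\alpha}\leeq 2/(|\delta|\cos\alpha)$ (using $1-e^{-x}\geq x/2$ for small $x>0$). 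This yields
$$\Bigl|\sum z_k\Bigr|\geq N-\frac{2}{|\delta|\cos\alpha},\qquad \sum|z_k|\leeq N+\frac{2}{|\delta|\cos\alpha},$$
and the inequality follows from $N>6/(|\delta|\cos\alpha)$.

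The remaining regime is $c_\alpha<\Lambda\leeq 6/\cos\alpha$. Letting $|\delta|\to 0$ with $\Lambda$ held fixed (so $\tilde m|\delta|\to 0$, as $\tilde m$ is fixed), both sides become Riemann sums and the target reduces, in the limit, to the continuous assertion
$$F(\Lambda,\alpha):=\frac{\bigl|1-e^{-\Lambda e^{i\alpha}}-\Lambda e^{i\alpha}\bigr|}{\int_{0}^{\Lambda}|e^{-te^{i\alpha}}-1|\,dt}>\tfrac12$$
for every $\Lambda>0$ and every $\alpha\in(-\pi/2,\pi/2)$. Direct asymptotics give $F\to 1$ as $\Lambda\to 0^+$ and as $\Lambda\to\infty$; the worst values occur at the benchmark points $\Lambda\sin\alpha\in 2\pi\mathbb{Z}$ with $\alpha\to\pm\pi/2$, where the roots-of-unity structure of $e^{-k\delta}$ gives numerator $\sim\Lambda$ and denominator $\sim 4\Lambda/\pi$, hence $F\to\pi/4>\tfrac12$. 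Combined with the standard $O(|\delta|)$ Riemann-sum error, choosing $\eta$ small enough closes the argument for $|\delta|<\eta$. The hardest step, and the main obstacle, is establishing a quantitative, uniform-in-$\alpha$ positive lower bound for $F(\Lambda,\alpha)-1/2$ on the compact intermediate range of $\Lambda$: the spiral behavior of $t\mapsto e^{-te^{i\alpha}}$ for $\alpha$ close to the boundary produces correlated cancellations in the numerator and denominator of $F$ which must be tracked precisely to prevent the ratio from dipping below $1/2$.
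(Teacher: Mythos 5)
Your interpretation of the typo (the $-1$ belongs inside the sum) is correct; this is how the lemma is used at \eqref{eq:||}. Your small-scale case (cone of half-opening below $\pi/3$) and large-scale case (triangle inequality against a geometric-series tail) are both sound. The paper splits differently: it uses $m\im\delta<\pi$ versus $m\im\delta\geq\pi$. In the first regime it observes that each $e^{-k\delta}-1$ lies in $B(-1,1)$ with negative imaginary part, hence in the third quadrant, a cone of half-opening $\pi/4$, giving the constant $\cos(\pi/4)=\sqrt2/2$; in the second regime it establishes the stronger pointwise comparison $-\re\sum(e^{-k\delta}-1)>\sum|\im(e^{-k\delta}-1)|$, which yields $\sum|z_k|\leq -\re\sum z_k+\sum|\im z_k|<2|\sum z_k|$ directly. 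Neither of your first two cases needs the paper's machinery, so that part is a genuinely more elementary route.

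However, your proof is incomplete, and you acknowledge this yourself. The entire burden falls on the intermediate regime $c_\alpha<\Lambda\leq 6/\cos\alpha$, where you reduce (via a Riemann-sum passage) to the continuous assertion
$$F(\Lambda,\alpha)=\frac{\bigl|1-e^{-\Lambda e^{i\alpha}}-\Lambda e^{i\alpha}\bigr|}{\int_0^\Lambda|e^{-te^{i\alpha}}-1|\,dt}>\tfrac12,$$
and then state that "the main obstacle" is establishing a quantitative lower bound for $F-1/2$ on this compact range. That is precisely the content of the lemma in its hardest case: asymptotics at $\Lambda\to0^+$ and $\Lambda\to\infty$ and the heuristic $\pi/4$ at the $\alpha\to\pi/2$ benchmark points are consistency checks, not a proof, and the spiral cancellations you flag are real. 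Since $\eta$ is allowed to depend on $\alpha$, you do not need uniformity in $\alpha$, which removes one of your stated worries; but for a fixed $\alpha$ you still must prove $F(\Lambda,\alpha)>1/2$ with a uniform margin over the whole compact $\Lambda$-interval, and that step is missing. By contrast, the paper avoids $F$ altogether and handles this regime through the sharper inequality $-\re\sum z_k>\sum|\im z_k|$; some argument (averaging of $\cos\theta+|\sin\theta|$, whose mean over a period is $2/\pi<1$, against the decaying factor $e^{-k\re\delta}$) is needed to justify it, but it is a cleaner target than a pointwise lower bound on a ratio of oscillatory quantities. In short: correct setup, correct easy cases, but the hard case is identified rather than solved, so the proposal does not yet constitute a proof.
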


For $\alpha\in[-\pi/4,0)\cup(0,\pi/4]$ we define
\begin{equation}\label{eq:defw1}
   W_\alpha:=\big\{z\in\C:\arg z\in(-|\alpha|,|\alpha|)\;\wedge \;\re z>1/4\big\}.
\end{equation}
Next, for $\alpha\in(-\pi/2,-\pi/4)\cup(\pi/4,\pi/2)$, write
\begin{equation}\label{eq:defw2}
   W_\alpha:=\big\{z\in\C:\arg z\in(-|\alpha|,|\alpha|)\;\wedge \;\re z>(1/4)\cot|\alpha|\big\}.
\end{equation}

\begin{lem}\label{lem:w}
If $\alpha\in(-\pi/2,\pi/2)\sms\{0\}$ and $\alpha=\arg w$, then
$$\frac{1}{e^{w}-1}+\frac12\in W_\alpha.$$
\end{lem}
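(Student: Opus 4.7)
My plan is to first rewrite
\begin{equation*}
  \frac{1}{e^w-1}+\frac12 \;=\; \frac{e^w+1}{2(e^w-1)} \;=\; \frac12\coth(w/2),
\end{equation*}
and then, setting $w=a+ib$ with $a=t\cos\alpha>0$ and $b=t\sin\alpha$ (where $t=|w|$), use the standard identity
\begin{equation*}
  \frac12\coth(w/2)\;=\;\frac{\sinh(a)-i\sin(b)}{2(\cosh(a)-\cos(b))}.
\end{equation*}
In particular $\re z>0$, so the problem reduces to verifying the argument condition $|\sin(b)|/\sinh(a)<|\tan\alpha|$ and the appropriate lower bound on $\re z$ coming from \eqref{eq:defw1} or \eqref{eq:defw2}.

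The argument condition is immediate: $|\sin(b)|\le|b|=a|\tan\alpha|<\sinh(a)|\tan\alpha|$, using $\sinh(a)>a$ for $a>0$. The real part bound splits naturally along the two regimes in the definition of $W_\alpha$.

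In the range $|\alpha|\le\pi/4$ one has $|\tan\alpha|\le 1$, hence $|b|\le a$. Clearing denominators in $\re z>1/4$ gives $\cos(b)>\cosh(a)-2\sinh(a)$. For $0<a\le\pi$ I plan to bound $\cos(b)\ge\cos(a)$ and prove $h(a):=\cos(a)+2\sinh(a)-\cosh(a)>0$ from $h(0)=0$ together with
\begin{equation*}
  h'(a)\;\ge\;2\cosh(a)-\sinh(a)-1\;=\;\frac{e^a+3e^{-a}}{2}-1\;\ge\;\sqrt{3}-1>0,
\end{equation*}
where the last step is AM--GM. For $a>\pi>\ln3$ the inequality is trivial, since $\cosh(a)-2\sinh(a)=(3e^{-a}-e^a)/2<-1\le\cos(b)$.

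The main obstacle will be the range $\pi/4<|\alpha|<\pi/2$, where $T:=\tan|\alpha|>1$ and $|b|=aT$ may exceed $\pi$, destroying the monotonicity comparison $\cos(b)\ge\cos(a)$. Here the required inequality becomes $F(a):=\cos(aT)-\cosh(a)+2T\sinh(a)>0$ for $a>0$, which I plan to deduce from $F(0)=0$ combined with $F'(a)>0$. Using $-T\sin(aT)\ge-T$ and AM--GM on $(T-\tfrac12)e^a+(T+\tfrac12)e^{-a}$ yields
\begin{equation*}
  F'(a)\;\ge\;2\sqrt{T^2-\tfrac14}-T,
\end{equation*}
which is strictly positive iff $T>1/\sqrt3$, and thus throughout the regime $T>1$. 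This $F'$-estimate is the delicate point of the proof: the crude bound $|\sin(aT)|\le 1$ costs a full $T$ that must be reabsorbed by the AM--GM estimate on the remaining terms, and it is only because $T>1$ that there is enough slack to do so.
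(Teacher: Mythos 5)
Your proposal is correct. The initial reduction (rewriting the expression as $\tfrac12\coth(w/2)$, separating real and imaginary parts via the standard identity, and verifying the argument condition by $|\sin b|<|b|=a|\tan\alpha|<\sinh(a)|\tan\alpha|$) coincides with the paper's, as does the observation that the real-part bound splits into the two regimes of the definition of $W_\alpha$ at $|\alpha|=\pi/4$. Where you diverge is in how you prove the real-part bounds. For $|\alpha|\le\pi/4$ (i.e.\ $\vartheta=\tan|\alpha|\le1$), the paper estimates the ratio directly by using $\cos\vartheta t\ge 2-\cosh\vartheta t$ and $\cosh\vartheta t\le\cosh t$ to reduce to $\tfrac14\coth(t/2)>\tfrac14$; you instead clear denominators and run a monotonicity argument for $h(a)=\cos a+2\sinh a-\cosh a$ with an AM--GM lower bound on $h'$, plus a trivial case for $a>\pi$. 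For $\pi/4<|\alpha|<\pi/2$, the paper splits on $t\gtrless 4/(3\vartheta)$ and uses a $\tanh$ estimate on one side and a quadratic Taylor bound on the other; you instead prove $F(a)=\cos(aT)-\cosh a+2T\sinh a>0$ from $F(0)=0$ and $F'(a)\ge 2\sqrt{T^2-\tfrac14}-T>0$ via AM--GM. Both arguments are valid. Your calculus-plus-AM--GM route is more uniform (one derivative estimate per regime, no subdivision in $t$) and makes the role of the threshold $T>1$ transparent (it is exactly what makes $2\sqrt{T^2-\tfrac14}>T$ after the crude $|\sin(aT)|\le 1$ bound); the paper's argument in the $\vartheta\le1$ case is a bit slicker in that it avoids a case split at $a=\pi$ and needs no derivative, while its $\vartheta>1$ case is somewhat more ad hoc than your monotonicity argument.
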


\begin{lem}\label{lem:e}
  If $\alpha\in(-\pi/2,\pi/2)$ and $\alpha=\arg\delta$, then for every $\ve\in(0,1)$, $w\in\R^-$ and $\tilde w\in\C$ such that $\tilde w\in B(w,\ve|w|\cos\alpha)$, we have
    $$\Big|\frac{e^{\tilde w\delta}-1}{e^{w\delta}-1}-1\Big|<\ve.$$
  Moreover, if $\tilde w\in B(w,\frac\ve2|w|\cos\alpha)$, then the above inequality holds after interchanging the roles of $w$ and $\tilde w$.
\end{lem}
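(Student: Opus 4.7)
The plan is to reduce both inequalities to elementary one-variable monotonicity facts about the exponential. Write $h:=\tilde w-w$ and set $a:=e^{w\delta}$, $b:=e^{\tilde w\delta}=a\,e^{h\delta}$. Since $w\in\R^-$ and $|\alpha|<\pi/2$, the quantity $s:=-\re(w\delta)=-w|\delta|\cos\alpha$ is strictly positive, so $|a|=e^{-s}$ and $|a-1|\greq 1-|a|=1-e^{-s}$. The algebraic identity
$$\frac{b-1}{a-1}-1=\frac{a(e^{h\delta}-1)}{a-1},$$
together with its analogue obtained by swapping $w$ and $\tilde w$, reduces each claim to controlling the corresponding product of two moduli.

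For the first inequality, the hypothesis $|h|<\ve|w|\cos\alpha$ gives $|h\delta|<\ve s$, whence $|e^{h\delta}-1|\leeq e^{|h\delta|}-1<e^{\ve s}-1$. Combined with the bounds on $|a|$ and $|a-1|$, this yields
$$\left|\frac{b-1}{a-1}-1\right|<\frac{e^{-s}(e^{\ve s}-1)}{1-e^{-s}}=\frac{e^{\ve s}-1}{e^{s}-1}.$$
Then I would check the elementary comparison $(e^{\ve s}-1)/(e^{s}-1)\leeq\ve$ for all $s>0$ and $\ve\in(0,1)$, which follows at once from $g(0)=0$ and $g'(s)=\ve(e^{\ve s}-e^{s})<0$, where $g(s):=e^{\ve s}-1-\ve(e^{s}-1)$.

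For the ``moreover'' clause, the analogous identity $\frac{a-1}{b-1}-1=\frac{b(e^{-h\delta}-1)}{b-1}$ now requires an upper bound on $|b|/|b-1|$. The factor $\tfrac12$ in the hypothesis is exactly what is needed: $|h\delta|\leeq\ve s/2$ gives $|b|=|a|e^{\re(h\delta)}\leeq e^{-s(1-\ve/2)}$, hence $|b-1|\greq 1-e^{-s(1-\ve/2)}$. After multiplying numerator and denominator by $e^{s(1-\ve/2)}$, the estimate reduces to showing
$$G(s):=\ve\bigl(e^{s(1-\ve/2)}-1\bigr)-\bigl(1-e^{-\ve s/2}\bigr)>0$$
for $s>0$. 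I would factor $G'(s)=\ve e^{-\ve s/2}\bigl[(1-\ve/2)e^{s}-\tfrac12\bigr]$; at $s=0$ the bracket equals $(1-\ve)/2>0$ and it is increasing in $s$, so $G'>0$ on $[0,\infty)$, and since $G(0)=0$ we get $G>0$ on $(0,\infty)$.

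The only genuinely delicate point is the last monotonicity check: $\ve<1$ is used sharply to keep the bracket in $G'$ positive at $s=0$, and the factor $\tfrac12$ in the hypothesis of the ``moreover'' clause is essential, since any constant larger than $1$ would destroy this positivity for $\ve$ close to $1$. Everything else is a direct application of the standard estimates $|e^{z}-1|\leeq e^{|z|}-1$ and $|z-1|\greq\bigl|1-|z|\bigr|$.
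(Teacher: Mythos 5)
Your proof is correct, and it takes a genuinely different route from the paper. For the first inequality, the paper works in three stages: it first treats $\tilde w\in\R^-$ and $\delta\in\R^+$ via Cauchy's mean value theorem, then extends to complex $\tilde w$ by identifying the extremal point of $\exp(B(w\delta,r))$ on the negative real axis, and finally passes to general $\delta$ by using $|e^{w\delta}-1|>|e^{w\re\delta}-1|$. You collapse all of this into a single chain of modulus bounds ($|a|=e^{-s}$, $|a-1|\geq 1-|a|$, $|e^{h\delta}-1|\leq e^{|h\delta|}-1$) followed by the one-variable check $e^{\ve s}-1<\ve(e^{s}-1)$, which is more streamlined and just as rigorous. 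For the ``moreover'' clause the paper's proof is much shorter: it simply applies the first part with $\ve/2$ and invokes the elementary fact that $z\in B(1,\ve/2)$ and $\ve\in(0,1)$ force $1/z\in B(1,\ve)$. Your direct re-derivation is more work but also valid, and it transparently shows exactly where the factor $1/2$ is spent, which the paper's inversion trick somewhat hides.

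One small expository wrinkle in your ``moreover'' argument: you display the identity as $\frac{a-1}{b-1}-1=\frac{b(e^{-h\delta}-1)}{b-1}$ and say you need an upper bound on $|b|/|b-1|$, but the numerator estimate that actually produces your $G$ is the one using $|a|$ exactly, namely $|a-b|=|a|\,|e^{h\delta}-1|\leq e^{-s}(e^{\ve s/2}-1)$; the bound $|b|\leq e^{-s(1-\ve/2)}$ is used only to get $|b-1|\geq 1-e^{-s(1-\ve/2)}$. (If you instead bound the numerator by $|b|\,|e^{-h\delta}-1|\leq e^{-s(1-\ve/2)}(e^{\ve s/2}-1)$, you are led to showing $\ve(e^{s(1-\ve/2)}-1)-(e^{\ve s/2}-1)>0$, which is slightly stronger than your $G>0$ but also follows from the same monotonicity argument: factor out $\ve e^{\ve s/2}$ and observe the bracket $(1-\ve/2)e^{s(1-\ve)}-1/2$ equals $(1-\ve)/2>0$ at $s=0$ and increases.) So there is no gap, only a mismatch between the identity you wrote and the bound you used; worth cleaning up, but the argument is complete either way.
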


We have (cf. (\ref{eq:Psi}))
\begin{equation}\label{eq:Psi'}
\Psi_\delta'(w)=\Big(\frac{\delta}{e^{-w\delta}-1}\Big)^2e^{-w\delta}=\Big(\frac{\delta}{e^{w\delta}-1}\Big)^2e^{w\delta} =\Big(\frac{\delta/2}{\sinh(w\delta/2)}\Big)^2,
\end{equation}
and then
\begin{equation}\label{eq:Psi'2}
\frac{\Psi_\delta'(\tilde w)}{\Psi_\delta'(w)}=\Big(\frac{e^{w\delta}-1}{e^{\tilde w\delta}-1}\Big)^2e^{(\tilde w-w)\delta} =\Big(\frac{\sinh(w\delta/2)}{\sinh(\tilde w\delta/2)}\Big)^2.
\end{equation}

The following lemma is a consequence of Lemma \ref{lem:e} and Lemma \ref{lem:<}.
\begin{lem}\label{lem:Psi'/}
  If $\alpha\in(-\pi/2,\pi/2)$ and $\alpha=\arg\delta$, then there exist $K(\alpha)$, such that for every $\ve\in(0,1)$, $w\in\R^-$ and $\tilde w\in\C$, where $\tilde w\in B(w,\ve|w|/K(\alpha))$, we have
    $$\Big|\frac{\Psi_\delta'(\tilde w)}{\Psi_\delta'(w)}-1\Big|<e^{\ve |w\delta|}-1+\ve.$$
  Moreover, the above inequality holds after interchanging the roles of $w$ and $\tilde w$ on the left-hand side.
\end{lem}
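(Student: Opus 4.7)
My plan is to derive the estimate directly from formula (\ref{eq:Psi'2}) by factoring $\Psi_\delta'(\tilde w)/\Psi_\delta'(w)$ into two manageable pieces and invoking the earlier technical lemmas of this section. Setting
$$X:=\frac{e^{w\delta}-1}{e^{\tilde w\delta}-1},\qquad z:=(\tilde w-w)\delta,$$
identity (\ref{eq:Psi'2}) reads $\Psi_\delta'(\tilde w)/\Psi_\delta'(w)=X^{2}e^{z}$. The strategy is then to control $|X-1|$ by Lemma \ref{lem:e}, promote this to $|X^{2}-1|$ via Lemma \ref{lem:<}(\ref{lit:<2}), and finally multiply by $e^{z}$ using Lemma \ref{lem:<}(\ref{lit:<3}).

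Concretely I would choose $K(\alpha):=16/\cos\alpha$, so the hypothesis $\tilde w\in B(w,\ve|w|/K(\alpha))$ becomes $\tilde w\in B\bigl(w,\tfrac12\cdot\tfrac{\ve}{8}\,|w|\cos\alpha\bigr)$. The \emph{moreover} clause of Lemma \ref{lem:e}, applied with $\ve/8$ in place of $\ve$, then yields $|X-1|<\ve/8$. Next, Lemma \ref{lem:<}(\ref{lit:<2}) used with $Y=X$, $\ve_1=\tilde\ve_1=0$ and both small parameters equal to $\ve/8$ gives $|X^{2}-1|<\ve/2$. Applying Lemma \ref{lem:<}(\ref{lit:<3}) to $X^{2}$ and $z$ then produces
$$\bigl|X^{2}e^{z}-1\bigr|<e^{2|z|}-1+\ve,$$
and since $|z|\leeq\ve|w\delta|/K(\alpha)\leeq\ve|w\delta|/2$, one has $2|z|\leeq\ve|w\delta|$, which completes the first estimate.

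For the \emph{moreover} assertion of the present lemma, one swaps the roles of $X$ and $X^{-1}$. The direct (non-moreover) clause of Lemma \ref{lem:e}, applied with $\ve/8$, furnishes $|X^{-1}-1|<\ve/8$ under exactly the same hypothesis on $\tilde w$, and the identical three-step cascade through Lemma \ref{lem:<} bounds $|X^{-2}e^{-z}-1|=|\Psi_\delta'(w)/\Psi_\delta'(\tilde w)-1|$ by $e^{\ve|w\delta|}-1+\ve$. No genuine obstacle arises; the only delicate point is matching the factors of $2$ in the constants, and the choice $K(\alpha)=16/\cos\alpha$ is dictated precisely by the requirement that the chain $\ve/8\rightarrow\ve/2\rightarrow\ve$ close neatly.
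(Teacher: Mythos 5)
Your proof is correct and follows essentially the same route as the paper: both use the factorization $\Psi_\delta'(\tilde w)/\Psi_\delta'(w)=X^2 e^z$ from (\ref{eq:Psi'2}) and then chain Lemma~\ref{lem:e}, Lemma~\ref{lem:<}(\ref{lit:<2}), and Lemma~\ref{lem:<}(\ref{lit:<3}) to close the estimate with $2|z|\leeq\ve|w\delta|$. The only difference is cosmetic: the paper simply says ``we can find a constant $K(\alpha)>2$'' via Lemma~\ref{lem:e}, whereas you pin it down explicitly as $K(\alpha)=16/\cos\alpha$, which is a valid instantiation of what the paper leaves implicit.
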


\section{Some properties of the Julia and the postcritical sets}\label{sec:P}

We begin with the following fact, which follows from the Fatou's flower theorem (see for example \cite[Lemma 8.2]{ADU}):
\begin{lem}\label{lem:fatouff}
For every $\theta$ there exists $r>0$ such that
$$\big(\mc J_0\cap B(0,r)\big)\subset \big(S^+(\theta,r)\cup\{0\}\big).$$
\end{lem}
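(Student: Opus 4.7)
The plan is to prove the contrapositive: every $z\in B(0,r)\setminus(S^+(\theta,r)\cup\{0\})$ lies in the Fatou set of $f_0$ (i.e.\ in $\C\sms\mc J_0$), whence $\mc J_0\cap B(0,r)\subset S^+(\theta,r)\cup\{0\}$. The main tool is the approximate Fatou coordinate $\Phi_0(z)=-1/z$, in which $f_0$ is conjugated to the explicit map $F_0(w)=w+1+1/(w-1)$, close to translation by $1$ when $|w|$ is large. First, fix a small $\theta_1\in(0,\pi/2)$ and apply Lemma~\ref{lem:trans} with $\delta=0$ and some $\varepsilon<1$ to obtain $R_1>0$ for which $F_0$ maps $S^+(\theta_1)_{R_1}$ into itself with $|F_0^n(w)-(w+n)|<\varepsilon n$ there. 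Thus $\Psi_0(S^+(\theta_1)_{R_1})$ sits inside the immediate parabolic basin of $0$, hence in the Fatou set, and it suffices to show that the forward orbit of every such $z$ eventually enters $\Psi_0(S^+(\theta_1)_{R_1})$.

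Set $w=\Phi_0(z)$: the hypothesis gives $|w|>1/r$ and $|\arg w|\leq\pi-\theta$, separating $w$ from the negative real axis by angle at least $\theta$; in particular, either $\re w>0$ or $|\im w|\geq|w|\sin\theta$. The special case $\im w=0$ forces $z\in\R^-$ and $w\in\R^+$, and since $F_0$ preserves $\R$ with $F_0(u)>u+1$ for $u>1$, the orbit stays on $\R^+$ and escapes to $+\infty$. Otherwise, assume by symmetry that $v:=\im w>0$. Using the identity $\im F_0(\tilde w)=\im\tilde w\,(1-|\tilde w-1|^{-2})$ and the bound $|\tilde w-1|\geq|\im\tilde w|$, I would prove inductively that $\im w_n\in[v/2,2v]$ throughout the first $N\asymp|w|/\theta_1$ iterations, provided $r$ is small enough depending on $\theta,\theta_1$. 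The key estimate is $\sum_{k=0}^{N-1}|w_k-1|^{-2}=O(1/v)$, obtained by comparing the sum (whose $k$-th term is approximately $1/((k+\re w-1)^2+v^2)$) to $\int_{-\infty}^{\infty}(x^2+v^2)^{-1}dx=\pi/v$; then $\prod_k(1-|w_k-1|^{-2})\geq\tfrac12$ follows since $v\geq|w|\sin\theta$ is large. A parallel computation gives $\re w_n=\re w+n+O(1)$, so after $N$ iterations $\re w_N>R_1$ and $|\im w_N|/\re w_N<\tan\theta_1$, i.e.\ $w_N\in S^+(\theta_1)_{R_1}$, placing $f_0^N(z)$ in the attracting basin.

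The main technical obstacle is sustaining the lower bound $\im w_n\geq v/2$ across the $\asymp|w|$ iterations, which ensures the orbit stays away from the pole of $F_0$ at $w=1$; this reduces to the product estimate above and ultimately to making $|w|\sin\theta$ large by shrinking $r$. Once $w_N\in S^+(\theta_1)_{R_1}$, Lemma~\ref{lem:trans} gives $F_0^k(w_N)\to\infty$, equivalently $f_0^k(z)\to 0$ in the attracting basin, so $z$ is in the Fatou set, completing the proof.
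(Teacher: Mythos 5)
Your argument is correct in outline, but it takes a genuinely different route from the paper. The paper gives no proof at all: it simply observes that the statement is an immediate consequence of the Leau--Fatou flower theorem and points to \cite[Lemma~8.2]{ADU}. For $f_0(z)=z+z^2$ the repelling direction at the parabolic point $0$ is $\R^+$, so the flower theorem directly says that near $0$ the Julia set is pinched into any cone around $\R^+$. What you do instead is re-derive this special case of the flower theorem by hand: you pass to the approximate Fatou coordinate $w=-1/z$, in which $f_0$ becomes the explicit rational map $F_0(w)=w+1+1/(w-1)$, and then track $\im w_n$ (via the exact identity $\im F_0(w)=\im w\,(1-|w-1|^{-2})$) and $\re w_n$ to show that the orbit drifts to the right and eventually lands in the attracting sector $S^+(\theta_1)_{R_1}$, where Lemma~\ref{lem:trans} (with $\delta=0$) guarantees convergence to the parabolic basin. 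This buys you a self-contained, concrete argument at the cost of several estimates that you only sketch: the simultaneous induction on $\im w_n\in[v/2,v]$ and $\re w_n=\re w+n+O(1)$ (note $\sum_n|w_n-1|^{-1}$ is only logarithmically bounded, so the $O(1)$ hides a $\log(1/\sin\theta)$), and the case split between $\re w\gtrsim|w|$ (near $\arg w=0$) and $v\gtrsim|w|\sin\theta$ (near $\arg w=\pi-\theta$), which is needed since $v\ge|w|\sin\theta$ fails when $w$ is close to the positive real axis. These details do go through, so the plan is sound; it is just a more laborious path to a fact the paper treats as standard.
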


\begin{figure}[!h]
\begin{center}
\includegraphics[width=6.2cm]{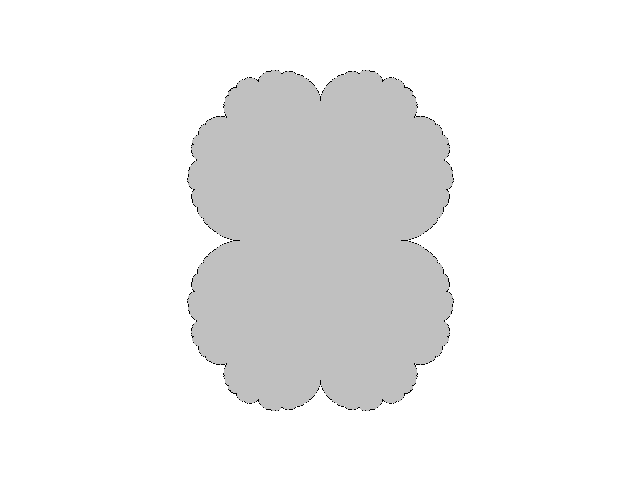} \includegraphics[width=6.2cm]{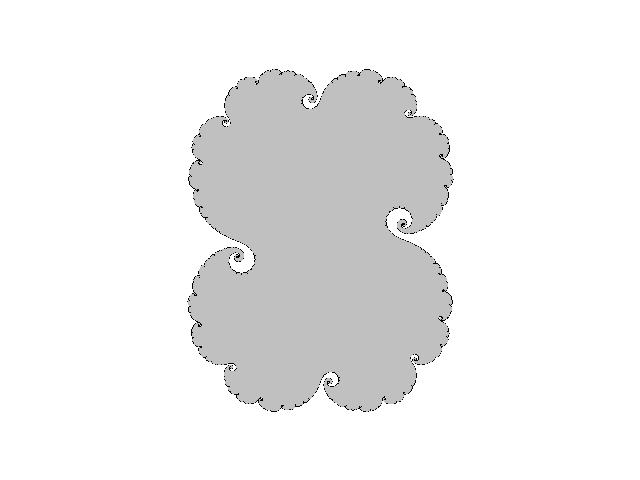}
\caption{The Julia sets for $\delta=0$ and $\delta=0.04+0.2i$ }
\end{center}
\end{figure}

Next, we conclude from \cite[Theorem 9.1]{Mii} (see also \cite{BT}), that
\begin{thm}\label{thm:Hmetric}
   If $\alpha\in(-\pi/2,\pi/2)$ and $\delta\rightarrow0$ where $\alpha=\arg\delta$, then
      $$\mc J_\delta\rightarrow\mc J_0,$$
   in the space of non-empty compact subsets of $\C$ equipped with the Hausdorff metric.
\end{thm}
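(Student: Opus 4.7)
The plan is to deduce Theorem \ref{thm:Hmetric} from McMullen's Theorem 9.1 in \cite{Mii}, the same result that produces Theorem M above. The only thing to verify is that approaching $0$ along a ray $\mc R(\alpha)$ with $\alpha \in (-\pi/2,\pi/2)$ fixed is horocyclic in the sense of (\ref{eq:horo}): for $\delta\to 0$ with $\arg\delta = \alpha$,
\[
\frac{(\im\delta)^2}{\re\delta} \;=\; |\delta|\,\frac{\sin^2\alpha}{\cos\alpha} \;\longrightarrow\; 0,
\]
since $\cos\alpha>0$ throughout the specified range. McMullen's theorem, applied in the $\delta$-parametrization exactly as in part (2) of Theorem M, then delivers $\mc J_\delta\to\mc J_0$ in the Hausdorff metric, which is the conclusion.

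If one prefers to give a self-contained argument using only the machinery of Section \ref{sec:fatou}, the natural route is via the B\"ottcher coordinate $\vp_\delta\colon\partial\D\to\mc J_\delta$, which provides a holomorphic motion over $\mc M_0^+$. One would show that $\delta\mapsto\vp_\delta(s)$ extends continuously (uniformly in $s\in\partial\D$) to $\delta=0$; this forces $\mc J_\delta=\vp_\delta(\partial\D)\to\vp_0(\partial\D)=\mc J_0$ in the Hausdorff metric, giving both upper and lower semicontinuity at once. For $s$ with $\vp_0(s)$ outside a small disk $B(0,r)$, continuity at $\delta=0$ follows from the standard fact that $f_0$ is uniformly expanding on $\mc J_0\setminus B(0,r)$, so repelling cycles move holomorphically by the implicit function theorem and their density in $\mc J_0$ produces enough nearby points of $\mc J_\delta$. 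For $s$ with $\vp_0(s)$ close to $0$, Lemma \ref{lem:fatouff} places $\vp_0(s)$ in the repelling sector $S^+(\theta)\cup\{0\}$, and one tracks the orbit in the Fatou coordinates of Lemma \ref{lem:trans}, where $F_\delta^{-1}$ acts as an approximate translation by $-1$ on $S^-(\theta)_R$; the uniform convergence $\Psi_\delta\to\Psi_0$ supplied by Lemma \ref{lem:Psiunif} then transfers the Fatou-coordinate control back to $z$-coordinates and yields the required approximation of $\vp_0(s)$ by a point of $\mc J_\delta$.

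The main obstacle in the direct approach is precisely the behavior inside $B(0,r)$: in general, parabolic parameters are discontinuities of $\delta\mapsto\mc J_\delta$ due to parabolic implosion, and the Julia set can jump outwards in the Hausdorff metric. The horocyclicity condition (equivalently, $\alpha\ne\pm\pi/2$) is what rules out the implosion direction, and the uniform-in-$\delta$ control of the Fatou-coordinate dynamics on the sector, established in Lemmas \ref{lem:Psiunif} and \ref{lem:trans}, is exactly what allows continuity of $\vp_\delta(s)$ at $\delta=0$ to hold for $\vp_0(s)$ near the parabolic fixed point.
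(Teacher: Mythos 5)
Your primary plan coincides with the paper's proof, which simply cites McMullen's Theorem 9.1 from \cite{Mii}; your explicit check that a fixed ray $\mc R(\alpha)$ with $\alpha\in(-\pi/2,\pi/2)$ is horocyclic, namely $(\im\delta)^2/\re\delta=|\delta|\sin^2\alpha/\cos\alpha\to0$, is a useful detail the paper leaves implicit. One small attribution slip worth correcting: Theorem M in the introduction is derived from McMullen's Theorem 11.2 (continuity of Hausdorff dimension), not from Theorem 9.1 (Hausdorff-metric convergence of Julia sets), so the parenthetical ``the same result that produces Theorem M'' is inaccurate, though it does not affect the substance of your argument.
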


The postcritical $P(f_\delta)$ set is defined as follows:
$$P(f_\delta)=\overline{\bigcup_{n\greq1}{f^n_\delta(c_\delta)}},$$
where $c_\delta=-1/2-\delta/2$ is the only critical point of $f_\delta$.

\begin{lem}\label{lem:kat}
   For every $\alpha\in(-\pi/2,\pi/2)$, $\theta>0$ and $R>0$ there exist $\eta>0$ and $r>0$ such that
  \begin{enumerate}
    \item\label{lit:kat1}
          $\big(\mc J_\delta\cap B(0,r)\big)\subset \big(\Psi_\delta(S^-(\theta)_R)\cup\{0\}\big),$
    \item\label{lit:kat2}
          $\big(P(f_\delta)\cap B(0,r)\big)\subset \big(\Psi_\delta(S^+(\theta)_R)\cup\{-\delta\}\big),$
 \end{enumerate}
   where $0<|\delta|<\eta$ and $\alpha=\arg\delta$ or $\delta=0$.
\end{lem}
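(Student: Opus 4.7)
My plan is to reduce each inclusion to the baseline case $\delta=0$ --- where both statements follow from Lemma \ref{lem:fatouff} combined with the explicit formulas $\Psi_0(w)=-1/w$ and $\Phi_0(z)=-1/z$ --- and then to propagate to small $|\delta|>0$ by combining the Hausdorff continuity $\mc J_\delta\to\mc J_0$ (Theorem \ref{thm:Hmetric}), the uniform convergence $\Psi_\delta\to\Psi_0$ on $S^\pm(\theta)$ (Lemma \ref{lem:Psiunif}), and, for part (2), the forward-invariance of $S^+(\theta)_R$ under $F_\delta$ from Lemma \ref{lem:trans}.

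For part (1), the baseline $\delta=0$ is a direct change of coordinates: writing $z=-1/w$ turns $z\in S^+(\theta,r)$ into $w\in S^-(\theta)$ with $|w|>1/r$, which forces $|\re w|>(\cos\theta)/r$, so $w\in S^-(\theta)_R$ as soon as $r<(\cos\theta)/R$. Lemma \ref{lem:fatouff} then gives $\mc J_0\cap B(0,r)\subset\Psi_0(S^-(\theta)_R)\cup\{0\}$. For small $|\delta|>0$ I would fix a sharper inner sector $\overline{S^-(\theta_1)_{R_1}}\subset S^-(\theta)_R$ with $\theta_1<\theta$ and $R_1>R$, apply the baseline with these parameters to obtain $\mc J_0\cap B(0,r_1)\subset\Psi_0(\overline{S^-(\theta_1)_{R_1}})\cup\{0\}$, and then use Lemma \ref{lem:Psiunif} (via an implicit function argument around each $w\in\overline{S^-(\theta_1)_{R_1}}$, using the buffer $\theta_1<\theta$, $R_1>R$ to keep preimages inside $S^-(\theta)_R$) to conclude that $\Psi_0(\overline{S^-(\theta_1)_{R_1}})\subset\Psi_\delta(S^-(\theta)_R)$ for $|\delta|$ small. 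The Hausdorff convergence from Theorem \ref{thm:Hmetric} moves every point of $\mc J_\delta\cap B(0,r)$ to within $\varepsilon$ of $\mc J_0\cap B(0,r_1)$, and with parameters chosen sharply enough this tubular $\varepsilon$-neighborhood still lies inside $\Psi_\delta(S^-(\theta)_R)$.

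For part (2), the baseline $\delta=0$ is elementary: the critical orbit $\{f_0^n(-1/2)\}_{n\greq 1}$ lies in $(-1/2,0)$ and converges monotonically to $0$, so $\Phi_0$ sends it into $\R^+$ with $\Phi_0(f_0^n(-1/2))\to+\infty$; hence $f_0^n(-1/2)\in\Psi_0(S^+(\theta)_R)$ for all sufficiently large $n$, while the finitely many earlier iterates fall outside $B(0,r)$ once $r$ is small. For $|\delta|>0$ small, I would choose $N$ large so that $f_0^N(-1/2)$ lies deep inside $\Psi_0(S^+(\theta_1)_{R_1})$ for sharper parameters $\theta_1<\theta$, $R_1>R$; continuity of $\delta\mapsto f_\delta^N(c_\delta)$ together with Lemma \ref{lem:Psiunif} then ensures $f_\delta^N(c_\delta)\in\Psi_\delta(S^+(\theta)_R)$ for $|\delta|$ small, and the forward-invariance from Lemma \ref{lem:trans} propagates the inclusion to all iterates $n\greq N$. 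The finitely many early iterates $n<N$ fall outside $B(0,r)$ for $r$ sufficiently small (uniformly in $\delta$, by continuity of each of the finitely many maps $\delta\mapsto f_\delta^n(c_\delta)$), and the accumulation point $-\delta$ is exactly the allowed exceptional point.

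The main technical obstacle is the perturbative step in part (1), where no explicit description of $\mc J_\delta$ is available and one must coordinate several small parameters --- the sector shrinkage $(\theta_1,R_1)\to(\theta,R)$, the Hausdorff tolerance coming from Theorem \ref{thm:Hmetric}, the $\Psi$-uniform tolerance coming from Lemma \ref{lem:Psiunif}, and the ball radius $r$ --- so that small Hausdorff perturbations of $\mc J_0\cap B(0,r_1)$ are absorbed inside small $\Psi_\delta$-perturbations of $\Psi_0(\overline{S^-(\theta_1)_{R_1}})$; part (2) is cleaner because it has a dynamical characterization (orbit rather than compact set) and only requires controlling a single iterate $f_\delta^N(c_\delta)$ before invariance takes over.
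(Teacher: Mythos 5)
Your part (2) is essentially the paper's argument and is correct: push one iterate $f_\delta^k(c_\delta)$ into $\Psi_\delta(S^+(\theta)_R)$ by the baseline case plus $\Psi_\delta\to\Psi_0$, then propagate forward by the invariance from Lemma~\ref{lem:trans}, with the finitely many early iterates handled by shrinking $r$.

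Your part (1) has a genuine gap, and it is exactly the obstacle you flag at the end. The final absorption step cannot work as stated: the set $\Psi_\delta(S^-(\theta)_R)$ is a cusp-shaped region touching $0$, so at distance $\rho$ from $0$ its width is on the order of $\rho\sin\theta\to0$. On the other hand $\mc J_0\cap B(0,r_1)$ contains points arbitrarily close to $0$, so for any fixed Hausdorff tolerance $\varepsilon>0$ the $\varepsilon$-neighborhood of $\mc J_0\cap B(0,r_1)$ contains a full ball $B(0,\varepsilon)$ around the origin. No choice of $\theta_1<\theta$, $R_1>R$, or $r,r_1$ makes a full ball fit inside a cusp. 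The Hausdorff convergence from Theorem~\ref{thm:Hmetric} is a single uniform tolerance and gives you no information on the shape of $\mc J_\delta$ at scales smaller than $\varepsilon$ near the fixed point. (Your claim $\Psi_0(\overline{S^-(\theta_1)_{R_1}})\subset\Psi_\delta(S^-(\theta)_R)$ is also delicate near infinity because of the very different decay rates of $\Psi_0(w)=-1/w$ and $\Psi_\delta(w)=\delta/(e^{-w\delta}-1)$, but even granting it the final step fails.)

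The missing idea — and the way the paper resolves it — is an \emph{invariance} argument in part (1) paralleling the one you already use in part (2), only backward. Apply the Hausdorff/uniform-convergence comparison only on an annulus $B(0,r)\sms\overline{B(0,r/2)}$ that is bounded away from $0$; there the region has definite thickness and the perturbation can be absorbed, giving $\mc J_\delta\cap(B(0,r)\sms\overline{B(0,r/2)})\subset\Psi_\delta(S^-(\theta)_R)$. Then note that by Lemma~\ref{lem:trans} the branch $f_\delta^{-1}$ fixing $0$ maps $\Psi_\delta(S^-(\theta)_R)$ into itself, so any $z\in\mc J_\delta\cap B(0,r)$, $z\neq0$, is pulled back from the annulus: since $0$ is repelling, some iterate $f_\delta^k(z)$ first leaves $B(0,r/2)$ into the annulus, and the backward invariance gives $z\in\Psi_\delta(S^-(\theta)_R)$. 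This is the step that lets you reach arbitrarily close to the cusp tip, and it is what your proposal is missing.
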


\begin{proof}
Fix $\alpha\in(-\pi/2,\pi/2)$. We can assume that $\theta>0$ and $R>0$ are such that Lemma \ref{lem:trans} holds for some $\ve>0$ and $\eta>0$.

{\em Step 1.}
We know that the Julia set $\mc J_{0}$ approaches the fixed point $0$ tangentially to the horizontal direction (see Lemma \ref{lem:fatouff}), so we have
   $$\big(\mc J_{0}\cap B(0,r)\big)\subset \big(S^+(\theta/2,r)\cup\{0\}\big)\subset\big(\Psi_0(S^-(\theta/2)_{2R})\cup\{0\}\big),$$
for suitably chosen $r>0$.

Next, using Theorem \ref{thm:Hmetric} and Lemma \ref{lem:Psiunif}, we get
   $$\big(\mc J_{\delta}\cap (B(0,r)\sms \overline {B(0,r/2)})\big)\subset \big(S^+(\theta/2,r)\sms \overline {S^+(\theta/2,r/2)}\big)\subset\Psi_\delta(S^-(\theta)_R),$$
where $0<|\delta|<\eta$ and $\alpha=\arg\delta$.

Since $\Phi_\delta\circ f^{-1}_\delta\circ \Psi_\delta$ maps $S^-(\theta)_R$ into $S^-(\theta)_R$ (see Lemma \ref{lem:trans}), we see that $f^{-1}_\delta$ maps $\Psi_\delta(S^-(\theta)_R)$ into $\Psi_\delta(S^-(\theta)_R)$, so the first statement follows.

{\em Step 2.}
It is easy to see, that the sequence $f^n_0(-1/2)$, $n\greq1$ is monotone increasing, and is included in the interval $[-1/4,0]$, and converges to $0$. So, there exists $k\greq1$ such that $f_0^k(-1/2)\in\Psi_0(S^+(\theta)_R)$. Using Lemma \ref{lem:Psiunif} (possibly changing $\eta>0$) we get $f_\delta^k(c_p)\in\Psi_\delta(S^+(\theta)_R)$, where $0<|\delta|<\eta$.

Moreover, we can assume that $f_\delta^n(c_p)\notin B(0,r)$, for $1\leeq n<k$, $|\delta|<\eta$ and suitably chosen $r>0$.
As before, using Lemma \ref{lem:trans}, we see that $f_\delta$ maps $\Psi_\delta(S^+(\theta)_R)$ into $\Psi_\delta(S^+(\theta)_R)$. So the second statement follows from the fact that the sequence $f_\delta^n(c_p)$ tends to the attracting fixed point $-\delta$.
\end{proof}

\begin{cor}\label{cor:w}
For every $\alpha\in(-\pi/2,\pi/2)$ and $\theta\in(0,\pi/2-|\alpha|)$, there exist $\eta>0$ and $r>0$ such that
\begin{enumerate}
    \item\label{cit:w1}
          $\big(\mc J_\delta\cap B(0,r)\big)\subset \delta(W_{|\alpha|+\theta}-1/2)$,
    \item\label{cit:w2}
          $P(f_\delta)\subset (-\delta)(W_{|\alpha|+\theta}+1/2)$,
 \end{enumerate}
   where $0<|\delta|<\eta$ and $\alpha=\arg\delta$.
\end{cor}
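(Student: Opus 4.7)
The plan is to use Lemma~\ref{lem:kat} to localize both sets inside images of the horizontal strips $S^\pm(\theta')_R$ under the Fatou-like coordinate $\Psi_\delta$, and then to convert the resulting expressions in the $w$-variable into the cone $W_{|\alpha|+\theta}$ by means of Lemma~\ref{lem:w} together with two short algebraic identities.

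Fix $\theta'\in(0,\theta)$ and let $\eta,r,R$ be the constants produced by Lemma~\ref{lem:kat} applied with this $\theta'$. For \eqref{cit:w1}, take $z\in\mc J_\delta\cap B(0,r)$; if $z\neq 0$, then $z=\Psi_\delta(w)=\delta/(e^{-w\delta}-1)$ for some $w\in S^-(\theta')_R$, and hence
\begin{equation*}
\frac{z}{\delta}+\frac12=\frac{1}{e^{-w\delta}-1}+\frac12.
\end{equation*}
Since $w\in S^-(\theta')_R$, the number $u:=-w\delta$ satisfies $|\arg u-\alpha|<\theta'$, so $|\arg u|<|\alpha|+\theta<\pi/2$. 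Lemma~\ref{lem:w} applied to $u$ gives $z/\delta+1/2\in W_{\arg u}\subset W_{|\alpha|+\theta}$, where the last inclusion uses the elementary fact that $W_{\alpha_1}\subset W_{\alpha_2}$ whenever $|\alpha_1|\leeq|\alpha_2|$ (immediate from \eqref{eq:defw1}--\eqref{eq:defw2}, noting that $(1/4)\cot|\alpha|$ is decreasing in $|\alpha|$). The case $z=0$ is trivial since $1/2\in W_{|\alpha|+\theta}$.

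Part~\eqref{cit:w2} splits into two sub-cases. For $z\in P(f_\delta)\cap B(0,r)$ with $z\neq-\delta$, write $z=\Psi_\delta(w)$ with $w\in S^+(\theta')_R$ and use the identity
\begin{equation*}
-\frac{1}{e^{-w\delta}-1}-\frac12=\frac{1}{e^{w\delta}-1}+\frac12,
\end{equation*}
so that $z/(-\delta)-1/2=1/(e^{w\delta}-1)+1/2$; since $\arg(w\delta)$ lies within $\theta'$ of $\alpha$, Lemma~\ref{lem:w} again yields $z\in(-\delta)(W_{|\alpha|+\theta}+1/2)$. The fixed point $-\delta$ is trivially covered. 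The remaining finitely many iterates $f_\delta^k(c_\delta)$ with $0\leeq k<N$, where $N$ is the first index such that $f_0^N(-1/2)\in\Psi_0(S^+(\theta')_R)$, lie outside $B(0,r)$; by continuity of the critical orbit in $\delta$ they are close to $a_k:=f_0^k(-1/2)\in(-1/2,0)$, and
\begin{equation*}
\frac{f_\delta^k(c_\delta)}{-\delta}-\frac12=\frac{-a_k}{\delta}+O(1)
\end{equation*}
has argument close to $-\alpha$ and modulus tending to infinity as $|\delta|\to 0$, hence lies in $W_{|\alpha|+\theta}$ once $\eta$ is small enough.

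The only delicate step is the uniform treatment of the $N$ initial critical iterates: $N$ must be fixed first from the behaviour at $\delta=0$, and only afterwards is $\eta$ shrunk so that all of $f_\delta^0(c_\delta),\dots,f_\delta^{N-1}(c_\delta)$ sit in the target cone simultaneously. Apart from this, the proof reduces to the monotonicity of $W_\bullet$ in its angular parameter and the two algebraic identities displayed above.
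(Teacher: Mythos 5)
Your proof is correct and follows essentially the same route as the paper: reduce to the Fatou-coordinate picture via Lemma~\ref{lem:kat}, pass through Lemma~\ref{lem:w} using the angular bookkeeping $\arg(\mp w\delta)$, and handle the initial critical iterates outside $B(0,r)$ by a continuity/asymptotic argument. The only cosmetic differences are that you interpose $\theta'<\theta$ (the paper works with $\theta$ directly, since $S^{\pm}(\theta)$ is already open) and that you use the algebraic identity $-\frac{1}{e^{-w\delta}-1}-\frac12=\frac{1}{e^{w\delta}-1}+\frac12$ where the paper invokes the equivalent functional identity $\Psi_\delta(w)=-\Psi_\delta(-w)-\delta$; one small point worth recording, in both your argument and the paper's, is that Lemma~\ref{lem:w} is stated only for $\arg u\neq 0$, so the real-positive case $u\in\R^+$ (which can occur even when $\alpha\neq0$) should be noted separately, via $\frac{1}{e^u-1}+\frac12=\frac12\coth(u/2)>\frac12$.
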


\begin{proof}
For $\alpha=0$ the statement follows from Lemma \ref{lem:fatouff}, and the fact that for $\delta\in(0,1)$, we have $P(f_\delta)\subset(-1,-\delta]$.

Fix $\alpha\in(-\pi/2,\pi/2)\sms\{0\}$.
If $w\in S^-(\theta)$ and $|\alpha|+\theta<\pi/2$, then $-w\delta\in S^+(|\alpha|+\theta)$, so Lemma \ref{lem:w} gives us
$$\frac{1}{e^{-w\delta}-1}\in W_{|\alpha|+\theta}-\frac12.$$
Multiplying both sides by $\delta$, we obtain (cf. (\ref{eq:Psi}))
$$\Psi_\delta(w)\in\delta(W_{|\alpha|+\theta}-1/2).$$
Since $w\in S^-(\theta)$, the first statement follows from Lemma \ref{lem:kat} (\ref{lit:kat1}).

If $w\in S^+(\theta)$, then $-(-w\delta)\in S^+(|\alpha|+\theta)$.
So, using the formula
$$\Psi_\delta(w)=-\Psi_\delta(-w)-\delta,$$
we get
$$\Psi_\delta(w)\in(-\delta)(W_{|\alpha|+\theta}-1/2)-\delta=(-\delta)(W_{|\alpha|+\theta}+1/2).$$
Therefore Lemma \ref{lem:kat} (\ref{lit:kat2}) leads to
$$\big(P(f_\delta)\cap B(-\delta,r)\big)\subset (-\delta)(W_{|\alpha|+\theta}+1/2).$$
But we can assume that the trajectory of the critical point outside $B(-\delta,r)$ is close to the real line. Thus, the second statement follows from the fact that $\R^-\subset(-\delta)(W_{|\alpha|+\theta}+1/2)\cup B(-\delta,r)$, where $|\delta|<\eta$ for sufficiently small $\eta>0$.
\end{proof}

Let $S^+,S^-\subset\partial\D$ denote the closed upper and lower half-circle respectively. We conclude from Lemma \ref{lem:kat} (\ref{lit:kat1}) and Lemma \ref{lem:trans} that:
\begin{cor}\label{cor:gamma}
For every $\alpha\in(-\pi/2,\pi/2)$ and $\ve>0$ there exists $\eta>0$, such that
   $$\varphi_\delta(S^+)\subset\{z\in \C:\im z\greq-\ve\}\;\;\textrm{ and }\;\;\varphi_\delta(S^-)\subset\{z\in \C:\im z\leeq\ve\},$$
where $0<|\delta|<\eta$ and $\alpha=\arg\delta$.
\end{cor}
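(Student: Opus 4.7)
The plan is to combine the reflection symmetry of $\varphi_0$, coming from $f_0(z)=z+z^2$ having real coefficients, with the localization of $\mc J_\delta$ near the parabolic fixed point $0$ given by Lemma \ref{lem:kat}(\ref{lit:kat1}). First I will verify the $\delta=0$ case: since $f_0$ has real coefficients, $s\mapsto\overline{\varphi_0(\overline s)}$ satisfies the same B\"ottcher functional equation and the same tangent-to-identity normalization at infinity as $\varphi_0$, so by uniqueness $\overline{\varphi_0(\overline s)}=\varphi_0(s)$. Extending by continuity to $\partial\D$ yields $\varphi_0(S^+)\subset\{\im z\greq 0\}$ and $\varphi_0(S^-)\subset\{\im z\leeq 0\}$.

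Fix $\ve>0$ and $\theta\in(0,\pi/2-|\alpha|)$. Using the explicit form of $\Psi_\delta$ together with Lemma \ref{lem:Psiunif}, I choose $R$ and $\eta_0>0$ such that $\Psi_\delta(S^-(\theta)_R)\subset B(0,\ve/2)$ for $|\delta|<\eta_0$; applying Lemma \ref{lem:kat}(\ref{lit:kat1}) then yields $r>0$ with
   $$\mc J_\delta\cap B(0,r)\subset\Psi_\delta(S^-(\theta)_R)\cup\{0\}\subset B(0,\ve/2)$$
whenever $|\delta|<\eta_0$. I then pick a small open arc $V\subset\partial\D$ around $s=1$ such that $\varphi_0(V)\subset B(0,r/4)$, and by joint continuity of the holomorphic motion (with H\"older control in $s$ uniform in $\delta$) I arrange that $\varphi_\delta(V)\subset B(0,r)$ for $|\delta|<\eta_1$, with some $\eta_1\leeq\eta_0$. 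On the compact complement $V^c=\partial\D\sms V$ the same joint continuity gives $|\varphi_\delta(s)-\varphi_0(s)|<\ve$ for $s\in V^c$ and $|\delta|<\eta_1$ (possibly after shrinking $\eta_1$).

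Putting these together: for $s\in V^c\cap S^+$ the symmetry of $\varphi_0$ gives $\im\varphi_\delta(s)>\im\varphi_0(s)-\ve\greq-\ve$, and symmetrically for $s\in V^c\cap S^-$; for $s\in V$, Lemma \ref{lem:kat}(\ref{lit:kat1}) upgrades $\varphi_\delta(V)\subset B(0,r)$ to $\varphi_\delta(V)\subset\Psi_\delta(S^-(\theta)_R)\cup\{0\}\subset B(0,\ve/2)$, whence $|\im\varphi_\delta(s)|<\ve/2$. The main obstacle is the uniform control of $\varphi_\delta(V)$ near the parabolic landing point $s=1$, where H\"older continuity of $\varphi_\delta$ in $s$ could in principle degenerate as $\delta\to 0$. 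The key role of Lemma \ref{lem:kat}(\ref{lit:kat1}) (and indirectly of Lemma \ref{lem:trans} that underpins it) is precisely that once $\varphi_\delta(V)$ sits inside the ball $B(0,r)$, it is automatically trapped in the much smaller set $\Psi_\delta(S^-(\theta)_R)\subset B(0,\ve/2)$, sidestepping the need for sharp H\"older bounds at the parabolic point.
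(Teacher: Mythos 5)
Your plan (split the circle into a small arc $V$ around $s=1$, handled by Lemma~\ref{lem:kat}(\ref{lit:kat1}), plus the compact complement $V^c$, handled by the symmetry of $\varphi_0$ and a convergence claim) is a reasonable outline, and the near-$0$ half is fine: $\Psi_\delta(S^-(\theta)_R)$ does have small diameter once $R$ is large and $|\delta|$ is small, so the piece of $\mc J_\delta$ inside $B(0,r)$ has imaginary part of size $<\ve/2$. The real-coefficient symmetry $\overline{\varphi_0(\overline s)}=\varphi_0(s)$ is also correct.

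The gap is in the $V^c$ step. You invoke ``joint continuity of the holomorphic motion (with H\"older control in $s$ uniform in $\delta$)'' to get $|\varphi_\delta(s)-\varphi_0(s)|<\ve$ for $s\in V^c$ and $|\delta|$ small. This is not available. The holomorphic motion $(\delta,s)\mapsto\varphi_\delta(s)$ is a motion over the hyperbolic component $\mc M_0^+=B(1,1)$, and $\delta=0$ lies on $\partial\mc M_0^+$. The $\lambda$-lemma gives joint continuity and quasiconformal (hence H\"older) bounds only on compact subsets of $\mc M_0^+$; as $\delta\to 0$ the dilatation of $\varphi_\delta$ and the H\"older exponent degenerate. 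No ``uniform in $\delta$'' H\"older control survives up to the parabolic boundary parameter, precisely because $\mc J_\delta$ develops parabolic geometry there. In fact the statement you want — $\varphi_\delta\to\varphi_0$ uniformly on $\partial\D$ (and hence on $V^c$) — is exactly Proposition~\ref{prop:CHm}, which comes \emph{after} this corollary and uses Corollary~\ref{cor:gamma} essentially in its proof. So using it here (or any argument tantamount to it via the motion) would be circular, and using only the $\lambda$-lemma is insufficient.

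What the paper does instead is avoid convergence of the \emph{parametrization} $\varphi_\delta$ altogether: Corollary~\ref{cor:gamma} is derived from Lemma~\ref{lem:kat}(\ref{lit:kat1}) and Lemma~\ref{lem:trans}, i.e.\ from the localization of $\mc J_\delta$ near $0$ together with the Fatou-coordinate dynamics, combined with the Hausdorff convergence of Julia sets (Theorem~\ref{thm:Hmetric}) and the topological fact that $\varphi_\delta(S^+)$ and $\varphi_\delta(S^-)$ are the two Jordan subarcs of $\mc J_\delta$ between the two preimages $0=\varphi_\delta(1)$ and $-1-\delta=\varphi_\delta(-1)$ of the fixed point, with the labelling fixed once and for all by the B\"ottcher normalization at infinity. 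That argument only needs $\mc J_\delta$ (as a set) to be close to the symmetric cauliflower $\mc J_0$ outside a small ball around $0$, not pointwise closeness of $\varphi_\delta$ to $\varphi_0$. To repair your proof you would need to replace the joint-continuity step by such a set-level/topological argument (or give an independent, non-circular proof of the convergence on $V^c$, e.g.\ via stability of the uniformly hyperbolic part of the dynamics away from the parabolic point — but that is a substantial extra argument, not a one-line appeal to the holomorphic motion).
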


\begin{prop}\label{prop:CHm}
   For every $\alpha\in(-\pi/2,\pi/2)$ the convergence
   $$\varphi_\delta\rightarrow\varphi_0$$
   is uniform on the set $\partial\D$, where $\delta\rightarrow0$ and $\alpha=\arg\delta$.
\end{prop}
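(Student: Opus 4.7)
The plan is to reduce uniform convergence on $\partial\D$ to the combination of locally uniform convergence of $\varphi_\delta$ on $\C\sms\overline\D$ and a uniform modulus of continuity of the family $\{\varphi_\delta\}$ up to $\partial\D$, concluding by a three-term triangle inequality. For the first ingredient, $\mc J_\delta\to\mc J_0$ in the Hausdorff metric by Theorem~\ref{thm:Hmetric}, and since the filled Julia sets $\mc K_\delta$ are all connected, the exteriors $\C\sms\mc K_\delta$ converge to $\C\sms\mc K_0$ in the Carath\'{e}odory kernel sense with respect to $\infty$. Because $\varphi_\delta$ is the Riemann map from $\C\sms\overline\D$ to $\C\sms\mc K_\delta$ normalised to be asymptotic to the identity at $\infty$, Carath\'{e}odory's kernel theorem then yields $\varphi_\delta\to\varphi_0$ locally uniformly on $\C\sms\overline\D$, and in particular uniformly on every circle $\{|s|=r\}$ with $r>1$.

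For the equicontinuity I would work on the segment $\Gamma=\{te^{i\alpha}:0\le t\le t_0\}$ and show that $\{\varphi_\delta\}_{\delta\in\Gamma}$ is uniformly H\"{o}lder on $\overline\C\sms\D$. One route is Slodkowski's extension of the holomorphic motion $(\delta,s)\mapsto\varphi_\delta(s)$ on $\C\sms\overline\D$ to a QC motion of $\overline\C$ whose dilatation is bounded on compact subsets of $\mc M_0^+$; since $0\in\partial\mc M_0^+$, a nested-disk / normal-families argument is required to incorporate $\delta=0$. A more self-contained alternative, aligned with the paper's own methods, uses the Fatou coordinate machinery of Section~\ref{sec:fatou}: Lemma~\ref{lem:kat} places $\mc J_\delta\cap B(0,r)$ inside $\Psi_\delta(S^-(\theta)_R)$, Lemma~\ref{lem:trans} identifies the backward dynamics in Fatou coordinates with approximate translation by $-1$, and Lemma~\ref{lem:Psiunif} gives $\Psi_\delta\to\Psi_0$ uniformly. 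Together these force $|\varphi_\delta(s)|\to 0$ uniformly in $\delta\in\Gamma$ as $s\to 1$ on $\partial\D$, which, combined with the holomorphic motion on $\C\sms\overline\D$, is enough to produce a uniform modulus of continuity on $\overline\C\sms\D$.

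With both ingredients in place, fix $\ve>0$; by equicontinuity pick $r>1$ with $|\varphi_\delta(s)-\varphi_\delta(rs)|<\ve/3$ for every $s\in\partial\D$ and every $\delta\in\Gamma\cup\{0\}$, and by the locally uniform convergence of Step~1 pick $\eta\in(0,t_0)$ with $\sup_{s\in\partial\D}|\varphi_\delta(rs)-\varphi_0(rs)|<\ve/3$ whenever $|\delta|<\eta$ and $\arg\delta=\alpha$. The triangle inequality applied to $\varphi_\delta(s)-\varphi_\delta(rs)+\varphi_\delta(rs)-\varphi_0(rs)+\varphi_0(rs)-\varphi_0(s)$ then bounds $\sup_{s\in\partial\D}|\varphi_\delta(s)-\varphi_0(s)|$ by $\ve$. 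The principal obstacle is producing the uniform modulus of continuity up to $\partial\D$ that also covers the limiting parameter $\delta=0$ (where Slodkowski cannot be invoked with $0$ in the interior of the parameter disk), and the Fatou coordinate route above is the natural way to do it using the tools developed in Sections~\ref{sec:fatou}--\ref{sec:P}.
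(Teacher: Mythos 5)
Your proposal takes a genuinely different route from the paper. The paper argues by contradiction: if $\varphi_{\delta_n}(s_n)\not\to\varphi_0(s_0)$, it produces an arc $l_n\subset\partial\D$ whose image under $\varphi_{\delta_{k_n}}$ joins two points near the distinct limits $z_0$ and $z_\infty$, then iterates $m$ times so that $T^m(l_n)$ sits in $S^+$ or $S^-$ while $\varphi_{\delta_{k_n}}(T^m(l_n))$ must simultaneously be Hausdorff-close to the whole Julia set; Corollary~\ref{cor:gamma} (images of half-circles lie in half-planes) then gives the contradiction. No modulus of continuity is ever constructed. Your plan instead assembles uniform convergence from (i) Carath\'eodory kernel convergence of the exterior Riemann maps on $\C\sms\overline\D$ and (ii) a uniform modulus of continuity of $\{\varphi_\delta\}_{\delta\in\Gamma\cup\{0\}}$ up to $\partial\D$, concluding by a three-term triangle inequality. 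Ingredient (i) and the triangle step are fine; the whole weight of the proof falls on (ii).

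That is where the gap is. You acknowledge that Slodkowski cannot be invoked because $0\in\partial\mc M_0^+$, and you fall back on the Fatou-coordinate lemmas, asserting that Lemmas~\ref{lem:kat},~\ref{lem:trans},~\ref{lem:Psiunif} ``force $|\varphi_\delta(s)|\to0$ uniformly as $s\to1$''. But Lemma~\ref{lem:kat} constrains the location of $\mc J_\delta\cap B(0,r)$ inside $\C$; it says nothing about which arc of $\partial\D$ is carried there by $\varphi_\delta$, which is exactly the content you need. The cylinder machinery of Section~\ref{sec:cyl} (notably Corollary~\ref{cor:M} and Lemma~\ref{lem:-n}) would supply precisely that information, but its proofs invoke Proposition~\ref{prop:CHm}, so using them here is circular. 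Moreover, even granting uniform smallness of $|\varphi_\delta(s)|$ on a fixed arc around $1$, you still need equicontinuity of $\varphi_\delta$ on the rest of $\partial\D$ uniformly in $\delta$ down to $\delta=0$, where $f_0$ is not hyperbolic; this requires a separate distortion argument (something like Corollary~\ref{cor:B'}, itself proved after this proposition) and is not addressed. So the proposal reduces the proposition to an equicontinuity statement of comparable difficulty without actually establishing it, whereas the paper's contradiction argument bypasses the modulus of continuity altogether. If you want to pursue your route, the missing piece is a self-contained proof, independent of Sections~\ref{sec:cyl}--\ref{sec:bn}, that for every $\ve>0$ there is an arc $A\ni 1$ and $\eta>0$ with $\varphi_\delta(A)\subset B(0,\ve)$ for all $0\le|\delta|<\eta$ with $\arg\delta=\alpha$, together with a uniform-expansion estimate on $\partial\D\sms A$.
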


\begin{proof}
Fix $\alpha\in(-\pi/2,\pi/2)$.

{\em Step 1.}
Suppose, towards a contradiction, that there are no uniform convergence. So, there exists a sequence of points $s_n\in\partial\D$, and parameters $\delta_n\rightarrow0$, where $\arg(\delta_n)=\alpha$, such that
\begin{equation*}\label{eq:u1}
|\varphi_{\delta_n}(s_n)-\varphi_0(s_n)|>\ve,
\end{equation*}
for some $\ve>0$.
Passing to a subsequence if necessary, we can assume that $s_n\rightarrow s_0=e^{i\beta_0}$, and $\varphi_{\delta_n}(s_n)\rightarrow z_\infty$, where the arguments of $s_n$ are monotone.

Let $z_0:=\varphi_{0}(s_0)$. We see that $|z_\infty-z_0|\greq\ve$.

Since $\varphi_{\delta_n}(s_n)\in\mc J_\delta$ and $\mc J_\delta\rightarrow\mc J_0$ (see Theorem \ref{thm:Hmetric}) we conclude that $z_\infty\in \mc J_0$. So, there exists $s_\infty\in\partial\D$ such that $\varphi_0(s_\infty)=z_\infty$.

Let $l$ be be the (shortest) arc joining $s_0$ and $s_\infty$. Then we have
\begin{equation}\label{eq:u2}
f_0^m(\varphi_0(l))=\varphi_0(T^m(l))=\mc J_0,
\end{equation}
where $m$ is the smallest integer such that $2^m\beta\greq 2\pi$ and $\beta$ is the length of $l$.

{\em Step 2.}
We can find a sequence $s_n'\rightarrow s_0$ of periodic points of $T$, such that
\begin{equation}\label{eq:u3}
|\varphi_\delta(s_n')-z_0|<1/n,
\end{equation}
where $|\delta|<\eta_n$, for some $\eta_n>0$. Moreover we can assume that arguments of $s_n'$ are monotone and $s_n$, $s_n'$ tend to $s_0$ from the same side.

Next we take a subsequence $k_n$ of $\N$ such that $|\delta_{k_n}|<\eta_n$.
Let $l_n$ be the (shortest) arc joining $s_{k_n}$ and $s_n'$. Then $\varphi_{\delta_{k_n}}(l_n)$ is a curve whose endpoints are "close" to $z_0$ and $z_\infty$ (cf. (\ref{eq:u3})).
Thus, for $n$ sufficiently large, distance between $f_{\delta_{k_n}}^m(\varphi_{\delta_{k_n}}(l_n))= \varphi_{\delta_{k_n}}(T^m(l_n))$ and $\mc J_{\delta_{k_n}}$ is small in the Hausdorff metric (cf. (\ref{eq:u2}) and Theorem \ref{thm:Hmetric}).

Since $T^m(s_{k_n})$ and $T^m(s_n')$ tend to $T^m(s_0)$ from the same side, we see that for $n$ sufficiently large, the arcs $T^m(l_n)$ are included in $S^+$ or $S^-$. Therefore the curves $\varphi_{\delta_{k_n}}(T^m(l_n))$ are included in $\varphi_{\delta_{k_n}}(S^+)$ or $\varphi_{\delta_{k_n}}(S^-)$, and we have the required contradiction since $\varphi_{\delta_{k_n}}(S^\pm)$ cannot be close to $\mc J_{\delta_{k_n}}$ in the Hausdorff metric (see Corollary \ref{cor:gamma}).
\end{proof}

\section{Cylinders}\label{sec:cyl}

\subsection{}
Now we define \emph{cylinders} that we will use to describe partition of a neighborhood of the repelling/parabolic fixed point. Let
\begin{equation}\label{eq:z}
   s_n:=e^{\pi i/2^{n}} \;\;\textrm{ and }\;\;z_n(\delta):=\vp_\delta(s_n),
\end{equation}
where $\delta\in\{0\}\cup\mc M^+_0$, $n\in\N$. Put
   $$C_n^+:=\{e^{\pi i\beta}\in\partial\D:\beta\in(2^{-(n+1)},2^{-n}]\}.$$
So, $C_n^+$ is the arc between $s_{n+1}$ and $s_{n}$. Write $C_n^-:=\overline{C_n^+}$, and
then
   $$C_n:=C_n^+\cup C_n^-.$$
We see that $\bigcup_{n\in\N}C_n\cup\{1\}=\partial\D$.

Next, for $\delta\in\{0\}\cup\mc M^+_0$ and $n\in\N$, we define
   $$\mc C_n(\delta):=\vp_\delta(C_n).$$
Thus we have $\bigcup_{n\in\N}\mc C_n(\delta)\cup\{0\}=\mc J_\delta$ and $z_n(\delta)\in \mc C_n(\delta)$.
Note that $\vp_\delta(1)=0$ is repelling (or parabolic for $\delta=0$) fixed point.

Instead of the diameters of the cylinders, we use a quantity which will be called the \emph{size} of the cylinder and denoted by $|\mc C_n(\delta)|$. Write
\begin{equation}\label{eq:defsize}
|\mc C_n(\delta)|:=|z_n(\delta)-z_{n+1}(\delta)|.
\end{equation}

The sets of points which are "near" the fixed point $0$, is defined are follows:
\begin{equation*}
\mathcal{M}_N^*(\delta):=\bigcup_{n>N}\mc C_n(\delta), \;\;\;\;\;\mathcal{M}_N(\delta):=\bigcup_{n>N}\mc C_n(\delta)\cup\{0\},
\end{equation*}
where $N\in\N$. Note that the sets $\mathcal{M}_N(\delta)$ are closed. Let
$$\mathcal{B}_N(\delta):=\mc J_\delta\sms\mathcal{M}_N(\delta).$$
Hence $\mathcal{B}_N(\delta)$ is the set of points which are "far" from $0$.
Related subsets of $\partial\D$ will be denoted by $M_N$, $M_N^*$
and $B_N$. So we have $B_N\cup M_N=\partial\D$.

We know that $\mc J_0$ approaches the parabolic fixed point $0$ tangentially to the horizontal direction (cf. Lemma \ref{lem:fatouff}).
Thus, Proposition \ref{prop:CHm} leads to:

\begin{cor}\label{cor:M}
For every $\alpha\in(-\pi/2,\pi/2)$ and $r>0$ there exist $N\in\N$ and $\eta>0$ such that
   $$\mathcal{M}_N(\delta)\subset B(0,r),$$
where $0<|\delta|<\eta$ and $\alpha=\arg\delta$ or $\delta=0$.
\end{cor}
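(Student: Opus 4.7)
The plan is to combine the uniform convergence $\varphi_\delta \to \varphi_0$ on $\partial\D$ (Proposition \ref{prop:CHm}) with the continuity of $\varphi_0$ at $s = 1$, where $\varphi_0(1) = 0$ is the parabolic fixed point. The point is that $\mathcal M_N(\delta)$ is the $\varphi_\delta$-image of a small arc neighborhood of $1$, together with the fixed point, and uniform convergence lets us move between $\delta = 0$ and small $\delta$ without loss.

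First I observe that the arcs $C_n$ shrink to the point $1$. By definition (\ref{eq:z}) together with $C_n = C_n^+\cup C_n^-$, we have $C_n\subset\{e^{i\theta}:|\theta|\leeq \pi/2^n\}$, so the tail $\bigcup_{n>N}C_n\cup\{1\}$ is a closed arc in $\partial\D$ whose diameter tends to $0$ as $N\to\infty$. Since the Böttcher coordinate $\varphi_0$ extends to a homeomorphism on $\partial\D$ (Carathéodory) and $\varphi_0(1)=0$, by continuity I can pick $N\in\N$ so that
$$\varphi_0\Big(\bigcup_{n>N}C_n\Big)\subset B(0,r/2).$$
This already settles the case $\delta=0$.

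Next, by Proposition \ref{prop:CHm} there exists $\eta>0$ such that
$$\sup_{s\in\partial\D}|\varphi_\delta(s)-\varphi_0(s)|<r/2$$
whenever $0<|\delta|<\eta$ and $\arg\delta=\alpha$. Combining both estimates, for such $\delta$ and any $n>N$, $s\in C_n$ one gets
$$|\varphi_\delta(s)|\leeq|\varphi_\delta(s)-\varphi_0(s)|+|\varphi_0(s)|<r/2+r/2=r.$$
Hence $\mc C_n(\delta)=\varphi_\delta(C_n)\subset B(0,r)$ for every $n>N$, and taking the union (and adjoining the fixed point $0\in B(0,r)$) yields $\mathcal M_N(\delta)\subset B(0,r)$.

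There is no real obstacle here: the nontrivial content has been packaged into Proposition \ref{prop:CHm}, whose proof already handled the combination of Theorem \ref{thm:Hmetric} and Corollary \ref{cor:gamma}. The corollary stated above is essentially a reformulation of that uniform convergence, localized to a shrinking neighborhood of the parabolic fixed point.
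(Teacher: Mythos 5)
Your argument is correct and is essentially the paper's intended reasoning: Corollary \ref{cor:M} is deduced from Proposition \ref{prop:CHm}, using that $\varphi_0$ is continuous at $s=1$ with $\varphi_0(1)=0$ and that the arcs $\bigcup_{n>N}C_n$ shrink to $\{1\}$ as $N\to\infty$. The paper additionally cites Lemma \ref{lem:fatouff} for context, but as your proof shows, only the continuity of $\varphi_0$ at $1$ (available via Carath\'eodory) together with the uniform convergence $\varphi_\delta\to\varphi_0$ is actually needed.
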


\subsection{}
Now we define related cylinders in the Fatou coordinates.

Let $\alpha\in(-\pi/2,\pi/2)$ and let $R=R(\alpha)>0$ be such that Lemma \ref{lem:trans} holds for $\theta=\pi/4-|\alpha|/2$, $\ve=1/10$ and some $\eta=\eta(\alpha)>0$.

Let $M=M(\alpha)$ be the smallest integer such that $\overline{\mc C_{M}(0)}\subset\Psi_0(S^-(\theta/2)_{R+1})$. We see from Lemma \ref{lem:fatouff} that such $M$ exists.
Changing $\eta>0$ if necessary, we conclude from Lemma \ref{lem:Psiunif} and Proposition \ref{prop:CHm}, that
$$\mc C_{M}(\delta)\subset\Phi_\delta(S^-(\theta/2)_{R+1})\subset\{z\in\C:\re z>0\},$$
where $\alpha=\arg\delta$ and $0<|\delta|<\eta$.

The function $\Phi_\delta$ can be defined on the set $\{z\in\C:\re z>0\}$ by taking branch of logarithm for which $\log(t)\in\R$ if $t\in\R^+$. So, let us define
  $$\hat C_M(\delta):=\Phi_\delta(\mc C_M(\delta)).$$
We have $\hat C_M(\delta)\subset S^-(\theta/2)_{R+1}$. Since the assertion of Lemma \ref{lem:trans} holds on $S^-(\theta)_R\supset S^-(\theta/2)_{R+1}$, the function $F^{-1}_\delta=\Phi_\delta\circ f^{-1}_\delta\circ \Psi_\delta$ is defined and univalent on $S^-(\theta)_R$. Thus, let us write
  $$\hat C_{M+n}(\delta):=F^{-n}_\delta(\hat C_M(\delta)),$$
where $n\greq1$. So, we see that $\hat C_{n}(\delta)\subset S^-(\theta)_R$ and $\mc C_{n}(\delta)=\Psi_\delta(\hat C_{n}(\delta))$, where $0<|\delta|<\eta$, $\alpha=\arg\delta$ or $\delta=0$ and $n\greq M$

Since the cylinder $\hat C_M(\delta)\subset S^-(\theta/2)_{R+1}$ is separated from the boundary of $S^-(\theta)_R$, the fact that $F^{-n}_\delta$ is univalent on $S^-(\theta)_R$ whereas $F^{-1}_\delta$ is close to the translation, leads to:
\begin{lem}\label{lem:c/c1}
There exists $K>1$ such that for every $\alpha\in(-\pi/2,\pi/2)$ there exists $\eta>0$ such that
          $$\diam\hat C_{n}(\delta)< K,$$
where $n\greq M$, $0<|\delta|<\eta$ and $\alpha=\arg\delta$ or $\delta=0$.
 \end{lem}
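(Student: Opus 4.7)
The plan is to reduce the uniform diameter bound for $\hat C_n(\delta) = F_\delta^{-(n-M)}(\hat C_M(\delta))$ to two steps: uniform boundedness of $\diam\hat C_M(\delta)$, and a uniform bound on the derivative $|(F_\delta^{-(n-M)})'(w_0)|$ at some reference point $w_0 \in \hat C_M(\delta)$; the two are combined via Koebe distortion on $S^-(\theta)_R$, the domain of univalence of $F_\delta^{-(n-M)}$. Boundedness of $\diam\hat C_M(\delta)$ follows from continuity of $\mc C_M(\delta)$ in $\delta$ (holomorphic motion of the B\"ottcher coordinate, Proposition \ref{prop:CHm}) together with continuity of $\Phi_\delta$ in $\delta$ on a neighborhood of $\mc C_M(0)$. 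Since $\hat C_M(\delta) \subset S^-(\theta/2)_{R+1}$ is compactly contained in $S^-(\theta)_R$, Koebe distortion gives a constant $C_0$ with
\[
   \diam\hat C_n(\delta) \leeq C_0\,|(F_\delta^{-(n-M)})'(w_0)|\,\diam\hat C_M(\delta).
\]

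The heart of the proof is then the uniform bound on $|(F_\delta^{-(n-M)})'(w_0)|$. Using (\ref{eq:Psi}), (\ref{eq:Psi'}) and the conjugacy $F_\delta = \Phi_\delta \circ f_\delta \circ \Psi_\delta$, a direct computation gives
\[
   F'_\delta(w) - 1 = -\frac{\Psi'_\delta(w)}{(1+z)(1+z+\delta)}, \qquad z = \Psi_\delta(w).
\]
By Corollary \ref{cor:M}, the cylinders $\mc C_n(\delta) = \Psi_\delta(\hat C_n(\delta))$ accumulate on $0$ as $n\to\infty$, so for $M$ large we may assume $|z| < 1/4$ for every $w \in \hat C_n(\delta)$, $n\greq M$; hence $|F'_\delta(w) - 1| \leeq 2|\Psi'_\delta(w)|$, and passing to the inverse, $|(F_\delta^{-1})'(w) - 1| \leeq 4|\Psi'_\delta(F_\delta^{-1}(w))|$. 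Taking the product along the orbit,
\[
   \log|(F_\delta^{-(n-M)})'(w_0)| \leeq C\sum_{k=1}^{n-M} |\Psi'_\delta(F_\delta^{-k}(w_0))|.
\]
By Lemma \ref{lem:trans}, $|F_\delta^{-k}(w_0)| \asymp k$ for $k$ large. In the parabolic regime $k\lesssim 1/|\delta|$, the asymptotic $\Psi_\delta(w) \approx -1/w$ gives $|\Psi'_\delta(w)| \leeq C/|w|^2$ and contributes $\sum 1/k^2 \leeq \pi^2/6$; in the repelling regime $k\gtrsim 1/|\delta|$, the exponential decay of $z=\delta/(e^{-w\delta}-1)$ yields $|\Psi'_\delta(w)| = |z(z+\delta)| \leeq C|\delta|^2 e^{-ck|\delta|\cos\alpha}$, contributing $O(|\delta|)$. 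The full sum is bounded uniformly in $n$ and $\delta$, whence $|(F_\delta^{-(n-M)})'(w_0)| \leeq K_1$.

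The main technical obstacle is controlling the derivative product across the two regimes. In the parabolic regime the summability $\sum 1/k^2$ is driven by the Leau--Fatou $1/k$-asymptotic of orbits inherited from the parabolic character of $F_0$; in the repelling regime one exploits the exponential smallness of $\Psi_\delta$ coming from $\re(-w\delta)\to+\infty$. The transition at $k\sim 1/|\delta|$ requires uniform constants in both regimes, but both pieces remain bounded, so the derivative estimate closes. Combining $|(F_\delta^{-(n-M)})'(w_0)| \leeq K_1$ with $\diam\hat C_M(\delta) \leeq D_0$ and the Koebe distortion constant $C_0$ yields $\diam\hat C_n(\delta) \leeq C_0 K_1 D_0 =: K$, as claimed.
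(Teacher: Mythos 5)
Your proposal is correct, and it is essentially a fully detailed version of the paper's one-sentence argument. The paper simply asserts that the conclusion "leads" from three facts: $\hat C_M(\delta)$ is compactly contained in $S^-(\theta)_R$, $F_\delta^{-n}$ is univalent there, and $F_\delta^{-1}$ is close to a translation. Your write-up supplies the actual mechanism that makes this work: the Koebe distortion reduction to a bound on $|(F_\delta^{-(n-M)})'(w_0)|$, the exact identity
\[
   F'_\delta(w)-1=-\frac{\Psi'_\delta(w)}{(1+z)(1+z+\delta)},\qquad z=\Psi_\delta(w),
\]
and the summability $\sum_k|\Psi'_\delta(F_\delta^{-k}(w_0))|=O(1)$ split across the parabolic and hyperbolic regimes using Lemma~\ref{lem:trans} to get $|F_\delta^{-k}(w_0)|\asymp k$. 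This is the right way to make the phrase ``close to the translation'' quantitative; the raw estimate of Lemma~\ref{lem:trans}, namely $|F_\delta^{-n}(w)-(w-n)|<\ve n$, would by itself only give a bound on the derivative that grows like $n$ (or $\log n$ after Cauchy estimates), so the decay of $\Psi'_\delta$ is genuinely needed. Minor points you should be aware of but which do not create a gap: the uniformity of $K$ in $\alpha$ requires taking $\eta(\alpha)$ small enough so that the hyperbolic-regime contribution $O(|\delta|/\cos(\theta+|\alpha|))$ stays $O(1)$, and the boundedness of $\diam\hat C_{M(\alpha)}(\delta)$ uniformly in $\alpha$ rests on the $\delta=0$ case of your own derivative estimate (the $\sum 1/k^2$ sum) together with Proposition~\ref{prop:CHm}. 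Both of these are already implicit in the paper and in your argument.
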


Now, using Lemma \ref{lem:trans}, we prove the following important fact:
\begin{lem}\label{lem:-n}
   For every $\alpha\in(-\pi/2,\pi/2)$ and $\ve>0$ there exist $N\in\N$ and $\eta>0$ such that if $w\in\hat C_n(\delta)$ then
     $$|w+n|<\ve n,$$
   where $n>N$, $0<|\delta|<\eta$ and $\alpha=\arg \delta$ or $\delta=0$.
\end{lem}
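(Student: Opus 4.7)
The statement is essentially a direct consequence of Lemma \ref{lem:trans}: since $\hat C_n(\delta)$ is obtained from $\hat C_M(\delta)$ by $n-M$ iterations of $F_\delta^{-1}$, and each such iteration is uniformly close to a unit translation to the left, the image must lie close to $\hat C_M(\delta)$ shifted by $-(n-M)$. The plan is to make this precise, noting that $\hat C_M(\delta)$ sits in a bounded region of the Fatou plane uniformly in $\delta$.

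Fix $\alpha\in(-\pi/2,\pi/2)$ and put $\theta=\pi/4-|\alpha|/2$, the angle used to set up the Fatou cylinders. Given $\ve>0$, apply Lemma \ref{lem:trans} with $\ve':=\ve/2$ in place of $\ve$ to obtain $R>0$ and $\eta_0>0$ such that $F_\delta^{-1}$ is univalent on $S^-(\theta)_R$, maps this set into itself, and
\[
\sup_{k\in\N,\: u\in S^-(\theta)_R}|F_\delta^{-k}(u)-(u-k)|<\ve' k,
\]
whenever $\delta=0$ or $0<|\delta|<\eta_0$ with $\arg\delta=\alpha$. By the construction of the cylinders in Fatou coordinates together with Proposition \ref{prop:CHm} and Lemma \ref{lem:Psiunif}, after shrinking $\eta\leeq\eta_0$ we may assume $\hat C_M(\delta)\subset S^-(\theta/2)_{R+1}$ and moreover, since $\hat C_M(0)$ is a fixed compact set and $\Phi_\delta\to\Phi_0$ uniformly on the relevant region, there exists a constant $K'=K'(\alpha)$ such that $|w'|\leeq K'$ for every $w'\in\hat C_M(\delta)$.

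Now take any $w\in\hat C_n(\delta)$ with $n>M$. By definition, $w=F_\delta^{-(n-M)}(w')$ for some $w'\in\hat C_M(\delta)$, so
\[
|w+n|\leeq\bigl|F_\delta^{-(n-M)}(w')-(w'-(n-M))\bigr|+|w'+M|<\ve'(n-M)+K'+M\leeq\tfrac{\ve}{2}n+(K'+M).
\]
Choose $N\in\N$ with $N>2(K'+M)/\ve$; then for every $n>N$ the right-hand side is strictly less than $\ve n$, which is the desired estimate.

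The main point to get right is the uniformity of the bound $|w'|\leeq K'$ as $\delta$ varies, but this is essentially built into the construction: $M=M(\alpha)$ is a fixed integer chosen at the start of Section 6.2, $\hat C_M(0)$ is compact, and $\Phi_\delta$ converges uniformly to $\Phi_0$ on any fixed compact subset of $\{\re z>0\}$ by Lemma \ref{lem:Psiunif}. Given that, the rest of the argument is merely the triangle inequality combined with the translation estimate of Lemma \ref{lem:trans}, and there is no further difficulty.
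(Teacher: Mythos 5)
Your proof follows the same overall strategy as the paper's — apply Lemma \ref{lem:trans} with $\ve/2$, bound a base cylinder in the Fatou plane, and then use the triangle inequality together with the near-translation estimate — but there is a genuine gap in the setup step. When you apply Lemma \ref{lem:trans} with $\ve'=\ve/2$, you obtain a \emph{new} radius $R$ (call it $R_{\mathrm{new}}$) that may well be larger than the $R=R(\alpha)$ fixed in Section~6.2, where the latter was chosen once and for all with $\ve=1/10$. The index $M=M(\alpha)$ and the inclusion $\hat C_M(\delta)\subset S^-(\theta/2)_{R(\alpha)+1}$ come from that original construction, so what you actually know is $\hat C_M(\delta)\subset S^-(\theta)_{R(\alpha)}$, not $\hat C_M(\delta)\subset S^-(\theta)_{R_{\mathrm{new}}}$. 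Since the translation estimate
\[
\sup_{k\in\N,\:u\in S^-(\theta)_{R_{\mathrm{new}}}}\bigl|F_\delta^{-k}(u)-(u-k)\bigr|<\tfrac{\ve}{2}\,k
\]
holds only for $u$ in the \emph{smaller} set $S^-(\theta)_{R_{\mathrm{new}}}\subset S^-(\theta)_{R(\alpha)}$, you are not entitled to apply it to a point $w'\in\hat C_M(\delta)$ when $\ve$ is small (the argument as written would only yield the conclusion with $1/10$ in place of $\ve/2$, i.e.\ it works for $\ve$ roughly above $1/5$ but not below).

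The paper's proof resolves exactly this point: rather than starting from $\hat C_M(\delta)$, it picks $\tilde n\geq M$ large enough that $\hat C_{\tilde n}(0)\subset S^-(\theta)_{R_{\mathrm{new}}}$ (possible because the cylinders march off to the left as $n\to\infty$, cf.\ Lemma \ref{lem:fatouff}), and then uses uniform convergence of $\Phi_\delta$ and Proposition \ref{prop:CHm} to upgrade this to $\hat C_{\tilde n}(\delta)\subset S^-(\theta)_{R_{\mathrm{new}}}$ for $|\delta|<\eta$. One then has a uniform bound $|w'+\tilde n|<K$ on $\hat C_{\tilde n}(\delta)$ and can legitimately apply the $\ve/2$ estimate to $w'\in\hat C_{\tilde n}(\delta)$, giving $|w+n|<K+\frac{\ve}{2}(n-\tilde n)<\ve n$ for $n>N$. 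Your argument becomes correct once you make this same replacement of $M$ by a suitable $\tilde n\geq M$; the remaining triangle-inequality step and the choice of $N$ are fine.
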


\begin{proof}
Fix $\ve>0$ and  $\alpha\in(-\pi/2,\pi/2)$. Let $R>0$ and $\eta>0$ be such that Lemma \ref{lem:trans} holds for $\theta=\pi/4-|\alpha|/2$ and $\ve/2$.

We can find $\tilde n\greq M$, such that the cylinder $\hat C_{\tilde n}(0)=\Phi_0(\mc C_{\tilde n}(0))$ is included in the set $S^-(\theta)_R$ (cf. Lemma \ref{lem:fatouff}).

Choosing suitable branch of the logarithm, we can assume that $\Phi_\delta$ converges uniformly to $\Phi_0$ on a neighborhood of $\mc C_{\tilde n}(0)$.
So, possibly changing $\eta>0$ and using Proposition \ref{prop:CHm}, we can get $\hat C_{\tilde n}(\delta)=\Phi_\delta(\mc C_{\tilde n}(\delta))\subset S^-(\theta)_R$, where $0<|\delta|<\eta$, $\alpha=\arg\delta$ or $\delta=0$.

Note that there exist $K>0$, such that for every $w\in\hat C_{\tilde n}(\delta)$ we have
   $$|w+\tilde n|<K,$$
where $0<|\delta|<\eta$, $\alpha=\arg\delta$ or $\delta=0$.
Thus, Lemma \ref{lem:trans} leads to
   $$|w+n|<K+\frac\ve2(n-\tilde n)<\ve n,$$
where $w\in\hat C_{n}(\delta)$, $n>N$ for suitably chosen $N\in\N$.
\end{proof}

\subsection{}
For $t\in\R$, let us define
$$V_t:=\{z\in\C:\re (z)<0 \wedge|\im (z) -t|<\pi\}.$$
The functions $\Psi_\delta$ are univalent on the sets $(1/\delta) V_t$ (see (\ref{eq:Psi})). Thus, using Lemma \ref{lem:c/c1}, we obtain:
\begin{lem}\label{lem:c/c2}
There exists $K>1$ such that for every $\alpha\in(-\pi/2,\pi/2)$ there exists $\eta>0$ such that
          $$\diam\mc C_n(\delta)< K |\mc C_n(\delta)|,$$
where $n\greq1$, $0<|\delta|<\eta$ and $\alpha=\arg\delta$.
 \end{lem}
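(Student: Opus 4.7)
The plan is to pass to Fatou coordinates, where, for $n \greq M$, Lemma \ref{lem:c/c1} guarantees that the cylinders $\hat C_n(\delta) = \Phi_\delta(\mc C_n(\delta))$ are uniformly bounded, and then to apply Koebe's distortion theorem to $\Psi_\delta$ on its univalence domains $(1/\delta)V_t$ in order to transfer a bound on $\diam \hat C_n(\delta)$ to one on $\diam \mc C_n(\delta)$, while controlling $|\mc C_n(\delta)|$ from below.

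I would treat the tail $n \greq M$ first. Denoting by $w_n, w_{n+1}$ the endpoints of $\hat C_n(\delta)$, one has $w_{n+1} = F_\delta^{-1}(w_n)$, so Lemma \ref{lem:trans} with $\ve$ small yields $|w_n - w_{n+1}| \greq 1 - \ve$. The key step is to embed $\hat C_n(\delta)$ into a domain of univalence of $\Psi_\delta$: set $t_n := \im(\delta w_n)$. Since $\hat C_n(\delta) \subset S^-(\theta)_R$ with $\theta = \pi/4 - |\alpha|/2$ and $|\alpha| + \theta < \pi/2$, one obtains $\arg(\delta w) \in (\pi/2, 3\pi/2)$ for every $w \in \hat C_n(\delta)$, hence $\re(\delta w) < 0$; and $\diam\delta\hat C_n(\delta) \leeq K_0|\delta| < \pi/2$ for $|\delta| < \eta$ small enough, so $|\im(\delta w) - t_n| < \pi/2$. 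Together these inclusions show that $\hat C_n(\delta) \subset (1/\delta)V_{t_n}$, where $\Psi_\delta$ is univalent.

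To apply Koebe I would estimate the distance from $\delta\hat C_n(\delta)$ to $\partial V_{t_n}$. The horizontal sides $\{\im u = t_n \pm \pi\}$ sit at Euclidean distance at least $\pi/2$, while for the vertical side $\{\re u = 0\}$, Lemma \ref{lem:-n} furnishes $|w+n| < \ve n$, whence $|\re(\delta w)| \greq (1-\ve)\, n\, |\delta|\, \cos(\pi/4 + |\alpha|/2)$. Hence the hyperbolic diameter of $\delta\hat C_n(\delta)$ inside $V_{t_n}$ is bounded by a constant depending only on $\alpha$, and Koebe distortion supplies $C = C(\alpha)$ such that $|\Psi_\delta'(w)|/|\Psi_\delta'(w')| \leeq C$ for all $w, w' \in \hat C_n(\delta)$. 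This gives
\[
\diam\mc C_n(\delta) \leeq C\, |\Psi_\delta'(w_n)|\, \diam \hat C_n(\delta) \leeq \frac{C^2 K_0}{1-\ve}\, |\Psi_\delta(w_n) - \Psi_\delta(w_{n+1})| = \frac{C^2 K_0}{1-\ve}\, |\mc C_n(\delta)|.
\]

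Finally, the finitely many indices $n < M$ I would handle by compactness: by Proposition \ref{prop:CHm}, $\vp_\delta \to \vp_0$ uniformly on $\partial\D$ as $\delta \to 0$ with $\arg\delta = \alpha$, so $\mc C_n(\delta) \to \mc C_n(0)$ in Hausdorff distance, and both $\diam\mc C_n(\delta)$ and $|\mc C_n(\delta)|$ converge to the corresponding positive quantities for $p_0$; shrinking $\eta$ further takes care of these $n$. The main technical obstacle is the uniformity of the Koebe distortion in $n$: for small $n$ the set $\delta\hat C_n(\delta)$ lies close to the vertical boundary $\{\re u = 0\}$ of $V_{t_n}$ on the scale of its Euclidean diameter, which forces one to measure things hyperbolically, via the precise localization $|w+n| < \ve n$ provided by Lemma \ref{lem:-n}.
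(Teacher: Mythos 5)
Your proof is essentially the same as the one the paper has in mind---the text gives only a one-line justification (univalence of $\Psi_\delta$ on $(1/\delta)V_t$, together with Lemma \ref{lem:c/c1}), and you have fleshed out the Koebe distortion argument it is alluding to, including the key lower bound $|w_n-w_{n+1}|\greq1-\ve$ from Lemma \ref{lem:trans}. Two caveats. First, your use of Lemma \ref{lem:-n} to control $\dist(\delta\hat C_n(\delta),\{\re u=0\})$ only covers $n>N$ for the $N$ of that lemma, so the finite group handled by compactness should be $n\leeq\max(M,N)$ rather than $n<M$; more simply, you can bypass Lemma \ref{lem:-n} entirely by using $|w|\greq|\re w|>R$ for $w\in S^-(\theta)_R$, which gives $|\re(\delta w)|\greq R|\delta|\cos(\pi/4+|\alpha|/2)$ for \emph{all} $n\greq M$ and thereby dissolves the ``main technical obstacle'' you raise at the end. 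Second, your distortion constant $C=C(\alpha)$ (hence the final $K$) depends on $\alpha$, while the lemma is stated with $K$ uniform in $\alpha$; but the same $\alpha$-dependence is already built into Lemma \ref{lem:c/c1} through $\theta=\pi/4-|\alpha|/2$, and uniformity in $\alpha$ is never used downstream, so this looks like an imprecision of the statement rather than a defect in your argument.
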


Note that (\ref{eq:Psi}) and (\ref{eq:Psi'}) lead to
\begin{equation}\label{eq:Psi-n}
\Psi_\delta(-n)=\frac{\delta}{e^{n\delta}-1}\,\,\,\,\textrm{ and }\,\,\,\,
\Psi_\delta'(-n)=\Big(\frac{\delta}{e^{n\delta}-1}\Big)^2e^{n\delta}.
\end{equation}

\begin{lem}\label{lem:z}
  For every $\alpha\in(-\pi/2,\pi/2)$ and $\ve>0$ there exist $N\in\N$ and $\eta>0$ such that for every $z\in\mc C_n(\delta)$
    $$\Big|\frac{z}{\Psi_\delta(-n)}-1\Big|<e^{\ve n|\delta|}-1+\ve,$$
  where $n>N$, $0<|\delta|<\eta$ and $\alpha=\arg\delta$. Moreover the above inequality holds after interchanging the roles of $z$ and $\Psi_\delta(-n)$.
\end{lem}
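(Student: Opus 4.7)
My plan is to parametrize $z=\Psi_\delta(w)$ with $w\in\hat C_n(\delta)$ and reduce the claim to a product estimate controlled by the technical lemmas of Section \ref{sec:t}, using Lemma \ref{lem:-n} as the source of the bound $|w+n|<\ve_1 n$.

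From the formulas (\ref{eq:Psi}) and (\ref{eq:Psi-n}) one immediately gets
\begin{equation*}
\frac{z}{\Psi_\delta(-n)}\;=\;\frac{e^{n\delta}-1}{e^{-w\delta}-1}.
\end{equation*}
The crucial step is the algebraic factorisation
\begin{equation*}
\frac{e^{n\delta}-1}{e^{-w\delta}-1}\;=\;e^{(n+w)\delta}\cdot\frac{e^{-n\delta}-1}{e^{w\delta}-1}\;=:\;X\cdot Y,
\end{equation*}
which one verifies by multiplying the numerator and denominator of the left-hand side by $e^{w\delta}$. This splits the problem into an ``exponential error'' factor $X$ and a ``ratio of $\R^-$-exponentials'' factor $Y$, both of which the excerpt has lemmas for.

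Given $\ve>0$, I would first apply Lemma \ref{lem:-n} with a sufficiently small parameter $\ve_1$ (chosen at the end so that $2\ve_1\le\ve$ and $\ve_1<\tfrac12\ve\cos\alpha$, to accommodate the $\cos\alpha$ factor coming from Lemma \ref{lem:e}) to get $N,\eta>0$ such that $|w+n|<\tfrac{\ve_1}{2}n\cos\alpha$ for all $w\in\hat C_n(\delta)$, $n>N$, $|\delta|<\eta$. For the factor $X$ the elementary bound $|e^{\zeta}-1|\le e^{|\zeta|}-1$ combined with $|(n+w)\delta|\le\ve_1 n|\delta|$ gives $|X-1|\le e^{\ve_1 n|\delta|}-1$. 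For the factor $Y$, I would apply the ``moreover'' part of Lemma \ref{lem:e} with its $w$ played by $-n\in\R^-$ and its $\tilde w$ played by our $w\in\hat C_n(\delta)$ (the hypothesis $\tilde w\in B(w,\tfrac{\ve_1}{2}|w|\cos\alpha)$ is exactly the bound secured above), giving $|Y-1|<\ve_1$. Finally Lemma \ref{lem:<}(\ref{lit:<2}) (with $\ve_1$-parameters as indicated and constants $0,\ve_1$) combines these into
\begin{equation*}
\left|\frac{z}{\Psi_\delta(-n)}-1\right|\;=\;|XY-1|\;<\;e^{2\ve_1 n|\delta|}-1+2\ve_1,
\end{equation*}
which, by the choice of $\ve_1$, is at most $e^{\ve n|\delta|}-1+\ve$.

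For the ``moreover'' statement, I would run the symmetric argument on
\begin{equation*}
\frac{\Psi_\delta(-n)}{z}\;=\;e^{-(n+w)\delta}\cdot\frac{e^{w\delta}-1}{e^{-n\delta}-1},
\end{equation*}
estimating the first factor exactly as above and the second now by the \emph{direct} part of Lemma \ref{lem:e} (the deviation $w-(-n)$ is already within the stronger ball $B(-n,\ve_1|{-n}|\cos\alpha)$). The genuine content of the proof is the factorisation identity; the only real bookkeeping issue is to propagate the $\cos\alpha$ factor coming from Lemma \ref{lem:e}'s hypothesis through the choice of $\ve_1$, which is entirely routine.
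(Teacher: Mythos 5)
Your proposal is correct and takes essentially the same route as the paper: the factorisation you write down, $\frac{z}{\Psi_\delta(-n)}=e^{(n+w)\delta}\cdot\frac{e^{-n\delta}-1}{e^{w\delta}-1}$, is exactly the paper's $\frac{e^{w\delta}-1}{e^{\tilde w\delta}-1}\,e^{(\tilde w-w)\delta}$ with its $w=-n$ and its $\tilde w$ being your $w$, and you use Lemma \ref{lem:-n} and Lemma \ref{lem:e} in the same way. The only cosmetic difference is that you bound the pure exponential factor separately and then combine via Lemma \ref{lem:<}~(\ref{lit:<2}), whereas the paper feeds the exponential factor directly into the tailored Lemma \ref{lem:<}~(\ref{lit:<3}) (note that Lemma \ref{lem:<}~(\ref{lit:<2}) requires its $\ve,\tilde\ve$ strictly positive, so your ``constants $0,\ve_1$'' should be ``$\ve',\ve_1$'' for some small $\ve'>0$, but this costs nothing).
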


\begin{proof}
Fix $\alpha\in(-\pi/2,\pi/2)$ and $\ve>0$.
Let $z\in\mc C_n(\delta)$ and let $z=\Psi_\delta(\tilde w)$ where $\tilde w\in\hat C_n(\delta)$, $\arg\delta=\alpha$. If $w=-n$ then (\ref{eq:Psi'2}) leads to
$$\Big|\frac{z}{\Psi_\delta(-n)}-1\Big|=\Big|\frac{\Psi_\delta(\tilde w)}{\Psi_\delta(w)}-1\Big| =\Big|\frac{e^{w\delta}-1}{e^{\tilde w\delta}-1}\,e^{(\tilde w-w)\delta}-1\Big|.$$

Let $\tilde\ve=(\ve/4)\cos\alpha$.
We can assume that $|\tilde w- w|<\tilde\ve n$ where $n>N$ and $0<|\delta|<\eta$ for sufficiently chosen $N\in\N$, $\eta>0$ (see Lemma \ref{lem:-n}). Thus, Lemma \ref{lem:e} combined with Lemma \ref{lem:<} (\ref{lit:<3}) leads to
$$\Big|\frac{e^{w\delta}-1}{e^{\tilde w\delta}-1}\,e^{(\tilde w-w)\delta}-1\Big| <e^{2\tilde\ve n|\delta|}-1+\frac\ve2.$$
Thus, the statement follows, since the roles of $z$ and $\Psi_\delta(-n)$ can be interchanged as in Lemma \ref{lem:e}.
\end{proof}

\begin{lem}\label{lem:size}
  For every $\alpha\in(-\pi/2,\pi/2)$ and $\ve>0$ there exist $N\in\N$ and $\eta>0$ such that
    $$(e^{\ve n|\delta|}+\ve)^{-1}<\frac{|\mc C_n(\delta)|}{|\Psi_\delta'(-n)|}<e^{\ve n|\delta|}+\ve,$$
  where $n>N$, $0<|\delta|<\eta$ and $\alpha=\arg\delta$.
\end{lem}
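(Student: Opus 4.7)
The plan is to work in Fatou coordinates, where $\Psi_\delta$ is close to a translation and $\Psi_\delta'$ is controlled by Lemma \ref{lem:Psi'/}, and reduce the estimate on $|\mc C_n(\delta)|$ to an integral of $\Psi_\delta'$ over a short segment near $-n$. Setting $\hat w_n := \Phi_\delta(z_n(\delta))$, we have $z_n(\delta) = \Psi_\delta(\hat w_n)$ and, by construction of the cylinders, $\hat w_{n+1} = F_\delta^{-1}(\hat w_n)$. Parametrizing $w(t) := (1-t)\hat w_{n+1} + t\hat w_n$ for $t \in [0,1]$, we write
\begin{equation*}
|\mc C_n(\delta)| \;=\; |\Psi_\delta(\hat w_n) - \Psi_\delta(\hat w_{n+1})| \;=\; |\hat w_n - \hat w_{n+1}|\,\Bigl|\int_0^1 \Psi_\delta'(w(t))\,dt\Bigr|.
\end{equation*}
Applying Lemma \ref{lem:trans} with a single iteration gives $|\hat w_n - \hat w_{n+1} - 1| = |F_\delta^{-1}(\hat w_n) - (\hat w_n - 1)| < \ve_1$ for any prescribed $\ve_1 > 0$, so the first factor is close to $1$.

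For the integral factor, Lemma \ref{lem:-n} with small parameter $\ve''$ puts both $\hat w_n + n$ and $\hat w_{n+1} + n$ (and hence the whole segment $w(t) + n$) inside $B(0, 2\ve'' n)$ for $n$ large. Choosing $\ve'' := \ve_2/(2K(\alpha))$ with $K(\alpha)$ from Lemma \ref{lem:Psi'/}, the latter yields, uniformly in $t \in [0,1]$,
\begin{equation*}
\Bigl|\frac{\Psi_\delta'(w(t))}{\Psi_\delta'(-n)} - 1\Bigr| < e^{\ve_2 n|\delta|} - 1 + \ve_2,
\end{equation*}
a bound which persists after integration in $t$. Multiplying by the first factor,
\begin{equation*}
\frac{|\mc C_n(\delta)|}{|\Psi_\delta'(-n)|} \;\leq\; (1+\ve_1)\bigl(e^{\ve_2 n|\delta|} + \ve_2\bigr),
\end{equation*}
and taking $\ve_1, \ve_2$ sufficiently small compared to $\ve$ yields the upper bound $e^{\ve n|\delta|} + \ve$.

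The main obstacle is the lower bound when $\ve_2 n|\delta|$ exceeds $\log 2$, since the pointwise inequality $|\int_0^1 \Psi_\delta'(w(t))/\Psi_\delta'(-n)\,dt| \geq 2 - e^{\ve_2 n|\delta|} - \ve_2$ becomes vacuous. We handle this by splitting into two regimes. When $\ve_2 n|\delta|$ is bounded by a fixed small constant, the pointwise lower bound, together with $|\hat w_n - \hat w_{n+1}| \geq 1 - \ve_1$, gives $|\mc C_n(\delta)|/|\Psi_\delta'(-n)| \geq (e^{\ve n|\delta|} + \ve)^{-1}$ once $\ve_1, \ve_2$ are chosen small enough. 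In the complementary regime we return to the explicit formula $\Psi_\delta(w) = \delta/(e^{-w\delta}-1)$ and write
\begin{equation*}
\frac{\Psi_\delta(\hat w_n) - \Psi_\delta(\hat w_{n+1})}{\Psi_\delta'(-n)} \;=\; \frac{(e^{-\hat w_{n+1}\delta} - e^{-\hat w_n\delta})(e^{n\delta}-1)^2}{\delta\,e^{n\delta}\,(e^{-\hat w_n\delta}-1)(e^{-\hat w_{n+1}\delta}-1)}.
\end{equation*}
Setting $\rho_n := \hat w_n + n$ with $|\rho_n| < \ve'' n$, and similarly for $n+1$, and factoring $e^{-\hat w_n\delta} = e^{n\delta} e^{-\rho_n\delta}$, the dominant exponentials $e^{n\re\delta}$ (large because $\re\delta = |\delta|\cos\alpha > 0$) cancel between numerator and denominator, leaving a residual expression which Lemmas \ref{lem:<} and \ref{lem:e} confine to the interval $[e^{-\ve''n|\delta|}, e^{\ve''n|\delta|}]$ up to small multiplicative errors. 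Choosing $\ve'' \leq \ve/2$ then produces the required lower bound $(e^{\ve n|\delta|} + \ve)^{-1}$.
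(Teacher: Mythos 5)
Your proposal is correct in spirit and does arrive at the statement, but it follows a genuinely different---and noticeably heavier---route than the paper. Where you represent
\begin{equation*}
|\mc C_n(\delta)| \;=\; |\hat w_n - \hat w_{n+1}|\,\Bigl|\int_0^1 \Psi_\delta'(w(t))\,dt\Bigr|
\end{equation*}
and then estimate the integrand pointwise via Lemma~\ref{lem:Psi'/}, the paper instead appeals to the univalence of $\Psi_\delta$ on the large domain $(1/\delta)V_t$ together with the uniform bound $\diam \hat C_n(\delta)<K$ (Lemma~\ref{lem:c/c1}) and the location estimate $\hat C_n(\delta)\approx -n$ (Lemma~\ref{lem:-n}), so that the Koebe distortion of $\Psi_\delta$ on $\hat C_n(\delta)$ is close to $1$. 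That gives a \emph{two-sided} comparison $|\mc C_n(\delta)|\asymp |\hat w_n-\hat w_{n+1}|\,|\Psi_\delta'(\tilde w)|$ in a single stroke, after which Lemma~\ref{lem:trans} (to get $|\hat w_n-\hat w_{n+1}|\approx 1$) and Lemma~\ref{lem:Psi'/} (to compare $\Psi_\delta'(\tilde w)$ with $\Psi_\delta'(-n)$) finish the proof. The point is that distortion control is symmetric in the two quantities being compared, so no case analysis is needed.

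Your version buys nothing over this: the integral representation gives the upper bound cheaply, but --- as you correctly diagnose --- the pointwise lower bound degenerates once $e^{\ve_2 n|\delta|}+\ve_2>2$, which forces the split into two regimes and an explicit hands-on computation with the formula for $\Psi_\delta$ in the regime $n|\delta|$ large. That computation is sound in outline (writing $e^{-\hat w_j\delta}=e^{n\delta}e^{-\rho_j\delta}$, replacing $(e^{n\delta}-1)^2$ by $e^{2n\delta}$ and $Ee^{-\rho_j\delta}-1$ by $Ee^{-\rho_j\delta}$ with multiplicative errors $1+O(e^{-c\,n|\delta|})$, and reducing the residual to $e^{(\rho_{n+1}-1)\delta}(e^{a\delta}-1)/\delta$ with $a=\hat w_n-\hat w_{n+1}\approx1$), but it is a chain of approximations each of which has to be justified and whose errors have to be shown to compound into a factor $\leeq (1+O(\ve))$; you only sketch this. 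In addition, the matching of the two regimes at the threshold constant needs to be verified: for the threshold $c_0$ one must check that $(2-e^{c_0}-\ve_2)(1-\ve_1)\greq(e^{\ve n|\delta|}+\ve)^{-1}$ holds for all $n|\delta|\leeq c_0/\ve_2$, which is a nontrivial constraint relating $\ve_1,\ve_2,c_0$ and $\ve$. None of this is needed if you use the distortion argument. So: the proposal is essentially correct but takes a more elementary, calculation-heavy detour; if you want a cleaner write-up, replace the integral representation plus two-regime analysis by the distortion estimate built into the paper's Lemma~\ref{lem:c/c1} and the univalence of $\Psi_\delta$ on $(1/\delta)V_t$.
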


\begin{proof}
Fix $\ve>0$ and $\alpha\in(-\pi/2,\pi/2)$.
We know that $\diam \hat C_n(\delta)$ are uniformly bounded (see Lemma \ref{lem:c/c1}), whereas $\Psi_\delta$ is univalent on the sets $(1/\delta)V_t$. Thus, using Lemma \ref{lem:-n}, we can assume that the distortion of $\Psi_\delta$ on $\hat C_n(\delta)$ is as close to $1$ as we need, where $n>N$, $0<|\delta|<\eta$, $\alpha=\arg\delta$, for suitably chosen $N\in\N$ and $\eta>0$.

Thus, if $\Psi_\delta(\tilde w)=z\in\mc C_n(\delta)$, then Lemma \ref{lem:trans} and definition (\ref{eq:defsize}) lead to
$$\Big(1+\frac\ve4\Big)^{-1}<\frac{|\mc C_n(\delta)|}{|\Psi_\delta'(\tilde w)|}<1+\frac\ve4.$$

Next, possibly changing $N\in\N$ and $\eta>0$, we conclude from Lemma \ref{lem:-n} combined with Lemma \ref{lem:Psi'/}, that
\begin{equation*}
\Big(e^{(\ve/2) n|\delta|}+\frac\ve4\Big)^{-1}<\frac{|\Psi_\delta'(\tilde w)|}{|\Psi_\delta'(-n)|}<e^{(\ve/2) n|\delta|}+\frac\ve4,
\end{equation*}
where $0<|\delta|<\eta$, $n>N$. We multiply the above estimates and the statement follows (cf. Lemma \ref{lem:<} (\ref{lit:<1})).
\end{proof}

\begin{cor}\label{cor:mod}
For every $\alpha\in(-\pi/2,\pi/2)$ there exist $K>1$, $\tilde K>0$ and $\eta>0$ such that
\begin{enumerate}
    \item\label{cit:mod1}
       hyperbolic estimate: if $n|\delta|>1$, then
     \begin{equation*}
        \frac{1}{K}|\delta|^{2}e^{-Kn|\delta|}<|\mc C_n(\delta)|< K|\delta|^{2}e^{-\frac{1}{K}n|\delta|},
     \end{equation*}
    \item\label{cit:mod2}
       parabolic estimate: if $n|\delta|\leeq1$, then
     \begin{equation*}
        \frac{1}{Kn^2}<|\mc C_n(\delta)|<\frac{K}{n^2},
     \end{equation*}
    \item\label{cit:mod3}
       if $n\greq1$, then
     \begin{equation*}
        |\mc C_n(\delta)|<\frac{\tilde K}{n^2},
     \end{equation*}
 \end{enumerate}
where $0<|\delta|<\eta$ and $\alpha=\arg\delta$.
\end{cor}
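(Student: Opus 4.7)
The plan is to reduce everything to the size-vs-derivative comparison of Lemma~\ref{lem:size} combined with the explicit formula
$$|\Psi_\delta'(-n)| = \frac{|\delta|^2\, e^{n|\delta|\cos\alpha}}{|e^{n\delta}-1|^2}$$
which follows from (\ref{eq:Psi-n}) together with $\re(n\delta)=n|\delta|\cos\alpha$. Since $|\alpha|<\pi/2$ is fixed, $\cos\alpha$ is a positive constant (which will be absorbed into $K$), and the key parameter becomes $x := n|\delta|$. First I would apply Lemma~\ref{lem:size} with a small $\ve>0$ (say $\ve\leq (\cos\alpha)/4$), so that the multiplicative factor $e^{\ve n|\delta|}$ contributes only a controlled error that is either dominated by the hyperbolic decay or absorbed into the bounds we are trying to prove.

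For the hyperbolic regime $x>1$, I would bound $|e^{n\delta}-1|$ two-sidedly using $|e^{n\delta}|=e^{x\cos\alpha}\geq e^{\cos\alpha}>1$: the upper bound $|e^{n\delta}-1|\leq|e^{n\delta}|+1\leq 2e^{x\cos\alpha}$ and the lower bound $|e^{n\delta}-1|\geq |e^{n\delta}|-1\geq (1-e^{-\cos\alpha})\, e^{x\cos\alpha}$. These pin down $|\Psi_\delta'(-n)|\asymp |\delta|^2 e^{-x\cos\alpha}$, and part (\ref{cit:mod1}) follows after letting the constant $K$ absorb $1/\cos\alpha$ and the small loss from $\ve$.

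For the parabolic regime $x\leq 1$, I would use that the entire function $z\mapsto (e^z-1)/z$ (with value $1$ at $0$) is continuous and non-vanishing on the closed unit disk, hence bounded above and below by positive absolute constants there. Applied to $z=n\delta$ this yields $|e^{n\delta}-1|\asymp n|\delta|$, and combined with $|e^{n\delta}|\in[1,e^{\cos\alpha}]\subset[1,e]$ one obtains $|\Psi_\delta'(-n)|\asymp n^{-2}$, which gives part (\ref{cit:mod2}).

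Finally, the global upper bound (\ref{cit:mod3}) comes from merging the two regimes. The parabolic estimate handles $x\leq 1$ directly, while for $x>1$ the hyperbolic estimate yields $n^2|\mc C_n(\delta)|\leq K\, x^2 e^{-x/K}$, a quantity uniformly bounded since $\sup_{x\geq 0} x^2 e^{-x/K}=4K^2 e^{-2}$ (attained at $x=2K$); the finitely many small values of $n$ not covered by Lemma~\ref{lem:size} are absorbed by enlarging $\tilde K$, using that $\mc C_n(\delta)\subset \mc J_\delta$ is contained in a uniformly bounded set. I do not anticipate a serious mathematical obstacle; the main bookkeeping concern is choosing $\ve$ in Lemma~\ref{lem:size} strictly smaller than $\cos\alpha$ so that the extraneous factor $e^{\ve n|\delta|}$ is genuinely dominated by the true decay $e^{-n|\delta|\cos\alpha}$ rather than competing with the exponential rate.
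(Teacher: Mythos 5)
Your proof is correct and follows essentially the same route as the paper: Lemma~\ref{lem:size} together with the explicit formula for $\Psi_\delta'(-n)$ from (\ref{eq:Psi-n}), a two-sided bound on $|e^{n\delta}-1|$ in each regime, and for part~(\ref{cit:mod3}) the observation that $x^2 e^{-x/K}$ is uniformly bounded for $x>1$. You spell out details (the explicit asymptotics of $|\Psi_\delta'(-n)|$, the choice $\ve<\cos\alpha$, the finitely many small $n$) that the paper compresses into ``easily follows,'' but the underlying argument is identical.
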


\begin{proof}
The first two statements easily follows Lemma \ref{lem:size} and the rightmost expression of (\ref{eq:Psi-n}).

The third statement must be proven in the case $n|\delta|>1$.
For every $K>0$ we can find a constant $C>0$ such that
$$x^2<Ce^{\frac1Kx},$$
for every $x>1$.
So, we substitute $n|\delta|$ in place of $x$ and the third statement follows from the first.
\end{proof}

\begin{lem}\label{lem:distP}
For every $\alpha\in(-\pi/2,\pi/2)$ and $\ve>0$ there exist $N\in\N$ and $\eta>0$ such that if $z\in\mc C_n(\delta)$, then
$$\ve\dist(z,P(f_\delta))>|\mc C_n(\delta)|,$$
where $n>N$, $0<|\delta|<\eta$ and $\alpha=\arg\delta$.
\end{lem}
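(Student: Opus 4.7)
The plan is to combine the geometric confinement of Corollary \ref{cor:w} with the size estimates of Corollary \ref{cor:mod}. First I would fix $\theta\in(0,\pi/2-|\alpha|)$, set $W:=W_{|\alpha|+\theta}$, and denote by $C=C(\alpha,\theta)>0$ the lower bound $\re z>C$ on $W$ coming from (\ref{eq:defw1})--(\ref{eq:defw2}). Combining Corollary \ref{cor:M} with Corollary \ref{cor:w}, for $n$ large and $|\delta|$ small (with $\arg\delta=\alpha$) any $z\in\mc C_n(\delta)$ can be written as $z=\delta(u-1/2)$ with $u\in W$, and any $p\in P(f_\delta)$ as $p=-\delta(w+1/2)$ with $w\in W$. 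Hence $z-p=\delta(u+w)$ and the task reduces to a lower bound on $|u+w|$ uniform over $w\in W$.

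The second step will be a planar estimate. Both $\re u$ and $\re w$ exceed $C$, so $\re(u+w)\greq 2C$ and therefore $|u+w|\greq 2C$. Moreover, since $W$ sits in the closed right half-plane and $-u$ lies in the open left half-plane, $|u+w|\greq\dist(-u,\{\re z\greq 0\})=\re u\greq|u|\cos(|\alpha|+\theta)$. Averaging the two estimates yields
\begin{equation*}
|u+w|\greq c_1(|u|+1),\qquad c_1:=\tfrac12\min\!\big(\cos(|\alpha|+\theta),\,2C\big)>0,
\end{equation*}
which, after taking the infimum over $p\in P(f_\delta)$, gives $\dist(z,P(f_\delta))\greq c_1|\delta|(|u|+1)$.

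Third, I would lower-bound $|u|$ via the explicit reference $u_0:=\Psi_\delta(-n)/\delta+1/2=(1+e^{n\delta})/(2(e^{n\delta}-1))$. Lemma \ref{lem:z} with an auxiliary $\ve_0$ small gives $|u-u_0|\leeq (e^{\ve_0 n|\delta|}-1+\ve_0)/|e^{n\delta}-1|$. In the parabolic window $n|\delta|\leeq 1$ one has $|n\im\delta|\leeq 1$ and hence $\cos(n\im\delta)\greq\cos 1>1/2$, so $|1+e^{n\delta}|\greq\sqrt 3$ and $|u_0|\greq\sqrt 3/(2|e^{n\delta}-1|)$. Choosing $\ve_0$ sufficiently small then yields $|u|\greq c_2/(n|\delta|)$ for some $c_2=c_2(\alpha)>0$, using also the elementary bound $|e^{n\delta}-1|\leeq e\cdot n|\delta|$.

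Finally, I would split on $n|\delta|$ and invoke Corollary \ref{cor:mod}. In the hyperbolic regime $n|\delta|\greq 1$, drop the $|u|$-term: $\dist(z,P(f_\delta))\greq c_1|\delta|$, while $|\mc C_n(\delta)|\leeq K|\delta|^2$, so the ratio exceeds $c_1/(K|\delta|)>1/\ve$ once $|\delta|<c_1\ve/K$. In the parabolic regime $n|\delta|<1$, inserting the Step-3 bound together with $|\mc C_n(\delta)|\leeq K/n^2$ gives the ratio at least $c_1c_2 n/K'$, which exceeds $1/\ve$ for $n$ sufficiently large. The main technical obstacle is the planar estimate in the second step: since the hypothesis allows $|\alpha|+\theta$ close to $\pi/2$, the convenient bound $|u+w|^2\greq|u|^2+|w|^2$ (which would follow from $\re(u\bar w)\greq 0$) is not at our disposal, and one really has to project $-u$ onto the right half-plane rather than attempt a direct minimization over $w\in W$.
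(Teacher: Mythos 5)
Your proof is correct and follows essentially the same route as the paper: both rely on Corollaries \ref{cor:M} and \ref{cor:w} to confine $z$ and $P(f_\delta)$ to the cones $\delta(W_{|\alpha|+\theta}-1/2)$ and $-\delta(W_{|\alpha|+\theta}+1/2)$, and both then exploit the geometric separation of those cones (your $|u+w|\greq c_1(|u|+1)$ is precisely the paper's inequality (\ref{eq:distW}) after the substitution $z=\delta(u-1/2)$). The only difference is in the final bookkeeping: the paper bounds $|z+\delta|$ and $|\mc C_n(\delta)|$ via Lemmas \ref{lem:z} and \ref{lem:size} in a single unified expression and shows $|\delta/(e^{n\delta}-1)|(e^{\ve n|\delta|}+\ve)\lesssim\max(1/n,|\delta|)$, whereas you invoke the coarser Corollary \ref{cor:mod} and split explicitly into the parabolic and hyperbolic regimes, which is a mild stylistic variation.
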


\begin{proof}
First let us note that for every $\beta\in(-\pi/2,\pi/2)$ there exists $K_{\beta}>0$ such that for every $z\in W_\beta-1/2$
\begin{equation}\label{eq:distW}
\dist( z,-W_\beta-1/2)>K^{-1}_{\beta}| z+1|,
\end{equation}
(cf. definitions (\ref{eq:defw1}), (\ref{eq:defw2})). For example we can take $K_{\beta}=\max(2,2\tan\beta)$.

Fix $\alpha\in(-\pi/2,\pi/2)$ and $\ve>0$ (small enough). Let $z\in\mathcal{M}_N(\delta)$ and $\theta\in(0,\pi/2-|\alpha|)$.
We can assume that $z\in \delta (W_{|\alpha|+\theta}-1/2)$ (see Corollaries \ref{cor:M} and \ref{cor:w}).
Since $P(f_\delta)\subset \delta(-W_{|\alpha|+\theta}-1/2)$, we conclude from (\ref{eq:distW}) that
\begin{equation}\label{eq:ddist}
\dist(z,P(f_\delta))>\dist(z,\delta(-W_{|\alpha|+\theta}-1/2))>K^{-1}_{|\alpha|+\theta}|z+\delta|.
\end{equation}

Using (\ref{eq:Psi-n}) we get
$$\Psi_\delta(-n)+\delta=\frac{\delta}{e^{n\delta}-1}+\delta=\frac{\delta e^{n\delta}}{e^{n\delta}-1}.$$
So, if $z\in\mc C_n(\delta)$, then we see from Lemma \ref{lem:z} that
\begin{equation}\label{eq:z+d}
|z+\delta|>\Big|\frac{\delta e^{n\delta}}{e^{n\delta}-1}\Big|-\Big|\frac{\delta}{e^{n\delta}-1}(e^{\ve n|\delta|}-1+\ve)\Big| >\frac12\Big|\frac{\delta e^{n\delta}}{e^{n\delta}-1}\Big|.
\end{equation}

Next, Lemma \ref{lem:size} combined with (\ref{eq:Psi-n}) leads to
\begin{equation}\label{eq:cn}
|\mc C_n(\delta)|<\Big|\frac{\delta}{e^{n\delta}-1}\Big|^2|e^{n\delta}|(e^{\ve n|\delta|}+\ve).
\end{equation}

Therefore, (\ref{eq:ddist}) and next (\ref{eq:z+d}), (\ref{eq:cn}) give us
$$\frac{|\mc C_n(\delta)|}{\dist(z,P(f_\delta))}<K_{|\alpha|+\theta}\frac{|\mc C_n(\delta)|}{|z+\delta|}<2K_{|\alpha|+\theta}\Big|\frac{\delta}{e^{n\delta}-1}\Big|(e^{\ve n|\delta|}+\ve).$$
Since there exists a constant $K_{|\alpha|+\theta}'>0$, such that
$$\Big|\frac{\delta}{e^{n\delta}-1}\Big|(e^{\ve n|\delta|}+\ve)<K_{|\alpha|+\theta}'\max\Big(\frac1n,|\delta|\Big),$$
the assertion follows.
\end{proof}

\section{Estimates of $(f_\delta^n)'$}

Now we give several important estimates concerning $(f_\delta^n)'$.
Let $w_{n}(\delta)\in\hat C_n(\delta)$ be the point such that $\Psi_\delta(w_{n}(\delta))=z_{n}(\delta)$ (cf. (\ref{eq:z}), (\ref{eq:defsize})).

\begin{lem}\label{lem:(f^j)'}
  For every $\alpha\in(-\pi/2,\pi/2)$ and $\ve>0$ there exist $N\in\N$ and $\eta>0$ such that for every $z\in\mc C_n(\delta)$
    $$\Big|\frac{1}{(f_\delta^k)'(z)}\frac{\Psi_\delta'(w_{n-k}(\delta))}{\Psi_\delta'(w_{n}(\delta))}-1\Big|<\ve,$$
  where $k\greq1$, $n-k\greq N$, $0<|\delta|<\eta$ and $\alpha=\arg\delta$. 
\end{lem}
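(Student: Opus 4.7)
The plan is to combine Koebe distortion (to pass from an arbitrary $z\in\mc C_n(\delta)$ to the reference point $z_n(\delta)$) with an exact telescoping identity coming from the flow equation. Recall $\Psi_\delta$ solves $\dot z=z(z+\delta)$, so $\Psi_\delta'(w)=\Psi_\delta(w)(\Psi_\delta(w)+\delta)$ by (\ref{eq:Psi'}); in particular $\Psi_\delta'(w_m(\delta))=z_m(\delta)(z_m(\delta)+\delta)$. For brevity write $z_m:=z_m(\delta)$.

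First, let $y:=f_\delta^k(z)\in\mc C_{n-k}(\delta)$ and let $g$ be the branch of $f_\delta^{-k}$ with $g(y)=z$; by continuous pullback along the cylinders, $g(z_{n-k})=z_n$. Fix a small $\ve_1>0$. Lemma \ref{lem:distP} yields $\dist(y,P(f_\delta))>|\mc C_{n-k}(\delta)|/\ve_1$ for $n-k\geq N$, $|\delta|<\eta$, so $g$ extends univalently to $B(y,|\mc C_{n-k}(\delta)|/\ve_1)$. Both $y$ and $z_{n-k}$ lie in the subdisk $B(y,K|\mc C_{n-k}(\delta)|)$ by Lemma \ref{lem:c/c2}, i.e.\ within a fraction $K\ve_1$ of the disk of univalence. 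Koebe distortion therefore gives
\begin{equation*}
\Big|\frac{(f_\delta^k)'(z_n)}{(f_\delta^k)'(z)}-1\Big|=\Big|\frac{g'(y)}{g'(z_{n-k})}-1\Big|<\ve/3.
\end{equation*}

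Next, the factorizations $f_\delta(z)=z(1+z+\delta)$ and $f_\delta(z)+\delta=(z+\delta)(1+z)$ give
\begin{equation*}
\frac{f_\delta(z)(f_\delta(z)+\delta)}{z(z+\delta)}=(1+z)(1+z+\delta)=f_\delta'(z)+z(z+\delta).
\end{equation*}
Substituting $z=z_{n-j}$ (so $f_\delta(z_{n-j})=z_{n-j-1}$) and multiplying over $j=0,\dots,k-1$, the left side telescopes to $z_{n-k}(z_{n-k}+\delta)/[z_n(z_n+\delta)]=\Psi_\delta'(w_{n-k})/\Psi_\delta'(w_n)$. Dividing by $\prod_j f_\delta'(z_{n-j})=(f_\delta^k)'(z_n)$ yields the exact identity
\begin{equation*}
\frac{1}{(f_\delta^k)'(z_n)}\frac{\Psi_\delta'(w_{n-k})}{\Psi_\delta'(w_n)}=\prod_{j=0}^{k-1}\Big(1+\frac{z_{n-j}(z_{n-j}+\delta)}{f_\delta'(z_{n-j})}\Big).
\end{equation*}
To show the product is close to $1$, note $|z_m(z_m+\delta)|=|\Psi_\delta'(w_m)|$ by the flow equation; combining the explicit formula (\ref{eq:Psi-n}) with Lemmas \ref{lem:size} and \ref{lem:Psi'/} one verifies $\sum_{m=N}^\infty|\Psi_\delta'(w_m)|\leq C/N+C|\delta|$, small for $N$ large and $\eta$ small. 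Since $f_\delta'(z_{n-j})\to 1$, the product differs from $1$ by at most $\ve/3$, and combining with Step~1 finishes the proof.

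The most delicate point is Step~1: one must verify that the analytic continuation of $f_\delta^{-k}$ on the Koebe disk around $y$ sends $z_{n-k}$ to $z_n$ (not to a different preimage). This follows because $\mc C_{n-k}(\delta)\subset B(y,|\mc C_{n-k}(\delta)|/\ve_1)$ by Lemma \ref{lem:c/c2} and $f_\delta^k$ is a homeomorphism $\mc C_n(\delta)\to\mc C_{n-k}(\delta)$ by construction, so the continuous branch $g$ maps $\mc C_{n-k}(\delta)$ onto $\mc C_n(\delta)$. A secondary subtlety is that the naive bound from Lemmas \ref{lem:size} and \ref{lem:Psi'/} carries exponential factors $e^{\ve m|\delta|}$ which blow up in the hyperbolic regime $m|\delta|\gg 1$; however $|\Psi_\delta'(w_m)|$ itself is of order $|\delta|^2 e^{-m|\delta|\cos\alpha}$ there by (\ref{eq:Psi-n}), so the summation still works cleanly in both regimes.
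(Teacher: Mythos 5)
Your proof is correct, and it takes a genuinely different route from the paper's.

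The paper's argument has two distortion steps: first $\bigl|\tfrac{1}{(f_\delta^k)'(z)}\cdot\tfrac{z_{n-k}-z_{n-k+1}}{z_n-z_{n+1}}-1\bigr|<\ve/4$ by bounded distortion of the inverse branch (Lemma \ref{lem:distP}), and second $\bigl|\tfrac{z_{m}-z_{m+1}}{\Psi_\delta'(w_m)}-1\bigr|<\ve/4$ by bounded distortion of $\Psi_\delta$ on the bounded sets $\hat C_m(\delta)\subset(1/\delta)V_t$; these combine via Lemma \ref{lem:<}~(\ref{lit:<2}). You keep the first step (in the mild variant that compares $(f_\delta^k)'(z)$ to $(f_\delta^k)'(z_n)$ via Koebe, using the same ingredients Lemma \ref{lem:distP} and Lemma \ref{lem:c/c2}), but replace the second distortion step by an exact algebraic identity: since $\Psi_\delta'=\Psi_\delta(\Psi_\delta+\delta)$ and $f_\delta(z)=z(1+z+\delta)$, $f_\delta(z)+\delta=(z+\delta)(1+z)$, you obtain the telescoping formula
\begin{equation*}
\frac{1}{(f_\delta^k)'(z_n)}\,\frac{\Psi_\delta'(w_{n-k})}{\Psi_\delta'(w_n)}
=\prod_{j=0}^{k-1}\Bigl(1+\frac{\Psi_\delta'(w_{n-j})}{f_\delta'(z_{n-j})}\Bigr),
\end{equation*}
which I have checked is exact. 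Controlling the right-hand side then requires the summation bound $\sum_{m>N}|\Psi_\delta'(w_m)|\lesssim 1/N+|\delta|$, which follows from Lemmas \ref{lem:-n}, \ref{lem:Psi'/} and (\ref{eq:Psi-n}) (splitting at $m\sim 1/|\delta|$, and keeping the $e^{\tilde\ve m|\delta|}$ correction factor with $\tilde\ve<\cos\alpha$ so it is absorbed by the exponential decay of $|\Psi_\delta'(-m)|$). The tradeoff is clear: the paper avoids any summation by using one more application of bounded distortion (for $\Psi_\delta$), while you trade that for an exact identity plus a quantitative tail estimate. Both proofs are about the same length; yours makes visible \emph{why} the combination $(f_\delta^k)'\cdot\Psi_\delta'/\Psi_\delta'$ is close to $1$ (because the error terms $\Psi_\delta'(w_m)/f_\delta'(z_m)$ form an absolutely convergent series).

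One small imprecision to flag: near the end you write ``Since $f_\delta'(z_{n-j})\to 1$, the product differs from $1$ by at most $\ve/3$,'' but what you actually need is that $f_\delta'(z_{n-j})$ is bounded away from $0$ (true, since $z_{n-j}\in\mc M_N(\delta)\subset B(0,r)$ by Corollary \ref{cor:M} and $\delta$ is small) combined with smallness of $\sum_j|\Psi_\delta'(w_{n-j})|$; the pointwise convergence of $f_\delta'$ to $1$ alone does not control the product. That is easy to fix.
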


\begin{proof}
Fix $\ve>0$ and $\alpha\in(-\pi/2,\pi/2)$. Let $\alpha=\arg\delta$.

Using Lemma \ref{lem:distP}, we can assume that distortion of $f^{-k}_\delta$ is close to $1$ on $\mc C_{n-k}(\delta)$, and then
\begin{equation}\label{eq:1/f'}
\Big|\frac{1}{(f_\delta^k)'(z)}\frac{z_{n-k}(\delta)-z_{n-k+1}(\delta)}{z_{n}(\delta)-z_{n+1}(\delta)}-1\Big|<\frac\ve4,
\end{equation}
where $n-k\greq N$, $k\greq1$, $0<|\delta|<\eta$, for suitably chosen $N\in\N$ and $\eta>0$.

We know that the functions $\Psi_\delta$ are univalent on the sets $(1/\delta)V_t$. Thus, possibly changing $N\in\N$ and $\eta>0$, we can assume that the distortion of $\Psi_\delta$ is close to $1$ on $\hat C_n(\delta)$, and then
$$\Big|\frac{z_{n-j}(\delta)-z_{n-j+1}(\delta)}{\Psi_\delta'(w_{n-j}(\delta))}-1\Big|<\frac\ve4,$$
where $j\greq0$, $n-j\greq N$ and $0<|\delta|<\eta$. Thus, the statement follows from (\ref{eq:1/f'}) combined with the above estimate (cf. Lemma \ref{lem:<} (\ref{lit:<2})).
\end{proof}

We have (cf. formula (\ref{eq:Psi'2}))
\begin{equation}\label{eq:Psi'/}
\frac{\Psi_\delta'(k-n)}{\Psi_\delta'(-n)}= \Big(\frac{e^{n\delta}-1}{e^{(n-k)\delta}-1}\Big)^2e^{-k\delta}=\Big(\frac{\sinh(\frac{n}{2}\delta)}{\sinh(\frac{n-k}{2}\delta)}\Big)^2.
\end{equation}

\begin{lem}\label{lem:(f^k)'}
  For every $\alpha\in(-\pi/2,\pi/2)$ and $\ve>0$ there exist $N\greq1$ and $\eta>0$ such that for every $z\in\mc C_n(\delta)$
    $$\Big|\frac{1}{(f_\delta^k)'(z)}\frac{\Psi_\delta'(k-n)}{\Psi_\delta'(-n)}-1\Big|<e^{\ve n|\delta|}-1+\ve,$$
  where $k\greq1$, $n-k\greq N$, $0<|\delta|<\eta$ and $\alpha=\arg\delta$. 
\end{lem}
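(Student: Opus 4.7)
The strategy is to reduce Lemma \ref{lem:(f^k)'} to the previous Lemma \ref{lem:(f^j)'} by swapping the ``true'' points $w_n(\delta), w_{n-k}(\delta)$ (at which $\Psi_\delta'$ is evaluated in Lemma \ref{lem:(f^j)'}) for the nearby integer points $-n$ and $k-n$. Lemma \ref{lem:-n} says these pairs are close on the scale of $n$ (respectively $n-k$), and Lemma \ref{lem:Psi'/} quantifies the multiplicative distortion of $\Psi_\delta'$ under such a displacement in exactly the form $e^{\ve n|\delta|}-1+\ve$ that appears in the statement. The final bound then comes from composing everything via the multiplicative triangle inequality Lemma \ref{lem:<} (\ref{lit:<2}).

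Concretely, given $\ve>0$, fix a small $\ve':=\ve/10$ (the denominator is dictated by the combining step below) and decompose the quantity of interest as $X\cdot Y\cdot Z$, where
\begin{equation*}
X:=\frac{1}{(f_\delta^k)'(z)}\,\frac{\Psi_\delta'(w_{n-k}(\delta))}{\Psi_\delta'(w_{n}(\delta))},\quad
Y:=\frac{\Psi_\delta'(k-n)}{\Psi_\delta'(w_{n-k}(\delta))},\quad
Z:=\frac{\Psi_\delta'(w_{n}(\delta))}{\Psi_\delta'(-n)}.
\end{equation*}
Lemma \ref{lem:(f^j)'} applied with $\ve'$ in place of $\ve$ gives $|X-1|<\ve'$. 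For $Y$ and $Z$, invoke Lemma \ref{lem:-n} with $\ve'/K(\alpha)$ in place of $\ve$ (where $K(\alpha)$ is the constant from Lemma \ref{lem:Psi'/}) to place $w_n(\delta)$ and $w_{n-k}(\delta)$ inside the precise balls around $-n$ and $k-n\in\R^-$ respectively where Lemma \ref{lem:Psi'/} applies. That lemma (using its interchangeability of $w$ and $\tilde w$) then yields
\begin{equation*}
|Z-1|<e^{\ve' n|\delta|}-1+\ve',\qquad |Y-1|<e^{\ve'(n-k)|\delta|}-1+\ve'\leeq e^{\ve' n|\delta|}-1+\ve'.
\end{equation*}

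Finally, two successive applications of Lemma \ref{lem:<} (\ref{lit:<2}) with $|z|$ there playing the role of $n|\delta|$ combine the three bounds: from $|X-1|<e^{0\cdot n|\delta|}-1+\ve'$ and the bound on $|Y-1|$ we get $|XY-1|<e^{2\ve' n|\delta|}-1+4\ve'$, and combining with $|Z-1|$ yields $|XYZ-1|<e^{6\ve' n|\delta|}-1+10\ve'$. Since $\ve'=\ve/10$, both $6\ve'\leeq\ve$ and $10\ve'\leeq\ve$, so this is bounded by $e^{\ve n|\delta|}-1+\ve$, which is the claim (after enlarging $N$ and shrinking $\eta$ to meet the hypotheses of all three lemmas simultaneously). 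There is no genuine obstacle: the proof is a bookkeeping composition of the three preceding lemmas, the only mild care required being the factor of $2$ that Lemma \ref{lem:<} (\ref{lit:<2}) introduces at each multiplication, which forces us to start with $\ve'$ a universal constant smaller than $\ve$.
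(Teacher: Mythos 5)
Your proof is correct and takes essentially the same approach as the paper: decompose the quantity into the product $XYZ$, bound $X$ via Lemma \ref{lem:(f^j)'}, bound $Y$ and $Z$ by combining Lemma \ref{lem:-n} (scaled by $1/K(\alpha)$) with Lemma \ref{lem:Psi'/}, and merge the three estimates with Lemma \ref{lem:<} (\ref{lit:<2}). Your explicit bookkeeping with $\ve'=\ve/10$ is in fact a bit more careful than the paper, which fixes $\ve$ at the start and leaves the final rescaling implicit.
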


\begin{proof}
Fix $\ve>0$ and $\alpha\in(-\pi/2,\pi/2)$. Let $\alpha=\arg\delta$ and let $\tilde\ve=\ve/K(\alpha)$, where $K(\alpha)$ is the constant from Lemma \ref{lem:Psi'/}.

Since we can assume that $|w_{n}(\delta)+n|<\tilde\ve n$ and $|w_{n-k}(\delta)+(n-k)|<\tilde\ve(n-k)$ (cf. Lemma \ref{lem:-n}), Lemma \ref{lem:Psi'/} gives us
$$\Big|\frac{\Psi_\delta'(k-n)}{\Psi_\delta'(w_{n-k}(\delta))}-1\Big|<e^{\ve (n-k)|\delta|}-1+\ve,\textrm{ and }
\Big|\frac{\Psi_\delta'(w_{n}(\delta))}{\Psi_\delta'(-n)}-1\Big|<e^{\ve n|\delta|}-1+\ve.$$
Thus, the assertion follows from Lemma \ref{lem:(f^j)'} and Lemma \ref{lem:<} (\ref{lit:<2}).
\end{proof}

We conclude from Lemma \ref{lem:fatouff} and Proposition \ref{prop:CHm} that:
\begin{cor}\label{cor:B'}
For every $\alpha\in(-\pi/2,\pi/2)$ and $N\greq1$ there exist $\kappa>0$ and $\eta>0$ such that if $z\in f_\delta^{-1}(\mathcal{B}_{N-1}(\delta))$, then
   $$|f'_\delta(z)|>1+\kappa,$$
where $0<|\delta|<\eta$ and $\alpha=\arg\delta$.
\end{cor}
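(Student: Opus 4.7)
\emph{Strategy.} My plan is to prove the bound first for the parabolic limit $\delta=0$ and then propagate it to $\delta\neq 0$ via the uniform convergence of B\"{o}ttcher coordinates supplied by Proposition \ref{prop:CHm}.

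\emph{The case $\delta=0$.} Since $f_\delta'(z)=1+\delta+2z$, the equation $|f_0'(z)|=1$ on $\mc J_0$ amounts to $z \in \mc J_0 \cap \{|z+1/2|=1/2\}$. Using the conjugacy $\tau_0(z)=z+1/2$ of (\ref{eq:conj}) and setting $w=z+1/2$, this translates into $J_0 \cap \{|w|=1/2\}$ for the Julia set $J_0=J(p_0)$ of $p_0(w)=w^2+1/4$. For $|w|\leeq 1/2$ we have $|p_0(w)|\leeq|w|^2+1/4\leeq 1/2$, and on $\{|w|=1/2\}$ equality holds only at $w=\pm 1/2$ (the triangle inequality $|w^2+1/4|\leeq|w|^2+1/4$ being strict unless $w^2\in\R^+$). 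Hence $p_0$ maps $B(0,1/2)$ into itself; by Montel, $B(0,1/2)\subset K_0^\circ$, and total invariance of the Fatou set then yields $\{|w|=1/2\}\sms\{\pm 1/2\}\subset K_0^\circ$. Translating back one obtains
\[
\mc J_0 \cap \{|z+1/2|=1/2\}=\{0,-1\}=f_0^{-1}(0).
\]
Since $\mathcal{B}_{N-1}(0)$ is bounded away from $0$ by construction, its $f_0$-preimage (after closure) is a compact subset of $\mc J_0\sms\{0,-1\}$; by continuity $|f_0'(z)|\greq 1+2\kappa_0$ on this compact set, for some $\kappa_0=\kappa_0(N)>0$.

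\emph{Transfer to $\delta\neq 0$.} The cylinder labels $B_{N-1}\subset\partial\D$ and $T^{-1}(B_{N-1})$ are independent of $\delta$, so $f_\delta^{-1}(\mathcal{B}_{N-1}(\delta))=\vp_\delta(T^{-1}(B_{N-1}))$ and Proposition \ref{prop:CHm} gives $f_\delta^{-1}(\mathcal{B}_{N-1}(\delta))\to f_0^{-1}(\mathcal{B}_{N-1}(0))$ in the Hausdorff metric as $\delta\to 0$ with $\arg\delta=\alpha$. Joint continuity of $(\delta,z)\mapsto f_\delta'(z)=1+\delta+2z$ then degrades the bound from $1+2\kappa_0$ to $|f_\delta'(z)|>1+\kappa_0$ on $f_\delta^{-1}(\mathcal{B}_{N-1}(\delta))$ for all $\delta$ with $|\delta|<\eta$ and $\arg\delta=\alpha$, provided $\eta$ is small enough.

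\emph{Main obstacle.} The genuinely delicate step is excluding extra intersections of $\mc J_0$ with the critical circle $\{|z+1/2|=1/2\}$ beyond $\{0,-1\}$: at these two points $|f_0'|=1$ is forced, and a priori further such points could exist. The invariance argument above for the quadratic $p_0$ settles this cleanly. Lemma \ref{lem:fatouff} sits in the background, confirming the tangential approach of $\mc J_0$ to $0$ along the positive real axis and hence that the critical circle is indeed tangent to $\mc J_0$ at the parabolic point.
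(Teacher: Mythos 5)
Your proof is correct. Since the paper states this corollary with only a bare citation of Lemma \ref{lem:fatouff} and Proposition \ref{prop:CHm} and gives no written argument, the comparison is with what those references can reasonably supply. Your transfer step — identifying $f_\delta^{-1}(\mathcal{B}_{N-1}(\delta))$ with $\vp_\delta(T^{-1}(B_{N-1}))$, noting that $T^{-1}(\overline{B_{N-1}})$ is a compact subset of $\partial\D\sms\{\pm1\}$, and invoking the uniform convergence $\vp_\delta\to\vp_0$ of Proposition \ref{prop:CHm} together with the joint continuity of $(\delta,z)\mapsto 1+\delta+2z$ — is exactly what that proposition is positioned to deliver, and is presumably what the author had in mind. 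The genuinely new piece you supply is the base case: Lemma \ref{lem:fatouff} only constrains $\mc J_0$ in a small disc around the parabolic point, so by itself it does not rule out $\mc J_0$ re-entering the critical circle $\{|z+1/2|=1/2\}$ far from $0$. Your forward-invariance argument — $p_0(\overline{B(0,1/2)})\subset\overline{B(0,1/2)}$ with equality on the boundary only at $\pm 1/2$, hence $B(0,1/2)\subset K_0^{\circ}$ by Montel, and then total invariance of the Fatou set kills the rest of the circle — is a clean, self-contained way to pin down $\mc J_0\cap\{|f_0'|=1\}=\{0,-1\}$, and it is more explicit than what the cited lemma alone provides. Both proofs then finish with the same compactness-plus-uniform-convergence step. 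A minor stylistic point: you should say explicitly that you only consider $z\in\mc J_\delta$ (the corollary is used only on the Julia set, and $f_\delta^{-1}(\mathcal{B}_{N-1}(\delta))$ off $\mc J_\delta$ is irrelevant), and that it is the closures $\vp_\delta(T^{-1}(\overline{B_{N-1}}))$ that converge in the Hausdorff metric; neither affects the argument.
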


Let $\mc C'_n(\delta)$ be the set which is placed symmetrically to $\mc C_n(\delta)$ with respect to the critical point $c_\delta=-1/2-\delta/2$. So we have $f_\delta(\mc C'_n(\delta))=\mc C_{n-1}(\delta)$, and then we see that $\mc C_n(\delta)\cup\mc C'_n(\delta)=f_\delta^{-1}(\mc C_{n-1}(\delta))$, where $n\greq1$.

\begin{lem}\label{lem:>1}
For every $\alpha\in(-\pi/2,\pi/2)$ and $N\in\N$ there exists $\eta>0$ such that for every $k\greq 1$ and $z\in\mc C_{N+k}(\delta)\cup\mc C'_{N+k}(\delta)$ we have
$$|(f^k_\delta)'(z)|>1,$$
where $0<|\delta|<\eta$ and $\alpha=\arg\delta$.
\end{lem}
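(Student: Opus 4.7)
First I would reduce the case $z\in\mc C'_{N+k}(\delta)$ to $z\in\mc C_{N+k}(\delta)$ by symmetry. The involution $\sigma(z)=-1-\delta-z$ about the critical point $c_\delta=-1/2-\delta/2$ sends $\mc C_n(\delta)$ to $\mc C'_n(\delta)$ and satisfies $f_\delta\circ\sigma=f_\delta$, which forces $f_\delta'\circ\sigma=-f_\delta'$. Since $f_\delta^j(\sigma(z))=f_\delta^j(z)$ for $j\geq 1$, iterating the chain rule yields $|(f_\delta^k)'(\sigma(z))|=|(f_\delta^k)'(z)|$, so it suffices to prove the inequality for $z\in\mc C_{N+k}(\delta)$.

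For such $z$, Lemma \ref{lem:(f^k)'} applied with $n=N+k$ (so $n-k=N$ satisfies the hypothesis $n-k\geq N$) combined with formula (\ref{eq:Psi'/}) gives
\begin{equation*}
(f_\delta^k)'(z)=\left(\frac{\sinh((N+k)\delta/2)}{\sinh(N\delta/2)}\right)^{\!2}\!(1+\tilde\eta)^{-1},\qquad |\tilde\eta|<e^{\ve(N+k)|\delta|}-1+\ve.
\end{equation*}
The problem therefore reduces to establishing, for some fixed $\ve>0$ and all $|\delta|<\eta$ with $\arg\delta=\alpha$, the sinh-ratio estimate
\begin{equation*}
\left|\frac{\sinh((N+k)\delta/2)}{\sinh(N\delta/2)}\right|^{\!2}>e^{\ve(N+k)|\delta|}+\ve\qquad\text{for every }k\geq 1.
\end{equation*}

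To prove this estimate I would use the identity $|\sinh(se^{i\alpha})|^2=\sinh^2(s\cos\alpha)+\sin^2(s\sin\alpha)$ and split into two regimes in $(N+k)|\delta|$. In the \emph{parabolic} regime $(N+k)|\delta|\leq c$ for a suitable constant $c=c(N,\alpha)$, a Taylor expansion yields $|\sinh(s\delta/2)|^2=s^2|\delta|^2/4+s^4|\delta|^4\cos(2\alpha)/48+O(s^6|\delta|^6)$, so the ratio equals $((N+k)/N)^2\bigl(1+O((N+k)^2|\delta|^2)\bigr)$; for $\eta$ small this exceeds the right-hand side (close to $1$ when $\ve$ is small) with a margin at least $(1+1/N)^2-1\geq 2/N$. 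In the \emph{hyperbolic} regime $(N+k)|\delta|>c$, the lower bound $|\sinh((N+k)\delta/2)|^2\geq\sinh^2((N+k)|\delta|\cos\alpha/2)$ grows like $e^{(N+k)|\delta|\cos\alpha}/4$, while $|\sinh(N\delta/2)|^2\leq(\sinh(N|\delta|\cos\alpha/2)+1)^2$ remains bounded (since $N|\delta|$ is small); choosing $\ve<\cos\alpha/2$ then makes this exponential growth dominate the error factor $e^{\ve(N+k)|\delta|}$.

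The main obstacle will be to coordinate the choices of $\ve$, the threshold $c$, and the final $\eta$ (as functions of $N$ and $\alpha$) so that both regime estimates hold uniformly for every $k\geq 1$; in particular the parabolic analysis requires $\eta$ of order at most $1/(N\sqrt{|\cos(2\alpha)|})$ so that the fourth-order correction $s^4|\delta|^4\cos(2\alpha)/48$ in $|\sinh(s\delta/2)|^2$ does not overwhelm the $((N+k)/N)^2-1$ margin, while the hyperbolic estimate demands $c$ large enough that $e^{(N+k)|\delta|(\cos\alpha-\ve)}$ comfortably exceeds $1$; these constraints are compatible because $|\delta|$ can be chosen arbitrarily small.
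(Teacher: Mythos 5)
Your overall strategy — reduce $\mathcal C'$ to $\mathcal C$ by symmetry, invoke Lemma~\ref{lem:(f^k)'} together with (\ref{eq:Psi'/}) to rewrite $(f_\delta^k)'$ as a $\sinh$-ratio times a controlled factor, then split into parabolic and hyperbolic regimes — matches the paper's skeleton, and your symmetry reduction and hyperbolic estimate are both fine. But there is a genuine gap in how you apply Lemma~\ref{lem:(f^k)'} and in the parabolic regime, and the two problems are coupled.

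Lemma~\ref{lem:(f^k)'} produces, for given $\varepsilon>0$, a threshold $N_0=N_0(\alpha,\varepsilon)$ such that the estimate holds only when $n-k\geq N_0$. You apply it with $n-k=N$ where $N$ is the \emph{arbitrary} integer in the statement of the lemma you are trying to prove; there is no reason $N\geq N_0$. This is not cosmetic: in your parabolic regime the margin you extract from the Taylor expansion is $(1+1/N)^2-1\asymp 2/N$, so to beat the error $e^{\varepsilon(N+k)|\delta|}+\varepsilon$ you are forced to take $\varepsilon\lesssim 1/N$. That makes $N_0(\alpha,\varepsilon)$ grow with $N$, so you cannot simply choose $N$ large enough to guarantee $N\geq N_0$ — the requirement on $\varepsilon$ chases the requirement on $N$ in a circle. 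The paper breaks this circle in two steps that you omit: (i) it first reduces to the case of large $N$, observing that for small $N$ one can concatenate the large-$N$ result with Corollary~\ref{cor:B'} applied to the final $N'-N$ iterates (each contributing a factor $>1+\kappa$); and (ii) within the parabolic regime $(N+k)|\delta|\leq 1$ it handles $k\leq N$ entirely by Corollary~\ref{cor:B'} (so no $\sinh$-ratio estimate is needed there) and only uses the $\sinh$-ratio for $k>N$, where $((N+k)/N)^2>4$ gives a margin bounded away from $1$ \emph{uniformly in $N$}, allowing $\varepsilon$ to be fixed depending only on $\alpha$ (the paper takes $\varepsilon<\tfrac12\cos\alpha$). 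To repair your proof you need both the reduction to large $N$ and some device — Corollary~\ref{cor:B'} for $k\leq N$ being the natural one — that decouples $\varepsilon$ from $N$ in the parabolic regime.
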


\begin{proof}
Fix $\alpha\in(-\pi/2,\pi/2)$ and $\ve>0$ such that $\ve<1/2\cos\alpha$. We can assume that $N$ is large (cf. Corollary \ref{cor:B'}). So, let $N\in\N$ and $\eta>0$ be such that Lemma \ref{lem:(f^k)'} holds, and let $z\in\mc C_{N+k}(\delta)$ (if $z\in\mc C'_{N+k}(\delta)$, then we have the same estimates).

First, we assume that $(N+k)|\delta|\greq1$. There exists a constant $K_\alpha>0$ such that
   $$\Big|\sinh\Big(\frac{N+k}{2}\delta\Big)\Big|>K_\alpha e^{\frac{N+k}{2}\re\delta},$$
where $\alpha=\arg\delta$. So, changing $\eta>0$ if necessary, we get (cf. formula (\ref{eq:Psi'/}))
    $$\Big|\frac{\Psi_\delta'(-N)}{\Psi_\delta'(-N-k)}\Big|=\Big|\frac{\sinh(\frac{N+k}{2}\delta)}{\sinh(\frac{N}{2}\delta)}\Big|^2> e^{(N+k)\re\delta},$$
where $0<|\delta|<\eta$. So, if $\ve<1/2\cos\alpha$, then $(N+k)\re\delta>2\ve (N+k)|\delta|$ and $e^{(N+k)\re\delta}>e^{\ve (N+k)|\delta|}+\ve$.
So, the statement follows from Lemma \ref{lem:(f^k)'}.

Let $(N+k)|\delta|<1$. Changing $\eta>0$ if necessary, we can assume that $f'_\delta(z)>1$ where $z\in\mc C_{N+k}(z)$, $k\leeq N$ and $0<|\delta|<\eta$ (cf. Corollary \ref{cor:B'}). So, we will consider $2N<N+k<1/|\delta|$. Thus, using (\ref{eq:h3}), we obtain
$$\Big|\frac{\Psi_\delta'(-N)}{\Psi_\delta'(-N-k)}\Big|=\Big|\frac{\sinh(\frac{N+k}{2}\delta)}{\sinh(\frac{N}{2}\delta)}\Big|^2> \Big(\frac{\sin(1/2)}{\sin(1/4)}\Big)^2>2.$$
Because $2>e^{\ve (N+k)|\delta|}+\ve$, the statement follows from Lemma \ref{lem:(f^k)'}.
\end{proof}

\begin{cor}\label{cor:K}
For every $\alpha\in(-\pi/2,\pi/2)$ there exist $\kappa>0$ and $\eta>0$ such that for every $z\in\mc J_\delta$ and $n\greq1$
$$|(f_\delta^n)'(z)|>\kappa,$$
where $0<|\delta|<\eta$ and $\alpha=\arg\delta$.
\end{cor}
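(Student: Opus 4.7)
The plan is to decompose the orbit $z_0=z,\,z_1=f_\delta(z),\ldots,z_{n-1}$ according to its visits to the neighborhood $\mathcal{M}_{N-1}(\delta)$ of the repelling fixed point $0$, and to bound the product $|(f_\delta^n)'(z)|=\prod_{j=0}^{n-1}|f_\delta'(z_j)|$ by combining Lemma \ref{lem:>1} (expansion through visits) with Corollary \ref{cor:B'} (expansion outside).

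First I would fix $N$ large enough that Corollary \ref{cor:B'} (producing some $\kappa_0>0$) and Lemma \ref{lem:>1} apply with a common admissible $\eta>0$. A \emph{visit} is a maximal run $[T,T']$ of indices with $z_j\in\mathcal{M}_{N-1}(\delta)$. Inside such a visit the cylinder index decreases by one per step: if $z_T\in\mc C_{N+L}(\delta)$, then $z_{T+j}\in\mc C_{N+L-j}(\delta)$ and, in a visit that completes within the orbit, $z_{T+L}\in\mc C_N(\delta)$ with $z_{T+L+1}\in\mc C_{N-1}(\delta)\subset\mathcal{B}_{N-1}(\delta)$. The preimage $z_{T-1}$ of $z_T$ not in $\mathcal{M}_{N-1}(\delta)$ lies in $\mc C'_{N+L+1}(\delta)$, so I would absorb it into the visit to form the extended visit $[T-1,T+L]$: Lemma \ref{lem:>1} applied to $z_{T-1}\in\mc C'_{N+L+1}(\delta)$ with $k=L+1$ gives $|(f_\delta^{L+1})'(z_{T-1})|>1$, and Corollary \ref{cor:B'} at $z_{T+L}$ gives $|f_\delta'(z_{T+L})|>1+\kappa_0$, so each extended complete visit contributes a factor $>1+\kappa_0$. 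Outside indices $j$ not immediately preceding a visit satisfy $z_{j+1}\in\mathcal{B}_{N-1}(\delta)$, so Corollary \ref{cor:B'} gives $|f_\delta'(z_j)|>1+\kappa_0$. If the orbit starts in a visit ($T_1=0$, no pre-entry), Lemma \ref{lem:>1} applied directly to $z_0$ gives the same conclusion.

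Two boundary cases remain at the end of the orbit. If $z_{n-1}\in\mathcal{B}_{N-1}(\delta)$ is an isolated outside index, I would use the uniform bound $|f_\delta'(z)|=2|z-c_\delta|\greq 2\dist(c_\delta,\mc J_\delta)\greq C_0>0$, where $C_0$ is bounded below uniformly for small $\delta$ because $c_0=-1/2\notin\mc J_0$ and $\mc J_\delta\to\mc J_0$ (Theorem \ref{thm:Hmetric}). If instead the orbit ends in an incomplete visit $[T_p,n-1]$ with $m:=n-T_p\leeq L_p$, Lemma \ref{lem:(f^k)'} combined with (\ref{eq:Psi'/}) yields
\[
|(f_\delta^m)'(z_{T_p})|\asymp\left|\frac{\sinh((N+L_p)\delta/2)}{\sinh((N+L_p-m)\delta/2)}\right|^2,
\]
and I would bound this ratio below by a positive constant $K(\alpha)$ via the identity $|\sinh(a+bi)|^2=\sinh^2(a)+\sin^2(b)$, with separate treatment of the regimes where $(N+L_p)|\delta|\cos\alpha$ is small, order one, or large. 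Since at most one such boundary factor occurs in a given orbit, we obtain $|(f_\delta^n)'(z)|\greq\kappa:=\min(C_0,K(\alpha))>0$.

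The main obstacle will be the uniform lower bound on the sinh-ratio in the partial-visit case; the rest amounts to combinatorial bookkeeping of visits and direct application of Lemma \ref{lem:>1} and Corollary \ref{cor:B'}.
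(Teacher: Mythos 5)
Your plan follows the same basic route as the paper's proof: decompose the orbit using Lemma \ref{lem:>1} (excursions into $\mathcal{M}_{N-1}(\delta)$) and Corollary \ref{cor:B'} (steps landing in $\mathcal{B}_{N-1}(\delta)$), and reduce the whole problem to bounding below a $\sinh$-ratio for a terminal trajectory segment staying near the fixed point. The paper compresses the bookkeeping into one sentence (``it is enough to consider trajectories such that all points \dots are included in $\mathcal{M}_N(\delta)$''), whereas you spell it out; your account is more explicit and mostly correct, though there can be \emph{two} boundary factors rather than one (the entry index $z_{T_p-1}$ preceding an incomplete final visit cannot be absorbed and needs its own separate bound, e.g.\ your $C_0$).

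The genuine gap is in the partial-visit estimate. You invoke Lemma \ref{lem:(f^k)'} together with (\ref{eq:Psi'/}) and claim $|(f_\delta^m)'(z_{T_p})|\asymp|\sinh((N+L_p)\delta/2)/\sinh((N+L_p-m)\delta/2)|^2$, but Lemma \ref{lem:(f^k)'} only controls the multiplicative error by $e^{\ve n|\delta|}-1+\ve$ with $n=N+L_p$, so this is not a uniform $\asymp$. When $(N+L_p)|\delta|$ is large and $m$ is small, the $\sinh$-ratio is close to $1$ while $e^{\ve(N+L_p)|\delta|}$ is huge, and the resulting lower bound degenerates to $0$; choosing $\ve$ small or splitting into regimes does not repair this. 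The paper avoids the difficulty by using Lemma \ref{lem:(f^j)'} instead, which expresses $(f_\delta^k)'(z)$ through $\Psi_\delta'(w_{n-k}(\delta))/\Psi_\delta'(w_{n}(\delta))$ with a \emph{uniform} error $\ve$; then identity (\ref{eq:h3}), the sector containment $w_j(\delta)\in S^-(\theta)$ with $\theta=\pi/4-|\alpha|/2$ (to compare $|\sinh|^2$ with $\sinh^2(\re)$ up to a constant $\kappa_\alpha$), and the monotonicity $\re(w_n(\delta)\delta/2)<\re(w_{n-k}(\delta)\delta/2)<0$ from Lemma \ref{lem:trans} yield the lower bound directly. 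Your decomposition is sound, but the core estimate must be run with Lemma \ref{lem:(f^j)'} and the Fatou-coordinate points $w_j(\delta)$ rather than with Lemma \ref{lem:(f^k)'} and integer arguments.
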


\begin{proof}
Fix $\alpha\in(-\pi/2,\pi/2)$ and $N\in\N$ (large enough). Let $\eta>0$ be such that Lemma \ref{lem:>1} and Corollary \ref{cor:B'} (for some $\kappa>0$) hold.

Thus we see that it is enough to consider trajectories such that all points $z, f_\delta(z),\ldots,f_\delta^{n}(z)$ are included in the set $\mc M_{N}(\delta)$. Since $\alpha=\arg\delta$ and $w_j(\delta)\in S^-(\theta)$ where $\theta=\pi/4-|\alpha|/2$, we conclude from (\ref{eq:h3}) that there exists $\kappa_\alpha>0$, such that
\begin{equation*}
\bigg|\frac{\Psi_\delta'(w_{n-k}(\delta))}{\Psi_\delta'(w_{n}(\delta))}\bigg|= \bigg|\frac{\sinh^2(w_{n}(\delta)\frac\delta2)}{\sinh^2(w_{n-k}(\delta)\frac\delta2)}\bigg|>
\kappa_\alpha\frac{\sinh^2(\re(w_{n}(\delta)\frac\delta2))}{\sinh^2(\re(w_{n-k}(\delta)\frac\delta2))}
\end{equation*}
We can assume that $\re(w_n(\delta)\delta/2)<\re(w_{n-k}(\delta)\delta/2)<0$ (cf. Lemma \ref{lem:trans}), thus the latter expression is bounded below by $\kappa_\alpha$, and the assertion follows from Lemma \ref{lem:(f^j)'}.
\end{proof}

Now we prove generalized version of \cite[formula (4.8)]{HZ}.

\begin{lem}\label{lem:n^2}
For every $\alpha\in(-\pi/2,\pi/2)$ there exists $\eta>0$ such that for every $N\greq1$ there exists $K(N)>0$ such that if $f^n_\delta(z)\in\mc B_N(\delta)$, $n\greq1$ then
\begin{equation}\label{eq:n^2}
|(f^n_\delta)'(z)|>K(N)n^2,
\end{equation}
where $0<|\delta|<\eta$ and $\alpha=\arg\delta$.
\end{lem}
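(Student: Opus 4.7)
The plan is to bound $|(f_\delta^n)'(z)| = \prod_{j=0}^{n-1}|f_\delta'(z_j)|$, with $z_j:=f_\delta^j(z)$, by decomposing the orbit according to whether each $z_j$ lies in $\mc M_N$ or $\mc B_N$. Because $\vp_\delta$ conjugates $T$ to $f_\delta$ and $T(C_m)=C_{m-1}$, every maximal excursion of the orbit into $\mc M_N$ enters at some cylinder $\mc C_{m_i}$ with $m_i>N$ and moves monotonically through $\mc C_{m_i-1},\ldots,\mc C_{N+1}$ in consecutive steps before exiting to $\mc C_N \subset \mc B_N$ after exactly $l_i:=m_i-N$ iterates. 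Write $r$ for the number of such excursions and $L:=\sum_{i=1}^r l_i$ for the total count of deep iterates. Classify each shallow iterate $j$ as \emph{good} if $z_{j+1}\in\mc B_{N-1}$, so that Corollary~\ref{cor:B'} yields $|f_\delta'(z_j)|>1+\kappa$, and as \emph{bad} otherwise.

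For each deep excursion I apply Lemma~\ref{lem:(f^k)'} with the substitutions $n\to m_i$, $k\to l_i$; combined with formula~\eqref{eq:Psi'/} this gives
$$
|(f_\delta^{l_i})'(z_{a_i})| \;\geq\; C_1 \left| \frac{\sinh(m_i\delta/2)}{\sinh(N\delta/2)} \right|^{2} \;\geq\; C_2\,(m_i/N)^2,
$$
where the second inequality is verified by splitting into the parabolic regime $|\delta|m_i\lesssim 1$ (using $|\sinh w|\asymp|w|$) and the hyperbolic regime $|\delta|m_i\gtrsim 1$ (using $|\sinh w|\asymp e^{|\re w|}/2$ together with $|\re(m_i\delta/2)|\gtrsim m_i|\delta|$, which holds because $\alpha=\arg\delta$ is bounded away from $\pm\pi/2$). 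A bad shallow step $j$ must have $z_j\in\mc C'_{m+1}$ for some $m\geq N$: if $m>N$ I absorb the step into the following deep excursion and invoke Lemma~\ref{lem:>1} with parameters $(\tilde N,k)=(N,l_i+1)$ to ensure no net contraction, while if $m=N$ then $z_{j+1}\in\mc C_N$, $z_{j+2}\in\mc C_{N-1}\subset\mc B_{N-1}$, so step $j+1$ is itself a good shallow step. This yields an injection from the $m=N$ bad steps into the good shallow steps modulo at most one boundary case, so the total bad count $b$ satisfies $b\leq g+r+1$.

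Combined with the identity $g+b+L=n$ this forces $2g+r+L\geq n-1$, and so at least one of $g$, $L$, $r$ must exceed $(n-1)/4$. Assembling the per-block estimates yields
$$
|(f_\delta^n)'(z)| \;\geq\; C_3(N)\,(1+\kappa)^g \prod_{i=1}^r (m_i/N)^2,
$$
and a short case analysis delivers the $n^2$ bound in each subcase: if $g\geq cn$ then $(1+\kappa)^g$ exceeds $n^2$ for $n$ past an absolute threshold; if $L\geq cn$ then $\prod(1+l_i/N)^2\geq (1+L/N)^2\gtrsim n^2/N^2$; and if $r\geq cn$ then $\prod(1+1/N)^{2r}$ exceeds $n^2$ once $n$ is large compared to $N\log N$. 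Values of $n$ below the relevant thresholds contribute only a bounded factor absorbed into $K(N)$.

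The main obstacle I anticipate is the combinatorial bookkeeping of bad shallow steps when the orbit makes many short excursions—ensuring no double counting in the pairing of bad steps with good steps and with deep-excursion starts—together with the uniform handling of the parabolic/hyperbolic transition $m_i|\delta|\sim 1$ across all excursions. The latter is precisely what the $\sinh$ form of~\eqref{eq:Psi'/} manages cleanly; the former is handled by the injection described above.
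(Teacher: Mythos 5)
Your overall decomposition (good shallow steps, bad shallow steps, deep excursions) and the ingredients you invoke (Corollary~\ref{cor:B'}, Lemma~\ref{lem:>1}, Lemma~\ref{lem:(f^k)'} and the $\sinh$ formula~\eqref{eq:Psi'/}) match the paper's. The combinatorial accounting $b\leq g+r+1$ is also sound. But there is a genuine gap in how you assemble the per-excursion lower bounds.

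Your per-excursion estimate has the form $|(f_\delta^{l_i})'(z_{a_i})| \geq C_2\,(m_i/N)^2$, and $C_2$ is in general \emph{strictly less than} $1$. Indeed, using~\eqref{eq:h3}, $\bigl|\sinh(m_i\delta/2)\bigr|^2 = \sinh^2(m_i\re\delta/2) + \sin^2(m_i\im\delta/2)$; taking $m_i\im\delta$ near a multiple of $2\pi$ with $m_i\re\delta$ of order $1/\tan|\alpha|$, one finds
$\bigl|\sinh(m_i\delta/2)/\sinh(N\delta/2)\bigr|^2$ is of order $\cos^2\alpha\,(m_i/N)^2$, so $C_2$ is bounded by roughly $\cos^2\alpha$ (and the multiplicative error $(e^{\ve m_i|\delta|}+\ve)^{-1}$ from Lemma~\ref{lem:(f^k)'} only makes $C_2$ smaller). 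When you then multiply these bounds over all $r$ excursions, the honest product carries a factor $C_2^r$, and since $r$ may be comparable to $n$ this factor decays exponentially and cannot be absorbed into a constant $C_3(N)$ depending only on $N$. This breaks the $L$-case of your final trichotomy (where you invoke $\prod(1+l_i/N)^2\geq(1+L/N)^2$) and the $r$-case (where you invoke $(1+1/N)^{2r}$): both silently drop $C_2^r$.

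The fix, and what the paper actually does, is to use Lemma~\ref{lem:>1} to give each deep excursion (together with its preceding bad shallow step, i.e.\ a maximal run in $f_\delta^{-1}(\mc M_{N-1}(\delta))$) the clean bound $\geq 1$, apply the quadratic bound only once, to the \emph{longest} run $\tilde m$ (via bounded distortion and Corollary~\ref{cor:mod}~(\ref{cit:mod3}), rather than $\sinh$), and replace your three-way case split by a dichotomy on $k_n(z)$ (your $g$): either $(1+\kappa)^{k_n}>n^2$, or $k_n\leq 2\log n/\log(1+\kappa)\leq n/2$, in which case pigeonholing among the $\leq k_n+1$ runs gives $\tilde m\gtrsim n/k_n$, and the product $(1+\kappa)^{k_n}\cdot(n/k_n)^2$ beats $n^2$ because $(1+\kappa)^{k_n}/k_n^2$ is bounded below. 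You could also repair your version by replacing each per-excursion factor by $\max\bigl(1,\,C_2(m_i/N)^2\bigr)$ and then using only the largest term of the product; but at that point the superadditivity you wanted for the $L$-case is gone, and the argument collapses back to the paper's single-longest-run strategy.
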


\begin{proof}
Fix $\alpha\in(-\pi/2,\pi/2)$ and $\eta>0$ such that the segment joining $0$ and $\eta e^{i\alpha}$ is included in $\mc M^+_0\cup\{0\}$.

Let $L$ be a closed segment joining $\tilde\eta e^{i\alpha}$ to $\eta e^{i\alpha}$, where $0<\tilde \eta<\eta$. Then, because $I$ is included in the hyperbolic component $\mc M^+_0$, we can find $C>0$ and $\lambda>1$ such that
\begin{equation*}
|(f^n_\delta)'(z)|>C\lambda^n,
\end{equation*}
for every $z\in\mc J_\delta$ and $n\greq1$, provided $\delta\in I$. Thus, in order to prove lemma, it is enough to find $\tilde\eta>0$ which depends on $N\greq1$, such that (\ref{eq:n^2}) holds for $0<|\delta|<\tilde\eta$.

Fix $N\greq1$. Let $z_1,\ldots,z_m$ be a sequence of consecutive points from the trajectory $z,f_\delta(z)\ldots f_\delta^{n-1}(z)$, that are included in $f_\delta^{-1}(\mc M_{N-1}(\delta))$, and $f_\delta(z_m)\in f_\delta^{-1}(\mc B_{N-1}(\delta))$. Thus, we have $f_\delta(z_m)\in\mc C_N(\delta)\subset\mc B_N(\delta)$ and $z_1\in\mc C_{N+m}(\delta)\cup\mc C'_{N+m}(\delta)$. So, we can find $\tilde\eta>0$ (depending on $N$), such that
\begin{equation}\label{eq:m1}
|(f_\delta^m)'(z_1)|>1,
\end{equation}
where $0<|\delta|<\tilde\eta$ and $\alpha=\arg\delta$ (see Lemma \ref{lem:>1}).

Let $\tilde m$ be the length of the longest sequence $z'_1,\ldots,z'_{\tilde m}$ with properties as before. Because we can assume that the distortion of $f^{-\tilde m}_\delta$ is close to $1$ on a neighborhood of $\mc C_N(\delta)$ (cf. Lemma \ref{lem:distP}), there exists $K_1>0$ such that
\begin{equation}\label{eq:km}
|(f^{\tilde m}_\delta)'(z'_1)|> K_1\frac{|\mc C_{N}(\delta)|}{|\mc C_{N+\tilde m}(\delta)|}.
\end{equation}

Possibly changing $\tilde\eta>0$, we can assume that Corollary \ref{cor:B'} holds for some $\kappa>0$. Let $k_n(z)$ be the number of points from the trajectory, which are included in $f_\delta^{-1}(B_{N-1}(\delta))$. Then, using (\ref{eq:m1}) and (\ref{eq:km}), we conclude that
\begin{equation}\label{eq:kn}
|(f_\delta^n)'(z)|>K_1\frac{|\mc C_{N}(\delta)|}{|\mc C_{N+\tilde m}(\delta)|}(1+\kappa)^{k_n(z)},
\end{equation}

Obviously $|\mc C_{N+\tilde m}(\delta)|$ is bounded above, thus if $(1+\kappa)^{k_n(z)}>n^2$, then the assertion holds. Therefore, we will consider points $z\in\mc J_\delta$ for which $k_n(z)\leeq 2\log n/\log(1+\kappa)$.
Since we can assume that $n>\tilde n$, for some $\tilde n>1$ (depending on $N$), we can also assume that $k_n(z)\leeq n/2$.

So, there are at least $n/2$ points from the trajectory inside $f_\delta^{-1}(\mc M_{N-1}(\delta))$ and we conclude that $\tilde m\greq n/(2k_n(z))$. Thus, using Corollary \ref{cor:mod}, we get
\begin{equation*}\label{eq:tildem}
\frac{|\mc C_{N}(\delta)|}{|\mc C_{N+\tilde m}(\delta)|}>\frac{K_2(N)}{|\mc C_{N+\tilde m}(\delta)|}>K_3(N)(N+\tilde m)^2>K_4(N)\Big(\frac{n}{k_n(z)}\Big)^2.
\end{equation*}
Combining this with (\ref{eq:kn}), we obtain
$$|(f_\delta^n)'(z)|>K_5(N)(1+\kappa)^{k(z)}\Big(\frac{n}{k_n(z)}\Big)^2=K_5(N)\frac{(1+\kappa)^{k(z)}}{k_n^2(z)}\;n^2>K_6(N) n^2,$$
where $0<|\delta|<\tilde\eta$, and the proof is finished.
\end{proof}

\section{Invariant measures}\label{sec:miary}

The construction of the invariant measures which are equivalent to the conformal ones was carried out in \cite[Section 7]{J} and \cite[Section 6]{Ji}.
We used the method described in \cite{S}.

In this section we will not repeat the whole construction, but we will just define the partition and the jump transformation which are usually needed. Note that in our case the $f_\delta$-invariant and conformal measures were already denoted by $\mu_\delta$ and $\omega_\delta$ respectively (see Section \ref{sec:formalism}).

First we slightly modify the sets $\mc C_n^-(\delta)$. We will assume that $\mc C_n^-(\delta)$ contains $\overline z_{n+1}(\delta)$ instead of $\overline z_{n}(\delta)$. Thus, the sets $\{\mc C_n^-(\delta),\mc C_n^+(\delta)\}_{n\in\N}$ form a disjoint partition of $\mc J_\delta\sms\{0\}$ (in the construction $\{\mc C_{n-2}^-(\delta),\mc C_{n-2}^+(\delta)\}$ were denoted by $\mathscr B_n$, where $n\greq2$).

Define the \emph{jump transformation} $f_\delta^*:\mc J_\delta\sms\{0\}\rightarrow\mc J_\delta$ by
   $$f_\delta^*(z):=f_\delta^{n+2}(z)\;\;\textrm{ provided }\;\;z\in\mc C_n^-(\delta)\cup\mc C_n^+(\delta).$$
Note that for every $\mc C_n^\pm(\delta)$, the iteration $f_\delta^{n+2}$ (which maps $\mc C_n^\pm(\delta)$ injectively onto $\mc J_\delta$) as well as every inverse branch of $f_\delta^k$ defined on $\mc C_n^\pm(\delta)$, has uniformly bounded distortion (cf. Lemma \ref{lem:distP}).

Thus the construction of the unique (up to multiplicative constant) $f_\delta$-invariant measures $\mu_\delta$ equivalent to the conformal measure $\omega_\delta$ can be carried out. Note that the families denoted in the construction by $\mathscr D_n$, where $n\greq1$, consist of the sets $\bigcup_{k=n-1}^\infty \mc C_k^+(\delta)$ and $\bigcup_{k=n-1}^\infty \mc C_k^-(\delta)\cup\{0\}$.

We have $d(0)>1$, therefore $\mu_\delta$ is finite (see \cite[Theorem 9.10]{ADU}). So, we will assume that all the measures are normalized.
Next, we take
$\tilde{\mu_{\delta}}:=(\vp^{-1}_{\delta})_*\mu_{\delta}$, and $\tilde{\omega_{\delta}}:=(\vp^{-1}_{\delta})_*\omega_{\delta}$ (measures supported on
$\partial\D$).

Analogously as in \cite[Lemma 7.3]{J} and \cite[Lemma 6.4]{Ji} we obtain:

\begin{lem}\label{lem:mn}
For every $\alpha\in(-\pi/2,\pi/2)$ and $N\greq1$ there exist $D>1$, $\la(N)>1$ and $\eta>0$ such that
$$D^{-1}<\frac{d\mu_\delta}{d\omega_\delta}\Big|_{\mc B_N(\delta)}<\la(N),$$
where $0<|\delta|<\eta$, $\alpha=\arg\delta$ or $\delta=0$, and $D$ does not depend on $N$.
\end{lem}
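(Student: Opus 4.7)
The plan is to follow the Sullivan-type construction of \cite[Lemma 7.3]{J} and \cite[Lemma 6.4]{Ji}, adapted to non--real $\delta$. The jump transformation $f^*_\delta$ introduced above organizes $f_\delta$ as a tower over a uniformly expanding base, and unfolding this tower gives a representation of the density
\[
   \rho_\delta(z) := \frac{d\mu_\delta}{d\omega_\delta}(z)
\]
on $\mc J_\delta\sms\{0\}$ as a series of the form
\[
   \rho_\delta(z) = c_\delta\sum_{k=0}^{\infty}\sum_{\substack{y\in f_\delta^{-k}(z)\\ y\in\mc C_n^\pm(\delta),\; n\greq k}} |(f_\delta^k)'(y)|^{-\mc D(\delta)},
\]
where $c_\delta$ is a normalization constant. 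The $k=0$ summand equals $\mathbf 1_{\mc J_\delta\sms\{0\}}$, which immediately yields $\rho_\delta\greq c_\delta$ on $\mc B_N(\delta)$. Uniform positivity of $c_\delta$ as $\delta$ approaches $0$ along $\mc R(\alpha)$ follows from finiteness of $\mu_\delta$ (which uses $d(0)>1$, see \cite[Theorem 9.10]{ADU}) together with the cylinder size estimates from Corollary \ref{cor:mod} and the conformality of $\omega_\delta$; this gives the universal lower bound $D^{-1}$, not depending on $N$.

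For the upper bound I would fix $z\in\mc B_N(\delta)$ and estimate each summand separately. Bounded distortion on the cylinder $\mc C_n^\pm(\delta)$ containing $y$ (Lemma \ref{lem:distP}) lets one replace $|(f_\delta^k)'(y)|^{-\mc D(\delta)}$, up to a universal multiplicative constant, by a ratio of $\omega_\delta$--masses of cylinders. Since $f_\delta^k(y)=z\in\mc B_N(\delta)$, Lemma \ref{lem:n^2} yields $|(f_\delta^k)'(y)|\greq K(N)k^2$. Using Corollary \ref{cor:mod} to control the total $\omega_\delta$--mass at a given cylinder depth, the $k$-th level contribution is bounded by a constant times $k^{-2\mc D(\delta)}$. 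Because $\mc D(\delta)>1/2$ (indeed $\mc D(0)>1$ and $\mc D$ is continuous at $0$ along $\mc R(\alpha)$ by Theorem M), the series converges and produces the upper bound $\la(N)$.

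The main obstacle is the combinatorial bookkeeping of preimages: at level $k$ one has to sum over all inverse branches of $f_\delta^k$ at $z$, and in particular account for trajectories that first wander deep into the cylinders $\mc C_n^\pm(\delta)$ before returning to $\mc B_N(\delta)$. What makes this manageable is the combination of Lemma \ref{lem:n^2} (uniform quadratic expansion at return times) with Lemma \ref{lem:distP} and Corollary \ref{cor:mod} (distortion and conformal mass control on cylinders); once these ingredients are in place, the calculation reduces to the same series manipulation as in \cite[Lemma 7.3]{J} and \cite[Lemma 6.4]{Ji}, with the real parameter $c$ replaced by $\delta=te^{i\alpha}$.
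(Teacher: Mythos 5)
The paper does not actually write out a proof of Lemma~\ref{lem:mn}; it declares it follows ``analogously'' to [J, Lemma 7.3] and [Ji, Lemma 6.4] via the Schweiger jump-transformation construction recalled in Section~\ref{sec:miary}. Your sketch reconstructs exactly that argument---unfold the tower over $f_\delta^*$ to obtain the series expansion of $d\mu_\delta/d\omega_\delta$, bound the terms with Lemmas~\ref{lem:distP}, \ref{lem:n^2} and Corollary~\ref{cor:mod}, and control the normalizer via $\mc D(0)>1$---so it is the same approach and is correct modulo routine details (the density of the $f_\delta^*$-invariant base measure with respect to $\omega_\delta$ is a bounded function rather than the constant $c_\delta$, and the preimages at level $k$ with return time $>k$ form essentially a single chain into the cylinders, which is what makes the $\sum_k k^{-2\mc D(\delta)}$ bound close).
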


\begin{lem}\label{lem:sumofmeasures}
   There exists constant $H>0$, such that for every $\alpha\in(-\pi/2,\pi/2)$ and $\varepsilon>0$ there exist $N\in\N$, $\eta>0$ such that
      $$(1-\varepsilon)H\sum_{k=n}^{\infty}\tilde{\omega_\delta}(C_k)
      <\tilde{\mu_\delta}(C_n)<(1+\varepsilon)H\sum_{k=n}^{\infty}\tilde{\omega_\delta}(C_k),$$
   where $n>N$, $0<|\delta|<\eta$ and $\alpha=\arg\delta$ or $\delta=0$.
\end{lem}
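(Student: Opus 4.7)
The plan is to imitate \cite[Lemma 7.3]{J} and \cite[Lemma 6.4]{Ji}. The starting point is the tower representation of $\mu_\delta$ associated to the jump transformation $f_\delta^*$:
\begin{equation*}
\mu_\delta(A)=Z_\delta^{-1}\sum_{n\geq 0}\sum_{k=0}^{n+1}\nu_\delta\bigl(\mc C_n^\pm(\delta)\cap f_\delta^{-k}(A)\bigr),
\end{equation*}
where $\nu_\delta$ is the unique $f_\delta^*$-invariant probability measure equivalent to $\omega_\delta$, with H\"older density $h_\delta^*:=d\nu_\delta/d\omega_\delta$ bounded above and below uniformly for small $\delta$ with $\arg\delta=\alpha$, and $Z_\delta=\sum_n (n+2)\nu_\delta(\mc C_n(\delta))$ is Kac's constant, finite thanks to $d(0)>1$. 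Existence of $\nu_\delta$ with these properties follows from the Perron--Frobenius--Ruelle theorem applied to $f_\delta^*$, using the uniform bounded distortion of its inverse branches (Lemma \ref{lem:distP} and Lemma \ref{lem:(f^j)'}).

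I would then specialize to $A=\mc C_m(\delta)$ with $m>N$ large. Since $\{\mc C_n^\pm(\delta)\}$ is a refining partition with $f_\delta(\mc C_{n+1}^\pm)=\mc C_n^\pm$ for $n\geq 0$, for $0\leq k\leq n$ we have $\mc C_n^\pm\cap f_\delta^{-k}(\mc C_m(\delta))=\mc C_{m+k}^\pm$ when $n=m+k$ and is empty otherwise. The double sum therefore splits into a main term
\begin{equation*}
M_m(\delta):=Z_\delta^{-1}\sum_{j\geq m}\nu_\delta(\mc C_j(\delta))
\end{equation*}
and a top-of-tower term $E_m(\delta)$ collecting the $k=n+1$ contributions. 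The $\mc D(\delta)$-conformality of $\omega_\delta$ combined with bounded distortion of $f_\delta^{n+1}|_{\mc C_n^\pm}$ yields $E_m(\delta)\lesssim\omega_\delta(\mc C_m(\delta))\sum_n\omega_\delta(\mc C_n(\delta))\lesssim\omega_\delta(\mc C_m(\delta))$, while Corollary \ref{cor:mod} together with $\mc D(\delta)>1$ forces $\omega_\delta(\mc C_m(\delta))/\sum_{k\geq m}\omega_\delta(\mc C_k(\delta))\to 0$ uniformly, so $E_m=o(M_m)$.

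Finally I would pass from $\nu_\delta$ to $\omega_\delta$. H\"older continuity of $h_\delta^*$ and $\diam\mc C_n(\delta)\to 0$ (Corollary \ref{cor:mod}) give $h_\delta^*|_{\mc C_n(\delta)}=h_\delta^*(0)+o(1)$ as $n\to\infty$, uniformly in $\delta$; continuity of $\delta\mapsto h_\delta^*(0)$ and of $\delta\mapsto Z_\delta$ then let me fix $H:=h_0^*(0)/Z_0$ so that $Z_\delta^{-1}\nu_\delta(\mc C_n(\delta))=(H+o(1))\,\omega_\delta(\mc C_n(\delta))$ for $n>N$. Substituting in $M_m$ and using $\tilde\mu_\delta(C_n)=\mu_\delta(\mc C_n(\delta))$, $\tilde\omega_\delta(C_n)=\omega_\delta(\mc C_n(\delta))$ yields the claimed two-sided bound. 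The main obstacle is the required uniformity in $\delta\to 0$ along $\arg\delta=\alpha$: one needs continuous dependence of the leading eigenfunction $h_\delta^*$ and of the normalization $Z_\delta$, together with uniform H\"older estimates---a parabolic analogue of spectral stability of the transfer operator. As in \cite{J,Ji}, this rests on the uniform distortion bounds from Section \ref{sec:cyl} and the preceding section, plus the uniform convergence $\vp_\delta\to\vp_0$ of Proposition \ref{prop:CHm}.
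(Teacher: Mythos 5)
Your approach — the Rokhlin/Kac tower over the jump transformation $f_\delta^*$, with an $f_\delta^*$-invariant measure $\nu_\delta$ comparable to $\omega_\delta$ — is indeed the one the paper defers to: Lemma~\ref{lem:sumofmeasures} is asserted ``analogously as in'' \cite[Lemma 7.3]{J} and \cite[Lemma 6.4]{Ji}, which follow Schweiger's method \cite{S} referenced in Section~\ref{sec:miary}. The splitting of $\tilde\mu_\delta(C_m)$ into the diagonal main term $M_m=Z_\delta^{-1}\sum_{j\geq m}\nu_\delta(\mc C_j(\delta))$ and the top-of-tower term $E_m$ is the correct skeleton, and the identification $H=h_0^*(0)/Z_0$ together with H\"older continuity of $h_\delta^*$, shrinking of the $\mc C_n(\delta)$, and the weak* convergences from Propositions~\ref{prop:o} and~\ref{prop:mu} is how one would finish.

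There is, however, a genuine gap where you dispose of $E_m$. You claim ``Corollary~\ref{cor:mod} together with $\mc D(\delta)>1$ forces $\omega_\delta(\mc C_m(\delta))/\sum_{k\geq m}\omega_\delta(\mc C_k(\delta))\to 0$ uniformly.'' In the parabolic range $m|\delta|\leeq 1$ this works (the ratio is $\asymp 1/m$), but in the hyperbolic range $m|\delta|>1$ Corollary~\ref{cor:mod}(\ref{cit:mod1}) sandwiches $|\mc C_n(\delta)|$ between $|\delta|^2e^{-Kn|\delta|}$ and $|\delta|^2 e^{-n|\delta|/K}$ with \emph{mismatched} constants in the exponents; the resulting two-sided bound on $\omega_\delta(\mc C_m)/\sum_{k\geq m}\omega_\delta(\mc C_k)$ contains a factor $e^{(K-K^{-1})\mc D\, m|\delta|}$, which blows up as $m|\delta|\to\infty$ rather than going to~$0$. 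So Corollary~\ref{cor:mod} alone is not enough. The correct and cleaner argument does not use cylinder-size asymptotics at all: by conformality, $\omega_\delta(\mc M_{m-1}(\delta))=\int_{\mc M_m(\delta)}|f_\delta'|^{\mc D(\delta)}\,d\omega_\delta$, hence
$$\frac{\omega_\delta(\mc C_m(\delta))}{\omega_\delta(\mc M_{m-1}(\delta))}=1-\frac{\omega_\delta(\mc M_m(\delta))}{\omega_\delta(\mc M_{m-1}(\delta))}\leeq 1-\Big(\sup_{\mc M_m(\delta)}|f_\delta'|\Big)^{-\mc D(\delta)},$$
and $\sup_{\mc M_N(\delta)}|f_\delta'|=\sup_{\mc M_N(\delta)}|1+\delta+2\vp_\delta|\leeq 1+|\delta|+2r$ with $r\to 0$ by Corollary~\ref{cor:M}. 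This bound is uniform for \emph{all} $m>N$, including $m|\delta|\gg 1$, and is what actually forces $E_m=o(M_m)$. Beyond this, your final paragraph correctly flags the real technical weight of the lemma (uniform H\"older control and continuity of $h_\delta^*$, $Z_\delta$ along $\arg\delta=\alpha$); the paper offloads exactly this to \cite{J}, \cite{Ji}, so deferring it is consistent with how the paper itself treats the statement.
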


It follows from \cite[Theorem 11.2]{Mii} that:

\begin{prop}\label{prop:o}
For every $\alpha\in(-\pi/2,\pi/2)$ the measure $\tilde{\omega_0}$ is equal to weak* limit of $\tilde{\omega_\delta}$, where $\delta\rightarrow0$ and $\alpha=\arg\delta$.
\end{prop}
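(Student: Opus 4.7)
The plan is to deduce the weak-$*$ convergence $\tilde\omega_\delta\to\tilde\omega_0$ on $\partial\D$ from weak-$*$ convergence of the conformal measures $\omega_\delta\to\omega_0$ on $\C$ supplied by Theorem 11.2 of \cite{Mii}, by transporting through the B\"ottcher coordinates using the uniform convergence of Proposition \ref{prop:CHm}. The central observation is that each $\vp_\delta$ restricts to a homeomorphism of $\partial\D$ onto the Jordan curve $\mc J_\delta$, and $\tilde\omega_\delta=(\vp_\delta^{-1})_*\omega_\delta$.

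Fix $g\in C(\partial\D)$. Since $\vp_0:\partial\D\to\mc J_0$ is a homeomorphism, the function $g\circ\vp_0^{-1}$ is continuous on $\mc J_0$, so by Tietze's extension theorem we may choose $\tilde g\in C(\C)$ of compact support that agrees with $g\circ\vp_0^{-1}$ on $\mc J_0$. McMullen's theorem then yields
\begin{equation*}
\int_\C\tilde g\,d\omega_\delta\;\longrightarrow\;\int_\C\tilde g\,d\omega_0=\int_{\mc J_0}(g\circ\vp_0^{-1})\,d\omega_0=\int_{\partial\D}g\,d\tilde\omega_0.
\end{equation*}
On the other hand
\begin{equation*}
\int_{\partial\D}g\,d\tilde\omega_\delta=\int_{\mc J_\delta}(g\circ\vp_\delta^{-1})\,d\omega_\delta=\int_{\mc J_\delta}\tilde g\,d\omega_\delta+\int_{\mc J_\delta}\bigl(g\circ\vp_\delta^{-1}-\tilde g\bigr)d\omega_\delta,
\end{equation*}
so it remains to show the last integral vanishes in the limit. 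For $z=\vp_\delta(s)\in\mc J_\delta$ one has $(g\circ\vp_\delta^{-1})(z)=g(s)=\tilde g(\vp_0(s))$, hence
\begin{equation*}
\bigl|(g\circ\vp_\delta^{-1})(z)-\tilde g(z)\bigr|=\bigl|\tilde g(\vp_0(s))-\tilde g(\vp_\delta(s))\bigr|.
\end{equation*}
By Proposition \ref{prop:CHm} the maps $\vp_\delta$ converge to $\vp_0$ uniformly on $\partial\D$; by uniform continuity of $\tilde g$ on a fixed compact set containing all nearby $\mc J_\delta$, the supremum over $\partial\D$ of the right-hand side tends to zero. Since $\omega_\delta$ is a probability measure, the second integral vanishes in the limit, and combining with the first yields the desired weak-$*$ convergence.

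The only subtlety is bookkeeping: $\vp_0^{-1}$ enters implicitly through the auxiliary extension $\tilde g$ while $\vp_\delta^{-1}$ appears explicitly in the integrand, and these are reconciled by the identity $\tilde g\circ\vp_0=g$ on $\partial\D$ together with the uniform convergence $\vp_\delta\to\vp_0$. Everything else is a routine application of McMullen's theorem and the Tietze trick, and the uniform convergence of the B\"ottcher maps is exactly the content Proposition \ref{prop:CHm} is there to supply.
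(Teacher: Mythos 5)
Your argument is correct and takes essentially the same route the paper points at: the paper simply states that Proposition \ref{prop:o} ``follows from [Theorem 11.2]{Mii}'' without writing out the transfer from weak$^*$ convergence of $\omega_\delta$ on $\C$ to weak$^*$ convergence of the pullbacks $\tilde\omega_\delta$ on $\partial\D$, and your Tietze-extension argument combined with Proposition \ref{prop:CHm} is precisely the intended bookkeeping.
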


\begin{prop}\label{prop:mu}
For every $\alpha\in(-\pi/2,\pi/2)$ the measure $\tilde{\mu_0}$ is equal to weak* limit of $\tilde{\mu_\delta}$, where $\delta\rightarrow0$ and $\alpha=\arg\delta$.
\end{prop}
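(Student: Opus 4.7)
The plan is a weak*-compactness plus identification argument. Since each $\tilde{\mu_\delta}$ is a probability measure on the compact space $\partial\D$, any sequence $\delta_n\rightarrow0$ with $\arg\delta_n=\alpha$ admits, by Banach--Alaoglu, a weak* convergent subsequence $\tilde{\mu_{\delta_{n_k}}}\rightharpoonup\tilde{\mu}^*$. Each $\tilde{\mu_{\delta_n}}$ is $T$-invariant, and $T$-invariance is preserved under weak* limits, so $\tilde{\mu}^*$ is $T$-invariant. It therefore suffices to show $\tilde{\mu}^*=\tilde{\mu_0}$ for every subsequential limit.

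Next I would establish tightness near the parabolic point $1\in\partial\D$:
\[
\lim_{N\rightarrow\infty}\ \sup_{0<|\delta|<\eta,\ \arg\delta=\alpha}\tilde{\mu_\delta}(M_N)=0.
\]
By Lemma \ref{lem:sumofmeasures} together with the Fubini rearrangement $\sum_{n>N}\sum_{k\geq n}a_k=\sum_{k>N}(k-N)a_k$, this reduces to $\sum_{k>N}(k-N)\,\tilde{\omega_\delta}(C_k)\rightarrow0$ uniformly. Conformality of $\omega_\delta$ together with bounded distortion on cylinders (Lemma \ref{lem:distP}) gives $\tilde{\omega_\delta}(C_k)\asymp|\mc C_k(\delta)|^{\mc D(\delta)}$, and Corollary \ref{cor:mod}(\ref{cit:mod3}) then yields $\tilde{\omega_\delta}(C_k)\lesssim k^{-2\mc D(\delta)}$. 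Since $\mc D(\delta)\rightarrow\mc D(0)>1$, the resulting series converges uniformly in small $\delta$. Sandwiching $\mathbf 1_{M_N}$ between continuous bump functions then transfers this to $\tilde{\mu}^*(M_N)\rightarrow0$, and in particular $\tilde{\mu}^*(\{1\})=0$.

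The identification $\tilde{\mu}^*|_{B_N}=\tilde{\mu_0}|_{B_N}$ for each fixed $N$ uses the construction of $\mu_\delta$ from $\omega_\delta$ via the jump transformation $f_\delta^*$ recalled above. Crucially, the partition $\{C_n^\pm\}$ and the return time $\tau\equiv n+2$ on $C_n^\pm$ are intrinsic to $\partial\D$ and independent of $\delta$; only the transfer weights $|(f_\delta^\tau)'(\varphi_\delta)|^{-\mc D(\delta)}$ depend on $\delta$, and by Proposition \ref{prop:CHm}, $\mc D(\delta)\rightarrow\mc D(0)$, and $f_\delta\rightarrow f_0$ they converge uniformly on each $C_n$. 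Uniform bounded distortion together with uniform expansion of $f_\delta^*$ (Corollaries \ref{cor:B'}, \ref{cor:K} and Lemma \ref{lem:n^2}) provide a uniform spectral gap for the associated transfer operator and a summable majorant for the series defining the $f_\delta^*$-invariant density $h_\delta^*$. Hence $h_\delta^*$ and the induced $T$-invariant density converge uniformly on $B_N$, whence $\tilde{\mu}^*|_{B_N}=\tilde{\mu_0}|_{B_N}$.

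The main obstacle is this last step, namely propagating the weak* convergence $\tilde{\omega_\delta}\rightarrow\tilde{\omega_0}$ (Proposition \ref{prop:o}) through the jump construction to uniform convergence of the invariant density. This requires a $\delta$-uniform dominant for the series defining $h_\delta^*$, which is exactly where the estimates of the preceding section enter. Combining tightness from the second paragraph with identification on each $B_N$ via a standard $\varepsilon/3$ argument then yields $\int\psi\,d\tilde{\mu_{\delta_{n_k}}}\rightarrow\int\psi\,d\tilde{\mu_0}$ for every $\psi\in C(\partial\D)$, so $\tilde{\mu}^*=\tilde{\mu_0}$.
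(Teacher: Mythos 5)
Your framework (weak\(^*\) compactness, invariance of the limit, uniform tightness near \(1\in\partial\D\), and identification on each \(B_N\)) is sound, and the tightness estimate is correct: \(\tilde\mu_\delta(M_N)\lesssim\sum_{k>N}(k-N)\tilde\omega_\delta(C_k)\lesssim\sum_{k>N}k^{1-2\mc D(\delta)}\) converges uniformly because \(\mc D(\delta)\to\mc D(0)>1\). However, your identification step takes a much heavier route than the paper's and, as you acknowledge, leaves a genuine gap.

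The paper does not attempt to show convergence of the invariant densities at all. Instead it exploits the uniqueness (up to a scalar) of the \(f_0\)-invariant measure equivalent to \(\omega_0\), which is already available from the jump-transformation construction. Concretely: Proposition \ref{prop:CHm} gives \(f_0\)-invariance of any subsequential weak\(^*\) limit \(\hat\mu_0\); the \(\delta\)-uniform density bounds \(D^{-1}<d\mu_\delta/d\omega_\delta<\lambda(N)\) on \(\mc B_N(\delta)\) from Lemma \ref{lem:mn}, passed to the limit via Proposition \ref{prop:o}, show \(\hat\mu_0\ll\omega_0\) on \(\mc J_0\setminus\{0\}\); uniqueness then forces \(\hat\mu_0=c\mu_0\) on \(\mc J_0\setminus\{0\}\) for some \(c\in(0,1]\); and Lemma \ref{lem:sumofmeasures}, which pins \(\tilde\mu_\delta(C_n)\) and \(\tilde\mu_0(C_n)\) to the same multiple of \(\sum_{k\geq n}\tilde\omega_\delta(C_k)\), forces \(c=1\), which simultaneously rules out an atom at the fixed point. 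This is a purely soft argument: no quantitative spectral stability is required.

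By contrast, you assert a ``uniform spectral gap'' for the transfer operators of the jump transformations \(f_\delta^*\) and uniform convergence of the invariant densities \(h_\delta^*\). None of the lemmas you cite (Corollaries \ref{cor:B'}, \ref{cor:K}, Lemma \ref{lem:n^2}) establish a spectral gap, let alone one that is uniform in \(\delta\) near the parabolic parameter, nor do they give a summable majorant for the density series; they supply only the raw distortion and expansion inputs. Making this rigorous is a nontrivial perturbation theorem for transfer operators of countable-branch Markov maps, and it would amount to re-proving a quantitatively stronger statement than Proposition \ref{prop:mu} actually requires. So while your strategy could plausibly be carried through, as written the key step is an assertion rather than a proof, and the cleaner move — which you did not take — is to use the uniqueness of the equilibrium state equivalent to \(\omega_0\) together with Lemmas \ref{lem:mn} and \ref{lem:sumofmeasures} to identify the limit directly.
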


\begin{proof}
Fix $\alpha\in(-\pi/2,\pi/2)$. Let $\hat\mu_0$ be a week* limit of a sequence $\mu_{\delta_n}$ where $\delta_n\rightarrow0$ and $\alpha=\arg\delta_n$. Since $\vp_{\delta_n}$ converges uniformly to $\vp_0$ (see Proposition \ref{prop:CHm}), $\hat\mu_0$ is an $f_0$-invariant measure.

Next, we conclude from Proposition \ref{prop:o}, Lemma \ref{lem:mn}, and the uniqueness of the measure $\mu_0$, that there exists a constant $c\in(0,1]$ such that $\hat\mu_0=c\mu_0$ on the set $\mc J_0\sms\{0\}$. But we see from Lemma \ref{lem:sumofmeasures} that $c=1$, hence there are no atom at $0$ and $\hat\mu_0$=$\mu_0$.
\end{proof}

Now we are going to estimate $\tilde{\mu_\delta}(C_n)$.
We have $\tilde{\omega_\delta}(C_n)=\omega_\delta(\mc C_n(\delta))\asymp|\mc C_n(\delta)|^{\mc D(\delta)}$. Thus, Lemma \ref{lem:sumofmeasures} combined with Lemma \ref{lem:size} and (\ref{eq:Psi-n}) suggests that $\tilde{\mu_\delta}(C_n)$ can be estimated using the following expressions
   $$|\delta|^{2\mc D(\delta)}\int_{n}^{\infty}\Big|\frac{e^{x\delta}}{(e^{x\delta}-1)^{2}}\Big|^{\mc D(\delta)}|e^{\pm\ve x\delta}|dx =|\delta|^{2\mc D(\delta)-1}\int_{n|\delta|}^{\infty}\Big|\frac{e^{vs}}{(e^{vs}-1)^{2}}\Big|^{\mc D(\delta)}|e^{\pm\ve vs}|ds,$$
where $x\delta=xv|\delta|=vs$ (i.e. $x|\delta|=s$).
So, in order to state precise estimates (see Lemma \ref{lem:est}), let us define
\begin{equation}\label{eq:A}
   \Lambda_\ve^h(z):=\Big|\frac{e^{z}}{(e^{z}-1)^{2}}\Big|^h|e^{\ve z}| =\Big|\frac{1/4}{\sinh^2(z/2)}\Big|^h|e^{\ve z}|, 
\end{equation}
where $h>1$, $\ve\in[-1,1]$, and $\re z>0$.

Note that there exists $K>1$ (depending on $\alpha$) such that
\begin{equation*}\label{eq:ll}
 \begin{array}{ll}
   \Lambda_\ve^h(vt)<K e^{t(-h+\ve)\cos\alpha} &\textrm{ for }\;\;t\in(1,\infty),\\
   \Lambda_\ve^h(vt)<K t^{-2h} &\textrm{ for }\;\;t\in(0,1],
 \end{array}
\end{equation*}
where $v=e^{i\alpha}$. If $-h+\ve<0$ then there exists $\tilde K>1$ such that
\begin{equation}\label{eq:lll}
 \begin{array}{ll}
   \int_{t}^{\infty}\Lambda_\ve^h(vs)ds<\tilde K e^{t(-h+\ve)\cos\alpha} &\textrm{ for }\;\;t\in(1,\infty),\\
   \int_{t}^{\infty}\Lambda_\ve^h(vs)ds<\tilde K t^{-2h+1} &\textrm{ for }\;\;t\in(0,1].
 \end{array}
\end{equation}

\begin{lem}\label{lem:est}
   There exists $H_\mu>0$, and for every $\alpha\in(-\pi/2,\pi/2)$, $\varepsilon\in(0,1)$ there exist $N\in\N$, $\eta>0$ such that
     \begin{equation*}
        (1-\varepsilon)H_\mu\int^\infty_{n|\delta|}\Lambda_{-\varepsilon}^{\mc D(\delta)}(vs) ds<\frac{\tilde{\mu_\delta}( C_n)}{|\delta|^{2\mc D(\delta)-1}}< (1+\varepsilon)H_\mu\int^\infty_{n|\delta|}\Lambda_{\varepsilon}^{\mc D(\delta)}(vs) ds,
     \end{equation*}
   where $n>N$, $0<|\delta|<\eta$, $\alpha=\arg\delta$ and $v=e^{i\alpha}$.
\end{lem}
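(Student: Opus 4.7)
The plan is to decompose the argument into three stages: (i) invoke Lemma \ref{lem:sumofmeasures} to reduce $\tilde\mu_\delta(C_n)$ to the tail sum of $\tilde\omega_\delta(C_k) = \omega_\delta(\mc C_k(\delta))$; (ii) prove the uniform asymptotic $\omega_\delta(\mc C_k(\delta)) = H_\omega |\mc C_k(\delta)|^{\mc D(\delta)}(1+o(1))$ with a single constant $H_\omega > 0$; (iii) insert the explicit formula $|\Psi_\delta'(-k)| = |\delta|^2 \Lambda_0^1(k\delta)$ obtained from (\ref{eq:Psi-n}) and convert the resulting sum over $k\geq n$ into the integral $\int_{n|\delta|}^\infty \Lambda^{\mc D(\delta)}_{\pm\ve}(vs)\,ds$ by a Riemann-sum comparison. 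The final constant will be $H_\mu := H \cdot H_\omega$.

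For stage (ii), the jump-transformation structure of Section \ref{sec:miary} combined with the conformality of $\omega_\delta$ gives, using the fact that $f_\delta^{n+2}$ maps $\mc C_n^\pm(\delta)$ bijectively onto $\mc J_\delta$ with uniformly bounded distortion (cf.\ Lemma \ref{lem:distP}),
$$\omega_\delta(\mc C_n^\pm(\delta)) = |(f_\delta^{n+2})'(z_*)|^{-\mc D(\delta)}(1+o(1)), \qquad z_* \in \mc C_n^\pm(\delta).$$
Applying Lemma \ref{lem:(f^k)'} with $k=n+2$, the right-hand side equals $|\Psi_\delta'(-n)|^{\mc D(\delta)}\, |\Psi_\delta'(2)|^{-\mc D(\delta)} \cdot (1 + O(e^{\ve n|\delta|}-1+\ve))$, and the weak-$*$ convergence $\tilde\omega_\delta \to \tilde\omega_0$ from Proposition \ref{prop:o} pins down the leading constant to a value $H_\omega$ independent of the direction $\alpha$. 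The outcome is
$$\omega_\delta(\mc C_k(\delta)) = H_\omega\, |\mc C_k(\delta)|^{\mc D(\delta)} \cdot (1+o(1))$$
as $k\to\infty$, uniformly for $\arg\delta = \alpha$, $0<|\delta|<\eta$; the construction parallels \cite[Section 7]{J} and \cite[Section 6]{Ji}.

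For stage (iii), Lemma \ref{lem:size} together with the identity $|\Psi_\delta'(-k)|^{\mc D(\delta)} = |\delta|^{2\mc D(\delta)}\, \Lambda_0^{\mc D(\delta)}(k\delta)$ (from (\ref{eq:Psi-n})) and the elementary bound $(e^{\ve k|\delta|}+\ve)^{\mc D(\delta)} \leeq (1+K\ve)\, e^{\mc D(\delta)\ve k|\delta|}$ (valid for $\ve$ small since $\mc D(\delta)$ is bounded) yields
$$(1-K\ve)|\delta|^{2\mc D(\delta)}\Lambda_{-\ve}^{\mc D(\delta)}(k\delta) \leeq |\mc C_k(\delta)|^{\mc D(\delta)} \leeq (1+K\ve)|\delta|^{2\mc D(\delta)}\Lambda_{\ve}^{\mc D(\delta)}(k\delta),$$
after rescaling $\ve$ by a factor depending only on $\mc D(0)$. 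Summing over $k \geq n$ and comparing with the integral uses the smoothness of $s\mapsto \Lambda_\ve^{\mc D(\delta)}(vs)$ on $(0,\infty)$ and the decay estimates (\ref{eq:lll}); the Riemann-sum error contributes a multiplicative $1+O(\ve)$, which one checks separately in the regimes $n|\delta|\leeq 1$ (polynomial tail, matched by Corollary \ref{cor:mod}) and $n|\delta|>1$ (exponential tail dominated by the endpoint).

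The main obstacle is stage (ii). Establishing that $\omega_\delta(\mc C_k(\delta))/|\mc C_k(\delta)|^{\mc D(\delta)}$ has a common limit $H_\omega$ \emph{uniformly} in $\delta$, rather than merely pointwise for each fixed $\delta$, requires simultaneously controlling the Koebe distortion on iterated inverse branches, the Fatou-coordinate approximation from Section \ref{sec:fatou}, and the weak-$*$ convergence of $\tilde\omega_\delta$, while verifying that all error terms combine into the admissible multiplicative factor $1+O(\ve)$ rather than just $O(1)$.
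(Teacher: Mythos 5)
The overall skeleton of your proposal — reduce via Lemma~\ref{lem:sumofmeasures} to a tail sum of $\tilde\omega_\delta(C_k)$, establish $\omega_\delta(\mc C_k(\delta))\sim H_\omega|\mc C_k(\delta)|^{\mc D(\delta)}$, then feed Lemma~\ref{lem:size} and (\ref{eq:Psi-n}) into a Riemann-sum comparison with $\int\Lambda^{\mc D(\delta)}_{\pm\ve}$ — is the right one and matches what the paper delegates to \cite[Lemma 6.5]{Ji}. Your direction-independence argument via Proposition~\ref{prop:o} also mirrors the paper's own ``additional argument'' (where the paper lets $\delta\to0$ with $n$ fixed and identifies $H_\mu$ against $\tilde\mu_0(C_n)$). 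However, you yourself flag stage~(ii) as the obstacle and then do not actually close it, and the specific argument you sketch for it does not work.

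Two concrete problems in stage~(ii). First, the conformality of $\omega_\delta$ together with the fact that $f_\delta^{n+2}$ maps $\mc C_n^\pm(\delta)$ bijectively onto $\mc J_\delta$ with \emph{uniformly bounded} distortion only yields $\omega_\delta(\mc C_n^\pm(\delta)) \asymp |(f_\delta^{n+2})'(z_*)|^{-\mc D(\delta)}$, not a $(1+o(1))$-asymptotic. Koebe distortion on the inverse branch $f_\delta^{-(n+2)}$ restricted to a neighborhood of $\mc J_\delta$ gives a bound depending only on the geometry of $\mc J_\delta$ relative to the postcritical set, which is of fixed (not vanishing) size; Lemma~\ref{lem:distP} gives shrinking relative distortion only when the target of the inverse branch is a single far cylinder $\mc C_m(\delta)$, not all of $\mc J_\delta$. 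Since the lemma to be proved demands multiplicative error $1\pm\ve$ for arbitrary $\ve$, an $\asymp$ estimate is not enough. Second, your appeal to Lemma~\ref{lem:(f^k)'} with $k=n+2$ violates its hypothesis $n-k\geq N$ (here $n-k=-2$), and it is not merely a technicality: $\Psi_\delta$ is a Fatou coordinate valid near the fixed points, whereas $f_\delta^{n+2}(z)$ has left that region entirely (it lands anywhere on $\mc J_\delta$), so $\Psi_\delta'(2)$ has no dynamical meaning here.

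The genuine content of stage~(ii) — that the ratio $\omega_\delta(\mc C_n(\delta))/|\mc C_n(\delta)|^{\mc D(\delta)}$ converges to a constant $H_\omega$ as $n\to\infty$, uniformly in $\delta$ with $\arg\delta=\alpha$ — requires an averaging argument over the full Julia set: the quantity $\omega_\delta(\mc C_n^\pm)\cdot|(f_\delta^{n+2})'(z_n^*)|^{\mc D(\delta)}$ is an integral of a normalized distortion density over $\mc J_\delta$ against $\omega_\delta$, and one must show this integral converges (using convergence of the rescaled inverse branches together with Proposition~\ref{prop:o}), rather than invoke pointwise distortion control on the whole Julia set. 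This is exactly the machinery developed in \cite[Lemma 6.5]{Ji}, which the paper cites precisely because the step cannot be dispatched by the Koebe-plus-conformality one-liner you propose.
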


This Lemma can be proven analogously as in \cite[Lemma 6.5]{Ji}. If we do that, we will see that the constant $H_\mu$ does not depend
on the direction. But, that fact is important for us, so we will give an additional argument.

Let $\alpha\in(-\pi/2,\pi/2)$ and $v=e^{i\alpha}$. We see that $\Lambda_\ve^{\mc D(\delta)}(vs)$ is close to $s^{-2\mc D(\delta)}$ for small $s>0$. So we can get (cf. (\ref{eq:lll}))
 \begin{equation*}
   \lim_{\delta\rightarrow0}|\delta|^{2\mc D(\delta)-1}\int^\infty_{n|\delta|}\Lambda_{\pm\varepsilon}^{\mc D(\delta)}(vs) ds =\frac{n^{-2\mc D(\delta)+1}}{2\mc D(\delta)-1}.
\end{equation*}
On the other hand, for every $\alpha\in(-\pi/2,\pi/2)$ we have $\tilde{\mu_\delta}( C_n)\rightarrow\tilde{\mu_0}( C_n)$, where $\delta\rightarrow0$ and $\alpha=\arg\delta$. So, the constant $H_\mu$ cannot depend on $\alpha$.

\begin{cor}\label{cor:meas}
For every $\alpha\in(-\pi/2,\pi/2)$ there exist $K>1$ and $\eta>0$ such that
\begin{enumerate}
   \item\label{cit:meas1}
      if $n|\delta|>1$, then
    \begin{equation*}
       K^{-1}|\delta|^{2\mc D(\delta)-1}e^{-Kn|\delta|}< \tilde{\mu_\delta}( C_n)< K|\delta|^{2\mc D(\delta)-1}e^{-\frac1K n|\delta|},
    \end{equation*}
   \item\label{cit:meas2}
      if $n|\delta|\leeq1$, then
    \begin{equation*}
      K^{-1} n^{-2\mc D(\delta)+1}< \tilde{\mu_\delta}( C_n)< K n^{-2\mc D(\delta)+1},
    \end{equation*}
 \end{enumerate}
   where $n\greq1$, $0<|\delta|<\eta$ and $\alpha=\arg\delta$.
\end{cor}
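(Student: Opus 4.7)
My plan is to combine Lemma~\ref{lem:est} with the explicit two-sided asymptotic behavior of $\Lambda_\varepsilon^h$ coming from formula~(\ref{eq:A}). First I fix $\varepsilon\in(0,1)$ small enough that $\mc D(\delta)-\varepsilon>1$ holds uniformly for $\delta$ in a neighborhood of $0$; this is possible because $\mc D(0)>1$ and $\mc D$ is continuous at $0$. Lemma~\ref{lem:est} then reduces everything to estimating $|\delta|^{2\mc D(\delta)-1}\int_{n|\delta|}^{\infty}\Lambda_{\pm\varepsilon}^{\mc D(\delta)}(vs)\,ds$ from above and below by the quantities appearing in (\ref{cit:meas1}) and (\ref{cit:meas2}).

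The key observation is a two-sided asymptotics for $\Lambda_\eta^h$: using $\sinh(vs/2)\sim vs/2$ as $s\to 0^+$ and $|\sinh(vs/2)|^2\asymp e^{s\cos\alpha}$ as $s\to\infty$, one obtains
\begin{equation*}
   \Lambda_\eta^h(vs)\asymp s^{-2h}\ \text{ for }\ s\in(0,1],\qquad \Lambda_\eta^h(vs)\asymp e^{(-h+\eta)s\cos\alpha}\ \text{ for }\ s\in[1,\infty),
\end{equation*}
with constants depending on $\alpha$ but uniform for $h$ and $\eta$ in a compact range. The upper bounds are already recorded in (\ref{eq:lll}); the matching lower bounds follow from exactly the same elementary estimates.

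For the hyperbolic regime $n|\delta|>1$, the upper bound in (\ref{cit:meas1}) comes directly from (\ref{eq:lll}) with $\eta=\varepsilon$, while the lower bound follows by restricting integration to the interval $[n|\delta|,n|\delta|+1]$ and using the pointwise lower bound $\Lambda_{-\varepsilon}^{\mc D(\delta)}(vs)\geq c\,e^{-(\mc D(\delta)+\varepsilon)s\cos\alpha}$. It then suffices to take $K$ large enough that $1/K<(\mc D(0)-\varepsilon)\cos\alpha$ and $(\mc D(0)+\varepsilon)\cos\alpha<K$, and large enough to absorb the remaining multiplicative constants from Lemma~\ref{lem:est}.

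For the parabolic regime $n|\delta|\leeq 1$, I would split the integral as $\int_{n|\delta|}^{1}+\int_{1}^{\infty}$. Using the small-$s$ asymptotics, the first piece is comparable to $\int_{n|\delta|}^{1}s^{-2\mc D(\delta)}\,ds\asymp (n|\delta|)^{1-2\mc D(\delta)}$ (valid since $2\mc D(\delta)-1$ is bounded away from $0$), so after multiplying by $|\delta|^{2\mc D(\delta)-1}$ one recovers the main term $n^{1-2\mc D(\delta)}$. The tail $\int_1^{\infty}\Lambda_{\pm\varepsilon}^{\mc D(\delta)}(vs)\,ds$ is uniformly bounded, and the constraint $n|\delta|\leeq 1$ forces $|\delta|^{2\mc D(\delta)-1}\leeq n^{1-2\mc D(\delta)}$, so its contribution is absorbed into $K\,n^{1-2\mc D(\delta)}$. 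There is no real obstacle in the argument; the only mildly delicate point is the lower bound at the crossover $n|\delta|\approx 1$, where neither the small-$s$ nor the large-$s$ asymptotic is sharp, and one simply observes that both $\int_{n|\delta|}^\infty \Lambda_{-\varepsilon}^{\mc D(\delta)}(vs)\,ds$ and $(n|\delta|)^{1-2\mc D(\delta)}$ are bounded between positive constants, uniformly in $\delta$ near $0$.
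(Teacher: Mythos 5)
Your approach is the one the paper implicitly uses: the corollary is stated without an explicit proof and is meant to follow by specializing Lemma~\ref{lem:est} to the two regimes and plugging in the asymptotics of $\Lambda_\eta^{\mc D(\delta)}(vs)$ in $s$ (with two-sided versions of the upper bounds recorded in~(\ref{eq:lll})). Your treatment of both regimes is sound: in the hyperbolic case the upper bound is~(\ref{eq:lll}) and the lower bound comes from a pointwise lower bound on a unit interval; in the parabolic case the splitting $\int_{n|\delta|}^1+\int_1^\infty$ together with the observation that $n|\delta|\leeq1$ forces $|\delta|^{2\mc D(\delta)-1}\leeq n^{1-2\mc D(\delta)}$ handles both pieces, and your remark about the crossover $n|\delta|\approx1$ is the right way to close the argument there.

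The one loose end is that Lemma~\ref{lem:est} only covers $n>N$, while the corollary is asserted for all $n\greq1$. For the finitely many $n\leeq N$ (which, after shrinking $\eta$, all fall in the parabolic regime), both $n^{1-2\mc D(\delta)}$ and $\tilde{\mu_\delta}(C_n)$ are uniformly bounded between positive constants — the latter because $\tilde{\mu_\delta}(C_n)\to\tilde{\mu_0}(C_n)>0$ by Proposition~\ref{prop:mu} and $\tilde{\mu_\delta}$ is a probability measure — so the estimate holds after enlarging $K$. This is a trivial but genuinely missing step in your write-up; with it added, the proof is complete.
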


\begin{cor}\label{cor:Kdelta}
For every $\alpha\in(-\pi/2,\pi/2)$ there exist $K>1$ and $\eta>0$ such that
$$K^{-1}|\delta|^{2\mc D(\delta)-2}<\tilde \mu_\delta(\mc M_{[1/|\delta|]})< K|\delta|^{2\mc D(\delta)-2},$$
where $0<|\delta|<\eta$ and $\alpha=\arg\delta$.
\end{cor}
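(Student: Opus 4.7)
The plan is to decompose the mass of $\mc M_{[1/|\delta|]}$ as a sum over its constituent cylinders and then apply the hyperbolic estimate from Corollary~\ref{cor:meas}~(\ref{cit:meas1}) term by term. Since $\tilde\mu_\delta$ has no atom at $1$ (the measure $\mu_\delta$ is finite and $f_\delta$--invariant with $\mc D(\delta)>1$, so the same argument as in the proof of Proposition~\ref{prop:mu} rules out a point mass) and since $\mc M_{[1/|\delta|]}\sms\{0\}=\bigsqcup_{n>[1/|\delta|]}\mc C_n(\delta)$, we have
\begin{equation*}
   \tilde\mu_\delta\bigl(\mc M_{[1/|\delta|]}\bigr)=\sum_{n>[1/|\delta|]}\tilde\mu_\delta(C_n).
\end{equation*}

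For each such $n$ we have $n|\delta|>1$ (because $n\greq [1/|\delta|]+1>1/|\delta|$), so Corollary~\ref{cor:meas}~(\ref{cit:meas1}) applies. Writing $N_0:=[1/|\delta|]$, the two bounds
\begin{equation*}
   K^{-1}|\delta|^{2\mc D(\delta)-1}e^{-Kn|\delta|}<\tilde\mu_\delta(C_n)<K|\delta|^{2\mc D(\delta)-1}e^{-\frac1K n|\delta|}
\end{equation*}
are to be summed as geometric series in $n$. Both sides give, up to a constant,
\begin{equation*}
   |\delta|^{2\mc D(\delta)-1}\cdot\frac{e^{-c(N_0+1)|\delta|}}{1-e^{-c|\delta|}},
\end{equation*}
where $c$ stands for either $K$ or $1/K$. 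The point is that $N_0|\delta|\in[1-|\delta|,\,1]$, so $e^{-c(N_0+1)|\delta|}$ is bounded between two positive constants depending only on $K$, while $1-e^{-c|\delta|}\asymp c|\delta|$ as $\delta\rightarrow 0$. Hence the sum is of order $|\delta|^{2\mc D(\delta)-1}/|\delta|=|\delta|^{2\mc D(\delta)-2}$, with explicit matching constants on both sides; after replacing $K$ by a larger absolute constant the corollary follows.

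There is no real obstacle here; the only bookkeeping point is to verify that the different constants $K$ appearing on the two sides of Corollary~\ref{cor:meas}~(\ref{cit:meas1}) do not spoil the geometric-series estimate, which is immediate from $N_0|\delta|\asymp 1$ and $1-e^{-c|\delta|}\asymp |\delta|$.
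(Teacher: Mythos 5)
Your argument is correct and is essentially the argument the paper has in mind: the corollary is stated without proof immediately after Corollary~\ref{cor:meas}, and the intended route is precisely to sum the hyperbolic estimates from Corollary~\ref{cor:meas}~(\ref{cit:meas1}) as a geometric series, using $N_0|\delta|\asymp 1$ and $1-e^{-c|\delta|}\asymp|\delta|$. One small remark: the non-atomicity of $\tilde\mu_\delta$ at $1$ (equivalently of $\mu_\delta$ at $0$) for $\delta\neq0$ does not really require the argument of Proposition~\ref{prop:mu}; for $\delta\in\mc M_0^+$ the map $f_\delta$ is hyperbolic, so $\mu_\delta$ is the equilibrium state of a H\"older potential for an expanding map and is automatically non-atomic (and equivalent to the non-atomic conformal measure $\omega_\delta$). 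Apart from that unnecessary detour, the proof is complete.
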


A straightforward computation gives us the following lemma:

\begin{lem}\label{lem:K}
For every $\alpha\in(-\pi/2,\pi/2)$ there exist $K>1$, $\ve_0>0$ and $\eta>0$ such that
   $$\sum_{n=[1/|\delta|]+1}^{\infty}(e^{\ve n|\delta|}-1)\tilde{\mu_\delta}( C_n)< \ve K\tilde{\mu_\delta}(\mc M_{[1/|\delta|]}),$$
where $0<\ve<\ve_0$, $0<|\delta|<\eta$ and $\alpha=\arg\delta$.
\end{lem}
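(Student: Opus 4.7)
The plan is to bound each term of the sum using the hyperbolic regime estimate from Corollary~\ref{cor:meas}~(\ref{cit:meas1}), to pull out a factor of $\ve$ via the elementary inequality $e^{\ve n|\delta|}-1\leeq \ve n|\delta|\,e^{\ve n|\delta|}$, and then to compare the remaining geometric-type sum with the lower bound for $\tilde{\mu_\delta}(\mc M_{[1/|\delta|]})$ provided by Corollary~\ref{cor:Kdelta}.

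More precisely, fix $\alpha\in(-\pi/2,\pi/2)$ and let $K_0>1$, $\eta>0$ be such that Corollaries~\ref{cor:meas} and \ref{cor:Kdelta} hold with constant $K_0$. For every $n\greq[1/|\delta|]+1$ we have $n|\delta|>1$, so
\[
   \tilde{\mu_\delta}(C_n)<K_0\,|\delta|^{2\mc D(\delta)-1}e^{-n|\delta|/K_0}.
\]
Using $e^{\ve n|\delta|}-1\leeq \ve n|\delta|\,e^{\ve n|\delta|}$ and choosing $\ve_0:=1/(2K_0)$, for every $\ve\in(0,\ve_0)$ the quantity $\gamma:=1/K_0-\ve$ satisfies $\gamma\greq 1/(2K_0)>0$, whence
\[
   \sum_{n=[1/|\delta|]+1}^{\infty}(e^{\ve n|\delta|}-1)\tilde{\mu_\delta}(C_n)
   \leeq \ve K_0|\delta|^{2\mc D(\delta)-1}\sum_{n=[1/|\delta|]+1}^{\infty} n|\delta|\,e^{-n|\delta|\gamma}.
\]

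The next step is routine: the sum on the right can be compared with $\int_{1}^{\infty}s\,e^{-\gamma s}\,ds$ after the substitution $s=n|\delta|$, which introduces a factor $1/|\delta|$. Since $\gamma\greq 1/(2K_0)$, the integral is bounded by a constant depending only on $K_0$, giving
\[
   \sum_{n=[1/|\delta|]+1}^{\infty} n|\delta|\,e^{-n|\delta|\gamma}\leeq \frac{C_0}{|\delta|}
\]
for some $C_0=C_0(K_0)$, and possibly after shrinking $\eta>0$. Consequently
\[
   \sum_{n=[1/|\delta|]+1}^{\infty}(e^{\ve n|\delta|}-1)\tilde{\mu_\delta}(C_n)
   \leeq \ve C_0K_0\,|\delta|^{2\mc D(\delta)-2}.
\]
Combining this with the lower bound $\tilde\mu_\delta(\mc M_{[1/|\delta|]})>K_0^{-1}|\delta|^{2\mc D(\delta)-2}$ from Corollary~\ref{cor:Kdelta} yields the desired inequality with $K:=C_0K_0^2$. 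I do not expect any real obstacle here; the only point requiring mild care is verifying that the tail integral (equivalently, the sum) is uniformly bounded for all $\ve\in(0,\ve_0)$, which is ensured by the gap between $\ve$ and $1/K_0$ built into the choice $\ve_0=1/(2K_0)$.
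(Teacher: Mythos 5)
Your proof is correct, and since the paper dismisses this lemma as ``a straightforward computation,'' your argument is exactly the sort of routine filling-in the author intended. The chain of estimates is sound: every term in the sum has $n|\delta|>1$ since $n\geq[1/|\delta|]+1>1/|\delta|$, so Corollary~\ref{cor:meas}~(\ref{cit:meas1}) applies; the bound $e^{x}-1\leq xe^{x}$ pulls out the factor $\varepsilon$; taking $\varepsilon_0=1/(2K_0)$ guarantees a uniform exponential decay rate $\gamma\geq 1/(2K_0)$; the Riemann-sum comparison of $\sum_n n|\delta|\,e^{-\gamma n|\delta|}$ with $\frac{1}{|\delta|}\int_0^\infty s\,e^{-\gamma s}\,ds$ (plus a bounded overshoot of order $\sup_s se^{-\gamma s}$) yields the factor $C_0/|\delta|$; and Corollary~\ref{cor:Kdelta} converts $|\delta|^{2\mathcal{D}(\delta)-2}$ back into $\tilde\mu_\delta(\mathcal{M}_{[1/|\delta|]})$. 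The one spot to state explicitly (you gestured at it) is that the sum-versus-integral bound is uniform over $\gamma\geq 1/(2K_0)$, which holds because both $\int_0^\infty s\,e^{-\gamma s}\,ds=\gamma^{-2}$ and $\sup_s se^{-\gamma s}=(\gamma e)^{-1}$ are decreasing in $\gamma$; this is where your choice of $\varepsilon_0$ is used.
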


\begin{lem}\label{lem:mm}
   For every $\alpha\in(-\pi/2,\pi/2)$ and $\ve>0$ there exists $\eta>0$ and $N\in\N$ such that
     \begin{enumerate}
       \item\label{lit:mm1}
        $\sum_{n=[1/|\delta|]+1}^{\infty}\big|\tilde{\mu_\delta}( C_n)-H_\mu|\delta|^{2\mc D(\delta)-1}\int^{\infty}_{n|\delta|}\Lambda_0^{\mc D(\delta)}(vs)ds\big|< \ve |\delta|^{2\mc D(\delta)-2},$
       \item\label{lit:mm2}
         $\sum_{n=N+1}^{[1/|\delta|]}n|\delta|\big|\tilde{\mu_\delta}( C_n)-H_\mu|\delta|^{2\mc D(\delta)-1}\int^{\infty}_{n|\delta|}\Lambda_0^{\mc D(\delta)}(vs)ds\big|< \ve |\delta|^{2\mc D(\delta)-2},$
     \end{enumerate}
    where $0<|\delta|<\eta$, $\alpha=\arg\delta$ and $v=e^{i\alpha}$.
\end{lem}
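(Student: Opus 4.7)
The plan is to derive both parts of the lemma from the two-sided bound in Lemma \ref{lem:est}, applied with an auxiliary parameter $\ve'>0$ that will be shrunk at the end in terms of the target $\ve$. Writing $A_t(n):=H_\mu\int^\infty_{n|\delta|}\Lambda_t^{\mc D(\delta)}(vs)\,ds$ for $t\in\{-\ve',0,\ve'\}$, Lemma \ref{lem:est} gives
\[
(1-\ve')A_{-\ve'}(n)<|\delta|^{-(2\mc D(\delta)-1)}\tilde{\mu_\delta}(C_n)<(1+\ve')A_{\ve'}(n),
\]
so the error $\Delta_n:=|\tilde{\mu_\delta}(C_n)-|\delta|^{2\mc D(\delta)-1}A_0(n)|$ is majorized by $|\delta|^{2\mc D(\delta)-1}\bigl[(A_{\ve'}(n)-A_{-\ve'}(n))+\ve'(A_{\ve'}(n)+A_{-\ve'}(n))\bigr]$. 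It thus suffices to bound each of the two resulting sums.

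For the main piece I would use the identity $\Lambda_{\ve'}^{h}(vs)-\Lambda_{-\ve'}^{h}(vs)=2\Lambda_0^{h}(vs)\sinh(\ve' s\cos\alpha)$ together with $\sinh(x)\leeq x\cosh(x)$ to obtain
\[
A_{\ve'}(n)-A_{-\ve'}(n)\leeq 2\ve'\cos\alpha\,H_\mu\int^\infty_{n|\delta|}s\,\Lambda_{\ve'}^{\mc D(\delta)}(vs)\,ds.
\]
For $\ve'<\mc D(0)\cos\alpha/2$ the exponential decay of $\Lambda_{\ve'}^{\mc D}$ at infinity is preserved, and estimates of the type (\ref{eq:lll}) show that the right-hand side is bounded by $Cn|\delta|e^{-c(\alpha)n|\delta|}$ in the hyperbolic range $n|\delta|>1$ and by $C(n|\delta|)^{-2\mc D(\delta)+2}$ in the parabolic range $n|\delta|\leeq1$ (the latter exponent being negative since $\mc D(\delta)>1$). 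The remaining piece $\ve'(A_{\ve'}+A_{-\ve'})$ is controlled directly by (\ref{eq:lll}) and produces bounds of the same shape.

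For part (\ref{lit:mm1}), summing the hyperbolic estimate over $n>[1/|\delta|]$ produces a geometric series comparable to $1/(c(\alpha)|\delta|)$, hence $\sum_{n>[1/|\delta|]}\Delta_n\leeq C\ve'|\delta|^{2\mc D(\delta)-2}$. For part (\ref{lit:mm2}), the extra factor $n|\delta|$ together with the polynomial bound yields, after a direct power count,
\[
\sum_{n=N+1}^{[1/|\delta|]}n|\delta|\,\Delta_n\leeq C\ve'|\delta|^{2}\sum_{n\leeq[1/|\delta|]}n^{-2\mc D(\delta)+3},
\]
and since $1<\mc D(\delta)<3/2$ the exponent $-2\mc D(\delta)+3$ lies in $(0,1)$, so the partial sum is of order $[1/|\delta|]^{4-2\mc D(\delta)}\asymp|\delta|^{2\mc D(\delta)-4}$, giving again $C\ve'|\delta|^{2\mc D(\delta)-2}$. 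Choosing $\ve'$ small in terms of $\ve$ and the implicit constants finishes both parts. The main obstacle will be to keep all constants uniform in $\delta$ as $\delta\to0$ along the ray $\arg\delta=\alpha$: one must check that $\mc D(\delta)$ stays strictly between $1$ and $3/2$ (so that the exponent manipulations survive) and that the transition near $n|\delta|\approx1$ produces no borderline $\log$ or other divergences. Both points follow from continuity of $\mc D$ at $0$ (cf.\ Theorem M) and the uniform estimates already established in Sections \ref{sec:cyl} and \ref{sec:miary}.
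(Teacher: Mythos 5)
Your proposal is correct and follows essentially the same route as the paper: start from the two-sided estimate of Lemma \ref{lem:est} with a small auxiliary parameter, bound the resulting $\Lambda$-differences by an $\ve'$-proportional majorant still having the decay profile of (\ref{eq:lll}), and then close by power counting over the hyperbolic and parabolic ranges. The only cosmetic differences are that you split via $A_{\ve'}-A_{-\ve'}$ and the identity $\Lambda_{\ve'}^h-\Lambda_{-\ve'}^h = 2\Lambda_0^h\sinh(\ve's\cos\alpha)$, whereas the paper bounds $\Lambda_\ve^h-\Lambda_0^h<\ve\Lambda_1^h$ directly; both give the same final orders $\ve'|\delta|^{2\mc D(\delta)-2}$ in parts (\ref{lit:mm1}) and (\ref{lit:mm2}).
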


\begin{proof}
Fix $\alpha\in(-\pi/2,\pi/2)$ and $\ve>0$ small enough.
Note that $\Lambda_{\ve}^{\mc D(\delta)}(z)-\Lambda_0^{\mc D(\delta)}(z)>\Lambda_{0}^{\mc D(\delta)}(z)-\Lambda_{-\ve}^{\mc D(\delta)}(z)$.
Thus, Lemma \ref{lem:est} leads to
\begin{multline}\label{eq:la}
\bigg|\tilde{\mu_\delta}( C_n)-H_\mu|\delta|^{2\mc D(\delta)-1}\int^{\infty}_{n|\delta|}\Lambda_0^{\mc D(\delta)}(vs)ds\bigg|\\
<H_\mu|\delta|^{2\mc D(\delta)-1}\bigg(\int^{\infty}_{n|\delta|}\Big(\Lambda_\ve^{\mc D(\delta)}(vs)-\Lambda_{0}^{\mc D(\delta)}(vs)\Big)ds+ \ve\int^{\infty}_{n|\delta|}\Lambda_\ve^{\mc D(\delta)}(vs)ds\bigg),
\end{multline}
where $0<|\delta|<\eta$, $\alpha=\arg\delta$, $n>N$, for suitably chosen $\eta>0$ and $N\in\N$.

We can assume that $\ve<1$, therefore Lemma \ref{lem:e} leads to
$$|e^{-x+\ve x}-e^{-x}|<\ve|e^{-x}-1|<\ve,$$
where $x>0$.
If $x=\re z$, then we have
\begin{equation*}\label{eq:d-d}
\Lambda_\ve^{\mc D(\delta)}(z)-\Lambda_{0}^{\mc D(\delta)}(z)
=(e^{-x+\ve x}-e^{-x})\Big|\frac{e^{z}}{(e^{z}-1)^{2}}\Big|^{\mc D(\delta)}|e^{z}|<\ve\Lambda_1^{\mc D(\delta)}(z).
\end{equation*}
Obviously $\Lambda_{\ve}^{\mc D(\delta)}(z)<\Lambda_1^{\mc D(\delta)}(z)$, so (\ref{eq:la}) gives us
\begin{equation}\label{eq:lam}
\bigg|\tilde{\mu_\delta}( C_n)-H_\mu|\delta|^{2\mc D(\delta)-1}\int^{\infty}_{n|\delta|}\Lambda_0^{\mc D(\delta)}(vs)ds\bigg|
<2\ve H_\mu|\delta|^{2\mc D(\delta)-1}\int^{\infty}_{n|\delta|}\Lambda_1^{\mc D(\delta)}(vs)ds.
\end{equation}

We can assume that $-\mc D(\delta)+1<0$. Thus, if $n|\delta|>1$, then using (\ref{eq:lll}) we see that there exist $K_1,K_2,K_3>0$ for which
\begin{equation*}
\sum_{n=[1/|\delta|]+1}^{\infty}\int^{\infty}_{n|\delta|}\Lambda_1^{\mc D(\delta)}(vs)ds<
K_1\sum_{n=[1/|\delta|]+1}^{\infty}e^{-n|\delta|K_2}
<K_3|\delta|^{-1}.
\end{equation*}
So the first statement follows from (\ref{eq:lam}).

If $n|\delta|\leeq 1$, then (\ref{eq:lll}) and the fact that $\mc D(\delta)<3/2$ lead to
\begin{equation*}
\sum_{n=N+1}^{[1/|\delta|]}n|\delta|\int^{\infty}_{n|\delta|}\Lambda_1^{\mc D(\delta)}(vs)ds< K_4\sum_{n=N+1}^{[1/|\delta|]}n|\delta|(n|\delta|)^{1-2\mc D(\delta)}<K_5|\delta|^{-1},
\end{equation*}
and the second statement follows from (\ref{eq:lam}).
\end{proof}

\section{The functions $\dot\vp_\delta$ and $\dot\psi_\delta$}\label{sec:f}

The main problem in the proof of the Theorem \ref{thm:direction2}, is the estimation of the following integral (cf. formula \ref{eq:d}):
\begin{equation}\label{eq:integral}
\int_{\partial\D}\frac{\partial}{\partial t}\log|f_{tv}'(\vp_{tv})|d\tilde\mu_{tv}.
\end{equation}
The integrand can be rewritten as follows:
\begin{equation*}\label{eq:f1}
\frac{\partial}{\partial t}\log|f'_{tv}(\vp_{tv})|=\re\Big(\frac{\frac{\partial}{\partial t}(f'_{tv}(\vp_{tv}))}{f'_{tv}(\vp_{tv})}\Big).
\end{equation*}
Let $\dot\vp_\delta:=\frac{\partial}{\partial \delta}\vp_\delta$, then we see that
\begin{equation*}
\frac{\partial}{\partial t}(f'_{tv}(\vp_{tv}))=\frac{\partial}{\partial t}(1+tv+2\vp_{tv})=v+2\frac{\partial}{\partial t}\vp_{tv}= v+2v\dot\vp_\delta\big|_{\delta=tv}.
\end{equation*}

So, we have to deal with the functions $\dot\vp_\delta$ and $1+2\dot\vp_\delta$.
In this section we derive two formulas for $\dot\vp_\delta$, and define the function $\dot\psi_\delta$ which is a "principal part" of $\dot\vp_\delta$.

Next we prove two propositions in which we estimate the functions $\dot\psi_\delta$ and $1+2\dot\psi_\delta$.
These results will allow us to estimate integral (\ref{eq:integral}) restricted to a set $ M_N$, which has a decisive influence on (\ref{eq:integral}) and consequently on $\mc D'(\delta)$ (see Section \ref{sec:mn}).

\subsection{}
We know that the function $\vp_\delta$ conjugates $T(s)=s^2$ to $f_\delta(z)=(1+\delta)z+z^2$, i.e. $f_\delta\circ\vp_\delta=\vp_\delta\circ T$.
If $\tau_\delta(z)=z+(1+\delta)/2$, then we have (cf. (\ref{eq:conj}))
\begin{equation*}
 \begin{array}{ll}
   \tau_\delta\circ \vp_\delta\circ T=\tau_\delta\circ f_\delta\circ\vp_\delta,\\
   \tau_\delta\circ f_\delta=p_{-\delta^2/4}\circ\tau_\delta.
 \end{array}
\end{equation*}
So, we conclude that $\tau_\delta\circ\vp_\delta\circ T=p_{-\delta^2/4}\circ\tau_\delta\circ\vp_\delta$, hence
$$\vp_\delta(s^2)+\frac{1+\delta}{2}=\Big(\vp_\delta(s)+\frac{1+\delta}{2}\Big)^2+\frac14-\frac{\delta^2}{4}.$$
Differentiating both sides with respect to $\delta$, we  get
\begin{equation*}\label{eq:dot1}
\dot\vp_\delta(s^2)+\frac12=2\Big(\vp_\delta(s)+\frac{1+\delta}{2}\Big)\Big(\dot\vp_\delta(s)+\frac12\Big)-\frac\delta2.
\end{equation*}
We see that
$$\dot\vp_\delta(s)+\frac12=\frac{\delta/2}{2\vp_\delta(s)+1+\delta}+\frac{\dot\vp_\delta(s^2)+1/2}{2\vp_\delta(s)+1+\delta},$$
and then
\begin{equation*}
\dot\vp_\delta(s)+\frac12=\frac{\delta/2}{f_\delta'(\vp_\delta(s))}+\frac{\dot\vp_\delta(s^2)+1/2}{f_\delta'(\vp_\delta(s))}.
\end{equation*}

Next, replacing $s$ by $s^2$, $s^4$,..., $s^{2^{m-1}}$, we obtain
\begin{multline*}\label{eq:sumass1}
\dot\vp_\delta(s)+\frac12=\sum_{k=0}^{m-1}
\frac{\delta/2}{f_\delta'(\vp_\delta(s))\cdot f_\delta'(\vp_\delta(s^2))\cdot...\cdot f_\delta'(\vp_\delta(s^{2^{k}}))}+\\
+\frac{\dot\vp_\delta(s^{2^m})+1/2}{f_\delta'(\vp_\delta(s))\cdot f_\delta'(\vp_\delta(s^2))\cdot...\cdot f_\delta'(\vp_\delta(s^{2^{m-1}}))}.
\end{multline*}
Thus, for every $m\greq1$ we have
\begin{equation}\label{eq:sumas1}
\dot\vp_\delta(s)=-\frac12+\sum_{k=1}^{m}
\frac{\delta/2}{(f_\delta^k)'(\vp_\delta(s))}+\frac{\dot\vp_\delta(T^m(s))+1/2}{(f_\delta^m)'(\vp_\delta(s))}.
\end{equation}

If $\vp_\delta(s)=z\in\mc C_n(\delta)$, $n\in\N$, then we define
\begin{equation}\label{eq:psi1}
\dot\psi_\delta(z):=-\frac12+\sum_{k=1}^{n}
\frac{\delta/2}{(f_\delta^k)'(z)}.
\end{equation}
Hence $\dot\psi_\delta(z)$ is defined on the set $\mc M_0^*(\delta)$, and is a "principal part" of $\dot\varphi_\delta(s)$ close to the fixed point $0$, provided $z=\vp_\delta(s)$.

\subsection{}
Now we give another formulas for $\dot\vp_\delta$ and for $\dot\psi_\delta$.
Of course we have
$$\vp_\delta(s^2)=(1+\delta)\vp_\delta(s)+\vp_\delta^2(s).$$
So, as before, differentiating both sides we can get
$$\dot\vp_\delta(s)=\frac{-\vp_\delta(s)}{(1+\delta)+2\vp_\delta(s)}+\frac{\dot\vp_\delta(s^2)}{(1+\delta)+2\vp_\delta(s)}.$$
Next, repeating the above procedure, we obtain
\begin{equation*}\label{eq:sumas}
\dot\vp_\delta(s)=-\sum_{k=1}^{m}
\frac{f_\delta^{k-1}(\vp_\delta(s))}{(f_\delta^k)'(\vp_\delta(s))}+\frac{\dot\vp_\delta(T^m(s))}{(f_\delta^m)'(\vp_\delta(s))}.
\end{equation*}

If $\vp_\delta(s)=z\in\mc C_n(\delta)$, then the above formula combined with (\ref{eq:sumas1}) and (\ref{eq:psi1}) leads to
\begin{equation}\label{eq:psi}
\dot\psi_\delta(z)=-\sum_{k=1}^{n}
\frac{f_\delta^{k-1}(z)}{(f_\delta^k)'(z)}-\frac{1/2}{(f_\delta^n)'(z)}.
\end{equation}

\subsection{}
Now we are going to define the function $\Gamma$, which gives us an approximation of $\dot\psi_\delta$.
First, let us write
\begin{equation}\label{eq:defg}
g(z):=e^{-z}-1+z.
\end{equation}
Fix $\alpha\in(-\pi/2,\pi/2)$ and let $v=e^{i\alpha}$. We have $|g(tv)|\rightarrow\infty$, where $t\rightarrow\infty$. Moreover it is easy to see that $e^{-tv}-1\neq-tv$ where $t>0$, therefore the function $g(z)$ does not vanish in the half-plane $\re z>0$, whereas $g(0)=0$.

Let $E:=\{2k\pi i:k\in \Z\sms\{0\}\}$.
The function $\Gamma:\C\sms E\rightarrow\C$ is defined as
\begin{equation}\label{eq:f}
   \Gamma(z)=\frac12\Big(-1+\frac{\sinh (z)-z}{\cosh (z)-1}\Big)=\frac12\Big(\frac{-g(z)}{\cosh(z)-1}\Big)=\frac{e^{z}-ze^z-1}{(e^{z}-1)^2},
\end{equation}
where $z\neq0$, and $\Gamma(0)=-1/2$.
Note that close to $0$ we have
\begin{equation}\label{eq:2G+1}
1+2\Gamma(z)=\frac{\sinh (z)-z}{\cosh (z)-1}=\frac{z}{3}+O(z^3).
\end{equation}
So, we see that $\Gamma$ is continuous at 0. Moreover $\Gamma(z)\neq0$ if $\re z>0$, and $\lim_{t\rightarrow\infty}\Gamma(te^{i\alpha})=0$, where $\alpha\in(-\pi/2,\pi/2)$. Thus the function $\Gamma$ is bounded on each ray $\mc R(\alpha)=\{z\in\C^*:\alpha=\arg z\}$.

\subsection{}
Now we prove that $\Gamma(n\delta)$ is a good approximation of $\dot\psi_\delta(z)$ where $z\in\mc C_n(\delta)$ (see Proposition \ref{prop:psi}). But this is not enough for our purposes and we will also need another estimate.

In order to prove Theorem \ref{thm:direction2}, we have to show that the integral (\ref{eq:integral}) restricted to a set $M_N$, after dividing by $t^{2\mc D(0)-2}$, tends to a constant (see Proposition \ref{prop:MN}).

But, if $t\rightarrow0^+$ (i.e. $\delta\rightarrow0$), then $t^{2\mc D(0)-2}\rightarrow0$, so we expect that the integral also tends to $0$. On the other hand we have
$$\tilde{\mu_\delta}(M_N)\rightarrow\tilde{\mu_0}(M_N)>0.$$
Thus the integrand must be estimated very precisely. In particular, it is not enough to show that absolute value of the difference between $\dot\psi_\delta(z)$ and $\Gamma(n\delta)$ is less than a small $\ve$ (cf. Proposition \ref{prop:psi}).

It would be enough to show that $1+2\dot\psi_\delta(z)$ divided by $1+2\Gamma(n\delta)$ is close to $1$. But $1+2\Gamma(n\delta)$  vanishes at some points, therefore we will be able to prove it only under assumption that $n|\delta|\leeq2$ (see Proposition \ref{prop:psi2}).

However, in the case $n|\delta|>2$ (or in the case $n|\delta|>1$), the estimate from the following proposition will be enough for us.

\begin{prop}\label{prop:psi}
   For every $\alpha\in(-\pi/2,\pi/2)$ and $\varepsilon>0$ there exist $\eta>0$, $N\in\N$ such that if $z\in\mc C_n(\delta)$, then
 \begin{equation*}
    \Big|\frac{\dot\psi_\delta(z)}{\Gamma(n\delta)}-1\Big|<e^{\ve n|\delta|}-1+\ve,
 \end{equation*}
   where $n>N$, $0<|\delta|<\eta$ and $\alpha=\arg\delta$.
\end{prop}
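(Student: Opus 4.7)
My approach is to work with the second representation \eqref{eq:psi} of $\dot\psi_\delta(z)$ rather than the simpler-looking \eqref{eq:psi1}. In \eqref{eq:psi} each summand already has magnitude comparable to $|\Gamma(n\delta)|$, while in \eqref{eq:psi1} the constant $-1/2$ cancels against $(\delta/2)\sum 1/(f_\delta^k)'$ in a delicate way that would force the Fatou-approximation error to be estimated much more precisely than Lemma \ref{lem:(f^k)'} allows. Given $\varepsilon>0$ and $\alpha\in(-\pi/2,\pi/2)$, I first choose $N=N(\varepsilon)$ such that Lemma \ref{lem:(f^k)'} applies to every $k\leeq n-N$, and such that the excluded last $N$ summands, together with the additional $(1/2)/(f_\delta^n)'(z)$ in \eqref{eq:psi}, contribute $O_N(1/n^2)$ by Lemma \ref{lem:n^2} and Corollary \ref{cor:K}.

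For $1\leeq k\leeq n-N$ I combine Lemma \ref{lem:(f^k)'} with Lemma \ref{lem:-n} and a mean-value estimate for $\Psi_\delta$ (using the explicit formula \eqref{eq:Psi'}) to replace each summand by its Fatou counterpart:
\begin{equation*}
\frac{f_\delta^{k-1}(z)}{(f_\delta^k)'(z)}=(1+e_k)\,\Psi_\delta(-(n-k+1))\,\frac{\Psi_\delta'(-n)}{\Psi_\delta'(-(n-k))},\qquad |e_k|\leeq C(e^{\varepsilon n|\delta|}-1+\varepsilon).
\end{equation*}
Using $\Psi_\delta(w)=\delta/(e^{-w\delta}-1)$ and $\Psi_\delta'(w)=(\delta/2)^2/\sinh^2(w\delta/2)$, a direct algebraic simplification compacts the Fatou summand to $[\delta e^{-\delta}/(4\sinh^2(n\delta/2))]\cdot(1-e^{-(n-k)\delta})^2/(1-e^{-(n-k+1)\delta})$, so that after the substitution $l=n-k$ the Fatou sum equals
$$\frac{\delta\, e^{-\delta}}{4\sinh^2(n\delta/2)}\sum_{l=N}^{n-1}\frac{(1-e^{-l\delta})^2}{1-e^{-(l+1)\delta}}.$$

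Next I would identify the negative of this closed-form sum with $\Gamma(n\delta)$ up to a small remainder. The algebraic identity $(1-x^l)^2/(1-x^{l+1})=(1-x^l)-x^l(1-x)/(1-x^{l+1})$ (with $x=e^{-\delta}$) decomposes the series into a geometric-telescoping piece and a smaller tail; together with the identity $\Gamma(n\delta)+1/2=(\sinh(n\delta)-n\delta)/(4\sinh^2(n\delta/2))$ they combine into $-\Gamma(n\delta)+\rho(n,\delta)$ with $|\rho(n,\delta)|\leeq C'(|\delta|+1/n)|\Gamma(n\delta)|$ uniformly across the regimes $n|\delta|\leeq 1$, $n|\delta|\asymp 1$ and $n|\delta|\gg 1$.

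The decisive ingredient is the a~priori estimate $\sum_{k=1}^{n-N}|\text{Fatou summand}_k|\asymp|\Gamma(n\delta)|$ (both sides are $\asymp 1$ when $n|\delta|\leeq 1$ and $\asymp n|\delta|\,e^{-n|\delta|\cos\alpha}$ when $n|\delta|\gg 1$), from which the aggregated Fatou error is bounded by $C''(e^{\varepsilon n|\delta|}-1+\varepsilon)|\Gamma(n\delta)|$; combining this with the remainder $\rho$ yields $|\dot\psi_\delta(z)/\Gamma(n\delta)-1|<e^{\varepsilon n|\delta|}-1+\varepsilon$ after absorbing constants into a slightly smaller $\varepsilon$. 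The main technical obstacle will be the uniform verification of the identification $\sum(\text{Fatou})=-\Gamma(n\delta)+\rho$; in particular one needs to check that the $-\delta/4$-type discrepancy which appears in the analogous \eqref{eq:psi1}-based computation (and which would be incompatible with the claimed bound in the hyperbolic regime $n|\delta|\gg 1$) does not appear in the \eqref{eq:psi}-based computation.
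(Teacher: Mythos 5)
Your plan follows the same basic framework as the paper's proof: both work from formula \eqref{eq:psi} (not \eqref{eq:psi1}), both replace each summand $f_\delta^{k-1}(z)/(f_\delta^k)'(z)$ by its Fatou-coordinate counterpart via Lemma~\ref{lem:(f^k)'} and Lemma~\ref{lem:-n} (equivalently Lemma~\ref{lem:z}), and both exploit the fact that $\sum_k|\tilde a_n^k|\asymp|\Gamma(n\delta)|$ (Lemma~\ref{lem:1/2}, formula~(\ref{eq:||})). The genuine difference in technique is the identification of the Fatou sum with $\Gamma(n\delta)$: the paper compares the discrete sum $\sum_{k}(e^{(k-n)\delta}-1)$ to the integral $\int_0^{n-\tilde n}(e^{(x-n)\delta}-1)\,dx=\delta^{-1}(g(\tilde n\delta)-g(n\delta))$ and discards the small $g(\tilde n\delta)$ piece, whereas you propose to evaluate the geometric sum exactly and telescope. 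Both routes are viable in principle; the paper's is cleaner because the integral gives the closed form in $g$ directly, while the geometric route produces $1/(1-e^{-\delta})=\delta^{-1}+\tfrac12+O(\delta)$ corrections that you must carry through.

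However, there are two concrete gaps. \textbf{First}, the algebraic identity you state is false: $(1-x^l)^2/(1-x^{l+1})$ equals $(1-x^l)-(1-x^l)x^l(1-x)/(1-x^{l+1})$, not $(1-x^l)-x^l(1-x)/(1-x^{l+1})$; a factor $(1-x^l)$ is missing. The discrepancy $\sum_l x^{2l}(1-x)/(1-x^{l+1})$ is of size $\asymp\log(\min(n,1/|\delta|))$ in the regime $l|\delta|\lesssim1$; after multiplying by the prefactor $\delta e^{-\delta}/(4\sinh^2(n\delta/2))\asymp 1/(n^2|\delta|)$ it is \emph{not} $O((|\delta|+1/n)|\Gamma(n\delta)|)$ as $|\delta|\to0$ with $n$ fixed. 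So the claimed remainder bound $|\rho|\leeq C'(|\delta|+1/n)|\Gamma(n\delta)|$ does not follow from the identity you wrote; the corrected identity yields a second sum $\asymp n|\delta|$ which does produce $|\rho/\Gamma|\lesssim 1/n+|\delta|$, so the plan is salvageable but not as written. \textbf{Second}, bounding the tail (last $N$ summands plus $1/(2(f_\delta^n)')$) by $O_N(1/n^2)$ via Lemma~\ref{lem:n^2} and Corollary~\ref{cor:K} is insufficient in the hyperbolic regime $n|\delta|\gg1$: there $|\Gamma(n\delta)|\asymp n|\delta|\,e^{-n|\delta|\cos\alpha}$ is exponentially small, so $n^{-2}/|\Gamma(n\delta)|\asymp e^{n|\delta|\cos\alpha}/(n^3|\delta|)$, which eventually dominates the target $e^{\ve n|\delta|}-1+\ve$ whenever $\ve<\cos\alpha$. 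You need the separate hyperbolic estimate that the paper performs in Step~4 of its proof (using Lemma~\ref{lem:(f^k)'} directly and the inequality $3|e^{n\delta}-n\delta e^{n\delta}-1|>n|\delta|e^{n\re\delta}$ for $n\re\delta>2$) rather than the polynomial bound of Lemma~\ref{lem:n^2}.
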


\begin{proof}
Let $z\in \mc C_n(\delta)$, and let us write (cf. definition (\ref{eq:Psi}) and (\ref{eq:Psi'/}))
\begin{align*}
   a^k(z)&:=-\frac{f_\delta^{k-1}(z)}{(f_\delta^k)'(z)}\;\;\textrm{ and},\\
   \tilde a_n^k&:=-\Psi_\delta(k-n)\frac{\Psi_\delta'(-n)}{\Psi_\delta'(k-n)} =-\frac{\delta}{e^{(n-k)\delta}-1}\Big(\frac{e^{(n-k)\delta}-1}{e^{n\delta}-1}\Big)^2e^{k\delta}\\
   &=\delta\frac{e^{n\delta}}{(e^{n\delta}-1)^2}\big(e^{(k-n)\delta}-1\big)=\frac{\delta/2}{\cosh(z)-1}\big(e^{(k-n)\delta}-1\big).
 \end{align*}
Using formula (\ref{eq:psi}), we see that
\begin{equation}\label{eq:sa^k}
\dot\psi_\delta(z)=\sum_{k=1}^{n}a^k(z)-\frac{1/2}{(f_\delta^n)'(z)}.
\end{equation}

{\em Step 1.}
Fix $\alpha\in(-\pi/2,\pi/2)$ and $\ve>0$.
We can find $\tilde n\greq1$ and $\eta>0$ such that
$$\Big|\frac{\Psi_\delta(k-1-n)}{\Psi_\delta(k-n)}-1\Big|= \Big|\frac{e^{(n-k)\delta}-1}{e^{(n-k+1)\delta}-1}-1\Big|= \Big|\frac{e^{-(n-k)\delta}-1}{e^{-(n-k+1)\delta}-1}\,e^{-\delta}-1\Big|<\frac{\ve}{16},$$
where $0<|\delta|<\eta$ and $n-k\greq\tilde n$ (see Lemma \ref{lem:e} and Lemma \ref{lem:<} (\ref{lit:<3})).
Since $f_\delta^{k-1}(z)\in \mc C_{n-k+1}(\delta)$, Lemma \ref{lem:z} combined with the above, gives us
\begin{equation*}\label{eq:k-1}
\Big|\frac{f_\delta^{k-1}(z)}{\Psi_\delta(k-n)}-1\Big|<e^{\ve n|\delta|/4}-1+\frac\ve4.
\end{equation*}
Thus, Lemma \ref{lem:(f^k)'} leads to
$$\Big|\frac{a^k(z)}{\tilde a^k_n}-1\Big|<e^{\ve n|\delta|}-1+\ve,$$
where $n-k\greq\tilde n$, $0<|\delta|<\eta$, and $\alpha=\arg\delta$, for suitably chosen $\tilde n\greq1$ and $\eta>0$.
So, we obtain
\begin{equation}\label{eq:sss}
\Big|\sum_{k=1}^{n-\tilde n} a^k(z)-\sum_{k=1}^{n-\tilde n}\tilde a_n^k\Big|< (e^{\ve n|\delta|}-1+\ve)\sum_{k=1}^{n-\tilde n}\big|\tilde a_n^k\big|.
\end{equation}

{\em Step 2.}
We have
\begin{equation}\label{eq:akn}
\sum_{k=1}^{n-\tilde n}\tilde a_n^k
=\frac{\delta/2}{\cosh(z)-1}\sum_{k=1}^{n-\tilde n}\big(e^{(k-n)\delta}-1\big).
\end{equation}
So, we will deal with $\sum_{k=1}^{n-\tilde n}(e^{(k-n)\delta}-1)$. Note that Lemma \ref{lem:1/2} for $\tilde m=\tilde n$ and $m=n-1$ leads to
  \begin{equation}\label{eq:||}
      \Big|\sum_{k=1}^{n-\tilde n}\big(e^{(k-n)\delta}-1\big)\Big|>\frac{1}{2}\sum_{k=1}^{n-\tilde n}\big|e^{(k-n)\delta}-1\big|.
  \end{equation}

We know that $(e^{-m\delta}-1)/(e^{-(m+1)\delta}-1)$ is close to $1$, where $m>\tilde n$ (see Lemma \ref{lem:e}), so using (\ref{eq:||}) we obtain
\begin{equation}\label{eq:sc}
\Big|\sum_{k=1}^{n-\tilde n}\big(e^{(k-n)\delta}-1\big)-\int_{0}^{n-\tilde n}\big(e^{(x-n)\delta}-1\big)dx\Big| \leeq\ve\sum_{k=1}^{n-\tilde n}\big|e^{(k-n)\delta}-1\big|.
\end{equation}

We have (cf. definition (\ref{eq:defg}))
\begin{equation}\label{eq:int}
\int_{0}^{n-\tilde n}\big(e^{(x-n)\delta}-1\big)dx=\frac1\delta\big(e^{-\tilde n\delta}-e^{-n\delta}\big)-(n-\tilde n)=\frac1\delta\big(g(\tilde n\delta)-g(n\delta)\big).
\end{equation}
For $n>N$, where $N$ is large enough, $|g(\tilde n\delta)|$ is small with respect to $|g(n\delta)|$, thus we can get
\begin{equation}\label{eq:g}
   |g(\tilde n\delta)|<\ve|g(n\delta)|.
\end{equation}
Thus, (\ref{eq:sc}) and (\ref{eq:int}) lead to
\begin{equation}\label{eq:gg}
\Big|\sum_{k=1}^{n-\tilde n}\big(e^{(k-n)\delta}-1\big)+\frac1\delta g(n\delta)\Big| \leeq
\ve\Big|\frac1\delta g(n\delta)\Big|+\ve\sum_{k=1}^{n-\tilde n}\big|e^{(k-n)\delta}-1\big|.
\end{equation}
Equality (\ref{eq:int}) combined with (\ref{eq:g}) and (\ref{eq:sc}) gives us
$$\Big|\frac1\delta g(n\delta)\Big|<\frac{1}{1-\ve}\Big|\sum_{k=1}^{n-\tilde n}e^{(k-n)\delta}-1\Big|+\frac{\ve}{1-\ve}\sum_{k=1}^{n-\tilde n}\big|e^{(k-n)\delta}-1\big|.$$
So, using (\ref{eq:||}), we conclude from (\ref{eq:gg}) that
\begin{equation}\label{eq:e-1}
\Big|\sum_{k=1}^{n-\tilde n}\big(e^{(k-n)\delta}-1\big)+\frac1\delta g(n\delta)\Big| \leeq4\ve\Big|\sum_{k=1}^{n-\tilde n}e^{(k-n)\delta}-1\Big|.
\end{equation}

{\em Step 3.}
We have
\begin{equation*}
\frac{\delta/2}{\cosh(z)-1}\Big(-\frac1\delta g(n\delta)\Big)=\Gamma(n\delta).
\end{equation*}
Thus (\ref{eq:e-1}) gives us (cf. (\ref{eq:akn}))
$$\Big|\sum_{k=1}^{n-\tilde n}\tilde a_n^k-\Gamma(n\delta)\Big|< 4\ve\Big|\sum_{k=1}^{n-\tilde n}\tilde a_n^k\Big|.$$
So, using (\ref{eq:sss}) and (\ref{eq:||}) we obtain
\begin{multline}\label{eq:ssg}
\Big|\sum_{k=1}^{n-\tilde n} a^k(z)-\Gamma(n\delta)\Big|<2 \big(e^{\ve n|\delta|}-1+\ve\big)\Big|\sum_{k=1}^{n-\tilde n}\tilde a_n^k\Big|+4\ve\Big|\sum_{k=1}^{n-\tilde n}\tilde a_n^k\Big|\\
<(1-4\ve)^{-1}\big(2(e^{\ve n|\delta|}-1)+6\ve\big)\big|\Gamma(n\delta)\big|<(e^{3\ve n|\delta|}-1+7\ve)\big|\Gamma(n\delta)\big|.
\end{multline}

{\em Step 4.}
In order to finish the proof, we have to estimate (cf. (\ref{eq:sa^k})):
\begin{multline}\label{eq:a^k}
\bigg|\sum_{k=n-\tilde n+1}^n a^k(z)-\frac{1/2}{(f_\delta^n)'(z)}\bigg|\\
= \bigg|\frac{-1}{(f_\delta^{n-\tilde n})'(z)}\bigg(\sum_{k=1}^{\tilde n}\frac{f_\delta^{k-1}(z)}{(f_\delta^{k})'(f_\delta^{n-\tilde n}(z))}+\frac{1/2}{(f_\delta^{\tilde n})'(f_\delta^{n-\tilde n}(z))}\bigg)\bigg|.
\end{multline}
Note that absolute value of the expression in bracket can be bounded above by a constant $K_1(\tilde n)$ (cf. Corollary \ref{cor:K}). So, we will deal with $|(f_\delta^{n-\tilde n})'(z)|^{-1}$.

Let us assume that $n\re\delta\leeq2$. If $z\in\mc C_n(\delta)$ then $f_\delta^{n-\tilde n}(z)\in\mc B_{\tilde n}(\delta)$, therefore Lemma \ref{lem:n^2} gives us
\begin{equation}\label{eq:a^k2}
\bigg|\sum_{k=n-\tilde n+1}^n a^k(z)-\frac{1/2}{(f_\delta^n)'(z)}\bigg|<\frac{K_2(\tilde n)}{(n-\tilde n)^2}K_1(\tilde n)=\frac{K_3(\tilde n)}{(n-\tilde n)^2}.
\end{equation}
We have $n\re\delta\leeq2$ and $\alpha=\arg\delta$ so there exists a constant $\gamma(\alpha)>0$ such that $\gamma(\alpha)<\Gamma(n\delta)$.
If $n>N$ (for sufficiently large $N$) then we can get $K_3(\tilde n)(n-\tilde n)^{-2}<\ve\gamma(\alpha)$, so the statement follows from (\ref{eq:ssg}) and (\ref{eq:a^k2}).

Now let us assume that $n\re\delta>2$. Lemma \ref{lem:(f^k)'} and (\ref{eq:Psi'/}) lead to
\begin{equation*}
\frac{1}{|(f_\delta^{n-\tilde n})'(z)|}<e^{\ve n|\delta|}\Big|\Big(\frac{e^{\tilde n\delta}-1}{e^{n\delta}-1}\Big)^2e^{(n-\tilde n)\delta}\Big|<e^{\ve n|\delta|}\frac{2\tilde n^2|\delta|^2}{|e^{n\delta}-1|^2}e^{n\re\delta}.
\end{equation*}
Since $n\re\delta>2$, we have $3|e^{n\delta}-n\delta \,e^{n\delta}-1|>n|\delta|e^{n\re\delta}$, therefore
\begin{multline*}
2\tilde n^2|\delta|^2\frac{e^{n\re\delta}}{|e^{n\delta}-1|^2}e^{\ve n|\delta|}<\frac{\ve n|\delta|}{3}\frac{ e^{n\re\delta}}{|e^{n\delta}-1|^2}e^{\ve n|\delta|}\\
<\ve e^{\ve n|\delta|}\Big|\frac{e^{n\delta}-n\delta \,e^{n\delta}-1}{(e^{n\delta}-1)^2}\Big|=\ve e^{\ve n|\delta|}\big|\Gamma(n\delta)\big|,
\end{multline*}
where $0<|\delta|<\eta$ and $n>N$, for sufficiently chosen $\eta>0$ and $N\in\N$.
Thus, the expression (\ref{eq:a^k}) can be estimated by $\ve e^{\ve n|\delta|}|\Gamma(n\delta)|$, so the statement follows from (\ref{eq:ssg}).
\end{proof}

\begin{prop}\label{prop:psi2}
   For every $\alpha\in(-\pi/2,\pi/2)$ and $\varepsilon>0$ there exist $\eta>0$, $N\in\N$ such that if $z\in\mc C_n(\delta)$, then
 \begin{equation*}
    \Big|\frac{1+2\,\dot\psi_\delta(z)}{1+2\,\Gamma(n\delta)}-1\Big|<\ve,
 \end{equation*}
   where, $0<|\delta|<\eta$, $\alpha=\arg\delta$ and $N<n\leeq2/|\delta|$.
\end{prop}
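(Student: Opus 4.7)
The plan is to adapt the proof of Proposition \ref{prop:psi}, with the crucial simplification that the hypothesis $n|\delta|\leq 2$ keeps the factor $e^{\ve n|\delta|}-1+\ve$ appearing in Lemma \ref{lem:(f^k)'} uniformly of size $O(\ve)$, so no exponential correction survives. Starting from the definition (\ref{eq:psi1}), I would first write
\[
1+2\,\dot\psi_\delta(z)=\sum_{k=1}^n\frac{\delta}{(f_\delta^k)'(z)}
\]
and replace each $1/(f_\delta^k)'(z)$ for $k\leq n-\tilde n$ by $\Psi_\delta'(-n)/\Psi_\delta'(k-n)$ using Lemma \ref{lem:(f^k)'}; combined with (\ref{eq:Psi'/}) this brings the main part of the sum into the explicit form
\[
\frac{\delta}{(e^{n\delta}-1)^2}\sum_{k=1}^{n-\tilde n}(e^{(n-k)\delta}-1)^2e^{k\delta},
\]
with relative error $O(\ve)$ on each term. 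The remaining block of $\tilde n$ tail terms is controlled in absolute value by $C\tilde n|\delta|/\kappa$ via Corollary \ref{cor:K}.

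Next, I would compare the sum above to its Riemann integral. Since the derivative of the integrand $h(x)=(e^{(n-x)\delta}-1)^2 e^{x\delta}$ in $x$ has size $O(|\delta|)\,|h(x)|$ and the step is $1$, the discrepancy between sum and integral is at most $O(|\delta|)\int_0^n|h(x)|\,dx$. A direct computation then evaluates the integral in closed form: expanding the square gives
\[
\int_0^n (e^{(n-x)\delta}-1)^2e^{x\delta}\,dx=\frac{1}{\delta}(e^{2n\delta}-1)-2ne^{n\delta},
\]
and hence
\[
\frac{\delta}{(e^{n\delta}-1)^2}\int_0^n(e^{(n-x)\delta}-1)^2e^{x\delta}\,dx=\frac{e^{2n\delta}-1-2n\delta\, e^{n\delta}}{(e^{n\delta}-1)^2}=1+2\,\Gamma(n\delta).
\]
Thus, assembling the pieces, $1+2\,\dot\psi_\delta(z)$ coincides with $1+2\,\Gamma(n\delta)$ up to the error terms described above.

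The delicate point, and the main obstacle, is that the conclusion is a \emph{ratio} bound, while $|1+2\,\Gamma(n\delta)|$ is as small as $\asymp n|\delta|/3$ when $n|\delta|$ is small (see (\ref{eq:2G+1})). To convert the $O(\ve)$ and $O(|\delta|)$ absolute errors above into an $O(\ve)$ relative error I would use the closed-form identity just obtained, together with the expansion $1+2\,\Gamma(z)\sim z/3$, to lower-bound $|1+2\,\Gamma(n\delta)|$ by $c_\alpha n|\delta|$ when $n|\delta|$ is small, and by a strictly positive constant when $n|\delta|\in[\eta_0,2]$ (here using that $\sinh z-z=\frac{z^3}{6}(1+O(z^2))$ does not vanish on the compact set $\{\arg z=\alpha,\;0<|z|\leq2\}$). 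With these lower bounds, choosing $N$ large and $\eta$ small forces the tail contribution $C\tilde n|\delta|/\kappa$ and the integral-discrepancy $O(n|\delta|^2)$ both to sit below $\ve\,|1+2\,\Gamma(n\delta)|$, which finishes the proof.
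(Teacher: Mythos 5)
Your plan follows the paper's skeleton (replace $b^k(z)=\delta/(f_\delta^k)'(z)$ by $\tilde b^k_n$ via Lemma~\ref{lem:(f^k)'} and (\ref{eq:Psi'/}), compare to the integral, split off a tail of length $\tilde n$), but it diverges in the one place where the paper does something essential, and that divergence opens a real gap. The paper's proof hinges on the \emph{argument coherence} estimate~(\ref{eq:|s|}): because $n|\delta|\leeq 2$, the numbers $\sinh^2(\frac{n-k}{2}\delta)$ all lie within angle $\pi/5$ of each other, so $|\sum_k\tilde b^k_n|>\cos(\pi/5)\sum_k|\tilde b^k_n|$. This is exactly what turns a per-term relative error of size $\ve$ into a relative error of size $O(\ve)$ for the whole sum, and it also yields the nonvanishing of $\sinh z-z$ on $0<|z|\leeq2$ as a byproduct. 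You work with absolute errors instead and then divide by $|1+2\Gamma(n\delta)|$; for that to close, you need the additional bound $\sum_k|\tilde b^k_n|\lesssim|1+2\Gamma(n\delta)|$, which you never establish. (It does hold — both sides are $\asymp n|\delta|$, the left from the explicit formula $\tilde b^k_n=\delta(\sinh(\frac{n-k}{2}\delta)/\sinh(\frac n2\delta))^2$ together with (\ref{eq:h3}), the right from your lower bound — but you cannot just assert the ``$O(\ve)$ absolute errors'' land below $\ve|1+2\Gamma(n\delta)|$ without it.)

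Two more concrete issues. First, the claim that $h'(x)=O(|\delta|)\,|h(x)|$ with $h(x)=(e^{(n-x)\delta}-1)^2e^{x\delta}$ is false: near $x=n$ one has $|h'(x)|/|h(x)|\asymp 2/(n-x)$, since $|e^{(n-x)\delta}-1|\asymp (n-x)|\delta|$ and the factor $|\delta|$ cancels. The discrepancy between sum and integral is therefore of relative size $O(1/\tilde n)$ or $O(1/n)$, not $O(|\delta|)$; this still suffices after taking $N$ large, so the conclusion survives, but the stated rate (and hence the displayed bound ``$O(n|\delta|^2)$'') is incorrect. Second, the parenthetical justification that $\sinh z-z$ does not vanish on $\{\arg z=\alpha,\ 0<|z|\leeq2\}$ is circular: the expansion $\frac{z^3}{6}(1+O(z^2))$ only gives nonvanishing near the origin, not on the whole disc $|z|\leeq2$. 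The paper derives the nonvanishing from the coherence estimate plus the integral identity (take $\tilde n=0$ in (\ref{eq:sc2})--(\ref{eq:int2})); an alternative is a direct series estimate $|\sum_{m\ge1}6z^{2m}/(2m+3)!|<1$ for $|z|\leeq2$, but you must actually supply one of these arguments.
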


\begin{proof}{\em Step 1.}
Fix $\alpha\in(-\pi/2,\pi/2)$.
Let $z\in \mc C_n(\delta)$. Let us write (cf. (\ref{eq:Psi'/}))
\begin{align}\label{eq:11}
   b^k(z)&:=\frac{\delta}{(f_\delta^k)'(z)}\;\;\textrm{ and},\nonumber\\
   \tilde b_n^k&:=\delta\frac{\Psi_\delta'(-n)}{\Psi_\delta'(k-n)} =\delta\Big(\frac{e^{(n-k)\delta}-1}{e^{n\delta}-1}\Big)^2e^{k\delta}
   =\delta\Big(\frac{\sinh(\frac{n-k}{2}\delta)}{\sinh(\frac{n}{2}\delta)}\Big)^2.
 \end{align}
So, we see that (cf. definition (\ref{eq:psi1}))
\begin{equation*}
1+2\dot\psi_\delta(z)=\sum_{k=1}^{n}b^k(z).
\end{equation*}
Next, Lemma \ref{lem:(f^k)'} and the assumption $n|\delta|\leeq2$ lead to
\begin{equation}\label{eq:18}
\Big|\frac{b^k(z)}{\tilde b^k_n}-1\Big|<\ve, \;\textrm{ and }\;\; \Big|\sum_{k=1}^{n-\tilde n}b^k(z)-\sum_{k=1}^{n-\tilde n}\tilde b^k_n\Big|<\ve\sum_{k=1}^{n-\tilde n}\big|\tilde b^k_n\big|,
\end{equation}
where $n-k\greq\tilde n$, $0<|\delta|<\eta$, $\alpha=\arg\delta$, for suitably chosen $\tilde n\greq1$ and $\eta>0$.

{\em Step 2.}
We have $\sinh(z)=z+z^3/6+\ldots$. Thus, if $\arg z_1=\arg z_2$ and $|z_1|,|z_2|\leeq1$, then we can get
$$\arg(\sinh z_1)-\arg(\sinh z_2)<\frac\pi5.$$
Therefore we obtain
\begin{equation}\label{eq:|s|}
\Big|\sum_{k=1}^{n-\tilde n}\sinh^2\Big(\frac{n-k}{2}\delta\Big)\Big|>\cos\Big(\frac\pi5\Big)\sum_{k=1}^{n-\tilde n}\Big|\sinh^2\Big(\frac{n-k}{2}\delta\Big)\Big|,
\end{equation}
where $n|\delta|/2\leeq1$ and $\alpha=\arg\delta$.

{\em Step 3.}
Using (\ref{eq:|s|}), we conclude that
\begin{equation}\label{eq:sc2}
\bigg|\sum_{k=1}^{n-\tilde n}\sinh^2\Big(\frac{n-k}{2}\delta\Big)-\int_{0}^{n-\tilde n}\sinh^2\Big(\frac{n-x}{2}\delta\Big)dx\bigg| \leeq\ve\sum_{k=1}^{n-\tilde n}\Big|\sinh^2\Big(\frac{n-k}{2}\delta\Big)\Big|,
\end{equation}
where $0<|\delta|<\eta$ and $n>\tilde n$.

Since $2\sinh^2(z/2)=\cosh(z)-1$, we see that
\begin{equation}\label{eq:int2}
\int_{0}^{n-\tilde n}\sinh^2\Big(\frac{n-x}{2}\delta\Big)dx=\frac{1}{2\delta}\big(\sinh(n\delta)-n\delta-\sinh(\tilde n\delta)+\tilde n\delta\big).
\end{equation}
If we take $\tilde n=0$, then the above combined with (\ref{eq:|s|}) and (\ref{eq:sc2}) leads to the fact that
\begin{equation}\label{eq:23}
\sinh (z)-z\neq0 \;\;\;\;\;\textrm{ where }\;\;\;\;\; 0<|z|\leeq2.
\end{equation}
Thus, we can assume that $\sinh(\tilde n\delta)+\tilde n\delta$ is small with respect to $\sinh(n\delta)+ n\delta$, if $n>N$ where $N$ is large enough.

Next, using (\ref{eq:|s|}), (\ref{eq:sc2}) and (\ref{eq:int2}) (cf. proof of Proposition \ref{prop:psi}, {\em Step 2}), we can get
\begin{equation*}\label{eq:sh^2}
\bigg|\sum_{k=1}^{n-\tilde n}\sinh^2\Big(\frac{n-k}{2}\delta\Big)-\frac{1}{2\delta}\big(\sinh(n\delta)-n\delta\big)\bigg| \leeq4\ve\bigg|\sum_{k=1}^{n-\tilde n}\sinh^2\Big(\frac{n-k}{2}\delta\Big)\bigg|.
\end{equation*}
So, we conclude that (cf. definitions (\ref{eq:f}) and (\ref{eq:11}))
\begin{equation*}
\bigg|\sum_{k=1}^{n-\tilde n}\tilde b_n^k-\big(1+2\Gamma(n\delta)\big)\bigg| \leeq4\ve\bigg|\sum_{k=1}^{n-\tilde n}\tilde b_n^k\bigg|.
\end{equation*}
Thus, (\ref{eq:18}) and (\ref{eq:|s|}) lead to
\begin{equation}\label{eq:22}
\bigg|\sum_{k=1}^{n-\tilde n}b^k(z)-\big(1+2\Gamma(n\delta)\big)\bigg|\leeq6\ve\bigg|\sum_{k=1}^{n-\tilde n}\tilde b_n^k\bigg| \leeq\frac{6\ve}{1+4\ve}\big|1+2\Gamma(n\delta)\big|.
\end{equation}

{\em Step 4.}
Now, we have to estimate
\begin{equation*}
\bigg|\sum_{k=n-\tilde n+1}^{n}b^k(z) \bigg|= \bigg|\frac{\delta}{(f_\delta^{n-\tilde n})'(z)}\bigg(\sum_{k=1}^{\tilde n}\frac{1}{(f_\delta^{k})'(f_\delta^{n-\tilde n}(z))}\bigg)\bigg|.
\end{equation*}
Note that absolute value of the expression in bracket can be bounded above by a constant $K_1(\tilde n)$ (cf. Corollary \ref{cor:K}). Thus, Lemma \ref{lem:n^2} leads to
\begin{equation*}
\bigg|\sum_{k=n-\tilde n+1}^{n}b^k(z) \bigg|\leeq K_2(\tilde n)\frac{\delta}{(n-\tilde n)^2}.
\end{equation*}
We can assume that
\begin{equation*}
K_2(\tilde n)\frac{\delta}{(n-\tilde n)^2}\leeq \ve\frac{\sinh (n\delta)-n\delta}{\cosh (n\delta)-1}=\ve\big(1+2\Gamma(n\delta)\big),
\end{equation*}
where $0<|\delta|<\eta$ and $n>N$ for sufficiently chosen $\eta>0$ and $N\in\N$. Thus, the statement follows from (\ref{eq:22}) and the fact that $1+2\Gamma(z)\neq0$ where $0<z\leeq2$ (see (\ref{eq:23}) and definition (\ref{eq:f})).
\end{proof}

\section{Integral over ${B}_N$}\label{sec:bn}

The main result of this section is Proposition \ref{prop:BN}, which allows us to estimate the integral (\ref{eq:integral}), restricted to a set $ B_N$.

Note that the proof of  Proposition \ref{prop:BN} will be repeated after \cite[Lemma 7.3]{Jii}
(see also \cite[Proposition 4.1]{HZ}) with suitable changes.

First, we define a family of sets $\{A^{N_0}_{N,n}\}_{n\greq0}$, where $N\in\N$, $N_0\greq1$, which form a partition of
$ B_N\sms T^{-N_0}(\{1\}) $. 
Write
\begin{equation*}
   A^{N_0}_{N,n}:=
 \left\{
  \begin{array}{ll}
    T^{-N_0}(C_{N+n})\cap B_N, & \hbox{for $n\greq1$;} \\
    T^{-N_0}( B_N)\cap B_N, & \hbox{for $n=0$.}
 \end{array}
   \right.
\end{equation*}

\begin{lem}\label{lem:1}\cite[Lemma 5.1]{Jii}
For every $\alpha\in(-\pi/2,\pi/2)$ there exist $K>0$ and $\eta>0$ such that for every $N\in\N$, $N_0, n\greq1$ we have
$$\tilde{\mu_\delta}(A^{N_0}_{N,n})\leeq K N_0\,\tilde{\omega_\delta}( C_{N+n}),$$
where $0<|\delta|<\eta$ and $\alpha=\arg\delta$.
\end{lem}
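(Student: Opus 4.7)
My plan is to decompose $A^{N_0}_{N,n}$ according to the last time the $T$-orbit visits $B_N$ before time $N_0$, use the $T$-invariance of $\tilde\mu_\delta$ to reduce each piece to a single arc placed near $-1\in\partial\D$, and bound the $\tilde\mu_\delta$-mass of such an arc by a uniform constant times an $\tilde\omega_\delta$-mass.

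First, for $x\in A^{N_0}_{N,n}$ I set $k(x):=\max\{j\in\{0,\ldots,N_0-1\}:T^j(x)\in B_N\}$, which is well defined because $x\in B_N$ and $T^{N_0}(x)\in C_{N+n}\subset M_N$. The sets $A^{N_0,k}_{N,n}:=\{x\in A^{N_0}_{N,n}:k(x)=k\}$ partition $A^{N_0}_{N,n}$ disjointly. On $A^{N_0,k}_{N,n}$ the iterates $T^{k+1}(x),\ldots,T^{N_0}(x)$ all lie in $M_N$, and since $T$ sends $C_m$ bijectively onto $C_{m-1}$ for $m\greq 1$, the orbit past time $k$ is forced: $T^{k+1+j}(x)\in C_{N+n+N_0-k-1-j}$. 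Hence $T^{k+1}(x)\in C_{N+n+N_0-k-1}$, and $T^k(x)$ lies in the preimage of that cylinder under $T$ that is \emph{not} $C_{N+n+N_0-k}$, i.e.\ in an arc $C'_{N+n+N_0-k}\subset\partial\D$ placed near $-1$. The $T$-invariance of $\tilde\mu_\delta$ then gives $\tilde\mu_\delta(A^{N_0,k}_{N,n})\leeq\tilde\mu_\delta(C'_{N+n+N_0-k})$.

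The main step I would then establish is a uniform bound $\tilde\mu_\delta(C'_m)\leeq K_1\tilde\omega_\delta(C_{m-1})$ with $K_1$ depending only on $\alpha$, which is what removes the $N$-dependence from $\la(N)$ in Lemma \ref{lem:mn}. The key observation is that $\bigcup_{m\greq 1}C'_m$ is contained in a fixed compact subset of $\partial\D\sms\{1\}$, so by Proposition \ref{prop:CHm} and Corollary \ref{cor:M} one can choose a single $N_1=N_1(\alpha)$ such that $\vp_\delta(C'_m)\subset\mc B_{N_1}(\delta)$ for every $m\greq 2$ and all small $\delta$. Lemma \ref{lem:mn} at this fixed level $N_1$ gives $\tilde\mu_\delta|_{C'_m}\leeq\la(N_1)\tilde\omega_\delta|_{C'_m}$, and $f_\delta$-conformality of $\omega_\delta$ combined with bounded distortion of the branch of $f_\delta$ at the repelling preimage $-1-\delta$ of $0$ (whose derivative has modulus $|1+\delta|$, uniformly bounded away from $0$ and $\infty$) yields $\tilde\omega_\delta(C'_m)\asymp\tilde\omega_\delta(C_{m-1})$.

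Summing over $k$ would give $\tilde\mu_\delta(A^{N_0}_{N,n})\leeq K_1\sum_{m=N+n}^{N+n+N_0-1}\tilde\omega_\delta(C_m)$; since the sizes $|\mc C_m(\delta)|$ are non-increasing in $m$ up to a universal constant by Corollary \ref{cor:mod}, and $\tilde\omega_\delta(C_m)\asymp|\mc C_m(\delta)|^{\mc D(\delta)}$ by $f_\delta$-conformality, one has $\tilde\omega_\delta(C_m)\leeq K_2\tilde\omega_\delta(C_{N+n})$ throughout this range, and the claim follows with $K:=K_1K_2$. The hardest step is the uniform bound in the previous paragraph; once it is in place, the rest is $T$-invariance and routine bookkeeping with conformal measures.
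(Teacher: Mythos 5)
Your decomposition of $A^{N_0}_{N,n}$ by the last exit time from $B_N$, the use of $T$-invariance to reduce to the arcs $C'_m$ near $-1$, and the bound $\tilde\mu_\delta(C'_m)\leeq K_1\tilde\omega_\delta(C_{m-1})$ via Lemma \ref{lem:mn} at a fixed level $N_1$ together with conformality at the non-critical preimage $-1-\delta$ of $0$ -- all of this is sound and is essentially the route of \cite[Lemma 5.1]{Jii} that the paper invokes. (Incidentally, that $\bigcup_m C'_m$ lies in a fixed $B_{N_1}$ is a purely combinatorial fact about the circle, independent of $\vp_\delta$, so Proposition~\ref{prop:CHm} and Corollary~\ref{cor:M} are not really what is at work there -- but this is harmless.)

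The gap is in the very last step, and it is exactly the one the paper singles out as the place where the argument of \cite{Jii} must be modified. You assert that the sizes $|\mc C_m(\delta)|$ are non-increasing in $m$ up to a universal constant ``by Corollary~\ref{cor:mod}''. This does not follow from Corollary~\ref{cor:mod}: in the hyperbolic regime $n|\delta|>1$ the upper bound is $\asymp|\delta|^2 e^{-n|\delta|/K}$ and the lower bound is $\asymp|\delta|^2 e^{-Kn|\delta|}$, with \emph{different} exponential rates. For $N+n<m$ with both in the hyperbolic range, the quotient of the upper bound at $m$ by the lower bound at $N+n$ is $K^2 e^{K(N+n)|\delta|-m|\delta|/K}$, which is unbounded as $(N+n)|\delta|\to\infty$ when $m$ is only slightly larger than $N+n$. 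So Corollary~\ref{cor:mod} alone gives no control, and indeed the paper notes explicitly that ``it is not always true that $\omega_\delta(\mc C_{N+n}(\delta))>\omega_\delta(\mc C_{N+n+k}(\delta))$.'' The correct input is Corollary~\ref{cor:K}: since $|(f_\delta^k)'(z)|>\kappa$ uniformly for $z\in\mc J_\delta$, bounded distortion of $f_\delta^{-k}$ (Lemma~\ref{lem:distP}) together with conformality gives $\omega_\delta(\mc C_{N+n})/\omega_\delta(\mc C_{N+n+k})\asymp|(f_\delta^k)'(z)|^{\mc D(\delta)}>\kappa^{\mc D(\delta)}$ for $z\in\mc C_{N+n+k}(\delta)$, hence $\tilde\omega_\delta(C_{N+n+k})\leeq K_2\,\tilde\omega_\delta(C_{N+n})$ for all $k\greq0$. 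With this replacement your proof closes.
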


The proof can be carried out exactly as in \cite{Jii} with one change. In our case it is not always true that $\omega_\delta(\mc C_{N+n}(\delta))>\omega_\delta(\mc C_{N+n+k}(\delta))$, where $k>0$. So we must change the argument in the last step of the proof. We know that
   $\omega_\delta(\mc C_{N+n}(\delta))/\omega_\delta(\mc C_{N+n+k}(\delta))\asymp|(f_\delta^k)'(z)|^{\mc D(\delta)}>\kappa$,
where $z\in\mc C_{N+n+k}(\delta)$ (see Corollary \ref{cor:K}). Thus there exist $K>0$, such that
   $$\sum_{k=1}^{N_0}\omega_\delta(\mc C_{N+n+k}(\delta))<K N_0\omega_\delta(\mc C_{N+n}(\delta)),$$
and we can apply this inequality.

\begin{prop}\label{prop:BN}
For every $\alpha\in(-\pi/2,\pi/2)$ there exists $\eta>0$ such that for every $N\in\N$ there exists $K(N)>0$ such that
$$\int_{B_N}\big|1+2\dot\varphi_\delta\big|d\tilde{\mu_\delta}< K(N)|\delta|,$$
where $0<|\delta|<\eta$ and $\alpha=\arg\delta$.
\end{prop}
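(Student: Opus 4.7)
\emph{Plan of proof.} The proof adapts \cite[Lemma 7.3]{Jii} (cf.\ \cite[Proposition 4.1]{HZ}) to the complex direction setting. Write $I_N:=\int_{B_N}|1+2\dot\vp_\delta|\,d\tilde{\mu_\delta}$ and fix $N_0\in\N$, to be chosen large in terms of $N$. The starting point is formula (\ref{eq:sumas1}) applied with $m=N_0$:
\begin{equation*}
1+2\dot\vp_\delta(s)=\sum_{k=1}^{N_0}\frac{\delta}{(f_\delta^k)'(\vp_\delta(s))}+\frac{1+2\dot\vp_\delta(T^{N_0}(s))}{(f_\delta^{N_0})'(\vp_\delta(s))}.
\end{equation*}
Decompose $B_N=A^{N_0}_{N,0}\sqcup\bigsqcup_{n\geq 1}A^{N_0}_{N,n}$ (up to a countable set). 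The finite sum contributes at most $(N_0/\kappa)|\delta|$ to $I_N$ via Corollary \ref{cor:K}, so the remaining task is to estimate the contribution of the transfer term on each cell.

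For $n\geq 1$ we have $T^{N_0}(s)\in C_{N+n}$; applying (\ref{eq:sumas1}) once more with $m=N+n$ at $s':=T^{N_0}(s)$ splits $1+2\dot\vp_\delta(s')$ into a main part $1+2\dot\psi_\delta(\vp_\delta(s'))$ and a secondary remainder. Proposition \ref{prop:psi2} bounds the main part by $|1+2\Gamma((N+n)\delta)|=O((N+n)|\delta|)$ in the parabolic regime $(N+n)|\delta|\leq 2$, while Proposition \ref{prop:psi} together with the boundedness of $\Gamma$ on rays yields a uniform bound in the hyperbolic regime $(N+n)|\delta|>2$. The secondary remainder involves $T^{N+n}(s')\in C_0$; Lemma \ref{lem:n^2} supplies $|(f_\delta^{N+n})'(\vp_\delta(s'))|\geq K(0)(N+n)^2$, and a separate pointwise bound $|1+2\dot\vp_\delta|=O(|\delta|)$ on $C_0$---obtained from the absolutely convergent series $1+2\dot\vp_\delta(s)=\delta\sum_k 1/(f_\delta^k)'(\vp_\delta(s))$, whose summability is uniform in $s\in C_0$ thanks to the $k^2$-growth of Lemma \ref{lem:n^2} at returns to $B_0$---keeps the secondary remainder at size $O(|\delta|/(N+n)^2)$. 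Combining with $\tilde{\mu_\delta}(A^{N_0}_{N,n})\leq KN_0\tilde{\omega_\delta}(C_{N+n})\asymp|\mc C_{N+n}(\delta)|^{\mc D(\delta)}$ from Lemma \ref{lem:1} together with Corollaries \ref{cor:mod} and \ref{cor:meas}, and summing in $n$---the parabolic regime producing $|\delta|\sum(N+n)^{1-2\mc D(\delta)}$, which converges since $\mc D(\delta)>1$, and the hyperbolic regime producing $O(|\delta|^{2\mc D(\delta)-1})=o(|\delta|)$---the total contribution of the pieces with $n\geq 1$ is $\leq K(N)|\delta|$.

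For $n=0$, the iterate $T^{N_0}(s)\in B_N$, and Lemma \ref{lem:n^2} gives $|(f_\delta^{N_0})'(\vp_\delta(s))|>K(N)N_0^2$. Since $A^{N_0}_{N,0}\subset (T^{N_0})^{-1}(B_N)$ and $\tilde{\mu_\delta}$ is $T$-invariant, one obtains
\begin{equation*}
\int_{A^{N_0}_{N,0}}\frac{|1+2\dot\vp_\delta(T^{N_0}(s))|}{|(f_\delta^{N_0})'(\vp_\delta(s))|}\,d\tilde{\mu_\delta}(s)\leq \frac{1}{K(N)N_0^2}\int_{\partial\D}|1+2\dot\vp_\delta(T^{N_0}(s))|\cdot 1_{B_N}(T^{N_0}(s))\,d\tilde{\mu_\delta}(s)=\frac{I_N}{K(N)N_0^2}.
\end{equation*}
Choosing $N_0$ so that $K(N)N_0^2\geq 2$ absorbs this term into the left-hand side of the collected estimate, yielding $I_N\leq 2K(N)|\delta|$. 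The main obstacle is the uniform pointwise bound $|1+2\dot\vp_\delta|=O(|\delta|)$ on $C_0$ used to control the secondary remainder: proving it requires showing that $\sum_k 1/|(f_\delta^k)'(\vp_\delta(s))|$ is uniformly bounded on $C_0\times\{|\delta|<\eta\}$, which we obtain by combining the $k^2$-lower bounds of Lemma \ref{lem:n^2} at returns to $B_0$ with Fatou-coordinate estimates controlling the product of derivatives along excursions into $M_0$.
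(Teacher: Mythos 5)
Your overall strategy (the decomposition into cells $A^{N_0}_{N,n}$, the crude $O(N_0|\delta|)$ bound on the finite sum, and the self-referential absorption for $n=0$ via $T$-invariance and Lemma \ref{lem:n^2}) is the same skeleton as the paper's proof, but for the cells with $n\geq 1$ you replace the paper's argument with a double iteration of (\ref{eq:sumas1}) and here the argument breaks.

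The gap is the ``uniform pointwise bound'' $|1+2\dot\vp_\delta|=O(|\delta|)$ on $C_0$, which is false. Take $s\in C_0$ with $T(s)\in C_m$ for large $m$ in the parabolic range $m|\delta|\lesssim 1$. For $z=\vp_\delta(s)\in\mc C_0(\delta)$ one has $f_\delta^{1+j}(z)\in\mc C_{m-j}(\delta)$, and by Lemma \ref{lem:(f^j)'} together with (\ref{eq:Psi-n}) the derivatives satisfy $|(f_\delta^{1+j})'(z)|\asymp m^2/(m-j)^2$; hence
\begin{equation*}
\sum_{j=0}^{m}\frac{1}{|(f_\delta^{1+j})'(z)|}\asymp\frac{1}{m^2}\sum_{l=0}^{m}l^2\asymp m,
\end{equation*}
so $\sum_k 1/|(f_\delta^k)'(\vp_\delta(s))|$ is of size $\gtrsim m$, not $O(1)$, and $|1+2\dot\vp_\delta(s)|\gtrsim m|\delta|$ can be of size $O(1)$ as $m\to 1/|\delta|$. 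Lemma \ref{lem:n^2} only gives the $k^2$-lower bound at the \emph{return} times to $\mc B_0(\delta)$; during the excursion through the parabolic cylinders the derivative can stay as small as $\kappa$ for roughly $m$ steps, and there is no cancellation in the series for $\delta$ in a fixed cone, so no uniform pointwise $O(|\delta|)$ bound is available.

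The paper sidesteps this entirely by not iterating a second time: on $A^{N_0}_{N,n}$ it applies (\ref{eq:sumas1}) with $m=N_0+n$ (so that $T^{N_0+n}(s)\in C_N\subset B_N$), bounds the tail integral by $T$-invariance with $\int_{A^{N_0}_{N,n}}|1+2\dot\vp_\delta(T^{N_0+n})|\,d\tilde\mu_\delta\leeq I_N$ and the factor $|(f_\delta^{N_0+n})'|^{-1}\leeq K_1(N)/(N_0+n)^2$ from Lemma \ref{lem:n^2}, and sums over $n$ to absorb $\frac12 I_N$; the finite sum of length $N_0+n$ is then estimated directly via Lemma \ref{lem:1}, Lemma \ref{lem:c/c2} and Corollary \ref{cor:mod}. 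Your secondary remainder can be repaired in exactly the same way: instead of the false pointwise bound, use $T^{N_0+N+n}(A^{N_0}_{N,n})\subset C_0\subset B_N$ and $T$-invariance to control $\int_{A^{N_0}_{N,n}}|1+2\dot\vp_\delta(T^{N_0+N+n})|\,d\tilde\mu_\delta\leeq I_N$, divide by $K(N)(N_0+N+n)^2$ from Lemma \ref{lem:n^2}, and sum; this absorbs into $I_N$ together with your $n=0$ piece. With that replacement your argument closes, though it is more elaborate than the paper's: the main part estimated via Propositions \ref{prop:psi}, \ref{prop:psi2} and $\Gamma$ is a genuine refinement, but it is not needed here, since the cruder bound $\big|\sum_{k\leeq N_0+n}\delta/(f_\delta^k)'\big|\leeq(N_0+n)|\delta|/\kappa$ already suffices for the summation in $n$.
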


\begin{proof}
Fix $\alpha\in(-\pi/2,\pi/2)$. Let $\eta>0$ be such that Lemmas \ref{lem:c/c2}, \ref{lem:n^2}, \ref{lem:1} and Corollary \ref{cor:mod} hold.

Fix $N\in\N$. Let $N_0\greq1$ and $s\in A^{N_0}_{N,n}$, then formula (\ref{eq:sumas1}) leads to
\begin{equation*}
1+2\dot\vp_\delta(s)=\sum_{k=1}^{N_0+n}
\frac{\delta}{(f_\delta^k)'(\vp_\delta(s))}+ \frac{2\dot\vp_\delta(T^{N_0+n}(s))+1}{(f_\delta^{N_0+n})'(\vp_\delta(s))}.
\end{equation*}
So, we divided $\dot\vp_\delta$ into two parts, the finite sum and the "tail".

Strategy of the proof is as follows.
First, in Step 1, we will prove that integral of the "tail" is less than $\frac12\int_{
B_N}|1+2\dot\vp_{\delta}|d\tilde{\mu_\delta}$ (for $N_0$ large enough, depending on $N$). Next, in Step 2, we will see
that integral of the finite sum is bounded by $K(N,N_0)|\delta|$, where $K(N,N_0)>0$ depends on $N$ and $N_0$. It means that
   $$\int_{B_N}\big|1+2\dot\vp_{\delta}\big|d\tilde{\mu_\delta}\leeq K(N,N_0)|\delta|+\frac12\int_{ B_N}\big|1+2\dot\vp_{\delta}\big|d\tilde{\mu_\delta}.$$
Since $N_0$ depends only on $N$, the assertion follows.

{\em Step 1.}
The measure $\tilde{\mu_\delta}$ is $T$-invariant, and $T^{N_0+n}(A^{N_0}_{N,n})\subset B_N$, hence
\begin{equation*}
\int_{A^{N_0}_{N,n}}\big|1+2\dot\vp_\delta(T^{N_0+n})\big|d\tilde{\mu_\delta}\leeq
\int_{B_N}\big|1+2\dot\vp_\delta\big|d\tilde{\mu_\delta}.
\end{equation*}
If $s\in A^{N_0}_{N,n}$, then $f_\delta^{N_0+n}(\vp_\delta(s))\in\mathcal B_N(\delta)$, so Lemma \ref{lem:n^2} and the above estimate give us
\begin{equation*}
\int_{A^{N_0}_{N,n}}\bigg|\frac{1+2\dot\vp_\delta(T^{N_0+n}(s))}{(f_\delta^{N_0+n})'(\vp_\delta(s))}\bigg|d\tilde{\mu_\delta}(s)
\leeq\frac{K_1(N)}{(N_0+n)^{2}}\int_{ B_N}\big|1+2\dot\vp_\delta(s)\big|d\tilde{\mu_\delta}(s).
\end{equation*}
Thus, we obtain
\begin{equation*}
\sum_{n=0}^\infty\int_{A^{N_0}_{N,n}}\bigg|\frac{1+2\dot\vp_\delta(T^{N_0+n})}{(f_\delta^{N_0+n})'(\vp_\delta)}\bigg|d\tilde{\mu_\delta}
\leeq\bigg(\sum_{n=0}^\infty\frac{K_1(N)}{(N_0+n)^{2}}\bigg)\int_{ B_N}\big|1+2\dot\vp_\delta\big|d\tilde{\mu_\delta},
\end{equation*}
and for $N_0$ large enough (depending on $N$), we have $\sum_{n=0}^\infty\frac{K_1(N)}{(N_0+n)^{2}}<\frac12$.

{\em Step 2.} Lemma \ref{lem:1}, Lemma \ref{lem:c/c2} and Corollary \ref{cor:mod} lead to
\begin{multline*}
\tilde{\mu_c}(A^{N_0}_{N,n})\leeq K_2\,N_0\,\tilde{\omega_c}(C_{N+n})\\
\leeq K_3\,N_0\,(\diam \mc C_{N+n}(\delta))^{\mc D(\delta)}\leeq K_4\,N_0\,(N+n)^{-2\mc D(\delta)},
\end{multline*}
where $n\greq1$. Using Corollary \ref{cor:K}, we obtain
\begin{multline*}
\int_{A^{N_0}_{N,0}}\Big|\sum_{k=1}^{N_0}\frac{\delta}{(f_\delta^k)'(\vp_\delta(s))}\Big|d\tilde{\mu_\delta}+
\sum_{n=1}^{\infty}\int_{A^{N_0}_{N,n}}\Big|\sum_{k=1}^{N_0+n} \frac{\delta}{(f_\delta^k)'(\vp_\delta(s))}\Big|d\tilde{\mu_\delta} \\
\leeq K_5N_0\,\tilde{\mu_\delta}(A^{N_0}_{N,0})|\delta| +\sum_{n=1}^{\infty}(N_0+n)\,K_6\,N_0\,(N+n)^{-2\mc D(\delta)}|\delta|\leeq  K(N,N_0)|\delta|,
\end{multline*}
where the constant $K(N,N_0)$ depends only on $N$ and $N_0$, as required.
\end{proof}

\section{Integral over ${M}_N$ and proof of Theorem \ref{thm:direction2}}\label{sec:mn}

Now we are going to prove Proposition \ref{prop:MN}, which is the crucial ingredient in the proof of Theorem \ref{thm:direction2}.

Let $\vartheta=\tan\alpha$, and let $v=e^{i\alpha}$, where $\alpha\in(-\pi/2,\pi/2)$. We define
\begin{equation}\label{eq:Delta}
\Delta(\alpha):=\frac{H_\mu}{\cos\alpha}\int_{0}^{\infty} \Big(\frac{x\sinh x+\vartheta x\sin\vartheta x}{\cosh x-\cos\vartheta x}-2\Big)
\Big(\frac{1/2}{\cosh x-\cos\vartheta x}\Big)^{\mc D(0)}dx,
\end{equation}
where $H_\mu$ is the constant from Lemma \ref{lem:est} and consequently from Lemma~\ref{lem:mm}. Note that $H_\mu/\cos\alpha>0$.

\begin{prop}\label{prop:MN}
For every $\alpha\in(-\pi/2,\pi/2)$ and $\ve>0$ there exist $N\in\N$ and $\eta>0$ such that
$$|\delta|^{2\mc D(\delta)-2}(\Delta(\alpha)-\ve)<\int_{ M_N}\re\Big(\frac{v+2v\dot\vp_{\delta}}{f'_{\delta}(\vp_{\delta})}\Big)d\tilde {\mu_\delta}<|\delta|^{2\mc D(\delta)-2}(\Delta(\alpha)+\ve),$$
where $0<|\delta|<\eta$ and $\alpha=\arg\delta$. Moreover $\Delta(\alpha)>0$ if $\alpha\in[-\pi/4,\pi/4]$.
\end{prop}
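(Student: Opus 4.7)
\emph{Strategy.} The plan is to show that, on each cylinder $C_n$ with $n>N$, the integrand is well-approximated by $\re\bigl(v(1+2\Gamma(n\delta))\bigr)$, and that the resulting sum against $\tilde\mu_\delta(C_n)$ is a Riemann sum (step $|\delta|$) for the integral defining $\Delta(\alpha)$. Concretely, for $s\in C_n$ and $z=\vp_\delta(s)\in\mc C_n(\delta)$, formula~(\ref{eq:sumas1}) with $m=n$ gives
\begin{equation*}
  1+2\dot\vp_\delta(s)=\bigl(1+2\dot\psi_\delta(z)\bigr)+\frac{1+2\dot\vp_\delta(T^n s)}{(f_\delta^n)'(z)},
\end{equation*}
and $f_\delta'(z)=1+\delta+2z$ is close to~$1$. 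I replace $1+2\dot\vp_\delta$ by $1+2\dot\psi_\delta$ (the tail error is~(E1) below), in turn by $1+2\Gamma(n\delta)$ via Propositions~\ref{prop:psi},~\ref{prop:psi2} (error (E3)), replace $1/f'_\delta(\vp_\delta)$ by~$1$ (error (E2)), and invoke Lemma~\ref{lem:mm} to rewrite $\tilde\mu_\delta(C_n)\approx H_\mu|\delta|^{2\mc D(\delta)-1}\int_{n|\delta|}^\infty\Lambda_0^{\mc D(\delta)}(vs)\,ds$; the remaining expression is
\begin{equation*}
  H_\mu|\delta|^{2\mc D(\delta)-1}\sum_{n>N}\re\bigl(v(1+2\Gamma(n\delta))\bigr)\int_{n|\delta|}^\infty\Lambda_0^{\mc D(\delta)}(vs)\,ds.
\end{equation*}

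\emph{Evaluation.} Approximating the sum by an integral in $x=n|\delta|$ (the lower endpoint $N|\delta|\to 0$, and the integrand near $0$ is absolutely integrable since $|1+2\Gamma(xv)|\sim|xv|/3$ while $\int_x^\infty\Lambda_0^{\mc D(\delta)}\sim x^{1-2\mc D}$ with $4-2\mc D(0)>1$), this becomes
\begin{equation*}
  H_\mu|\delta|^{2\mc D(\delta)-2}\int_0^\infty\re\bigl(v(1+2\Gamma(xv))\bigr)\Bigl(\int_x^\infty\Lambda_0^{\mc D(\delta)}(vs)\,ds\Bigr)dx.
\end{equation*}
The key identity is $1+2\Gamma(z)=\frac{d}{dz}\bigl(z\sinh z/(\cosh z-1)\bigr)$, so $\int_0^s v(1+2\Gamma(xv))\,dx=sv\sinh(sv)/(\cosh(sv)-1)-2$ (the limit of the antiderivative at $0$ equals~$2$). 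After Fubini, writing $sv=u+i\vartheta u$ with $u=s\cos\alpha$, direct computation gives $|\cosh(sv)-1|=\cosh u-\cos\vartheta u$, $\re\bigl(sv\sinh(sv)/(\cosh(sv)-1)\bigr)=(u\sinh u+\vartheta u\sin\vartheta u)/(\cosh u-\cos\vartheta u)$, and $\Lambda_0^{\mc D(\delta)}(sv)=\bigl((1/2)/(\cosh u-\cos\vartheta u)\bigr)^{\mc D(\delta)}$. The substitution $ds=du/\cos\alpha$ turns the double integral into $|\delta|^{2\mc D(\delta)-2}\Delta(\alpha)$ (with the exponent $\mc D(\delta)$ in place of $\mc D(0)$, absorbed into the $\ve$-error by dominated convergence and continuity of $\mc D$ at~$0$).

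\emph{Error analysis and main obstacle.} (E1)~Bounded distortion (Lemma~\ref{lem:distP}) makes $1/[(f_\delta^n)'(z)f'_\delta(z)]$ an almost-constant $c_n$ on $\mc C_n(\delta)$, with $c_n=O(1/n^2)$ for $n\leeq1/|\delta|$ by Lemma~\ref{lem:n^2} and exponentially small for larger $n$; since $T^n(C_n)=C_0\subset B_N$, $T$-invariance of $\tilde\mu_\delta$ and Proposition~\ref{prop:BN} yield $\int_{C_n}|1+2\dot\vp_\delta(T^n s)|\,d\tilde\mu_\delta\leeq\int_{C_0}|1+2\dot\vp_\delta|\,d\tilde\mu_\delta\leeq K(N)|\delta|$, so summing $c_n K(N)|\delta|$ gives $O(|\delta|)=o(|\delta|^{2\mc D(\delta)-2})$. (E2)~The correction from replacing $1/f'_\delta(\vp_\delta)$ by~$1$ contributes, per cylinder, at most $|1+2\Gamma(n\delta)|\cdot|\delta\coth(n\delta/2)|=|\delta(\sinh(n\delta)-n\delta)\sinh(n\delta)/(\cosh(n\delta)-1)^2|$, which is uniformly $O(|\delta|)$ in $n$ (the asymptotic is $\sim 2|\delta|/3$ for $n|\delta|\to0$ and $\sim|\delta|$ for $n|\delta|\to\infty$); summed, again $O(|\delta|)$. (E3)~For the step $\dot\psi_\delta\to\Gamma(n\delta)$, the sharp relative bound of Proposition~\ref{prop:psi2} (needed near zeros of $1+2\Gamma$) is used in the regime $n|\delta|\leeq 2$, and paired with Lemma~\ref{lem:mm}\,(\ref{lit:mm2}) and the estimate $|1+2\Gamma(n\delta)|=O(n|\delta|)$, while for $n|\delta|>2$ the absolute bound of Proposition~\ref{prop:psi} combined with the exponential decay of $|\Gamma(n\delta)|$ and of $\tilde\mu_\delta(C_n)$ (Corollary~\ref{cor:meas}\,(\ref{cit:meas1}), Lemma~\ref{lem:K}) gives the $\ve|\delta|^{2\mc D(\delta)-2}$ bound. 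The main obstacle is precisely the bookkeeping in (E3): gluing the relative and absolute regimes at $n|\delta|\sim2$ and pairing them correctly with the two parts of Lemma~\ref{lem:mm}.

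\emph{Positivity.} For $\alpha\in[-\pi/4,\pi/4]$ one has $|\vartheta|\leeq1$. An elementary power series manipulation yields
\begin{equation*}
  x\sinh x+\vartheta x\sin\vartheta x-2(\cosh x-\cos\vartheta x)=\sum_{m\geq2}\frac{(m-1)\,x^{2m}}{m\,(2m-1)!}\bigl(1+(-1)^{m-1}\vartheta^{2m}\bigr),
\end{equation*}
the $m=1$ contribution vanishing. For odd $m$ the bracket equals $1+\vartheta^{2m}>0$, and for even $m$ it equals $1-\vartheta^{2m}\geeq 0$ since $|\vartheta|\leeq1$, so the numerator is strictly positive for $x>0$. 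As $\cosh x-\cos\vartheta x>0$ for $x>0$ and $H_\mu/\cos\alpha>0$, the integrand of~(\ref{eq:Delta}) is positive on $(0,\infty)$, whence $\Delta(\alpha)>0$.
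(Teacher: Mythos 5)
Your proposal follows essentially the same route as the paper's proof, which is split into Lemma \ref{lem:r}, Lemma \ref{lem:limQ} and the auxiliary Lemma \ref{lem:vp-psi}: on each cylinder replace $\dot\vp_\delta$ by $\dot\psi_\delta$ (tail estimate via Lemma \ref{lem:n^2}, $T$-invariance, and Proposition \ref{prop:BN}, exactly your (E1)), replace $\dot\psi_\delta$ by $\Gamma(n\delta)$ via Propositions \ref{prop:psi2}/\ref{prop:psi} split at the scale $n|\delta|\asymp1$ (your (E3)), replace $\tilde\mu_\delta(C_n)$ by the $\Lambda_0^{\mc D(\delta)}$-integral via Lemma \ref{lem:mm}, pass from the Riemann sum to an integral, and then compute $\int_0^\infty Q^{\mc D(0)}_\alpha$ by Fubini using the antiderivative identity $1+2\Gamma(z)=\frac{d}{dz}\bigl(z\sinh z/(\cosh z-1)\bigr)$ and the hyperbolic identities (\ref{eq:h3})--(\ref{eq:h4}); the positivity argument is word-for-word the paper's power-series expansion in Step~5 of Lemma \ref{lem:limQ}. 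The one place you deviate is (E2): the paper simply uses $|1/f'_\delta(\vp_\delta)-1|<\ve$ on $\mc M_{N_0}(\delta)$ (Corollary \ref{cor:M}) to absorb this correction into $\ve\cdot O(|\delta|^{2\mc D(\delta)-2})$, whereas you use the pointwise bound $|f'_\delta-1|\asymp|\delta\coth(n\delta/2)|$ on $\mc C_n(\delta)$ to obtain the sharper per-cylinder bound $|1+2\Gamma(n\delta)|\,|\delta\coth(n\delta/2)|=O(|\delta|)$ uniformly in $n$. Your bound is tighter (genuinely $O(|\delta|)=o(|\delta|^{2\mc D(\delta)-2})$ rather than $\ve|\delta|^{2\mc D(\delta)-2}$), so both close the argument; the paper's version avoids having to justify the approximation $z\approx\Psi_\delta(-n)$ inside $f'_\delta-1$ but requires letting the constant in the error depend on $\ve$. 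The remaining bookkeeping (gluing the $n|\delta|\le2$ and $n|\delta|>2$ regimes, pairing with the two halves of Lemma \ref{lem:mm}) is also the paper's, carried out in Steps~1--4 of Lemma \ref{lem:r}.
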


For $h>1$, $t>0$ and $\alpha\in(-\pi/2,\pi/2)$ let us write
\begin{equation}\label{eq:Q}
Q^h_\alpha(t):=H_\mu\re(v+2v\Gamma(vt))\int_{t}^{\infty}\Lambda_0^{h}(vs)ds,
\end{equation}
where $v=e^{i\alpha}$.
We know from (\ref{eq:2G+1}) that $1+2\Gamma(z)=z/3+O(z^3)$. The function $\Gamma$ is bounded on each ray $\mc R(\alpha)$,
so there exist a constant $K>0$ (depending on $\alpha$) for which
\begin{equation}\label{eq:r}
|\re(v+2v\Gamma(z))|\leeq|1+2\Gamma(z)|<K\min(|z|,1).
\end{equation}
Thus, using (\ref{eq:lll}), we see that there exists $\hat K>0$ such that
\begin{equation}\label{eq:Q1}
 \begin{array}{ll}
   |Q^h_\alpha(t)|<\hat K e^{-t\,h\cos\alpha} &\textrm{ for }\;\;t\in(1,\infty),\\
   |Q^h_\alpha(t)|<\hat K t^{-2h+2} &\textrm{ for }\;\;t\in(0,1].
 \end{array}
\end{equation}

Proposition \ref{prop:MN} is an immediate consequence of two following lemmas:

\begin{lem}\label{lem:r}
For every $\alpha\in(-\pi/2,\pi/2)$ and $\ve>0$ there exist $\eta>0$ and $N\in\N$ such that
\begin{equation*}
\int_0^\infty Q^{\mc D(\delta)}_\alpha(t)\,dt-\ve<|\delta|^{-2\mc D(\delta)+2}\int_{ M_N}\re\Big(\frac{v+2v\dot\vp_{\delta}}{f'_{\delta}(\vp_{\delta})}\Big)d\tilde {\mu_\delta}
<\int_0^\infty Q^{\mc D(\delta)}_\alpha(t)\,dt+\ve,
\end{equation*}
where $0<|\delta|<\eta$, $\alpha=\arg\delta$ and $v=e^{i\alpha}$.
\end{lem}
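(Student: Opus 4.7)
The plan is to split the integral cylinder by cylinder, replace $\dot\vp_\delta$ on each $\mc C_n(\delta)$ by the explicit model $\Gamma(n\delta)$, and then recognize the resulting expression as a Riemann sum of step $|\delta|$ approximating $\int_0^\infty Q^{\mc D(\delta)}_\alpha(t)\,dt$.

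First, for $s\in C_n$ with $n>N$, formula~(\ref{eq:sumas1}) applied with $m=n$ yields the decomposition
\[
1+2\dot\vp_\delta(s)=\bigl(1+2\dot\psi_\delta(\vp_\delta(s))\bigr)+\frac{1+2\dot\vp_\delta(T^n(s))}{(f_\delta^n)'(\vp_\delta(s))}.
\]
I would show that the contribution of the second summand to $\int_{M_N}\re\bigl(v(1+2\dot\vp_\delta)/f'_\delta(\vp_\delta)\bigr)\,d\tilde\mu_\delta$ is $o(|\delta|^{2\mc D(\delta)-2})$. Since $T^n(s)\in C_0\subset B_N$, Proposition~\ref{prop:BN} controls $|1+2\dot\vp_\delta\circ T^n|$ in an $L^1$ sense; pulled back through $T^n$, using $T$-invariance of $\tilde\mu_\delta$ together with the bounded distortion of the single inverse branch $T^{-n}\colon C_0\to C_n$, this bound survives, while Lemma~\ref{lem:n^2} gives $|(f_\delta^n)'(\vp_\delta(s))|>K(N)n^2$ and Corollary~\ref{cor:K} bounds $|f'_\delta(\vp_\delta(s))|$ away from $0$; the factor $n^{-2}$ then makes the sum over $n>N$ as small as desired.

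Second, on each $\mc C_n(\delta)$ with $n>N$ I would replace the pointwise factor $(1+2\dot\psi_\delta(\vp_\delta(s)))/f'_\delta(\vp_\delta(s))$ by the constant $1+2\Gamma(n\delta)$. By Lemma~\ref{lem:z}, on $\mc C_n(\delta)$ one has $f'_\delta(\vp_\delta(s))=1+2\vp_\delta(s)+\delta=1+O(|\Psi_\delta(-n)|+|\delta|)$, which is $1+O(n^{-1}+|\delta|)$ in the parabolic regime $n|\delta|\leeq 1$ and $1+O(|\delta|)$ in the hyperbolic regime. Combined with Proposition~\ref{prop:psi2} (for $n|\delta|\leeq 2$, giving a relative bound) and Proposition~\ref{prop:psi} together with Lemma~\ref{lem:K} (for $n|\delta|>2$, where the relative error $e^{\ve n|\delta|}-1$ is absorbed since $\Gamma(n\delta)$ is bounded and summable in $n$ against $\tilde\mu_\delta(C_n)$), the replacement produces per-cylinder additive errors whose total, weighted by $\tilde\mu_\delta(C_n)$ and summed via Corollaries~\ref{cor:meas}--\ref{cor:Kdelta}, is at most $\ve|\delta|^{2\mc D(\delta)-2}$; the decisive fact here is $\mc D(\delta)<3/2$, which ensures $|\delta|^{3-2\mc D(\delta)}\to 0$.

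Third, the remaining sum $\sum_{n>N}\re(v(1+2\Gamma(n\delta)))\,\tilde\mu_\delta(C_n)$ is transformed via Lemma~\ref{lem:mm}: each $\tilde\mu_\delta(C_n)$ is replaced by $H_\mu|\delta|^{2\mc D(\delta)-1}\int_{n|\delta|}^\infty\Lambda_0^{\mc D(\delta)}(vs)\,ds$. Splitting the sum at $n=[1/|\delta|]$, the weight $|1+2\Gamma(n\delta)|$ is $O(n|\delta|)$ for $n|\delta|\leeq 1$ (matched to part~(\ref{lit:mm2}) of Lemma~\ref{lem:mm}) and $O(1)$ for $n|\delta|>1$ (matched to part~(\ref{lit:mm1})), so both parts give a replacement error bounded by $\ve|\delta|^{2\mc D(\delta)-2}$. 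The resulting quantity is $|\delta|^{2\mc D(\delta)-2}\cdot|\delta|\sum_{n>N}Q^{\mc D(\delta)}_\alpha(n|\delta|)$, and a Riemann sum argument based on the decay estimates~(\ref{eq:Q1}) and the integrability near $0$ (again $\mc D(\delta)<3/2$) shows $|\delta|\sum_{n>N}Q^{\mc D(\delta)}_\alpha(n|\delta|)\to\int_0^\infty Q^{\mc D(\delta)}_\alpha(t)\,dt$ as $|\delta|\to 0$, uniformly in $\delta$ with $\arg\delta=\alpha$.

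The main obstacle is the first step: controlling $R_n:=(1+2\dot\vp_\delta\circ T^n)/(f_\delta^n)'(\vp_\delta)$ integrated against $\tilde\mu_\delta$. Because $\dot\vp_\delta\circ T^n$ is only controlled on average via Proposition~\ref{prop:BN}, the transfer through $T^n$ must use $T$-invariance of $\tilde\mu_\delta$ together with the bounded distortion of the relevant inverse branches; this is the analogue of the corresponding argument for \cite[Lemma~7.3]{Jii}. The other two steps, although computationally involved, reduce to routine applications of the quantitative tools assembled in Sections~\ref{sec:cyl}, \ref{sec:miary}, and~\ref{sec:f}.
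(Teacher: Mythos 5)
Your proposal is correct and follows essentially the same route as the paper's proof: decompose $\dot\vp_\delta$ on each cylinder into $\dot\psi_\delta$ plus a tail controlled via $T$-invariance and Lemma~\ref{lem:n^2} (this is exactly what the paper's Lemma~\ref{lem:vp-psi} encapsulates), replace $1+2\dot\psi_\delta$ by $1+2\Gamma(n\delta)$ and $f'_\delta$ by $1$ via Propositions~\ref{prop:psi2}, \ref{prop:psi} and Lemma~\ref{lem:K}, substitute Lemma~\ref{lem:mm} to recover $Q^{\mc D(\delta)}_\alpha(n|\delta|)$, and finish with the Riemann-sum/variation argument and $\mc D(\delta)<3/2$. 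The only minor deviation is your invocation of bounded distortion of $T^{-n}$ in step one, which is unnecessary ($T$-invariance alone gives $\int_{C_n}|1+2\dot\vp_\delta\circ T^n|\,d\tilde\mu_\delta\leq\int_{B_0}|1+2\dot\vp_\delta|\,d\tilde\mu_\delta$), but this does not affect correctness.
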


\begin{lem}\label{lem:limQ}
For every $\alpha\in(-\pi/2,\pi/2)$, we have
\begin{equation*}
\lim_{\delta\rightarrow0}\int_{0}^{\infty}Q^{\mc D(\delta)}_\alpha(t)\,dt=\int_{0}^{\infty}Q^{\mc D(0)}_\alpha(t)\,dt=\Delta(\alpha),
\end{equation*}
where $\alpha=\arg\delta$.
Moreover $\Delta(\alpha)>0$ for $\alpha\in[-\pi/4,\pi/4]$.
\end{lem}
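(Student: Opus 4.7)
The plan has three main steps, corresponding to the two equalities and the positivity claim.

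First, I would establish $\lim_{\delta\to 0}\int_0^\infty Q^{\mc D(\delta)}_\alpha(t)\,dt = \int_0^\infty Q^{\mc D(0)}_\alpha(t)\,dt$ by dominated convergence. By Theorem M, $\mc D(\delta)\to \mc D(0)$ as $\delta\to 0$ along the ray of argument $\alpha$; since $\mc D(0)\in(1,3/2)$, eventually $\mc D(\delta)$ stays in a compact subinterval $[h_1,h_2]\subset(1,3/2)$. The bounds (\ref{eq:Q1}) then give a single integrable majorant of $Q^{\mc D(\delta)}_\alpha(t)$ on $(0,\infty)$: $t\mapsto\hat K\,t^{-2h_1+2}$ on $(0,1]$ (integrable because $h_1<3/2$) and $t\mapsto\hat K e^{-th_1\cos\alpha}$ on $(1,\infty)$. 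Pointwise convergence $Q^{\mc D(\delta)}_\alpha(t)\to Q^{\mc D(0)}_\alpha(t)$ is immediate from the definition~(\ref{eq:Q}), and dominated convergence does the rest.

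Second, I would identify $\int_0^\infty Q^h_\alpha(t)\,dt$ explicitly with the integral defining $\Delta(\alpha)$. Fubini (justified by the same majorants) gives
\begin{equation*}
\int_0^\infty Q^h_\alpha(t)\,dt = H_\mu\int_0^\infty \Lambda_0^h(vs)\left(\int_0^s \re\bigl(v(1+2\Gamma(vt))\bigr)dt\right)ds.
\end{equation*}
Using $1+2\Gamma(z) = (\sinh z - z)/(\cosh z - 1)$ from (\ref{eq:2G+1}) together with $\cosh z - 1 = 2\sinh^2(z/2)$ and integration by parts, one finds the antiderivative $\int(1+2\Gamma(z))\,dz = z\coth(z/2)$ (up to constants), and since $z\coth(z/2)\to 2$ as $z\to 0$, the substitution $z=vt$ yields
\begin{equation*}
\int_0^s \re\bigl(v(1+2\Gamma(vt))\bigr)dt = \re\bigl(vs\coth(vs/2)\bigr) - 2.
\end{equation*}
Writing $vs = x+i\vartheta x$ with $x=s\cos\alpha$ and $\vartheta=\tan\alpha$, direct hyperbolic/trigonometric identities give
\begin{equation*}
|\sinh(vs/2)|^2=\frac{\cosh x-\cos\vartheta x}{2},\qquad \re\bigl(vs\coth(vs/2)\bigr)=\frac{x\sinh x+\vartheta x\sin\vartheta x}{\cosh x-\cos\vartheta x},
\end{equation*}
so $\Lambda_0^h(vs) = ((1/2)/(\cosh x-\cos\vartheta x))^h$. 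Changing variable $s=x/\cos\alpha$ in the outer integral produces exactly the formula (\ref{eq:Delta}) for $\Delta(\alpha)$ with $h$ in place of $\mc D(0)$. Taking $h=\mc D(0)$ gives the second equality of the lemma.

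Third, for positivity on $[-\pi/4,\pi/4]$, I would use the Mittag-Leffler expansion
\begin{equation*}
z\coth(z/2)-2 = 4z^2\sum_{k=1}^\infty\frac{1}{z^2+4\pi^2 k^2}.
\end{equation*}
With $z=vs$ one computes $\re(z^2) = s^2\cos 2\alpha\geq 0$ whenever $|\alpha|\leq\pi/4$, and hence for every $k\geq 1$
\begin{equation*}
\re\!\left(\frac{z^2}{z^2+4\pi^2k^2}\right) = \frac{|z|^4+4\pi^2k^2\,\re(z^2)}{|z^2+4\pi^2k^2|^2} > 0 \qquad (s>0).
\end{equation*}
Thus the bracket in (\ref{eq:Delta}) is strictly positive for $x>0$, while $H_\mu/\cos\alpha>0$ and the weight $((1/2)/(\cosh x-\cos\vartheta x))^{\mc D(0)}$ is positive and integrable at both ends (at $0$ because the bracket vanishes like $x^2$ or $x^4$, at $\infty$ because of the $(\cosh x)^{-\mc D(0)}$ factor, using $\mc D(0)<3/2$). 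Therefore $\Delta(\alpha)>0$ on $[-\pi/4,\pi/4]$.

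The main technical obstacle is the second step, both algebraically (spotting that $z\coth(z/2)$ is the right antiderivative and that the $-2$ is the correct boundary contribution at $0$) and analytically (verifying the Fubini swap and the correct behaviour at $x=0$, where the bracket and weight compete). The Mittag-Leffler trick in the positivity step is clean once noticed but is the only non-routine computational input.
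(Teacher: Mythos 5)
Your argument is correct and matches the paper closely in its first two steps, then diverges in the positivity step. A few remarks.

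On the dominated-convergence step there is a small slip: once $\mc D(\delta)\in[h_1,h_2]\subset(1,3/2)$, the uniform majorant of $|Q^{\mc D(\delta)}_\alpha(t)|$ on $(0,1]$ coming from (\ref{eq:Q1}) is $\hat K\,t^{-2h_2+2}$, not $\hat K\,t^{-2h_1+2}$ (for $t<1$ and $h<h_2$ one has $t^{-2h+2}\leq t^{-2h_2+2}$); integrability near $0$ is governed by $h_2<3/2$. One should also note, as the paper does, that $\hat K$ can be taken uniform over $h$ in the compact interval. Neither point is hard, but as written the stated bound is not actually a majorant.

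Your second step is algebraically identical to the paper's: $z\coth(z/2)=\dfrac{z\sinh z}{\cosh z-1}$, so the antiderivative you name is the same one the paper uses, and the boundary term $-2$ and the change of variables $vs=x+i\vartheta x$ reproduce (\ref{eq:Delta}) in the same way.

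Your positivity argument is genuinely different. The paper expands
\begin{equation*}
x\sinh x+\vartheta x\sin\vartheta x-2(\cosh x-\cos\vartheta x)
=\sum_{n=2}^{\infty}\Big(1-\frac1n\Big)\frac{1-(-1)^{n}\vartheta^{2n}}{(2n-1)!}x^{2n}
\end{equation*}
and observes that every coefficient is nonnegative when $|\vartheta|\leeq1$. You instead use the Mittag--Leffler expansion $z\coth(z/2)-2=4z^2\sum_{k\greq1}(z^2+4\pi^2k^2)^{-1}$ and show each summand has positive real part when $\re(z^2)\greq0$, i.e.\ $|\alpha|\leeq\pi/4$. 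Both arguments are correct and give strict positivity for $s>0$; yours stays in the complex variable $z=vs$ and makes the geometric meaning of the threshold $|\alpha|=\pi/4$ ($\re z^2\geq 0$) transparent, while the paper's Taylor expansion is more elementary and works directly with the real integrand appearing in $\Delta(\alpha)$. Either way one must still check that the prefactor and the weight are positive and that the $x\to0$ behaviour (the bracket vanishing to order $x^2$ while the weight blows up like $x^{-2\mc D(0)}$, integrable because $\mc D(0)<3/2$) gives convergence, which you note.

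Overall the proposal is sound; fix the majorant exponent and it matches the paper except for the interchangeable positivity argument.
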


Before proving Lemmas \ref{lem:r}, \ref{lem:limQ}, we state the following fact:

\begin{lem}\label{lem:vp-psi}
For every $\alpha\in(-\pi/2,\pi/2)$ there exist $K>0$ and $\eta>0$ such that for every $N\greq1$ we have
$$\int_{ M_N^*}|\dot\vp_\delta-\dot\psi_\delta(\vp_\delta)|d\tilde {\mu_\delta}<K\frac{|\delta|}{N},$$
where $0<|\delta|<\eta$ and $\alpha=\arg\delta$.
\end{lem}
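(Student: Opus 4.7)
The plan is to derive a closed-form expression for the difference on each cylinder and then estimate it term by term. Comparing the finite-sum representation (\ref{eq:sumas1}) of $\dot\vp_\delta$ at $m=n$ with the definition (\ref{eq:psi1}) of $\dot\psi_\delta$ immediately gives, for $s\in C_n$,
\begin{equation*}
\dot\vp_\delta(s)-\dot\psi_\delta(\vp_\delta(s))=\frac{\dot\vp_\delta(T^n(s))+\tfrac12}{(f_\delta^n)'(\vp_\delta(s))}.
\end{equation*}
Since $T^n:C_n^{\pm}\to C_0^{\pm}$ is a bijection, the argument $T^n(s)$ always lives on the fixed arc $C_0$, bounded away from the parabolic fixed point $s=1$. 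Bounded distortion of $f_\delta^n|_{\mc C_n^{\pm}(\delta)}$ together with Corollary~\ref{cor:mod}(\ref{cit:mod3}) controls the denominator: $|(f_\delta^n)'(\vp_\delta(s))|^{-1}\asymp|\mc C_n(\delta)|\lesssim 1/n^{2}$.

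The $|\delta|$-factor in the target bound is extracted from the conjugation (\ref{eq:conj}). Writing $\tilde\vp_\delta:=\tau_\delta\circ\vp_\delta=\psi_{-\delta^2/4}$ for the Böttcher coordinate of $p_{-\delta^2/4}$, one sees that $\tilde\vp_\delta$ depends on $\delta$ only through $c=-\delta^2/4$, so
\begin{equation*}
\dot\vp_\delta(s)+\tfrac12=\dot{\tilde\vp}_\delta(s)=-\tfrac{\delta}{2}\,\partial_c\psi_c(s)\big|_{c=-\delta^2/4}.
\end{equation*}
The task thus reduces to the uniform estimate $\int_{C_0}|\partial_c\psi_c|\,d\tilde\mu_\delta=O(1)$ as $\delta\to 0$. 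I would establish this by splitting $C_0$ according to distance to the countable pre-parabolic set $E:=C_0\cap\bigcup_{k\greq0}T^{-k}(\{1\})$: outside a neighborhood of $E$, Cauchy's formula on a disk of radius comparable to $|c|$ inside the main cardioid gives a pointwise bound on $\partial_c\psi_c$ independent of $\delta$; near a pre-parabolic point (e.g.\ at $s_0=-1$, where $\psi_c(-1)=-\tfrac12-\sqrt{-c}$ exhibits a $1/\sqrt{-c}\sim 1/|\delta|$ blow-up), H\"{o}lder regularity of the conformal measure, $\tilde\omega_\delta(B(s_0,r))\asymp r^{\mc D(\delta)}$, shows that an $O(|\delta|)$-neighborhood contributes only $O(|\delta|^{\mc D(\delta)-1})=o(1)$.

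With $\int_{C_0}|\dot\vp_\delta+\tfrac12|\,d\tilde\mu_\delta=O(|\delta|)$ in hand, the $T$-invariance of $\tilde\mu_\delta$ combined with the conformal change-of-variable for $\tilde\omega_\delta$ (whose Jacobian under $T^n|_{C_n^{\pm}}$ is $\asymp|(f_\delta^n)'|^{-\mc D(\delta)}$) and the density bounds of Lemma~\ref{lem:mn} yield
\begin{equation*}
\int_{C_n}\bigl|\dot\vp_\delta(T^n(s))+\tfrac12\bigr|\,d\tilde\mu_\delta(s)\;\lesssim\;|\delta|\,\tilde\mu_\delta(C_n).
\end{equation*}
Dividing by $|(f_\delta^n)'(\vp_\delta(s))|\asymp n^{2}$ and using $\tilde\mu_\delta(C_n)\lesssim n^{-(2\mc D(\delta)-1)}$ from Corollary~\ref{cor:meas} gives
\begin{equation*}
\int_{M_N^*}|\dot\vp_\delta-\dot\psi_\delta(\vp_\delta)|\,d\tilde\mu_\delta\;\lesssim\;|\delta|\sum_{n>N}\frac{1}{n^{2\mc D(\delta)+1}}\;\lesssim\;\frac{|\delta|}{N^{2\mc D(\delta)}}\;\leeq\;\frac{|\delta|}{N}.
\end{equation*}
The hard part will be the uniform $L^{1}(\tilde\mu_\delta)$-control of $\partial_c\psi_c$ on $C_0$: the naive pointwise estimate only gives $O(1/|\delta|)$ near pre-parabolic orbits, which would destroy the critical $|\delta|$-factor, and one has to balance that singular blow-up precisely against the dimensional scaling $r^{\mc D(\delta)}$ of the conformal measure.
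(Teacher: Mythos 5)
The first half of your plan reproduces the paper's argument exactly: the identity
\begin{equation*}
\dot\vp_\delta(s)-\dot\psi_\delta(\vp_\delta(s))=\frac{\dot\vp_\delta(T^n(s))+\tfrac12}{(f_\delta^n)'(\vp_\delta(s))}
\end{equation*}
for $s\in C_n$, the lower bound $|(f_\delta^n)'(\vp_\delta(s))|\gtrsim n^2$ from Lemma~\ref{lem:n^2} (since $f_\delta^n(\vp_\delta(s))\in\mc B_0(\delta)$), and then summing $1/n^2$ over $n>N$ to produce the $1/N$ factor. That part is fine. (Your intermediate step, obtaining $\int_{C_n}|\dot\vp_\delta\circ T^n+\tfrac12|\,d\tilde\mu_\delta\lesssim\tilde\mu_\delta(C_n)\int_{C_0}|\dot\vp_\delta+\tfrac12|\,d\tilde\mu_\delta$ via bounded distortion of the conformal Jacobian, is more than you need: plain $T$-invariance already gives $\int_{C_n}|\dot\vp_\delta\circ T^n+\tfrac12|\,d\tilde\mu_\delta\leeq\int_{B_0}|\dot\vp_\delta+\tfrac12|\,d\tilde\mu_\delta$, and summing $1/n^2$ over $n>N$ is already $<1/N$, so the extra $\tilde\mu_\delta(C_n)$ factor just gives an unnecessarily sharp power of $N$.)

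The genuine gap is that everything hinges on $\int_{B_0}|\dot\vp_\delta+\tfrac12|\,d\tilde\mu_\delta=O(|\delta|)$, and you try to prove this from scratch via the conjugation to $z^2+c$, Cauchy estimates on $\partial_c\psi_c$, and a balance between the $1/|\delta|$ blow-up near pre-parabolic points and the H\"older scaling $r^{\mc D(\delta)}$ of the conformal measure. You yourself flag that this balance is the hard, unresolved part, and indeed it is not carried out. But the needed estimate is precisely the statement of Proposition~\ref{prop:BN} (applied with $N=0$): $\int_{B_N}|1+2\dot\vp_\delta|\,d\tilde\mu_\delta<K(N)|\delta|$, which the paper has already established by a self-contained bootstrap argument (splitting $1+2\dot\vp_\delta$ into a finite sum plus a ``tail'' that contributes only half the integral back). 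You should simply invoke Proposition~\ref{prop:BN} here rather than attempting a parallel, incomplete derivation. As written, your proof does not close.
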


\begin{proof}
Formula (\ref{eq:sumas1}) and definition (\ref{eq:psi1}) lead to
   $$\int_{ M_N^*}|\dot\vp_\delta-\dot\psi_\delta(\vp_\delta)|d\tilde {\mu_\delta}=\sum_{n=N+1}^{\infty}\int_{ C_n}\Big|\frac{\dot\vp_\delta(T^m)+1/2}{(f_\delta^n)'(\vp_\delta)}\Big|d\tilde {\mu_\delta}.$$
If $s\in C_n$ then $f_\delta^n(\vp_\delta(s))\in \mc B_0(\delta)$, therefore Lemma \ref{lem:n^2} gives us
   $$\int_{ M_N^*}|\dot\vp_\delta-\dot\psi_\delta(\vp_\delta)|d\tilde {\mu_\delta}<
   \sum_{n=N+1}^{\infty}\frac{K(0)}{n^2}\int_{ C_n}\Big|\dot\vp_\delta(T^n)+\frac12\Big|d\tilde {\mu_\delta}.$$
We have $T^n(C_n)\subset B_0$. So, because the measure $\tilde{\mu_\delta}$ is $T$-invariant, we obtain
   $$\int_{ M_N^*}|\dot\vp_\delta-\dot\psi_\delta(\vp_\delta)|d\tilde {\mu_\delta}<
   \sum_{n=N+1}^{\infty}\frac{K(0)}{n^2}\int_{ B_0}\Big|\dot\vp_\delta+\frac12\Big|d\tilde {\mu_\delta}.$$
Since $\sum_{n>N} 1/n^{2}<1/N$, the assertion follows from Proposition \ref{prop:BN}.
\end{proof}

Now we are in a position to prove Lemma \ref{lem:r}, Lemma \ref{lem:limQ}, and consequently Proposition \ref{prop:MN}. Next, we will prove Theorem \ref{thm:direction2}.

\begin{proof}[\textbf{Proof of lemma \ref{lem:r}}]
{\em Step 1.} Fix $\ve>0$ (small enough). We can find $N_0\in\N$ and $\eta>0$ such that Proposition \ref{prop:psi2} gives us
\begin{multline*}
\sum_{n=N+1}^{[1/|\delta|]}\int_{ C_n}\big|\big(1+2\dot\psi_\delta(\vp_\delta)\big)-\big(1+2\Gamma(n\delta)\big)\big|d\tilde {\mu_\delta}
<\ve\sum_{n=N+1}^{[1/|\delta|]}\big|1+2\Gamma(n\delta)\big|\tilde {\mu_\delta}( C_n),
\end{multline*}
where $N\greq N_0$ and $0<|\delta|<\eta$.

Next, possibly changing $\eta>0$, we conclude from Proposition \ref{prop:psi} that
\begin{multline*}
\sum_{n>[1/|\delta|]}\int_{ C_n}\big|2\dot\psi_\delta(\vp_\delta)-2\Gamma(n\delta)\big|d\tilde {\mu_\delta}
<\sum_{n>[1/|\delta|]}\big(e^{\ve n|\delta|}-1+\ve\big)\big|2\Gamma(n\delta)\big|\tilde {\mu_\delta}( C_n).
\end{multline*}
Since $\Gamma$ is bounded on each ray $\mc R_\alpha$, where $\alpha\in(-\pi/2,\pi/2)$, Corollary \ref{cor:Kdelta} and Lemma \ref{lem:K} lead to
\begin{equation}\label{eq:ekm}
\sum_{n>[1/|\delta|]}\big(e^{\ve n|\delta|}-1+\ve\big)\big|\Gamma(n\delta)\big|\tilde {\mu_\delta}( C_n)<\ve K_1\tilde {\mu_\delta}( M_{[1/|\delta|]})<\ve K_2 |\delta|^{2\mc D(\delta)-2},
\end{equation}
where $K_2>0$ does not depend on $\ve>0$ (but depends on $\alpha$).

Therefore, the above estimates give us
\begin{multline}\label{eq:111}
\sum_{n>N}\int_{ C_n}\big|\big(1+2\dot\psi_\delta(\vp_\delta)\big)-\big(1+2\Gamma(n\delta)\big)\big|d\tilde {\mu_\delta}\\
<\ve\sum_{n=N+1}^{[1/|\delta|]}\big|1+2\Gamma(n\delta)\big|\tilde {\mu_\delta}( C_n)+\ve K_2 |\delta|^{2\mc D(\delta)-2}.
\end{multline}

{\em Step 2.} Let $v=e^{i\alpha}$. Since $\mc D(\delta)<3/2$, possibly changing $\eta>0$, we conclude from Lemma \ref{lem:vp-psi} that
\begin{multline*}
\bigg|\int_{ M_N}\re\Big(\frac{v+2v\dot\vp_{\delta}}{f'_{\delta}(\vp_{\delta})}\Big)d\tilde {\mu_\delta}-\int_{ M_N^*}\re\Big(\frac{v+2v\dot\psi_{\delta}(\vp_{\delta})}{f'_{\delta}(\vp_{\delta})}\Big)d\tilde {\mu_\delta}\bigg| \\ <K_3\frac{|\delta|}{N}<\ve|\delta|^{2\mc D(\delta)-2},
\end{multline*}
where $0<|\delta|<\eta$. We can assume that $f'_{\delta}$ is close to $1$ on the set $\mc M_{N_0}(\delta)$ (cf. Corollary \ref{cor:M}), so using (\ref{eq:111}) we obtain
\begin{multline*}
\bigg|\int_{ M_N^*}\re\Big(\frac{v+2v\dot\psi_{\delta}(\vp_{\delta})}{f'_{\delta}(\vp_{\delta})}\Big)d\tilde {\mu_\delta}-\int_{ M_N^*}\re\big(v+2v\dot\psi_{\delta}(\vp_{\delta})\big)d\tilde {\mu_\delta}\bigg|\\ <\ve\int_{ M_N^*}\big|v+2v\dot\psi_{\delta}(\vp_{\delta})\big|d\tilde {\mu_\delta}<2\ve\sum_{n>N}\big|1+2\Gamma(n\delta)\big|\tilde {\mu_\delta}( C_n)+\ve |\delta|^{2\mc D(\delta)-2}.
\end{multline*}
Of course we have
\begin{multline*}
\bigg|\int_{ M_N^*}\re\big(v+2v\dot\psi_{\delta}(\vp_{\delta})\big)d\tilde {\mu_\delta}-\sum_{n>N}\re\big(v+2v\Gamma(n\delta)\big)\tilde {\mu_\delta}( C_n)\bigg|\\ <\sum_{n>N}\int_{ C_n}\big|\big(1+2\dot\psi_\delta(\vp_\delta)\big)-\big(1+2\Gamma(n\delta)\big)\big|d\tilde {\mu_\delta},
\end{multline*}
so the above estimates and (\ref{eq:111}) lead to
\begin{multline}\label{eq:33}
\bigg|\int_{ M_N}\re\Big(\frac{v+2v\dot\vp_{\delta}}{f'_{\delta}(\vp_{\delta})}\Big)d\tilde {\mu_\delta}-\sum_{n>N}\re\big(v+2v\Gamma(n\delta)\big)\tilde {\mu_\delta}( C_n)\bigg| \\<
3\ve\sum_{n>N}\big|1+2\Gamma(n\delta)\big|\tilde {\mu_\delta}( C_n)+\ve (K_2+2) |\delta|^{2\mc D(\delta)-2}.
\end{multline}

{\em Step 3.}
We conclude from (\ref{eq:r}) that
\begin{equation}\label{eq:r1}
|\re(v+2v\Gamma(n\delta))|\leeq|1+2\Gamma(n\delta)|<K_4\min(n|\delta|,1).
\end{equation}
So Lemma \ref{lem:mm} (\ref{lit:mm1}) and (\ref{lit:mm2}) leads to
\begin{multline}\label{eq:rr}
\bigg|\sum_{n=N+1}^{\infty}\re\big(v+2v\Gamma(n\delta)\big)\tilde {\mu_\delta}( C_n)\\
-|\delta|^{2\mc D(\delta)-1}H_\mu \sum_{n=N+1}^{\infty}\re\big(v+2v\Gamma(n\delta)\big) \int_{n|\delta|}^{\infty}\Lambda_0^{\mc D(\delta)}(vs)ds\bigg|<2\ve K_4|\delta|^{2\mc D(\delta)-2},
\end{multline}
for sufficiently chosen $N_0\in\N$, $\eta>0$, where $N\greq N_0$ and $0<|\delta|<\eta$.

Next (\ref{eq:r1}), Corollaries \ref{cor:meas} (\ref{cit:meas2}), \ref{cor:Kdelta}, and the fact that $\mc D(0)<3/2$ give us
\begin{multline*}
3\ve\sum_{n>N}\big|1+2\Gamma(n\delta)\big|\tilde {\mu_\delta}(C_n)<3\ve K_4\sum_{n>N}\min(n|\delta|,1)\tilde {\mu_\delta}( C_n)
\\<\ve K_5\sum_{n>N}^{[1/|\delta|]}n|\delta|\,n^{-2\mc D(\delta)+1}+\ve K_6\tilde {\mu_\delta}(\mc M_{[1/|\delta|]+1})
<\ve K_7|\delta|^{2\mc D(\delta)-2}.
\end{multline*}

So, we conclude from (\ref{eq:33}) combined with (\ref{eq:rr}), definition (\ref{eq:Q}), and the above estimate, that
\begin{multline}\label{eq:rrc}
\bigg|\int_{ M_N}\re\Big(\frac{v+2v\dot\vp_{\delta}}{f'_{\delta}(\vp_{\delta})}\Big)d\tilde {\mu_\delta}
-|\delta|^{2\mc D(\delta)-1}\sum_{n=N+1}^{\infty}Q^{\mc D(\delta)}_\alpha(n|\delta|)\bigg|\\
<\ve (K_7+K_2+2+2K_4)|\delta|^{2\mc D(\delta)-2}=\ve K_8|\delta|^{2\mc D(\delta)-2}.
\end{multline}

{\em Step 4.}
Denote by $V_x(Q^{\mc D(\delta)}_\alpha)$ the variation of the function $Q^{\mc D(\delta)}_\alpha$ on the set $[x,\infty)$.
It is easy to see that $V_x(Q^{\mc D(\delta)}_\alpha)$ is bounded for every $x>0$, whereas there exists $K_{9}>0$ such that $V_x(Q^{\mc D(\delta)}_\alpha)<K_{9}x^{2-2\mc D(\delta)}$, where $x\in(0,1)$ (cf. (\ref{eq:Q1})). Since we can assume that $N|\delta|<1$, the above estimate gives us
\begin{multline}\label{eq:rrr}
|\delta|^{2\mc D(\delta)-1}\bigg|\sum_{n=N+1}^{\infty}Q^{\mc D(\delta)}_\alpha(n|\delta|)-\int_{N}^{\infty}Q^{\mc D(\delta)}_\alpha(t|\delta|)\,dt\bigg|\\
<|\delta|^{2\mc D(\delta)-1}V_{N|\delta|}(Q^{\mc D(\delta)}_\alpha)<K_{9}N^{2-2\mc D(\delta)}|\delta|.
\end{multline}

Next, using (\ref{eq:Q1}) we get
\begin{equation*}
|\delta|^{2\mc D(\delta)-1} \bigg|\int^{N}_{0}Q^{\mc D(\delta)}_\alpha(t|\delta|)\,dt\bigg|<K_{10}N^{3-2\mc D(\delta)}|\delta|.
\end{equation*}
Thus (\ref{eq:rrc}) combined with (\ref{eq:rrr}) and the above inequality, leads to
\begin{equation}\label{eq:rrrr}
\bigg|\int_{ M_N}\re\Big(\frac{v+2v\dot\vp_{\delta}}{f'_{\delta}(\vp_{\delta})}\Big)d\tilde {\mu_\delta}\\
-|\delta|^{2\mc D(\delta)-1} \int_{0}^{\infty}Q^{\mc D(\delta)}_\alpha(t|\delta|)\;dt\bigg|<\ve K_{11}|\delta|^{2\mc D(\delta)-2},
\end{equation}
where $0<|\delta|<\eta$, for sufficiently chosen $\eta>0$ (depending on $N$).

Let $t|\delta|=u$. Then $dt=|\delta|^{-1}du$, so we get
\begin{equation*}
|\delta|^{2\mc D(\delta)-1}\int_{0}^{\infty}Q^{\mc D(\delta)}_\alpha(t|\delta|)\;dt=|\delta|^{2\mc D(\delta)-2}
\int_{0}^{\infty}Q^{\mc D(\delta)}_\alpha(u)\;du.
\end{equation*}
Thus the statement follows from (\ref{eq:rrrr}).
\end{proof}

\begin{proof}[\textbf{Proof of Lemma \ref{lem:limQ}}]
Fix $\alpha\in(-\pi/2,\pi/2)$. 

{\em Step 1.}
Since $\mc D(0)<3/2$, we can find $K>0$ and $\eta>0$ such that (\ref{eq:Q1}) holds for $h=\mc D(\delta)$ and $\hat K=K$, where $0<|\delta|<\eta$.
Thus, dominated convergence theorem leads to
\begin{equation}\label{eq:limQ}
\lim_{\delta\rightarrow0}\int_{0}^{\infty}Q^{\mc D(\delta)}_\alpha(t)\,dt=\int_{0}^{\infty}Q^{\mc D(0)}_\alpha(t)\,dt.
\end{equation}

{\em Step 2.}
We have to compute
\begin{equation*}
\int_{0}^{\infty}Q^{\mc D(0)}_\alpha(t)\;dt=
H_\mu \int_{0}^{\infty}\int_{t}^{\infty}\re\big(v+2v\Gamma(vt)\big) \Lambda_0^{\mc D(0)}(vs) ds\;dt.
\end{equation*}
Changing the order of integration, we obtain
\begin{equation*}
\int_{0}^{\infty}Q^{\mc D(0)}_\alpha(t)\;dt=H_\mu \int_{0}^{\infty}\int_{0}^{s}\Lambda_0^{\mc D(0)}(vs)\re\big(v+2v\Gamma(vt)\big) dt\;ds.
\end{equation*}
Next, using (\ref{eq:A}) and (\ref{eq:2G+1}), we see that
\begin{equation}\label{eq:7r}
\int_{0}^{\infty}Q^{\mc D(0)}_\alpha(t)\;dt=H_\mu \int_{0}^{\infty}\Big|\frac{1/4}{\sinh^2(vs/2)}\Big|^{\mc D(0)}\int_{0}^{s}\re\Big(v\frac{\sinh (vt)-vt}{\cosh (vt)-1}\Big) dt\;ds.
\end{equation}

{\em Step 3.}
Now we compute the inner integral from (\ref{eq:7r}).
First note that the substitution $z=vt$, where $t\in(0,s)$, gives us
\begin{equation*}
\int_{0}^{vs}\frac{\sinh z-z}{\cosh z-1} \,dz=\int_{0}^{s}\frac{\sinh (vt)-vt}{\cosh (vt)-1}\,v\, dt.
\end{equation*}
So, we conclude that
\begin{multline*}
\int_{0}^{s}\re\Big(v\frac{\sinh (vt)-vt}{\cosh (vt)-1}\Big) dt=\re\int_{0}^{s}\frac{\sinh (vt)-vt}{\cosh (vt)-1}\, v \, dt\\
=\re\int_{0}^{vs}\frac{\sinh z-z}{\cosh z-1}\, dz=\re\Big(\frac{z\sinh z}{\cosh z-1}\Big|_{0}^{vs}\Big)=
\re\Big(\frac{vs\sinh (vs)}{\cosh (vs)-1}\Big)-2.
\end{multline*}

{\em Step 4.}
Thus we have
\begin{equation*}
\int_{0}^{\infty}Q^{\mc D(0)}_\alpha(t)\;dt=H_\mu \int_{0}^{\infty} \Big(\re\Big(\frac{vs\sinh (vs)}{\cosh (vs)-1}\Big)-2\Big)
\Big|\frac{1/4}{\sinh^2(vs/2)}\Big|^{\mc D(0)}ds.
\end{equation*}
Let $x=\re(vs)=s\cos\alpha$, then $vs=x+i\vartheta x$ where $\vartheta=\tan\alpha$. So the above integral is equal to
\begin{equation*}
\frac{H_\mu}{\cos\alpha} \int_{0}^{\infty} \Big(\re\Big(\frac{(x+i\vartheta x)\sinh (x+i\vartheta x)}{\cosh (x+i\vartheta x)-1}\Big)-2\Big)
\Big|\frac{1/4}{\sinh^2(\frac{x+i\vartheta x}{2})}\Big|^{\mc D(0)}dx.
\end{equation*}
Next, using (\ref{eq:h3}) and (\ref{eq:h4}), we conclude that
\begin{multline}\label{eq:Q=}
\int_{0}^{\infty}Q^{\mc D(0)}_\alpha(t)\;dt\\
=\frac{H_\mu}{\cos\alpha} \int_{0}^{\infty} \Big(\frac{x\sinh x+\vartheta x\sin \vartheta x}{\cosh x-\cos\vartheta x}-2\Big)
\Big(\frac{1/2}{\cosh x-\cos\vartheta x}\Big)^{\mc D(0)}dx=\Delta(\alpha).
\end{multline}

{\em Step 5.}
Note that
\begin{multline*}
x\sinh x+\vartheta x\sin\vartheta x-2(\cosh x-\cos\vartheta x)
=\sum_{n=2}^{\infty}\Big(1-\frac1n\Big)\frac{1-(-1)^{n}\vartheta^{2n}}{(2n-1)!}x^{2n}.
\end{multline*}
Hence, if we assume that $|\vartheta|\leeq1$ (i.e. $\alpha\in[-\pi/4,\pi/4]$), then
\begin{equation*}\label{eq:6r}
\Delta(\alpha)>0.
\end{equation*}
So, the assertion follows from (\ref{eq:limQ}), (\ref{eq:Q=}) and the above inequality.
\end{proof}

\begin{proof}[\textbf{Proof of Theorem \ref{thm:direction2}}]
{\em Step 1.}
Fix $\alpha\in(-\pi/2,\pi/2)$ and $\ve>0$. Let $N\in\N$ and $\eta>0$ be such that Proposition \ref{prop:MN} holds.

Note that $|f_\delta(z)|$ is separated from $0$ for $z\in\mc J_\delta$, where $0<|\delta|<\eta$ and $\alpha=\arg\delta$. Moreover $\mc D(\delta)<3/2$, so we have $2\mc D(\delta)-2<1$. Thus, possibly changing $\eta>0$, we conclude from Proposition \ref{prop:BN} that
\begin{equation*}
\int_{ B_N}\Big|\frac{v+2v\dot\vp_{\delta}}{f'_{\delta}(\vp_{\delta})}\Big|d\tilde {\mu_\delta}<K(N)|\delta|<\ve|\delta|^{2\mc D(\delta)-2},
\end{equation*}
where $0<|\delta|<\eta$, $\alpha=\arg\delta$ and $v=e^{i\alpha}$.
Hence, Proposition \ref{prop:MN} leads to
\begin{equation*}
|\delta|^{2\mc D(\delta)-2}(\Delta(\alpha)-2\ve)<\int_{\partial \D}\re\Big(\frac{v+2v\dot\vp_{\delta}}{f'_{\delta}(\vp_{\delta})}\Big)d\tilde {\mu_\delta}<|\delta|^{2\mc D(\delta)-2}(\Delta(\alpha)+2\ve).
\end{equation*}

{\em Step 2.}
Since $\ve>0$ is arbitrary, and if $\delta=tv$ then $|\delta|=t$, we obtain
\begin{equation*}
\lim_{t\rightarrow0^+}t^{-2\mc D(\delta)+2} \int_{\partial \D}\re\Big(\frac{v+2v\dot\vp_{\delta}}{f'_{\delta}(\vp_{\delta})}\Big)d\tilde {\mu_\delta}=\Delta(\alpha).
\end{equation*}

Because $f_{tv}'(\vp_{tv})=(1+tv)+2\vp_{tv}$, Propositions \ref{prop:mu}, \ref{prop:CHm} give us
   $$\lim_{t\rightarrow0^+}\int_{\partial\D}\log|f_{tv}'(\vp_{tv})|d\tilde\mu_{tv}= \int_{\partial\D}\log|f_{0}'(\vp_{0})|d\tilde{\mu_{0}}=:\chi.$$
For $s\neq\pm1$ we have $|f_{0}'(\vp_{0}(s))|>1$, thus $\chi>0$.

Therefore, the fact $\mc D(tv)\rightarrow \mc D(0)$ and formula (\ref{eq:d}) lead to
\begin{equation}\label{eq:limd}
\lim_{t\rightarrow0^+}\frac{\mc D'_v(tv)}{t^{2\mc D(\delta)-2}}=\frac{-\mc D(0)}{\chi}\Delta(\alpha).
\end{equation}

{\em Step 3.}
Now we have to replace $2\mc D(\delta)-2$ by $2\mc D(0)-2$ in the exponent.
We can assume that $2\mc D(\delta)-2>0$, so (\ref{eq:limd}) leads to
\begin{equation*}
|\mc D(tv)-\mc D(0)|\leeq\int_{0}^{t}|\mc D'_v(sv)|ds\leeq K\int_{0}^{t}s^{2\mc D(\delta)-2}ds<Kt.
\end{equation*}
Thus we get
\begin{equation*}
1\greq t^{|\mc D(tv)-\mc D(0)|}> t^{K t}=e^{K t\log t}.
\end{equation*}
Since $t\log t\rightarrow0$ when $t\rightarrow0^+$, we obtain $t^{|\mc D(tv)-\mc D(0)|}\rightarrow1$. Therefore
\begin{equation}\label{eq:1}
\lim_{t\rightarrow0^+}\frac{t^{2\mc D(\delta)-2}}{t^{2\mc D(0)-2}}=1.
\end{equation}

Since $\vartheta=\tan\alpha$, we have $\sqrt{\vartheta^2+1}=1/\cos\alpha$. Thus, (\ref{eq:limd}), (\ref{eq:1}) and next definitions  (\ref{eq:Delta}), (\ref{eq:I}) lead to
   \begin{equation*}
      \lim_{t\rightarrow0^+}\frac{\mc D'_v(tv)}{t^{2\mc D(0)-2}}=\frac{-\mc D(0)}{\chi}\Delta(\alpha) =\frac{-\mc D(0)}{\chi}\big(-2^{-\mc D(0)}H_\mu\Omega(\vartheta)\big).
   \end{equation*}
Therefore, the required limit exists, and we see that
   $$\mc A=2^{-\mc D(0)}H_\mu\frac{\mc D(0)}{\chi}>0.$$
Because $\Delta(\alpha)$ and $\Omega(\vartheta)$, where $\vartheta=\tan\alpha$, has opposite signs, we see from Proposition \ref{prop:MN} that $\Omega(\vartheta)<0$ if $\alpha\in[-\pi/4,\pi/4]$. Thus the proof is finished.
\end{proof}

\section*{Acknowledgement}

The author would like to thank F. Przytycki and M. Zinsmeister for helpful discussions. The author was partially supported by National Science Centre grant 2014/13/B/ST1/01033 (Poland).

\appendix
\section{}\label{app:A}

Now we give proofs of Lemmas from Section \ref{sec:t}.
 First, let us recall that
\begin{equation*}
 \begin{array}{lll}
   \sinh z&=2\sinh(z/2)\cosh(z/2),\\
   \cosh z+1&=2\cosh^2(z/2),\\
   \cosh z-1&=2\sinh^2(z/2).
 \end{array}
\end{equation*}
If $z=x+iy$, then we have
\begin{equation*}
 \begin{array}{ll}
   \sinh z&=\sinh x\cos y+i\cosh x\sin y,\\
   \cosh z&=\cosh x\cos y+i\sinh x\sin y.
 \end{array}
\end{equation*}
So, we obtain
\begin{equation}\label{eq:h3}
\Big|\sinh\frac{z}{2}\Big|^2=\sinh^2\frac{x}{2}+\sin^2\frac{y}{2}=\cosh^2\frac{x}{2}-\cos^2\frac{y}{2}=\frac12(\cosh x-\cos y).
\end{equation}
Next, we conclude that
\begin{equation}\label{eq:h4}
\frac{\sinh z}{\cosh z -1}=\frac{\cosh(z/2)}{\sinh(z/2)}=\frac{\sinh x -i\sin y}{\cosh x -\cos y}.
\end{equation}

\begin{proof}[\textbf{Proof of Lemma \ref{lem:<}}]
{\em Step 1.} If $x:=e^{|z|}$, then the first statement follows from the fact that for $0<\ve<1$ the real roots of
$$x^2-(1+\ve)x+2\ve>0,$$
are less than 1 (if they exist).

{\em Step 2.} Note that
$$|XY-1|\leeq|X-1||Y-1|+|X-1|+|Y-1|.$$
We have
$$(e^{\ve_1 |z|}-1+\ve)+(e^{\tilde\ve_1 |z|}-1+\tilde\ve)<e^{(\ve_1+\tilde\ve_1) |z|}-1+(\ve+\tilde\ve).$$
Next, using the fact that $\ve, \tilde\ve\in(0,1)$, we can get
\begin{multline*}
(e^{\ve_1 |z|}-1+\ve)(e^{\tilde\ve_1 |z|}-1+\tilde\ve)\\<e^{(\ve_1+\tilde\ve_1) |z|}-1+(\ve+\tilde\ve)-e^{\ve_1|z|}(1-\tilde\ve)-e^{\tilde\ve_1|z|}(1-\ve)+2(1-\ve)(1-\tilde\ve)\\
<e^{(\ve_1+\tilde\ve_1) |z|}-1+(\ve+\tilde\ve).
\end{multline*}
Therefore
$$|XY-1|<2(e^{(\ve_1+\tilde\ve_1) |z|}-1+(\ve+\tilde\ve))<e^{(2\ve_1+2\tilde\ve_1) |z|}-1+(2\ve+2\tilde\ve),$$
and the second statement follows.

{\em Step 3.} Finally, assumption $|X-1|<\ve$ and the first statement lead to
\begin{multline*}
|Xe^{z}-1|\leeq  |Xe^{z}-X|+|X-1|
<|X||e^{z}-1|+\ve\\<(1+\ve)(e^{|z|}-1)+\ve
=e^{|z|}(1+\ve)-1< e^{2|z|}-1+2\ve,
\end{multline*}
and the proof is finished.
\end{proof}

\begin{proof}[\textbf{Proof of Lemma \ref{lem:1/2}}]
Let $\alpha=\arg\delta$. We can assume that $\alpha\in(0,\pi/2)$. Next, we can find $\eta>0$ such that $\tilde
m\im\delta<\pi$ for $0<|\delta|<\eta$. We will consider two cases:

$\textbf{-}$ If $m\im\delta<\pi$, then for every $\tilde m\leeq k\leeq m$ we have $k|\delta|<\pi$. Therefore
$-\pi<\im(-k\delta)<0$, and then $-\pi<\arg(e^{-k\delta}-1)<-\pi/2$. Thus the assertion follows.

$\textbf{-}$ If $m\im\delta\greq\pi$, then using the fact that $\re(e^{-k\delta}-1)<0$, we can get
   $$-\re\sum_{k=\tilde m}^{m}\big(e^{-k\delta}-1\big)> \sum_{k=\tilde m}^{m}\big|\im\big(e^{-k\delta}-1\big)\big|,$$
and the assertion follows.
\end{proof}

\begin{proof}[\textbf{Proof of Lemma \ref{lem:w}}]
We can assume that $\alpha\in(0,\pi/2)$ where $\alpha=\arg w$. Let $\vartheta:=\tan\alpha$, then $w=t+i\vartheta t$, for $t\in\R^+$. Using (\ref{eq:h4}), we obtain
$$\frac{1}{e^{w}-1}+\frac12=\frac12\frac{\cosh(w/2)}{\sinh(w/2)}=\frac12\Big(\frac{\sinh t}{\cosh t-\cos \vartheta t}-i\frac{\sin \vartheta t}{\cosh t-\cos \vartheta t}\Big),$$
and next
$$\Big|-\frac{\sin \vartheta t}{\cosh t-\cos \vartheta t}\cdot\frac{\cosh t-\cos \vartheta t}{\sinh t}\Big| =\Big|\frac{\sin \vartheta t}{\sinh t}\Big|< \vartheta=\tan\alpha.$$
So, we conclude that $\arg ((e^{w}-1)^{-1}+1/2)\in(-\alpha,\alpha)$.

In order finish the proof we have to estimate $(1/2)\sinh t(\cosh t-\cos \vartheta t)^{-1}$ from below.

First we assume that $\vartheta\in(0,1]$ (i.e. $\alpha\in(0,\pi/4]$). So, using (\ref{eq:h4}), we get
$$\frac{1/2\sinh t}{\cosh t-\cos \vartheta t}\greq\frac{1/2\sinh t}{\cosh t-(2-\cosh \vartheta t)}\greq\frac14\frac{\sinh t}{\cosh t-1}=\frac{\cosh (t/2)}{4\sinh (t/2)}>\frac14.$$
Thus the statement holds in the case $\vartheta=\tan\alpha\leeq1$.

Let us assume that $\vartheta>1$ (i.e. $\alpha\in(\pi/4,\pi/2)$). If $t\greq4/(3\vartheta)$, then
$$\frac{1/2\sinh t}{\cosh t-\cos \vartheta t}\greq \frac{1/2\sinh t}{\cosh t+1}=\frac12\tanh\frac{t}{2}\greq\frac12\tanh\frac{2}{3\vartheta}>\frac{1}{4\vartheta}=\frac{1}{4}\cot\alpha.$$
If $0<t<4/(3\vartheta)$, then
$$\frac{1/2\sinh t}{\cosh t-\cos \vartheta t}>\frac{t/2}{(1+t^2)-(1-\vartheta^2t^2/2)}=\frac{1/2}{t(1+\vartheta^2/2)}>\frac34\frac{\vartheta}{2+\vartheta^2}>\frac{1}{4\vartheta},$$
and the statement follows.
\end{proof}

\begin{proof}[\textbf{Proof of Lemma \ref{lem:e}}]
{\em Step 1.}
Using Cauchy's mean value Theorem we can prove that
$$1-\ve<\frac{e^{(1-\ve)x}-1}{e^{x}-1}\;\textrm{ and }\;\frac{e^{(1+\ve)x}-1}{e^{x}-1}<1+\ve,$$
where $\ve\in(0,1)$, $x\in\R^-$. Since the above expressions are bounded by $1$ from above and from below
respectively, statement holds for $\tilde w\in\R^-$ and $\delta\in\R^+$.

{\em Step 2.}
Let us consider $\tilde w\in\C$ and $\delta\in\R^+$. It is easy to see that
$$\sup_{\tilde w\in B(w,\ve|w|)}|e^{\tilde w\delta}-e^{w\delta}|= e^{(w+\ve|w|)\delta}-e^{w\delta}.$$
Therefore
$$\exp(B(w\delta,\ve|w|\delta))\subset B(e^{w\delta},e^{(w+\ve|w|)\delta}-e^{w\delta}).$$
So, if $\tilde w\in B(w,\ve|w|)$, we get
$$e^{\tilde w\delta}-1\in B(e^{w\delta}-1,e^{(w+\ve|w|)\delta}-e^{w\delta}),$$
and then
$$\frac{e^{\tilde w\delta}-1}{e^{w\delta}-1}-1\in B\Big(0,\frac{ e^{(w+\ve|w|)\delta}-e^{w\delta}}{|e^{w\delta}-1|}\Big).$$

Since $w+\ve|w|\in\R^-$ and $w+\ve|w|\in B(w,\ve|w|)$ we already know that
$$1-\ve<\frac{e^{(w+\ve|w|)\delta}-1}{e^{w\delta}-1}<1+\ve.$$
So
$$\Big|\frac{ e^{(w+\ve|w|)\delta}-e^{w\delta}}{e^{w\delta}-1}\Big|=\Big|\frac{e^{(w+\ve|w|)\delta}-1}{e^{w\delta}-1}-1\Big|<\ve.$$

{\em Step 3.}
Let $\arg\delta=\alpha\in(-\pi/2,\pi/2)$, and let $\tilde w\in B(w,\ve|w|\cos\alpha)$. Similarly as before we get
\begin{multline*}
\sup_{\tilde w\in B(w,\ve|w|\cos\alpha)}|e^{\tilde w\delta}-e^{w\delta}|=|e^{w\delta+\ve|w||\delta|\cos\alpha}-e^{w\delta}|\\ =|e^{w\delta}|(e^{\ve|w|\re\delta}-1)=(e^{(w+\ve|w|)\re\delta}-e^{w\re\delta}),
\end{multline*}
and then
$$\frac{e^{\tilde w\delta}-1}{e^{w\delta}-1}-1\in B\Big(0,\frac{ e^{(w+\ve|w|)\re\delta}-e^{w\re\delta}}{|e^{w\delta}-1|}\Big).$$
Thus, desired inequality follows from the previous case, and the fact that $|e^{w\delta}-1|>|e^{w\re\delta}-1|$.

{\em Step 4.} Finally, the inequality for $w$ replaced by $\tilde w$ and vice versa follows from the fact that if $z\in B(1,\ve/2)$ and $\ve\in(0,1)$, then $1/z\in B(1,\ve)$.
\end{proof}

\begin{proof}[\textbf{Proof of Lemma \ref{lem:Psi'/}}]
Fix $\ve>0$. Using Lemma \ref{lem:e} we can find a constant $K(\alpha)>2$ such that
  \begin{equation*}
     \Big|\frac{e^{w\delta}-1}{e^{\tilde w\delta}-1}-1\Big|<\frac{\ve}{8},
  \end{equation*}
where $|w-\tilde w|<\ve|w|/K(\alpha)$ (i.e. $\tilde w\in B(w,\ve|w|/K(\alpha))$). We can also assume that the above inequality holds after interchanging the roles of $w$ and $\tilde w$.
Next, Lemma \ref{lem:<} (\ref{lit:<2}) leads to
  \begin{equation}\label{eq:k4}
    \Big|\Big(\frac{e^{w\delta}-1}{e^{\tilde w\delta}-1}\Big)^2-1\Big|<\frac\ve2.
  \end{equation}
Thus, Lemma \ref{lem:<} (\ref{lit:<3}) gives us
  \begin{equation*}
     \Big|\Big(\frac{e^{w\delta}-1}{e^{\tilde w\delta}-1}\Big)^2e^{(\tilde w-w)\delta}-1\Big|<e^{2|(\tilde w-w)\delta|}-1+\ve.
  \end{equation*}
Since $|(w-\tilde w)\delta|<\ve|w\delta|/K(\alpha)<\ve|w\delta|/2$, the assertion follows from (\ref{eq:Psi'2}).
\end{proof}

\end{document}